\tikzstyle{startstop} = [rectangle, rounded corners, minimum width=3cm, minimum height=1cm,text centered, draw=black, fill=red!30]
\tikzstyle{io} = [rectangle, rounded corners, minimum width=3cm, minimum height=1cm,text centered, draw=black, fill=cyan!30]
\tikzstyle{process} = [rectangle, rounded corners, minimum width=3cm, minimum height=1cm,text centered, draw=black, fill=green!30]
\tikzstyle{decision} = [rectangle, rounded corners, minimum width=3cm, minimum height=1cm, text centered, draw=black, fill=pink!30]
\tikzstyle{arrow} = [thick,->,>=stealth]
\newtheorem{theorem}{Theorem}[section]
\newtheorem{proposition}[theorem]{Proposition}
\newtheorem{lemma}[theorem]{Lemma}
\newtheorem{remark}[theorem]{Remark}
\newtheorem{corollary}[theorem]{Corollary}
\newtheorem{definition}[theorem]{Definition}
\newtheorem{example}[theorem]{Example}
\theoremstyle{remark}
\newcommand{\Z}{\mathbb{Z}}
\newcommand{\N}{\mathbb{N}}
\newcommand{\U}{\mathbb{U}}
\newcommand{\R}{\mathbb{R}}
\newcommand{\C}{\mathbb{C}}
\newcommand{\V}{\mathbb{V}}
\newcommand{\W}{\mathbb{W}}
\newcommand{\I}{\mathbb{I}}
\newcommand{\X}{\mathbb{X}}
\newcommand{\Y}{\mathbb{Y}}
\newcommand{\A}{\mathbb{A}}
\newcommand{\B}{\mathbb{B}}
\newcommand{\E}{\mathbb{E}}
\newcommand{\D}{\mathbb{D}}
\newcommand{\M}{\mathbb{M}}
\newcommand{\F}{\mathbb{F}}
\renewcommand{\S}{\mathcal{S}}
\newcommand{\extp}{\@ifnextchar^\@extp{\@extp^{\,}}}
\def\@extp^#1{\mathop{\bigwedge\nolimits^{\!#1}}}
\renewcommand{\varprojlim}{%
  \mathop{\mathpalette\varlim@{\leftarrowfill@\scriptscriptstyle}}\nmlimits@
}
\begin{document}

\begin{titlepage}
   \begin{center}
       \vspace*{1cm}
      
      \Large{ONE DIAMOND TO RULE THEM ALL\\}
      
       \Large{Old and new topics about zigzag, levelsets and extended persistence}

     \bigskip

       \normalsize{Nicolas Berkouk$^*$ and Luca Nyckees}
       
       \medskip 
       
       EPFL, Laboratory for Topology and Neuroscience
       
       \bigskip 
       
       \bigskip 
       
       $^*$Corresponding author E-mail: nicolas.berkouk@epfl.ch
       
       Contributing author: luca.nyckees99@gmail.com
       
       \begin{abstract}
        Extended and zigzag persistence were introduced more than ten years ago, as generalizations of ordinary persistence. While overcoming certain limitations of ordinary persistence, they both enjoy nice computational properties, which make them an intermediate between ordinary and multi-parameter persistence, with already existing efficient software implementations. Nevertheless, their algebraic theory is more intricate, and in the case of extended persistence, was formulated only very recently. In this context, this paper presents a richly illustrated self-contained introduction to the foundational aspects of the topic, with an eye towards recent applications in which they are involved, such as computational sheaf theory and multi-parameter persistence.  
        \end{abstract}

      \small{\textbf{Keywords:} Topological Data Analysis, Extended Persistence, Zigzag Persistence}

      \bigskip
      
      \bigskip
      
      \small{\textbf{Conflict of interest:} On behalf of all authors, the corresponding author states that there is no conflict of interest. }

       \end{center}
\end{titlepage}

\tableofcontents
\newpage



\section{Introduction}

Topological Data Analysis (TDA) arose in the early 2000s with the purpose of developing techniques to estimate and analyze the shape of datasets (point clouds, networks, images…). Its first and main construction is so-called ordinary persistence, which allows tracking topological changes across a filtration of topological spaces or simplicial complexes. These changes are efficiently stored in a topological summary called barcodes. Ordinary persistence had major success in a varied range of applications (material science, neuroscience, network analysis…), because barcodes can be computed efficiently thanks to several open source software packages, and because they have a nice algebraic stability theory, through persistence modules and the interleaving distance. We refer the reader to \cite{oudot:hal-01247501} and \cite{dey2022computational} for self-contained introductions to the topic. Nevertheless, ordinary persistence has several limitations. First, it can only handle filtrations, which exclude the study of time varying data. Second, it is very easy to produce topological features in a dataset that remain undetected by ordinary persistence (see figure \ref{fig:cesub}).

To overcome these issues, the TDA community has introduced two enhancements of ordinary persistence in the one-parameter case: zigzag persistence \cite{carlsson2008zigzag} and extended persistence \cite{article}. The first one allows dealing with non-increasing filtrations, such as time-varying data, while the second one can detect previously unseen topological features. Moreover, a certain type of zigzag filtration, called the levelsets zigzag filtration, has been shown to be equivalent to extended persistence.

Both theories, like in the ordinary case, admit a notion of barcode that can be computed by already existing software packages \cite{gudhi:urm,Dionysus}. However, their algebraic stability counterpart is far more intricate than in the ordinary case, and has been achieved for extended persistence only very recently. 

There is more than ten years of literature on the developments of both extended and zigzag persistence, though it seems to us that these techniques remain less popular than ordinary persistence, while being in theory more powerful. This richly illustrated paper intend to give a comprehensive overview of the theoretical foundations of these theories, starting from their very beginnings, to their recent developments. We hope that it will help reignite the appeal for these methods. 

To illustrate the rich techniques enabled by these flavors of persistence, we end the paper by an exposition of some recent TDA methods relying on extended or zigzag persistence, together with a survey of the algorithmic advances for its computation.

\bigskip

\noindent \textbf{WebApp.} This work is accompanied with an open source webapp, illustrating the different constructions exposed in the paper, and showcasing their equivalence. The webapp is accessible at the following url: \url{https://github.com/LucaNyckees/zigzag-homology}.

\bigskip

\noindent \textbf{Notation.} If unspecified, a vector space is a $K$-vector space where $K$ is a fixed field. We denote by $\text{Vect}_K$ the category of $K$-vector spaces and linear maps. Moreover, homology functors $H_p(\cdot)$ are taken over $K$, \textit{i.e.} $H_p(\cdot)=H_p(\cdot,K)$. Whenever there is a diagram of topological spaces $\mathcal{X}$ or simplicial complexes, we write $H_p(\mathcal{X})$ for the diagram of vector spaces induced by applying the functor $H_p$. The same goes for cohomology functors $H^p(\cdot)$.

\begin{figure}
\begin{center}

\tikzset{every picture/.style={line width=0.75pt}} 

\begin{tikzpicture}[x=0.75pt,y=0.75pt,yscale=-1,xscale=1]

\draw  [draw opacity=0][fill={rgb, 255:red, 74; green, 144; blue, 226 }  ,fill opacity=0.2 ] (143,201.4) .. controls (143,198.42) and (145.42,196) .. (148.4,196) -- (237.6,196) .. controls (240.58,196) and (243,198.42) .. (243,201.4) -- (243,217.6) .. controls (243,220.58) and (240.58,223) .. (237.6,223) -- (148.4,223) .. controls (145.42,223) and (143,220.58) .. (143,217.6) -- cycle ;
\draw  [draw opacity=0][fill={rgb, 255:red, 74; green, 144; blue, 226 }  ,fill opacity=0.2 ] (96,116) .. controls (96,110.48) and (100.48,106) .. (106,106) -- (280,106) .. controls (285.52,106) and (290,110.48) .. (290,116) -- (290,146) .. controls (290,151.52) and (285.52,156) .. (280,156) -- (106,156) .. controls (100.48,156) and (96,151.52) .. (96,146) -- cycle ;
\draw  [draw opacity=0][fill={rgb, 255:red, 74; green, 144; blue, 226 }  ,fill opacity=0.51 ] (121,278) .. controls (121,272.48) and (125.48,268) .. (131,268) -- (253,268) .. controls (258.52,268) and (263,272.48) .. (263,278) -- (263,308) .. controls (263,313.52) and (258.52,318) .. (253,318) -- (131,318) .. controls (125.48,318) and (121,313.52) .. (121,308) -- cycle ;
\draw  [draw opacity=0][fill={rgb, 255:red, 74; green, 144; blue, 226 }  ,fill opacity=0.51 ] (99,376) .. controls (99,370.48) and (103.48,366) .. (109,366) -- (277,366) .. controls (282.52,366) and (287,370.48) .. (287,376) -- (287,406) .. controls (287,411.52) and (282.52,416) .. (277,416) -- (109,416) .. controls (103.48,416) and (99,411.52) .. (99,406) -- cycle ;
\draw    (193,320) -- (193,364) ;
\draw [shift={(193,366)}, rotate = 270] [color={rgb, 255:red, 0; green, 0; blue, 0 }  ][line width=0.75]    (10.93,-3.29) .. controls (6.95,-1.4) and (3.31,-0.3) .. (0,0) .. controls (3.31,0.3) and (6.95,1.4) .. (10.93,3.29)   ;
\draw [shift={(193,318)}, rotate = 90] [color={rgb, 255:red, 0; green, 0; blue, 0 }  ][line width=0.75]    (10.93,-3.29) .. controls (6.95,-1.4) and (3.31,-0.3) .. (0,0) .. controls (3.31,0.3) and (6.95,1.4) .. (10.93,3.29)   ;
\draw  [draw opacity=0][fill={rgb, 255:red, 144; green, 19; blue, 254 }  ,fill opacity=0.43 ] (363,376) .. controls (363,370.48) and (367.48,366) .. (373,366) -- (513,366) .. controls (518.52,366) and (523,370.48) .. (523,376) -- (523,406) .. controls (523,411.52) and (518.52,416) .. (513,416) -- (373,416) .. controls (367.48,416) and (363,411.52) .. (363,406) -- cycle ;
\draw  [draw opacity=0][fill={rgb, 255:red, 144; green, 19; blue, 254 }  ,fill opacity=0.43 ] (388,276) .. controls (388,270.48) and (392.48,266) .. (398,266) -- (488,266) .. controls (493.52,266) and (498,270.48) .. (498,276) -- (498,306) .. controls (498,311.52) and (493.52,316) .. (488,316) -- (398,316) .. controls (392.48,316) and (388,311.52) .. (388,306) -- cycle ;
\draw  [draw opacity=0][fill={rgb, 255:red, 144; green, 19; blue, 254 }  ,fill opacity=0.2 ] (349,116) .. controls (349,110.48) and (353.48,106) .. (359,106) -- (528,106) .. controls (533.52,106) and (538,110.48) .. (538,116) -- (538,146) .. controls (538,151.52) and (533.52,156) .. (528,156) -- (359,156) .. controls (353.48,156) and (349,151.52) .. (349,146) -- cycle ;
\draw    (193,223) -- (193,266) ;
\draw [shift={(193,268)}, rotate = 270] [color={rgb, 255:red, 0; green, 0; blue, 0 }  ][line width=0.75]    (10.93,-3.29) .. controls (6.95,-1.4) and (3.31,-0.3) .. (0,0) .. controls (3.31,0.3) and (6.95,1.4) .. (10.93,3.29)   ;
\draw    (193,156) -- (193,194) ;
\draw [shift={(193,196)}, rotate = 270] [color={rgb, 255:red, 0; green, 0; blue, 0 }  ][line width=0.75]    (10.93,-3.29) .. controls (6.95,-1.4) and (3.31,-0.3) .. (0,0) .. controls (3.31,0.3) and (6.95,1.4) .. (10.93,3.29)   ;
\draw    (443,157) -- (443,264) ;
\draw [shift={(443,266)}, rotate = 270] [color={rgb, 255:red, 0; green, 0; blue, 0 }  ][line width=0.75]    (10.93,-3.29) .. controls (6.95,-1.4) and (3.31,-0.3) .. (0,0) .. controls (3.31,0.3) and (6.95,1.4) .. (10.93,3.29)   ;
\draw    (443,318) -- (443,364) ;
\draw [shift={(443,366)}, rotate = 270] [color={rgb, 255:red, 0; green, 0; blue, 0 }  ][line width=0.75]    (10.93,-3.29) .. controls (6.95,-1.4) and (3.31,-0.3) .. (0,0) .. controls (3.31,0.3) and (6.95,1.4) .. (10.93,3.29)   ;
\draw [shift={(443,316)}, rotate = 90] [color={rgb, 255:red, 0; green, 0; blue, 0 }  ][line width=0.75]    (10.93,-3.29) .. controls (6.95,-1.4) and (3.31,-0.3) .. (0,0) .. controls (3.31,0.3) and (6.95,1.4) .. (10.93,3.29)   ;
\draw    (295,390) -- (353,390) ;
\draw [shift={(355,390)}, rotate = 180] [color={rgb, 255:red, 0; green, 0; blue, 0 }  ][line width=0.75]    (10.93,-3.29) .. controls (6.95,-1.4) and (3.31,-0.3) .. (0,0) .. controls (3.31,0.3) and (6.95,1.4) .. (10.93,3.29)   ;
\draw    (273,293) -- (377,293) ;
\draw [shift={(271,293)}, rotate = 0] [color={rgb, 255:red, 0; green, 0; blue, 0 }  ][line width=0.75]    (10.93,-3.29) .. controls (6.95,-1.4) and (3.31,-0.3) .. (0,0) .. controls (3.31,0.3) and (6.95,1.4) .. (10.93,3.29)   ;
\draw  [dash pattern={on 0.84pt off 2.51pt}]  (320,50) -- (320,263) ;
\draw  [dash pattern={on 0.84pt off 2.51pt}]  (320,310) -- (320,373) ;

\draw (135,66) node [anchor=north west][inner sep=0.75pt]  [font=\small] [align=left] {Discrete framework};
\draw (375,66) node [anchor=north west][inner sep=0.75pt]  [font=\small] [align=left] {Continuous framework};
\draw (102,113) node [anchor=north west][inner sep=0.75pt]  [font=\small] [align=left] {Filtration of simplicial complex};
\draw (150,203) node [anchor=north west][inner sep=0.75pt]  [font=\small] [align=left] {Critical values};
\draw (127,278) node [anchor=north west][inner sep=0.75pt]  [font=\small] [align=left] {Extended persistence};
\draw (105,374) node [anchor=north west][inner sep=0.75pt]  [font=\small] [align=left] {Levelsets zigzag persistence};
\draw (364,115) node [anchor=north west][inner sep=0.75pt]  [font=\small] [align=left] {Continuous function with \\pfd levelsets co-homology};
\draw (404,278) node [anchor=north west][inner sep=0.75pt]  [font=\small] [align=left] {RISC functor};
\draw (377,374) node [anchor=north west][inner sep=0.75pt]  [font=\small] [align=left] {Levelsets persistence};
\draw (275,275) node [anchor=north west][inner sep=0.75pt]  [font=\scriptsize] [align=left] {Block decomposition};
\draw (304,398) node [anchor=north west][inner sep=0.75pt]  [font=\scriptsize] [align=left] {Inclusion};
\draw (133,136) node [anchor=north west][inner sep=0.75pt]  [font=\small] [align=left] {Morse type function};
\draw (99,327) node [anchor=north west][inner sep=0.75pt]  [font=\scriptsize] [align=left] {Pyramid theorem};
\draw (158,393) node [anchor=north west][inner sep=0.75pt]  [font=\footnotesize] [align=left] {(Section 2.3)};
\draw (158,296) node [anchor=north west][inner sep=0.75pt]  [font=\footnotesize] [align=left] {(Section 2.4)};
\draw (408,393) node [anchor=north west][inner sep=0.75pt]  [font=\footnotesize] [align=left] {(Section 4)};
\draw (111,342) node [anchor=north west][inner sep=0.75pt]  [font=\footnotesize] [align=left] {(Section 3)};
\draw (413,295) node [anchor=north west][inner sep=0.75pt]  [font=\footnotesize] [align=left] {(Section 5)};
\draw (303,412) node [anchor=north west][inner sep=0.75pt]  [font=\scriptsize] [align=left] {of posets};

\end{tikzpicture}

\end{center}
\caption{General pipeline illustration. }
\label{pipeline}
\end{figure}
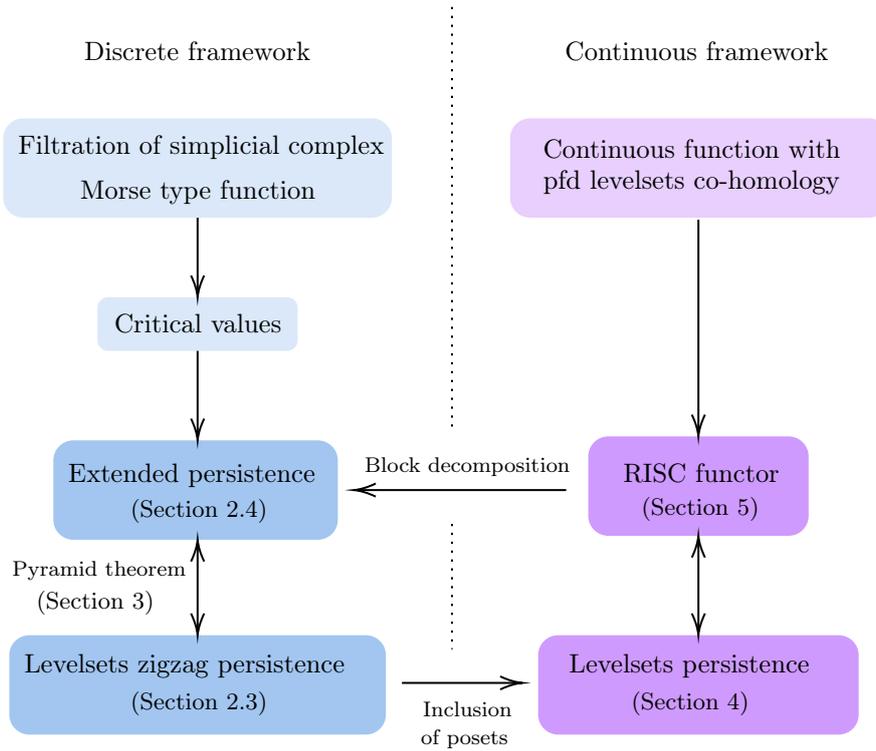

\section{Discrete flavors of  persistence}

This section is devoted to define the discrete versions of extended and zigzag persistence, as they were first introduced in \cite{carlsson2008zigzag} and \cite{article}. The existence of barcodes in both case relies on Gabriel's theorem for type A quivers, which we start by reviewing. 

\subsection{A short introduction to Gabriel's theorem}

One of the cornerstone stone of the different flavors of persistence (ordinary, extended, zigzag), that we will present in the next sections, is Gabriel's theorem on representation of quivers. For a detailed exposition, we refer to \cite[Appendix A]{oudot:hal-01247501}.

\begin{definition} A quiver $Q$ is an oriented graph, that is, a tuple $(Q_0,Q_1)$ where $Q_0$ and $Q_1$ are sets, equipped with two maps $h,t : Q_1 \longrightarrow Q_0$. $Q_0$ is the set of \emph{vertices} of $Q$, $Q_1$ the set of \emph{edges}, and $h$ (resp. $t$) associates to an edge its \emph{head} (resp. its \emph{tail}).
\end{definition} 


The following are examples of quivers:

\bigskip

\begin{center}
     $R = \xymatrix{ &   &  &  & &\bullet \\
    \bullet \ar[r] & \bullet \ar[r] & \bullet \ar[r] & \bullet \ar[r] & \bullet \ar[rd] \ar[ru] \\ 
     &   &  &  & &\bullet}$
     
     \bigskip

  $S = \xymatrix{\bullet \ar@(ur,ul)}$
  
  \bigskip
    
    $T = \xymatrix{
    \bullet \ar[r] & \bullet & \bullet \ar[l] \ar[r] & \bullet \ar[r] & \bullet }$

\end{center}

\begin{definition}
A quiver $Q$ is said to be of type $A$, if there exists an integer $n$ such that $Q_0 = \{0,...,n\}$, and there is exactly one edge between $i$ and $i+1$ (in any direction), and no other edges.
\end{definition}

\begin{remark}
The quiver $T$ in the last line of the above examples is a quiver of type $A$.
\end{remark}

\begin{definition}
Let $Q$ be a quiver. A representation of $Q$ over the field $K$ is the data, for all $q \in Q_0$, of a finite dimensional $K$-vector space $\V_q$, and for all $e \in Q_1$ of a $K$-linear map $\V_e : \V_{t(e)} \longrightarrow \V_{h(e)}$.

\noindent We shall simply denote the corresponding representation by $\V$.
\end{definition}

The following is a representation of the quiver $R$:

\begin{center}
     $\xymatrix{ &   &  &  & & K \\
    K \ar[r]^{\tiny \begin{pmatrix} 1 \\ 0 \end{pmatrix}} & K^2 \ar[r]^{\text{id}_{K^2}} & K^2 \ar[r]^{(1~1)} & K \ar[r]\ar[r]^{\text{id}_K} & K \ar[rd]_{\text{id}_K} \ar[ru]^{\text{id}_K} \\ 
    &   &  &  & & K}$
    
\end{center}


\begin{definition}[Interval representation]
Let $Q$ be a quiver of type $A$. The interval representation of $Q$ with birth $b \in Q_0$ and death $d \in Q_0$ ($b \leq d$), is defined by:
\begin{equation*}
\I(b,d)_i= \begin{cases}
             K  & \text{if } b\leq i\leq d \\
             0  & \text{otherwise,}
       \end{cases}
\end{equation*}
and equipped with identity maps between two adjacent copies of $K$ and zero maps elsewhere.
\end{definition}

The interval representation of the quiver $T$ with birth $1$ and death $3$ is:

\[ \I(1,3)  = \xymatrix{
    0 \ar[r]^0 & K & K \ar[l]_{\text{id}_K} \ar[r]^{\text{id}_K} & K \ar[r]^{\text{id}_K} & 0 }
    \]

\begin{definition}
Let $Q$ be a quiver, and $\V,\W$ be two $K$-representations of $Q$. A morphism of representation from $\V$ to $\W$ is the data, for all $q \in Q_0$, of a $K$-linear map $\varphi_q : \V_q \to \W_q$, such that for all $e \in Q_1$, the following diagram is commutative:

$$\xymatrix{\V_{t(e)} \ar[r]^{\V_e} \ar[d]_{\varphi_{t(e)}} & \V_{h(e)} \ar[d]^{\varphi_{h(e)}} \\
\W_{t(e)} \ar[r]_{\W_e} & \W_{h(e)} }.$$

\noindent We denote this data by $\varphi : \V \longrightarrow \W$. We say that $\varphi$ is an isomorphism if and only if $\varphi_q$ is an isomorphism for all $q \in Q_0$.
\end{definition}

\begin{definition}
Let $Q$ be a quiver, and let $(\V_i)_{i\in I}$ be a family of representations of $Q$. The direct sum of $(\V_i)_{i\in I}$, denoted by $\bigoplus_{i\in I} \V_i$, is defined pointwise, for all $q \in Q_0$ and all $e \in Q_1$, by:

\[\left( \bigoplus_{i\in I} \V_i \right )_q := \bigoplus_{i\in I} (\V_i)_q, ~~~\textnormal{and}~~~ \left( \bigoplus_{i\in I} \V_i \right )_e := \bigoplus_{i\in I} (\V_i)_e. \]
\end{definition}

We can now state Gabriel's theorem for type $A$ quivers.

\begin{theorem}
Let $Q$ be a type $A$ quiver, and $\V$ be a representation of $Q$. Then, there exists $N \in \Z_{\geq 0}$, and pairs of integers $(b_1,d_1),...,(b_N,d_N)\in Q_0^2$, such that: $$\V \simeq \I(b_1,d_1)\oplus\cdots\oplus\I(b_N,d_N).$$ 

\noindent Moreover, the pairs of integers $(b_i,d_i)$ are unique up to reordering. 
\end{theorem}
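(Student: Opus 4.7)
The plan is to establish the theorem in three steps: (1) each interval representation $\I(b,d)$ is indecomposable, (2) every finite-dimensional representation of $Q$ decomposes as a direct sum of intervals, and (3) uniqueness follows from a Krull-Schmidt argument. Step (2) is the substantive one; the others are standard consequences.

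For step (1), I would compute $\text{End}(\I(b,d))$. Any endomorphism of $\I(b,d)$ consists of scalars $\lambda_b, \ldots, \lambda_d \in K$ attached to each vertex in the support, and the commutativity squares along the identity maps of $\I(b,d)$ force $\lambda_i = \lambda_{i+1}$ for all $b \leq i < d$. Hence $\text{End}(\I(b,d)) \cong K$, which has no nontrivial idempotents, so $\I(b,d)$ is indecomposable.

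For step (2), I would induct on $n$, where $Q_0 = \{0,\ldots,n\}$. The base case $n=0$ is just writing $\V_0$ as a direct sum of lines. For the inductive step, let $Q'$ be the sub-quiver on $\{0,\ldots,n-1\}$ and $\V'$ the restriction of $\V$; the inductive hypothesis yields $\V' \cong \bigoplus_j \I'(b_j, d_j)$. The remaining edge $e$ between $n-1$ and $n$ is, without loss of generality, oriented $n-1 \to n$. Each summand $\I'(b_j,n-1)$ contributes a one-dimensional piece to $\V_{n-1}$, and $\V_e$ restricted to the sum of these pieces determines which summands continue to vertex $n$ (generators outside $\ker \V_e$, giving $\I(b_j,n)$) and which die at $n-1$ (generators inside $\ker \V_e$, giving $\I(b_j,n-1)$); summands with $d_j < n-1$ are left unchanged, and a direct complement of $\text{Im}\,\V_e$ in $\V_n$ contributes the $\I(n,n)$ summands. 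The main obstacle is that the inductive decomposition of $\V_{n-1}$ need not be compatible with $\ker \V_e$, so one must perform a basis refinement within the subspace $\bigoplus_{d_j = n-1} (\I'(b_j,n-1))_{n-1}$ of $\V_{n-1}$; this refinement can be propagated back through the quiver without disturbing the other summands because all internal maps of an interval representation are identities, so any change of basis at vertex $n-1$ extends consistently to the earlier vertices of each affected interval.

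For step (3), the Krull-Schmidt theorem applies because each $\text{End}(\V)$ is a finite-dimensional $K$-algebra, hence Artinian, so that the category of representations of $Q$ is Krull-Schmidt. Combined with step (1), this forces the multiset $\{(b_i, d_i)\}_i$ to be a well-defined isomorphism invariant of $\V$.
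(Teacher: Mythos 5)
Your step (2) has a genuine gap. You correctly identify the main obstacle --- that the inductively obtained decomposition of $\V_{n-1}$ need not be aligned with $\ker\V_e$ --- but the repair you propose fails: it is \emph{not} true that ``any change of basis at vertex $n-1$ extends consistently to the earlier vertices of each affected interval.'' The intervals ending at $n-1$ have different supports, and a change of basis that mixes two of them can be propagated past the birth vertex of the shorter one only when the edge there points in the right direction. Concretely, take $Q' = (0 \to 1)$ and $\V' = \I'(0,1) \oplus \I'(1,1)$, with $u$ generating $(\I'(0,1))_1$ and $w$ generating $(\I'(1,1))_1$. The change of basis $u \mapsto u + w$, $w \mapsto w$ does not extend to an automorphism of $\V'$: the image of $\V'_{0\to1}$ is the line $Ku$, which in the new basis $\{u' = u+w,\; w' = w\}$ is spanned by $u' - w'$ and is therefore no longer a coordinate line of the purported new summand decomposition. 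Equivalently, $\mathrm{Hom}(\I'(0,1),\I'(1,1)) = 0$ (the commutative square over the edge $0\to1$ forces the component at vertex $1$ to vanish), so the off-diagonal block $\I'(0,1)\to\I'(1,1)$ created by this change of basis cannot come from a morphism of representations.

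To make the inductive step go through one must show that the intervals $\I'(b_j,n-1)$ are totally preordered by nonvanishing of Hom: for $b<b'$, exactly one of $\mathrm{Hom}(\I'(b,n-1),\I'(b',n-1))$ and $\mathrm{Hom}(\I'(b',n-1),\I'(b,n-1))$ is nonzero, the direction being governed by the orientation of the edge between $b'-1$ and $b'$, and this relation is transitive. The automorphisms of $\V'$ then restrict on $\V_{n-1}$ to a group of matrices that are triangular for this order, and one must check that triangular Gaussian elimination alone suffices to bring $\ker\V_e$ (or, when $e$ is oriented $n \to n-1$, the image of $\V_e$) into coordinate position. This ordering-and-elimination lemma is the real technical content of the decomposition theorem for arbitrarily oriented type $A$ quivers; without it the inductive step does not close. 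Steps (1) and (3) are fine as written.
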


Note that since a pair $(b_i,d_i)$ can appear several times in the decomposition, the collection of the $(b_i,d_i)$ carries the structure of a multi-set, that is, a set where elements have a multiplicity. This multi-set, denoted $\B(\V)$, is called the \emph{barcode} of $\V$.

\subsection{Zigzag Persistence}
\label{zigzag_theory}

While ordinary persistence studies filtrations of topological spaces, that induces at the homological level  representations of equi-oriented (all arrows go in the same direction) type A quivers, \textit{zigzag persistence} \cite{carlsson2008zigzag} (Carlsson and De Silva) aims to generalize the setting of ordinary persistent homology to all type A quivers, regardless of the orientation of their edges. Indeed, Gabriel's theorem assures the existence of barcodes for all type A quivers. Beyond the fact that zigzag persistence successfully generalizes ordinary persistence, it is initially motivated by some concrete problems arising in computational topology (such as topological bootstrapping, probability density estimation on point cloud data, or time varying data). 

For a collection of simplicial complexes $\mathcal{X}=\{\X_i\}_{i=0}^n$ and a fixed $p\in\N$, one can consider a diagram of embeddings associated to $\mathcal{X}$ by taking pairwise unions as follows.
\begin{center}
\begin{tikzcd}
                    & \X_0\cup \X_1 &                                & \X_1\cup \X_2 &                                & ... &                     \\
\X_0 \arrow[ru] &                  & \X_1 \arrow[lu] \arrow[ru] &                  & \X_2 \arrow[lu] \arrow[ru] &     & \X_n \arrow[lu]
\end{tikzcd}
\end{center}

Applying simplicial homology to the above, we obtain a diagram of vector spaces:

\[
H_p(\X_0) \longrightarrow H_p(\X_0 \cup \X_1) \longleftarrow H_p(\X_1)  \longrightarrow  H_p(\X_1 \cup \X_2) \longleftarrow H_p(\X_2) \longrightarrow \cdots \longleftarrow H_p(\X_n).
\]

This simple example, that allows to compare a sequence a simplicial complexes that is not a filtration, motivates the following definition.

\begin{definition}[Zigzag module]
A \emph{zigzag persistence module} is a diagram in the category of finite dimensional vector spaces (each arrow is a linear map), of the form 
\[
V_0\longrightarrow V_1\longleftarrow V_2\longrightarrow \cdots\longleftarrow V_n,
\]
alternating between arrows $\longrightarrow$ and $\longleftarrow$.

\end{definition} 

\noindent In particular, a zigzag persistence module is a representation of a type A quiver. Therefore, it admits a barcode, by Gabriel's theorem. In section \ref{LZZ_subsection}, we will study a particular type of zigzag persistence module, associated to Morse type functions.

\subsection{Levelsets Zigzag Persistence}

\label{LZZ_subsection}

\begin{definition}[Morse type]
Let $\X$ be a topological space. A continuous function $f:\X\rightarrow \R$ is of \emph{Morse type} if there are finitely many so-called \emph{critical values} $a_1<...<a_n$ such that for any interval $I$ of the form $(-\infty,a_1)$, $(a_n,\infty)$ or $(a_i,a_{i+1})$ for some $i\in\{1,...,n-1\}$, there exists a topological space $\Y_I$ and an homeomorphism $f^{-1}(I) \stackrel{\sim}{\longrightarrow} \Y_I \times I$ such that, with $p_2$ the second coordinate projection, the following diagram is commutative:

\[ \xymatrix{f^{-1}(I) \ar[rr]^{\sim} \ar[rd]_{f} & & \Y_I \times I \ar[ld]^{p_2} \\
& \R &}. \]

Moreover, we ask that the spaces $\Y_I$ have finitely generated homology groups.
\end{definition}

\noindent Consider a topological space $\X$ paired with a Morse type function $f:\X\rightarrow\R$, with critical values $a_1<...<a_n$. Choose in-between values $s_0,...,s_n\in\R$ that satisfy $$-\infty<s_0<a_1<s_1<\cdots<s_{n-1}<a_n<s_n<\infty,$$ and denote by $\X_i^j$ the space $f^{-1}([s_i,s_j])$ for any pair of indices $i\leq j$. The \textit{levelsets zigzag filtration} of the pair $(\X,f)$ is defined by the sequence of inclusions of topological spaces: 
$$ZZ(f) : \X_0^0\rightarrow\X_0^1\leftarrow\X_1^1\rightarrow\cdots\leftarrow\X_{n-1}^{n-1}\rightarrow\X_{n-1}^n\leftarrow\X_n^n.$$ 

\begin{definition}
The $p$-th levelsets persistence module associated to the Morse type function $f : \X \longrightarrow \R$ is, with the previous notations, the sequence of vector spaces:

$$ZZ_p(f) : H_p(\X_0^0)\rightarrow H_p(\X_0^1)\leftarrow H_p(\X_1^1) \rightarrow\cdots\leftarrow H_p(\X_{n-1}^{n-1}) \rightarrow H_p(\X_{n-1}^n) \leftarrow H_p(\X_n^n).$$ 
\end{definition}

The $p$-th levelsets persistence module associated to $f$ is a representation of a type A quiver, therefore, it admits a barcode, that we shall denote by $\B(ZZ_p(f))$.

\medskip

\noindent \textbf{Notation.} Intervals of the form $\I(b,d)$ appearing in the levelsets zigzag barcode are denoted $[\Y,\Z]$, where $\Y$ and $\Z$ are the subspaces at position $b$ and $d$ in the above sequence, respectively. For example, $\I(0,2)$ would be denoted by $[\X_0^0,\X_1^1]$.\\




\subsection{Extended Persistent Homology}
\label{extended}

On a manifold $M$ with Morse function $f$, persistent homology works by pairing critical points that correspond to the birth and death of a homology class. However, there are homology classes that never die, called \textit{essential homology classes}, for which the pairing does not apply. The goal of introducing extended persistent homology is to extend the pairing to all homology classes, including essential ones. This unlocks more information about topological features represented by essential homology. This section focuses on introducing extended persistence by following \cite{article}.

\subsubsection{On Manifolds}
\label{manifolds}

\noindent Let $M$ be a $d$-dimensional manifold with Morse function $f$ and critical values $\infty<a_1<...<a_n<\infty$. Choose a set of regular values $s_0<...<s_n$ satisfying $$\infty<s_0<a_1<s_1<...<a_n<s_n<\infty.$$ Define sub-levelsets $M_k=f^{-1}((-\infty,s_k])$ for any $k\in\{0,...,n\}$. Let $H_p(\cdot)$ (resp. $H^p(\cdot)$) denote the $p$-th singular homology (resp. cohomology) functor. Poincaré duality (\autocite[Section~3.3]{Hatcher:478079}) tells us that if the $d$-dimensional manifold $M$ is oriented and closed (\textit{i.e.} compact and without boundary), then there are group isomorphisms $$H_p(M)\cong H^{d-p}(M).$$ Together with inclusion-induced maps, this gives the homology-cohomology sequence $$H_p(M_0)\rightarrow \cdots \rightarrow H_p(M_m)\rightarrow H^{d-p}(M_m)\rightarrow \cdots\rightarrow H^{d-p}(M_0).$$ 

\noindent Note that groups in the second half of the sequence get progressively shrunk down to $0$, and thus all homology classes, including essential ones, die at some point. Now, to turn the problem into a purely homological one, the trick is to use Lefschetz duality (\autocite[Theorem~3.43]{Hatcher:478079}), that translates cohomology to relative homology by providing group isomorphisms $$H^{d-p}(M_p)\cong H_p(M_k,\partial M_k).$$ 
\noindent Using excision, one may show that $H_p(M_k,\partial M_k)\cong H_p(M,M^{m-k})$, leading to 
$$H_p(M_0)\rightarrow \cdots \rightarrow H_p(M_m)\rightarrow H_p(M,M^{0})\rightarrow \cdots\rightarrow H_p(M,M^{m}).$$ 
\noindent The advantage of this version of the sequence is that it can generalize to the case of simplicial complexes, which is the subject of the next subsection.

\subsubsection{On Simplicial Complexes}

Let $K$ be the triangulation of a manifold $M$, and $f$ a real-valued injective function defined on the vertex set of $K$. Let $\{u_1,...,u_n\}$ be the unique ordered set of vertices of $K$ satisfying $f(u_1)<...<f(u_n)$. Moreover, let $K_i$ denote the sub-complex of $K$ spanned by the vertices $u_1,...,u_i$, and $L_{n-i}$ the sub-complex of $K$ spanned by the vertices $u_{i+1},...,u_n$. Assuming that $M$ is compact, closed and oriented, the duality results of section \ref{manifolds} still hold at the simplicial homology level, and we can consider the following extended sequence:
$$H_p(K_0)\rightarrow \cdots \rightarrow H_p(K_n)\rightarrow H_p(K,L_0)\rightarrow \cdots\rightarrow H_p(K,L_n).$$ 
\noindent  \\

\noindent \textbf{Notation.} For a fixed $p\in\N$, one can adopt an indexing convention by assigning $H_p(K_i)\mapsto i$ and  $H_p(K,L_i)\mapsto n+i$.\\

\noindent Poincaré and Lefschetz duality do not hols for general simplicial complex. Though, the above extended homological sequence still make sens for any abstract simplicial complex endowed with an injective real-valued function.

\begin{definition}
Let $K$ be a simplicial complex with an injective function $f:K\rightarrow\R$. The extended persistence module of the pair $(K,f)$ is the diagram $$EP_p(f) : H_p(K_0)\rightarrow \cdots \rightarrow H_p(K_n)\rightarrow H_p(K,L_0)\rightarrow \cdots\rightarrow H_p(K,L_n)$$ induced by an ordering $\{u_1,...,u_n\}$ of the vertices of $K$ as above.

\end{definition}

The extended persistence module of a pair $(K,f)$ is a representation of a quiver of type A, therefore, it admits a barcode.

\begin{definition}[Interval Types]
Four types of intervals arise from the extended persistence barcode of a pair $(K,f)$. \textit{Type I} or \textit{Ord} (resp. \textit{Type II} or \textit{Rel}) stands for intervals whose lifespan is contained within the first (resp. second) part of the sequence. \textit{Type III} or \textit{EP}($+$) (resp. \textit{Type IV} or \textit{EP}($-$)) stands for intervals whose birth appears in the first part of the sequence (resp. death) and whose death (resp. birth) appears in the second one. 
\end{definition}

\begin{figure}
    \centering

\tikzset{every picture/.style={line width=0.75pt}} 

\begin{tikzpicture}[x=0.75pt,y=0.75pt,yscale=-1,xscale=1]

\draw    (171,250) -- (475,250) ;
\draw [shift={(477,250)}, rotate = 180] [color={rgb, 255:red, 0; green, 0; blue, 0 }  ][line width=0.75]    (10.93,-3.29) .. controls (6.95,-1.4) and (3.31,-0.3) .. (0,0) .. controls (3.31,0.3) and (6.95,1.4) .. (10.93,3.29)   ;
\draw  [dash pattern={on 0.84pt off 2.51pt}]  (316,145) -- (316,326) ;
\draw    (210,247) -- (210,254) ;
\draw    (240,247) -- (240,254) ;
\draw    (270,247) -- (270,254) ;
\draw    (300,247) -- (300,254) ;
\draw    (330,247) -- (330,254) ;
\draw    (360,247) -- (360,254) ;
\draw    (390,247) -- (390,254) ;
\draw    (420,247) -- (420,254) ;
\draw [color={rgb, 255:red, 46; green, 78; blue, 167 }  ,draw opacity=1 ]   (210,247) .. controls (237,226) and (273,225) .. (300,247) ;
\draw [shift={(255.47,230.87)}, rotate = 177.4] [fill={rgb, 255:red, 46; green, 78; blue, 167 }  ,fill opacity=1 ][line width=0.08]  [draw opacity=0] (10.72,-5.15) -- (0,0) -- (10.72,5.15) -- (7.12,0) -- cycle    ;
\draw [color={rgb, 255:red, 144; green, 19; blue, 254 }  ,draw opacity=0.55 ]   (300,254) .. controls (326.2,277.6) and (395.2,275.6) .. (420,254) ;
\draw [shift={(360.53,270.95)}, rotate = 0.63] [fill={rgb, 255:red, 144; green, 19; blue, 254 }  ,fill opacity=0.55 ][line width=0.08]  [draw opacity=0] (10.72,-5.15) -- (0,0) -- (10.72,5.15) -- (7.12,0) -- cycle    ;
\draw [color={rgb, 255:red, 77; green, 158; blue, 235 }  ,draw opacity=1 ]   (330,247) .. controls (355.2,228.6) and (394.2,226.6) .. (420,247) ;
\draw [shift={(375.26,232.44)}, rotate = 357.4] [fill={rgb, 255:red, 77; green, 158; blue, 235 }  ,fill opacity=1 ][line width=0.08]  [draw opacity=0] (10.72,-5.15) -- (0,0) -- (10.72,5.15) -- (7.12,0) -- cycle    ;
\draw [color={rgb, 255:red, 116; green, 104; blue, 207 }  ,draw opacity=1 ]   (210,254) .. controls (241.2,275.6) and (303.2,274.6) .. (330,254) ;
\draw [shift={(270.28,269.83)}, rotate = 181.41] [fill={rgb, 255:red, 116; green, 104; blue, 207 }  ,fill opacity=1 ][line width=0.08]  [draw opacity=0] (10.72,-5.15) -- (0,0) -- (10.72,5.15) -- (7.12,0) -- cycle    ;
\draw  [color={rgb, 255:red, 46; green, 78; blue, 167 }  ,draw opacity=1 ][fill={rgb, 255:red, 46; green, 78; blue, 167 }  ,fill opacity=1 ] (402.04,299.55) -- (414.96,299.55) .. controls (416.64,299.55) and (418,300.48) .. (418,301.63) .. controls (418,302.78) and (416.64,303.71) .. (414.96,303.71) -- (402.04,303.71) .. controls (400.36,303.71) and (399,302.78) .. (399,301.63) .. controls (399,300.48) and (400.36,299.55) .. (402.04,299.55) -- cycle ;
\draw  [color={rgb, 255:red, 77; green, 158; blue, 235 }  ,draw opacity=1 ][fill={rgb, 255:red, 77; green, 158; blue, 235 }  ,fill opacity=1 ] (402.04,313.41) -- (414.96,313.41) .. controls (416.64,313.41) and (418,314.35) .. (418,315.49) .. controls (418,316.64) and (416.64,317.57) .. (414.96,317.57) -- (402.04,317.57) .. controls (400.36,317.57) and (399,316.64) .. (399,315.49) .. controls (399,314.35) and (400.36,313.41) .. (402.04,313.41) -- cycle ;
\draw  [color={rgb, 255:red, 116; green, 104; blue, 207 }  ,draw opacity=1 ][fill={rgb, 255:red, 116; green, 104; blue, 207 }  ,fill opacity=1 ] (402.04,327.28) -- (414.96,327.28) .. controls (416.64,327.28) and (418,328.21) .. (418,329.36) .. controls (418,330.51) and (416.64,331.44) .. (414.96,331.44) -- (402.04,331.44) .. controls (400.36,331.44) and (399,330.51) .. (399,329.36) .. controls (399,328.21) and (400.36,327.28) .. (402.04,327.28) -- cycle ;
\draw  [color={rgb, 255:red, 144; green, 19; blue, 254 }  ,draw opacity=0.55 ][fill={rgb, 255:red, 144; green, 19; blue, 254 }  ,fill opacity=0.55 ] (402.04,341.84) -- (414.96,341.84) .. controls (416.64,341.84) and (418,342.77) .. (418,343.92) .. controls (418,345.07) and (416.64,346) .. (414.96,346) -- (402.04,346) .. controls (400.36,346) and (399,345.07) .. (399,343.92) .. controls (399,342.77) and (400.36,341.84) .. (402.04,341.84) -- cycle ;

\draw (202,256.4) node [anchor=north west][inner sep=0.75pt]    {$a_{1}$};
\draw (292,257.4) node [anchor=north west][inner sep=0.75pt]    {$a_{n}$};
\draw (320,222.4) node [anchor=north west][inner sep=0.75pt]    {$\overline{ \begin{array}{l}
a_{n}\\
\end{array}}$};
\draw (410,222.4) node [anchor=north west][inner sep=0.75pt]    {$\overline{ \begin{array}{l}
a_{1}\\
\end{array}}$};
\draw (422.89,295.9) node [anchor=north west][inner sep=0.75pt]  [font=\scriptsize]  {$ \begin{array}{l}
Type\ I\\
Type\ II\\
Type\ III\\
Type\ IV
\end{array}$};

\end{tikzpicture}

    \caption{Types of intervals appearing in the extended persistence barcode of $(K,f)$, where $\{a_1,...,a_n\}$ is the filtration and $\{a_1,...,a_n,\bar{a_n},...,\bar{a_1}\}$ is the extended filtration.}
    \label{fig:interval_types}
\end{figure}
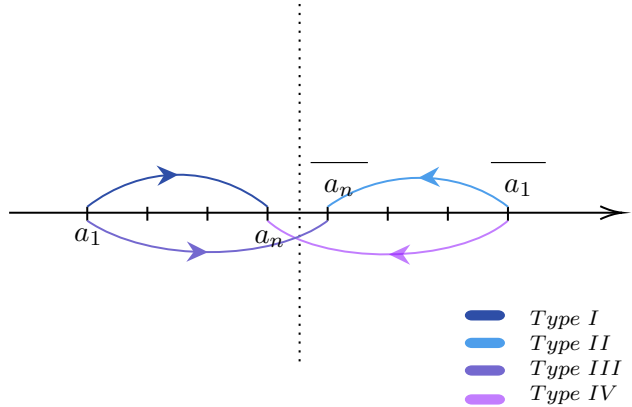

\section{The Pyramid Theorem}
\label{pyr1}

In \cite{10.1145/1542362.1542408}, the authors associate to a Morse type function, a poset of pair of spaces that is pyramid shaped. This pyramid allows to intermediate between extended and levelsets zigzag persistence, thanks to so-called diamond moves. In the end, this yields a diagram bijection between the levelsets zigzag persistence and the extended persistence of a pair $(\X,f)$ of Morse type having finitely generated levelsets homology.  This section reviews these results. We consider the same mathematical context as Section \ref{LZZ_subsection}.

\subsection{Mayer-Vietoris Diamonds}

The central algebraic concept at the core of the pyramidal construction is the one of Mayer-Vietoris diamonds.

\begin{definition}[Exact Square]
An \emph{exact square} is a diagram of vector spaces 
\begin{center}
\begin{tikzcd}
V_3 \arrow[r, "g_2"]                  & V_4                  \\
V_1 \arrow[r, "f_1"] \arrow[u, "f_2"] & V_2 \arrow[u, "g_1"]
\end{tikzcd}
\end{center}
that satisfies the condition $\mathrm{Ker}(V_2\oplus V_3\rightarrow V_4)=\mathrm{Im}(V_1\rightarrow V_2\oplus V_3)$ in the sequence 

$$V_1\longrightarrow V_2\oplus V_3\longrightarrow V_4,$$
where $(V_1\rightarrow V_2\oplus V_3)=f_1\oplus f_2$ and $(V_2\oplus V_3\rightarrow V_4)=g_1-g_2$.
\end{definition}

Two zigzag modules $\V^+$ and $\V^-$ are said to \textit{differ by an exact square at $k$} if one has $$\V^+:V_0\longleftrightarrow \cdots\longleftrightarrow V_{k-1}\longrightarrow V^+\longleftarrow V_{k+1}\longleftrightarrow\cdots\longleftrightarrow V_n,$$
$$\V^-:V_0\longleftrightarrow \cdots\longleftrightarrow V_{k-1}\longleftarrow V^-\longrightarrow V_{k+1}\longleftrightarrow\cdots\longleftrightarrow V_n$$ such that the square  
\begin{center}
    \begin{tikzcd}
V_{k+1} \arrow[r]       & V^+               \\
V^- \arrow[r] \arrow[u] & V_{k-1} \arrow[u]
\end{tikzcd}
\end{center}
is exact. 
\begin{theorem}[Diamond Principle](\autocite[Theorem~ 5.6]{carlsson2008zigzag})
For two zigzag modules $\V^+$ and $\V^-$ differing by an exact square at index $k$, there is a partial bijection between $\B({\V^+})$ and $\B({\V^-})$ given by the correspondence table 
\begin{center}
\begin{tabular}{|c c c|} 
 \hline
Condition & $\B({\V^+})$ & $\B({\V^-})$\\ [0.5ex] 
 \hline
 $b\leq k-1$ & $[b,k]$ & $[b,k-1]$ \\ 
\hline
 $d\geq k+1$ & $[k,d]$ & $[k+1,d]$\\
 \hline
 $b\geq k\lor d\leq k$ & $[b, d]$ & $[b, d]$ \\
 \hline
\end{tabular}
\end{center}
\end{theorem}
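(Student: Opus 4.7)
The plan is to apply Gabriel's theorem to decompose both $\V^+$ and $\V^-$ into interval summands, and then pair these summands via the exact square condition at index $k$. A first observation is that $\V^+$ and $\V^-$ coincide at every index other than $k$, so any interval supported entirely in $\{0,\ldots,k-1\}$ or in $\{k+1,\ldots,n\}$ appears identically in both barcodes. This trivially handles the subcases $d\leq k-1$ and $b\geq k+1$ of the third row of the table.

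The substantive step concerns intervals whose support meets position $k$. The exactness of
\[
V^- \xrightarrow{f_1\oplus f_2} V_{k-1}\oplus V_{k+1} \xrightarrow{g_1-g_2} V^+
\]
at the middle term is the sole algebraic input available. I would use it to show that a summand $\I(b,k)$ of $\V^+$ with $b\leq k-1$, whose generator reaches $V^+$ through $V_{k-1}$ but not through $V_{k+1}$, is matched by a summand $\I(b,k-1)$ of $\V^-$: the generator in $V_{k-1}$ is hit from $V^-$ by exactness, but its preimage in $V^-$ cannot be extended further without re-entering $V^+$. A symmetric argument yields the second row of the table, namely $[k,d]\leftrightarrow [k+1,d]$ for $d\geq k+1$.

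The main difficulty is to promote this generator-by-generator matching into a rigorous multi-set bijection, since Gabriel decompositions are not canonical. I would address this by computing the multiplicity of each interval type as an alternating sum of dimensions of intrinsic subquotients of the zigzag --- typically kernels and cokernels of composites of arrows meeting $V^+$ (respectively $V^-$) --- and then checking, via the middle-term exactness, that these dimension counts on the $\V^+$ side agree with the corresponding counts on the $\V^-$ side after the prescribed shift of endpoints. The qualifier \emph{partial} in the statement reflects the fact that this comparison leaves residual intervals of the form $[k,k]$ potentially unmatched, because the exact square only pins down the image and kernel of $g_1-g_2$, and not its rank. Once the matchable multiplicities are equated, the uniqueness part of Gabriel's theorem upgrades these equalities to an honest bijection at the level of barcodes.
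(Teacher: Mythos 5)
The paper does not prove this theorem; it cites \cite{carlsson2008zigzag} without argument, so there is no in-paper proof to compare against. Evaluating your proposal on its own terms, the ``first observation'' is already false: the equality of the restrictions of $\V^+$ and $\V^-$ to $\{0,\dots,k-1\}$ does \emph{not} imply that bars supported in that range have equal multiplicities, because deciding whether a bar dies exactly at $k-1$ requires looking at the arrow between $k-1$ and $k$, which is precisely what changes. Concretely, take $n=2$, $k=1$, $\V^+ : K \to 0 \leftarrow 0$ and $\V^- : K \xleftarrow{\;\mathrm{id}\;} K \to 0$. This is an exact square (in $K \xrightarrow{(\mathrm{id},0)} K\oplus 0 \xrightarrow{\,0\,} 0$ both kernel and image equal $K\oplus 0$), yet $\B(\V^+)=\{[0,0]\}$ and $\B(\V^-)=\{[0,1]\}$; the interval $[0,0]$, supported entirely in $\{0,\dots,k-1\}$, appears in one barcode but not the other, and the unique matching is $[0,0]^+\leftrightarrow[0,1]^-$.

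The underlying issue is that the exact square induces a \emph{swap} $[b,k]\leftrightarrow[b,k-1]$ (and $[k,d]\leftrightarrow[k+1,d]$), not a fix of $[b,k-1]$. Your exactness argument correctly shows $\I(b,k)^+\mapsto\I(b,k-1)^-$, but omits the companion direction $\I(b,k-1)^+\mapsto\I(b,k)^-$: if $\I(b,k-1)$ is a summand of $\V^+$ with generator $\alpha\in V_{k-1}$, then $g_1(\alpha)=0$, so $(\alpha,0)\in\ker(g_1-g_2)=\mathrm{Im}(f_1\oplus f_2)$, giving $w\in V^-$ with $f_1(w)=\alpha$ and $f_2(w)=0$, i.e.\ the bar through $\alpha$ in $\V^-$ is $[b,k]$, not $[b,k-1]$. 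This is exactly what the counterexample exhibits, and your proof as written would ``trivially handle'' such intervals incorrectly. The standard, canonical argument that also sidesteps the non-uniqueness of Gabriel decompositions is to observe that the exact square condition yields $\V^-\cong R_k^-(\V^+)\oplus\I(k,k)^{\oplus m}$ for some $m\ge 0$, where $R_k^-$ is the Bernstein--Gelfand--Ponomarev reflection functor at $k$, and then to read off the effect of $R_k^-$ on interval modules; your plan of computing multiplicities by alternating sums of ranks could be made to work, but as stated it is left entirely undeveloped and, more importantly, it must be redone with the swap in place of the claimed identity matching.
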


\noindent Note that the bijection above is \textit{partial}, as no matching is provided for intervals $[k,k]$. Now, the Diamond Principle admits a stronger version, when exact squares are given by taking the homology of Mayer-Vietoris diamonds.\\

\noindent Mayer-Vietoris diamonds are squares of topological spaces of the form 

\begin{center}
\begin{tikzcd}
V_2 \arrow[r, hook]                       & V_1\cup V_2           \\
V_1\cap V_2 \arrow[u, hook] \arrow[r, hook] & V_1 \arrow[u, hook]
\end{tikzcd}.
\end{center}

\noindent Applying a homology functor $H_p(\cdot)$ to this square provides an exact square 

\begin{center}
\begin{tikzcd}
H_p(V_2) \arrow[r]                       & H_p(V_1\cup V_2)          \\
H_p(V_1\cap V_2) \arrow[u] \arrow[r] & H_p(V_1) \arrow[u]
\end{tikzcd},
\end{center}
as a direct consequence of exactness of the corresponding Mayer-Vietoris sequence $$\cdots\longrightarrow H_p(V_1\cap V_2)\longrightarrow H_p(V_1)\oplus H_p(V_2) \longrightarrow H_p(V_1\cup V_2)\longrightarrow\cdots.$$

\begin{figure}
    \centering

\tikzset{every picture/.style={line width=0.75pt}} 

\begin{tikzpicture}[x=0.75pt,y=0.75pt,yscale=-1,xscale=1]

\draw   (169.5,49.14) -- (185.68,65.32) -- (169.5,81.5) -- (153.32,65.32) -- cycle ;
\draw [line width=1.5]    (142.32,65.32) -- (153.32,65.32) ;
\draw    (185.68,65.32) -- (196.68,65.32) ;
\draw   (259.5,49.14) -- (275.68,65.32) -- (259.5,81.5) -- (243.32,65.32) -- cycle ;
\draw [line width=1.5]    (232.32,65.32) -- (243.32,65.32) ;
\draw    (275.68,65.32) -- (286.68,65.32) ;
\draw   (169.5,119.14) -- (185.68,135.32) -- (169.5,151.5) -- (153.32,135.32) -- cycle ;
\draw [line width=1.5]    (142.32,135.32) -- (153.32,135.32) ;
\draw    (185.68,135.32) -- (196.68,135.32) ;
\draw   (259.5,119.14) -- (275.68,135.32) -- (259.5,151.5) -- (243.32,135.32) -- cycle ;
\draw [line width=1.5]    (232.32,135.32) -- (243.32,135.32) ;
\draw    (275.68,135.32) -- (286.68,135.32) ;
\draw   (169.5,219.14) -- (185.68,235.32) -- (169.5,251.5) -- (153.32,235.32) -- cycle ;
\draw    (142.32,235.32) -- (153.32,235.32) ;
\draw [line width=1.5]    (185.68,235.32) -- (196.68,235.32) ;
\draw   (259.5,219.14) -- (275.68,235.32) -- (259.5,251.5) -- (243.32,235.32) -- cycle ;
\draw    (232.32,235.32) -- (243.32,235.32) ;
\draw [line width=1.5]    (275.68,235.32) -- (286.68,235.32) ;
\draw   (169.5,289.14) -- (185.68,305.32) -- (169.5,321.5) -- (153.32,305.32) -- cycle ;
\draw    (142.32,305.32) -- (153.32,305.32) ;
\draw [line width=1.5]    (185.68,305.32) -- (196.68,305.32) ;
\draw   (259.5,289.14) -- (275.68,305.32) -- (259.5,321.5) -- (243.32,305.32) -- cycle ;
\draw    (232.32,305.32) -- (243.32,305.32) ;
\draw [line width=1.5]    (275.68,305.32) -- (286.68,305.32) ;
\draw   (169.5,379.14) -- (185.68,395.32) -- (169.5,411.5) -- (153.32,395.32) -- cycle ;
\draw    (142.32,395.32) -- (153.32,395.32) ;
\draw    (185.68,395.32) -- (196.68,395.32) ;
\draw   (259.5,379.14) -- (275.68,395.32) -- (259.5,411.5) -- (243.32,395.32) -- cycle ;
\draw    (232.32,395.32) -- (243.32,395.32) ;
\draw    (275.68,395.32) -- (286.68,395.32) ;
\draw   (169.5,459.14) -- (185.68,475.32) -- (169.5,491.5) -- (153.32,475.32) -- cycle ;
\draw [line width=1.5]    (142.32,475.32) -- (153.32,475.32) ;
\draw [line width=1.5]    (185.68,475.32) -- (196.68,475.32) ;
\draw   (259.5,459.14) -- (275.68,475.32) -- (259.5,491.5) -- (243.32,475.32) -- cycle ;
\draw [line width=1.5]    (232.32,475.32) -- (243.32,475.32) ;
\draw [line width=1.5]    (275.68,475.32) -- (286.68,475.32) ;
\draw  [draw opacity=0][fill={rgb, 255:red, 128; green, 181; blue, 243 }  ,fill opacity=1 ] (182.68,235.32) .. controls (182.68,233.67) and (184.02,232.32) .. (185.68,232.32) .. controls (187.33,232.32) and (188.68,233.67) .. (188.68,235.32) .. controls (188.68,236.98) and (187.33,238.32) .. (185.68,238.32) .. controls (184.02,238.32) and (182.68,236.98) .. (182.68,235.32) -- cycle ;
\draw  [draw opacity=0][fill={rgb, 255:red, 128; green, 181; blue, 243 }  ,fill opacity=1 ] (169.53,414.5) .. controls (167.91,414.48) and (166.6,413.15) .. (166.6,411.5) .. controls (166.6,409.85) and (167.91,408.52) .. (169.54,408.5) -- cycle ;
\draw  [draw opacity=0][fill={rgb, 255:red, 219; green, 125; blue, 238 }  ,fill opacity=1 ] (169.54,408.5) .. controls (171.16,408.52) and (172.47,409.85) .. (172.47,411.5) .. controls (172.47,413.15) and (171.16,414.48) .. (169.53,414.5) -- cycle ;
\draw  [draw opacity=0][fill={rgb, 255:red, 128; green, 181; blue, 243 }  ,fill opacity=1 ] (259,382) .. controls (257.37,381.98) and (256.06,380.65) .. (256.06,379) .. controls (256.06,377.35) and (257.37,376.02) .. (259,376) -- cycle ;
\draw  [draw opacity=0][fill={rgb, 255:red, 219; green, 125; blue, 238 }  ,fill opacity=1 ] (259,376) .. controls (260.63,376.02) and (261.94,377.35) .. (261.94,379) .. controls (261.94,380.65) and (260.63,381.98) .. (259,382) -- cycle ;
\draw  [draw opacity=0][fill={rgb, 255:red, 219; green, 125; blue, 238 }  ,fill opacity=1 ] (150.32,65.32) .. controls (150.32,63.67) and (151.67,62.32) .. (153.32,62.32) .. controls (154.98,62.32) and (156.32,63.67) .. (156.32,65.32) .. controls (156.32,66.98) and (154.98,68.32) .. (153.32,68.32) .. controls (151.67,68.32) and (150.32,66.98) .. (150.32,65.32) -- cycle ;
\draw  [draw opacity=0][fill={rgb, 255:red, 219; green, 125; blue, 238 }  ,fill opacity=1 ] (240.32,135.32) .. controls (240.32,133.67) and (241.67,132.32) .. (243.32,132.32) .. controls (244.98,132.32) and (246.32,133.67) .. (246.32,135.32) .. controls (246.32,136.98) and (244.98,138.32) .. (243.32,138.32) .. controls (241.67,138.32) and (240.32,136.98) .. (240.32,135.32) -- cycle ;
\draw  [draw opacity=0][fill={rgb, 255:red, 128; green, 181; blue, 243 }  ,fill opacity=1 ] (272.68,305.32) .. controls (272.68,303.67) and (274.02,302.32) .. (275.68,302.32) .. controls (277.33,302.32) and (278.68,303.67) .. (278.68,305.32) .. controls (278.68,306.98) and (277.33,308.32) .. (275.68,308.32) .. controls (274.02,308.32) and (272.68,306.98) .. (272.68,305.32) -- cycle ;
\draw [line width=1.5]    (243.32,65.32) -- (259.5,49.14) ;
\draw  [draw opacity=0][fill={rgb, 255:red, 219; green, 125; blue, 238 }  ,fill opacity=1 ] (256.5,49.14) .. controls (256.5,47.49) and (257.84,46.14) .. (259.5,46.14) .. controls (261.16,46.14) and (262.5,47.49) .. (262.5,49.14) .. controls (262.5,50.8) and (261.16,52.14) .. (259.5,52.14) .. controls (257.84,52.14) and (256.5,50.8) .. (256.5,49.14) -- cycle ;
\draw [line width=1.5]    (169.5,321.5) -- (185.68,305.32) ;
\draw  [draw opacity=0][fill={rgb, 255:red, 128; green, 181; blue, 243 }  ,fill opacity=1 ] (166.5,321.5) .. controls (166.5,319.84) and (167.84,318.5) .. (169.5,318.5) .. controls (171.16,318.5) and (172.5,319.84) .. (172.5,321.5) .. controls (172.5,323.16) and (171.16,324.5) .. (169.5,324.5) .. controls (167.84,324.5) and (166.5,323.16) .. (166.5,321.5) -- cycle ;
\draw [line width=1.5]    (275.68,235.32) -- (259.5,219.14) ;
\draw  [draw opacity=0][fill={rgb, 255:red, 128; green, 181; blue, 243 }  ,fill opacity=1 ] (256.5,219.14) .. controls (256.5,217.49) and (257.84,216.14) .. (259.5,216.14) .. controls (261.16,216.14) and (262.5,217.49) .. (262.5,219.14) .. controls (262.5,220.8) and (261.16,222.14) .. (259.5,222.14) .. controls (257.84,222.14) and (256.5,220.8) .. (256.5,219.14) -- cycle ;
\draw [line width=1.5]    (169.5,151.5) -- (153.32,135.32) ;
\draw  [draw opacity=0][fill={rgb, 255:red, 219; green, 125; blue, 238 }  ,fill opacity=1 ] (166.5,151.5) .. controls (166.5,149.84) and (167.84,148.5) .. (169.5,148.5) .. controls (171.16,148.5) and (172.5,149.84) .. (172.5,151.5) .. controls (172.5,153.16) and (171.16,154.5) .. (169.5,154.5) .. controls (167.84,154.5) and (166.5,153.16) .. (166.5,151.5) -- cycle ;
\draw [line width=1.5]    (169.5,491.5) -- (185.68,475.32) ;
\draw [line width=1.5]    (243.32,475.32) -- (259.5,459.14) ;
\draw [line width=1.5]    (169.5,491.5) -- (153.32,475.32) ;
\draw [line width=1.5]    (275.68,475.32) -- (259.5,459.14) ;

\draw (199,55.4) node [anchor=north west][inner sep=0.75pt]    {$\leftrightarrow $};
\draw (199,125.4) node [anchor=north west][inner sep=0.75pt]    {$\leftrightarrow $};
\draw (199,225.4) node [anchor=north west][inner sep=0.75pt]    {$\leftrightarrow $};
\draw (199,295.4) node [anchor=north west][inner sep=0.75pt]    {$\leftrightarrow $};
\draw (199,465.4) node [anchor=north west][inner sep=0.75pt]    {$\leftrightarrow $};
\draw (199,385.4) node [anchor=north west][inner sep=0.75pt]    {$\leftrightarrow $};
\draw (16,12.4) node [anchor=north west][inner sep=0.75pt]    {$\text{For }\mathbb{B} \in \{\mathbb{X}_{1} ,...,\mathbb{X}_{k-2} ,\mathbb{U}\} :$};
\draw (16,182.4) node [anchor=north west][inner sep=0.75pt]    {$\text{For }\mathbb{D} \in \{\mathbb{V,X}_{k+2} ,...,\mathbb{X}_{n}\} :\ $};
\draw (16,341.4) node [anchor=north west][inner sep=0.75pt]    {$\text{Exceptional case} :$};
\draw (16,434.4) node [anchor=north west][inner sep=0.75pt]    {$\text{Otherwise :}$};
\draw (86,56.4) node [anchor=north west][inner sep=0.75pt]    {$[\mathbb{B} ,\mathbb{U}]$};
\draw (294,56.4) node [anchor=north west][inner sep=0.75pt]    {$[\mathbb{B} ,\mathbb{U} \cup \mathbb{V}]$};
\draw (56,126.4) node [anchor=north west][inner sep=0.75pt]    {$[\mathbb{B} ,\mathbb{U} \cap \mathbb{V}]$};
\draw (294,126.4) node [anchor=north west][inner sep=0.75pt]    {$[\mathbb{B} ,\mathbb{U}]$};
\draw (86,226.4) node [anchor=north west][inner sep=0.75pt]    {$[\mathbb{V} ,\mathbb{D}]$};
\draw (56,296.4) node [anchor=north west][inner sep=0.75pt]    {$[\mathbb{U} \cap \mathbb{V} ,\mathbb{D}]$};
\draw (294,226.4) node [anchor=north west][inner sep=0.75pt]    {$[\mathbb{U} \cup \mathbb{V} ,\mathbb{D}]$};
\draw (296,296.4) node [anchor=north west][inner sep=0.75pt]    {$[\mathbb{V} ,\mathbb{D}]$};
\draw (21,386.4) node [anchor=north west][inner sep=0.75pt]    {$[\mathbb{U} \cap \mathbb{V} ,\mathbb{U} \cap \mathbb{V}]$};
\draw (289,386.4) node [anchor=north west][inner sep=0.75pt]    {$[\mathbb{U\cup V} ,\mathbb{U\cup V}]$};

\end{tikzpicture}

    \caption{The (strong) diamond principle illustrated. The Mayer-Vietoris diamond involved is $\U\xhookleftarrow{}\U\cap\V\xhookrightarrow{}\V$, used to go from the zigzag $\mathcal{X}_\cap$ to $\mathcal{X}_\cup$. }
    \label{fig:diamond_moves}
\end{figure}

\begin{theorem}[Strong Diamond Principle](\autocite[Theorem~ 5.9]{carlsson2008zigzag}) 
Let $\V^+=H_*(\mathcal{X}_{\cup})$ and $\V^-=H_*(\mathcal{X}_{\cap})$, where $\mathcal{X}_{\cup}$ and $\mathcal{X}_{\cap}$ are zigzag diagrams 
$$\mathcal{X}_{\cup}:\X_0\longleftrightarrow \cdots\longleftrightarrow \X_{k-1}\xhookrightarrow{} \X_{k-1}\cup \X_{k+1}\xhookleftarrow{} \X_{k+1}\longleftrightarrow\cdots\longleftrightarrow \X_n,$$
$$\mathcal{X}_{\cap}:\X_0\longleftrightarrow \cdots\longleftrightarrow \X_{k-1}\xhookleftarrow{} \X_{k-1}\cap \X_{k+1} \xhookrightarrow{} \X_{k+1}\longleftrightarrow\cdots\longleftrightarrow \X_n.$$
There is a bijection between intervals of $\B({\V^+})$ and $\B({\V^-})$ given by the table below.
\begin{center}
\begin{tabular}{|c c c|} 
 \hline
Condition & $\B({\V^+})$ & $\B({\V^-})$\\ [0.5ex] 
 \hline
 $b\leq k-1$ & $[b,k]$ & $[b,k-1]$ \\ 
\hline
 $d\geq k+1$ & $[k,d]$ & $[k+1,d]$\\
 \hline
 $b\geq k\lor d\leq k$ & $[b, d]$ & $[b, d]$ \\
 \hline
  & $[k, k]^{p+1}$ & $[k, k]^{p}$\\
 \hline
\end{tabular}
\end{center}
where the superscripts $p+1$ and $p$ in the last row indicate a $\pm 1$ shift of homological dimension, \textit{i.e.} $[k,k]\in \B({\V_{p+1}^+})$ is matched with $[k,k]\in \B({\V_p^-})$, where we write  $$\V_{p+1}^+=H_{p+1}(\mathcal{X}_{\cup})\text{ and } \V_{p}^-=H_{p}(\mathcal{X}_{\cap}).$$
\end{theorem}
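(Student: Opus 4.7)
The plan is to bootstrap from the ordinary Diamond Principle to handle all intervals other than $[k,k]$, and then to treat the exceptional $[k,k]$ intervals using the full Mayer-Vietoris long exact sequence, not only the exactness of the MV square at a single spot.

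First, I would apply $H_p(\cdot)$ to the Mayer-Vietoris diamond at position $k$; by exactness of the MV sequence at the middle term $H_p(\X_{k-1})\oplus H_p(\X_{k+1})$, this produces an exact square in the sense of the earlier definition. The (non-strong) Diamond Principle then yields the bijection in the first three rows of the table, for every interval $[b,d]$ with $b \leq k-1$, or $d \geq k+1$, or $b \geq k$ or $d \leq k$.

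Next, I would count the remaining $[k,k]^p$ intervals in each zigzag module purely from local data at position $k$. In $\V^+_p = H_p(\mathcal{X}_{\cup})$ the vertex $k$ is a sink, so Gabriel's theorem implies that the $[k,k]^p$ multiplicity equals $\dim \mathrm{coker}\bigl(H_p(\X_{k-1})\oplus H_p(\X_{k+1}) \to H_p(\X_{k-1}\cup \X_{k+1})\bigr)$: every other interval summand nonzero at position $k$ necessarily extends to a neighbor and is therefore hit by the sum map. Symmetrically, $k$ is a source in $\V^-_p = H_p(\mathcal{X}_{\cap})$, and the $[k,k]^p$ multiplicity equals $\dim \ker\bigl(H_p(\X_{k-1}\cap \X_{k+1}) \to H_p(\X_{k-1})\oplus H_p(\X_{k+1})\bigr)$. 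These two quantities are then matched via the Mayer-Vietoris long exact sequence
\[
\cdots \to H_{p+1}(\X_{k-1})\oplus H_{p+1}(\X_{k+1}) \to H_{p+1}(\X_{k-1}\cup \X_{k+1}) \stackrel{\partial}{\to} H_p(\X_{k-1}\cap \X_{k+1}) \to H_p(\X_{k-1})\oplus H_p(\X_{k+1}) \to \cdots,
\]
whose exactness on either side of the connecting morphism $\partial$ forces $\partial$ to induce an isomorphism between the cokernel computing $[k,k]^{p+1}$ multiplicities in $\V^+_{p+1}$ and the kernel computing $[k,k]^p$ multiplicities in $\V^-_p$, which is precisely the $+1$ shift in homological degree asserted in the last row of the table.

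The main obstacle will be the local counting step: verifying rigorously that $[k,k]$ multiplicities are captured exactly by the cokernel or kernel of the two neighboring arrows, uncontaminated by long interval summands that happen to reach position $k$ from far away in the zigzag. A brief case analysis on an interval $[b,d]$ with $b \leq k \leq d$ resolves this: either $b = d = k$, or the interval extends to $k-1$ or $k+1$, in which case its component at position $k$ lies in the image of $V_{k-1}\oplus V_{k+1}$ in the sink case, and misses the kernel of the map to $V_{k-1}\oplus V_{k+1}$ in the source case. Beyond this, the argument relies only on the classical Mayer-Vietoris long exact sequence and elementary homological algebra.
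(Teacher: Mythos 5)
Your proposal is correct and follows essentially the same route as the paper's proof: reduce the first three rows to the ordinary Diamond Principle via exactness of the Mayer--Vietoris square, and then identify the $[k,k]$ multiplicities in $\V^+_{p+1}$ and $\V^-_p$ with $\operatorname{Coker}(D_2)$ and $\operatorname{Ker}(D_1)$, which the Mayer--Vietoris long exact sequence and the first isomorphism theorem show are isomorphic via the connecting map $\partial$. The one place you are more careful than the paper is in spelling out why $[k,k]$ multiplicities are exactly computed by the cokernel at a sink and the kernel at a source — the paper asserts this without argument, and your sink/source case analysis is the right justification.
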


\begin{proof}
Let $\X_\cap=\X_{k-1}\cap \X_{k+1}$ and $\X_\cup=\X_{k-1}\cup \X_{k+1}$. For any $p\in\N$, the square \begin{center}
\begin{tikzcd}
H_p(\X_{k+1}) \arrow[r]          & H_p(\X_\cup)            \\
H_p(\X_\cap) \arrow[r] \arrow[u] & H_p(\X_{k-1}) \arrow[u]
\end{tikzcd}    
\end{center}
is exact, and thus the Diamond Principle applies, providing the three first rows of the bijection table. It remains to prove that any interval $[k,k]\in \B({\V_{p+1}^+})$ is matched with $[k,k]\in \B({\V_p^-})$. This follows from the exactness of the Mayer-Vietoris sequence

\[ \xymatrix{ \cdots \ar[r]& H_{p+1}(\X_\cap) \ar[r]& H_{p+1}(\X_{k-1})\oplus H_{p+1}(\X_{k+1})\ar[r] & H_{p+1}(\X_\cup) \ar[dll]  \\
&   H_{p}(\X_\cap)  \ar[r]  & H_{p}(\X_{k-1})\oplus H_{p}(\X_{k+1}) \ar[r]& H_{p}(\X_\cup) \ar[r]& \cdots \\
}
\]

Indeed, the first isomorphism theorem, together with exactness, implies that 
$$\mathrm{Coker}(D_2)=H_{p+1}(\X_\cup)/\mathrm{Im}(D_2)= H_{p+1}(\X_\cup)/\mathrm{Ker}(\partial)\cong \mathrm{Im}(\partial)=\mathrm{Ker}(D_1),$$ where $\partial$ denotes the connecting homomorphism $H_{p+1}(\X_\cup)\rightarrow H_{p}(\X_\cap)$, and with $$D_1:H_p(\X_\cap)\rightarrow H_{p}(\X_{k-1})\oplus H_{p}(\X_{k+1}),$$ 
$$D_2:H_{p+1}(\X_{k-1})\oplus H_{p+1}(\X_{k+1})\rightarrow H_{p+1}(\X_\cup).$$ This proves the last row of the matching table, as $\mathrm{Coker}(D_2)$ is precisely spanned by intervals $[k,k]\in\B({\V_{p+1}^+})$ and $\mathrm{Ker}(D_1)$ is spanned by intervals $[k,k]\in\B({\V_{p}^-})$.
\end{proof}

In the following, a pair of subspaces of $\mathbb{X}$ is a tuple $(\mathbb{A}, \mathbb{B})$ where $\mathbb{B} \subset \mathbb{A} \subset \mathbb{X}$. For two pairs $(\mathbb{A}, \mathbb{B})$, $(\mathbb{C}, \mathbb{D})$, such that $\mathbb{B} \subset \mathbb{D}$ and $\mathbb{A} \subset \mathbb{C}$, we will denote $(\mathbb{A}, \mathbb{B}) \longrightarrow (\mathbb{C}, \mathbb{D}) $ to designate the inclusion of pairs. A pair of type $(\mathbb{A}, \emptyset)$ will simply be denoted $\mathbb{A}$. To a pair $(\mathbb{A}, \mathbb{B})$ of subspaces of $\mathbb{X}$, we can associate their relative $p$-th homology groups with coefficients in $K$, denoted by $H_p(\mathbb{A},\mathbb{B})$. It is functorial with respect to inclusion of pairs. 

\begin{definition}[Mayer-Vietoris Pyramid] \label{d:MVpyramid}
The \emph{pyramid associated to $(\X,f)$} is defined as the diagram of pairs of subspaces of $\X$ built inductively as follows. One starts by defining the southern edge as the levelsets zigzag sequence $$(\emptyset, \emptyset) \longleftarrow (\mathbb{X}_0^0,  \emptyset) \longrightarrow (\mathbb{X}_0^1,  \emptyset)\longleftarrow \cdots\longrightarrow (\mathbb{X}_{n-1}^n,  \emptyset)\longleftarrow (\mathbb{X}_n^n,  \emptyset)\longrightarrow (\emptyset, \emptyset).$$
\noindent Then, one can build the pyramid from the southern edge by creating relative Mayer-Vietoris diamonds (the order in which one completes the diamonds has no importance). More precisely, consider any subspace $\Y$ as a pair $(\Y,\emptyset)$ and, given a $3$-tuple of the form $$(\A,\B)\longleftarrow (\A\cap\C,\B\cap\D)\longrightarrow (\C,\D),$$
complete it into a square of the form \begin{center}
\begin{tikzcd}
                   & {(\A\cup \C,\B\cup \D)}                       &                    \\
{(\A,\B)} \arrow[ru] &                                           & {(\C,\D)} \arrow[lu] \\
                   & {(\A\cap \C,\B\cap \D)} \arrow[lu] \arrow[ru] &                   
\end{tikzcd}.
\end{center}

\noindent \textbf{Detailed method : building the pyramid by completing diagrams.} 
\begin{enumerate}
    \item A diagram $\A\longleftarrow \A\cap\C\longrightarrow \C$ with $\A,\C\neq\emptyset$ is completed with $\A\cup\C$.
    \item A diagram $\emptyset\longleftarrow \E\longrightarrow \C$ is completed with $(\C,\E)$ (south-west corner).
    \item A diagram $\A\longleftarrow \E\longrightarrow \emptyset$ is completed with $(\A,\E)$ (south-east corner).
    \item A diagram $\emptyset\longleftarrow (\E,\F)\longrightarrow (\C,\D)$ is completed with $(\C,\E)$ (western edge).
    \item A diagram $(\A,\B)\longleftarrow (\E,\F)\longrightarrow \emptyset$ is completed with $(\A,\E)$ (eastern edge).
    \item A diagram $\A\longleftarrow \E\longrightarrow (\C,\D)$ is completed with $(\A,\D)$.
    \item A diagram $(\A,\B)\longleftarrow \E\longrightarrow \C$ is completed with $(\C,\B)$.
    \item A diagram $(\A,\B)\longleftarrow (\A\cap\C,\B\cap\D)\longrightarrow (\C,\D)$ is completed with $(\A\cup\C,\B\cup\D)$.
    \item The diagram $(\A,\B)\longleftarrow \A\longrightarrow (\A,\D)$ is completed with $(\A,\B\cup\D)$ (center).
    
\end{enumerate}

\noindent Whenever creating a diamond gives rise to a pair of the form $(\Y,\Y)$, stop considering $3$-tuples containing this pair. This process defines the northern edge of the pyramid. Moreover, pairs of this form are considered as the empty subspace $\emptyset$. 


\end{definition}

\begin{figure}
    \centering

\tikzset{every picture/.style={line width=0.75pt}} 

\begin{tikzpicture}[x=0.60pt,y=0.60pt,yscale=-1,xscale=1]

\draw    (422.41,278.22) -- (455.71,256.3) ;
\draw [shift={(457.38,255.2)}, rotate = 146.65] [color={rgb, 255:red, 0; green, 0; blue, 0 }  ][line width=0.75]    (10.93,-3.29) .. controls (6.95,-1.4) and (3.31,-0.3) .. (0,0) .. controls (3.31,0.3) and (6.95,1.4) .. (10.93,3.29)   ;
\draw    (346.83,325.89) -- (385.76,300.68) ;
\draw [shift={(387.44,299.59)}, rotate = 147.07] [color={rgb, 255:red, 0; green, 0; blue, 0 }  ][line width=0.75]    (10.93,-3.29) .. controls (6.95,-1.4) and (3.31,-0.3) .. (0,0) .. controls (3.31,0.3) and (6.95,1.4) .. (10.93,3.29)   ;
\draw    (306.43,328.36) -- (261.82,299.85) ;
\draw [shift={(260.14,298.77)}, rotate = 32.59] [color={rgb, 255:red, 0; green, 0; blue, 0 }  ][line width=0.75]    (10.93,-3.29) .. controls (6.95,-1.4) and (3.31,-0.3) .. (0,0) .. controls (3.31,0.3) and (6.95,1.4) .. (10.93,3.29)   ;
\draw    (469.82,328.36) -- (425.21,299.85) ;
\draw [shift={(423.53,298.77)}, rotate = 32.59] [color={rgb, 255:red, 0; green, 0; blue, 0 }  ][line width=0.75]    (10.93,-3.29) .. controls (6.95,-1.4) and (3.31,-0.3) .. (0,0) .. controls (3.31,0.3) and (6.95,1.4) .. (10.93,3.29)   ;
\draw    (143.05,328.36) -- (112.04,308.08) ;
\draw [shift={(110.37,306.99)}, rotate = 33.19] [color={rgb, 255:red, 0; green, 0; blue, 0 }  ][line width=0.75]    (10.93,-3.29) .. controls (6.95,-1.4) and (3.31,-0.3) .. (0,0) .. controls (3.31,0.3) and (6.95,1.4) .. (10.93,3.29)   ;
\draw    (103.16,173.83) -- (125.21,158.53) ;
\draw [shift={(126.85,157.39)}, rotate = 145.24] [color={rgb, 255:red, 0; green, 0; blue, 0 }  ][line width=0.75]    (10.93,-3.29) .. controls (6.95,-1.4) and (3.31,-0.3) .. (0,0) .. controls (3.31,0.3) and (6.95,1.4) .. (10.93,3.29)   ;
\draw    (272.38,173.83) -- (294.42,158.53) ;
\draw [shift={(296.07,157.39)}, rotate = 145.24] [color={rgb, 255:red, 0; green, 0; blue, 0 }  ][line width=0.75]    (10.93,-3.29) .. controls (6.95,-1.4) and (3.31,-0.3) .. (0,0) .. controls (3.31,0.3) and (6.95,1.4) .. (10.93,3.29)   ;
\draw [color={rgb, 255:red, 74; green, 144; blue, 226 }  ,draw opacity=1 ]   (441.59,173.83) -- (463.64,158.53) ;
\draw [shift={(465.28,157.39)}, rotate = 145.24] [color={rgb, 255:red, 74; green, 144; blue, 226 }  ,draw opacity=1 ][line width=0.75]    (10.93,-3.29) .. controls (6.95,-1.4) and (3.31,-0.3) .. (0,0) .. controls (3.31,0.3) and (6.95,1.4) .. (10.93,3.29)   ;
\draw [color={rgb, 255:red, 74; green, 144; blue, 226 }  ,draw opacity=1 ]   (520.56,124.51) -- (542.6,109.21) ;
\draw [shift={(544.25,108.07)}, rotate = 145.24] [color={rgb, 255:red, 74; green, 144; blue, 226 }  ,draw opacity=1 ][line width=0.75]    (10.93,-3.29) .. controls (6.95,-1.4) and (3.31,-0.3) .. (0,0) .. controls (3.31,0.3) and (6.95,1.4) .. (10.93,3.29)   ;
\draw    (434.31,75.19) -- (456.36,59.89) ;
\draw [shift={(458,58.75)}, rotate = 145.24] [color={rgb, 255:red, 0; green, 0; blue, 0 }  ][line width=0.75]    (10.93,-3.29) .. controls (6.95,-1.4) and (3.31,-0.3) .. (0,0) .. controls (3.31,0.3) and (6.95,1.4) .. (10.93,3.29)   ;
\draw    (272.38,75.19) -- (294.42,59.89) ;
\draw [shift={(296.07,58.75)}, rotate = 145.24] [color={rgb, 255:red, 0; green, 0; blue, 0 }  ][line width=0.75]    (10.93,-3.29) .. controls (6.95,-1.4) and (3.31,-0.3) .. (0,0) .. controls (3.31,0.3) and (6.95,1.4) .. (10.93,3.29)   ;
\draw    (103.16,75.19) -- (125.21,59.89) ;
\draw [shift={(126.85,58.75)}, rotate = 145.24] [color={rgb, 255:red, 0; green, 0; blue, 0 }  ][line width=0.75]    (10.93,-3.29) .. controls (6.95,-1.4) and (3.31,-0.3) .. (0,0) .. controls (3.31,0.3) and (6.95,1.4) .. (10.93,3.29)   ;
\draw    (215.97,173.83) -- (196.13,158.6) ;
\draw [shift={(194.54,157.39)}, rotate = 37.49] [color={rgb, 255:red, 0; green, 0; blue, 0 }  ][line width=0.75]    (10.93,-3.29) .. controls (6.95,-1.4) and (3.31,-0.3) .. (0,0) .. controls (3.31,0.3) and (6.95,1.4) .. (10.93,3.29)   ;
\draw    (385.19,173.83) -- (365.34,158.6) ;
\draw [shift={(363.75,157.39)}, rotate = 37.49] [color={rgb, 255:red, 0; green, 0; blue, 0 }  ][line width=0.75]    (10.93,-3.29) .. controls (6.95,-1.4) and (3.31,-0.3) .. (0,0) .. controls (3.31,0.3) and (6.95,1.4) .. (10.93,3.29)   ;
\draw    (543.12,173.83) -- (523.27,158.6) ;
\draw [shift={(521.68,157.39)}, rotate = 37.49] [color={rgb, 255:red, 0; green, 0; blue, 0 }  ][line width=0.75]    (10.93,-3.29) .. controls (6.95,-1.4) and (3.31,-0.3) .. (0,0) .. controls (3.31,0.3) and (6.95,1.4) .. (10.93,3.29)   ;
\draw    (464.15,124.51) -- (444.31,109.28) ;
\draw [shift={(442.72,108.07)}, rotate = 37.49] [color={rgb, 255:red, 0; green, 0; blue, 0 }  ][line width=0.75]    (10.93,-3.29) .. controls (6.95,-1.4) and (3.31,-0.3) .. (0,0) .. controls (3.31,0.3) and (6.95,1.4) .. (10.93,3.29)   ;
\draw    (303.35,124.51) -- (283.5,109.28) ;
\draw [shift={(281.91,108.07)}, rotate = 37.49] [color={rgb, 255:red, 0; green, 0; blue, 0 }  ][line width=0.75]    (10.93,-3.29) .. controls (6.95,-1.4) and (3.31,-0.3) .. (0,0) .. controls (3.31,0.3) and (6.95,1.4) .. (10.93,3.29)   ;
\draw    (137.01,124.51) -- (117.16,109.28) ;
\draw [shift={(115.57,108.07)}, rotate = 37.49] [color={rgb, 255:red, 0; green, 0; blue, 0 }  ][line width=0.75]    (10.93,-3.29) .. controls (6.95,-1.4) and (3.31,-0.3) .. (0,0) .. controls (3.31,0.3) and (6.95,1.4) .. (10.93,3.29)   ;
\draw    (362.62,124.51) -- (384.67,109.21) ;
\draw [shift={(386.31,108.07)}, rotate = 145.24] [color={rgb, 255:red, 0; green, 0; blue, 0 }  ][line width=0.75]    (10.93,-3.29) .. controls (6.95,-1.4) and (3.31,-0.3) .. (0,0) .. controls (3.31,0.3) and (6.95,1.4) .. (10.93,3.29)   ;
\draw    (193.41,124.51) -- (215.46,109.21) ;
\draw [shift={(217.1,108.07)}, rotate = 145.24] [color={rgb, 255:red, 0; green, 0; blue, 0 }  ][line width=0.75]    (10.93,-3.29) .. controls (6.95,-1.4) and (3.31,-0.3) .. (0,0) .. controls (3.31,0.3) and (6.95,1.4) .. (10.93,3.29)   ;
\draw    (217.1,75.19) -- (197.25,59.96) ;
\draw [shift={(195.67,58.75)}, rotate = 37.49] [color={rgb, 255:red, 0; green, 0; blue, 0 }  ][line width=0.75]    (10.93,-3.29) .. controls (6.95,-1.4) and (3.31,-0.3) .. (0,0) .. controls (3.31,0.3) and (6.95,1.4) .. (10.93,3.29)   ;
\draw    (387.44,75.19) -- (367.6,59.96) ;
\draw [shift={(366.01,58.75)}, rotate = 37.49] [color={rgb, 255:red, 0; green, 0; blue, 0 }  ][line width=0.75]    (10.93,-3.29) .. controls (6.95,-1.4) and (3.31,-0.3) .. (0,0) .. controls (3.31,0.3) and (6.95,1.4) .. (10.93,3.29)   ;
\draw    (552.53,75.19) -- (532.68,59.96) ;
\draw [shift={(531.09,58.75)}, rotate = 37.49] [color={rgb, 255:red, 0; green, 0; blue, 0 }  ][line width=0.75]    (10.93,-3.29) .. controls (6.95,-1.4) and (3.31,-0.3) .. (0,0) .. controls (3.31,0.3) and (6.95,1.4) .. (10.93,3.29)   ;
\draw    (193.41,223.15) -- (215.46,207.85) ;
\draw [shift={(217.1,206.71)}, rotate = 145.24] [color={rgb, 255:red, 0; green, 0; blue, 0 }  ][line width=0.75]    (10.93,-3.29) .. controls (6.95,-1.4) and (3.31,-0.3) .. (0,0) .. controls (3.31,0.3) and (6.95,1.4) .. (10.93,3.29)   ;
\draw    (137.01,223.15) -- (117.16,207.92) ;
\draw [shift={(115.57,206.71)}, rotate = 37.49] [color={rgb, 255:red, 0; green, 0; blue, 0 }  ][line width=0.75]    (10.93,-3.29) .. controls (6.95,-1.4) and (3.31,-0.3) .. (0,0) .. controls (3.31,0.3) and (6.95,1.4) .. (10.93,3.29)   ;
\draw    (103.16,271.64) -- (125.21,256.34) ;
\draw [shift={(126.85,255.2)}, rotate = 145.24] [color={rgb, 255:red, 0; green, 0; blue, 0 }  ][line width=0.75]    (10.93,-3.29) .. controls (6.95,-1.4) and (3.31,-0.3) .. (0,0) .. controls (3.31,0.3) and (6.95,1.4) .. (10.93,3.29)   ;
\draw    (223.87,276.57) -- (191.7,255.48) ;
\draw [shift={(190.03,254.38)}, rotate = 33.26] [color={rgb, 255:red, 0; green, 0; blue, 0 }  ][line width=0.75]    (10.93,-3.29) .. controls (6.95,-1.4) and (3.31,-0.3) .. (0,0) .. controls (3.31,0.3) and (6.95,1.4) .. (10.93,3.29)   ;
\draw    (314.12,227.26) -- (281.95,206.16) ;
\draw [shift={(280.27,205.06)}, rotate = 33.26] [color={rgb, 255:red, 0; green, 0; blue, 0 }  ][line width=0.75]    (10.93,-3.29) .. controls (6.95,-1.4) and (3.31,-0.3) .. (0,0) .. controls (3.31,0.3) and (6.95,1.4) .. (10.93,3.29)   ;
\draw    (381.8,276.57) -- (349.63,255.48) ;
\draw [shift={(347.96,254.38)}, rotate = 33.26] [color={rgb, 255:red, 0; green, 0; blue, 0 }  ][line width=0.75]    (10.93,-3.29) .. controls (6.95,-1.4) and (3.31,-0.3) .. (0,0) .. controls (3.31,0.3) and (6.95,1.4) .. (10.93,3.29)   ;
\draw    (520.56,223.15) -- (542.6,207.85) ;
\draw [shift={(544.25,206.71)}, rotate = 145.24] [color={rgb, 255:red, 0; green, 0; blue, 0 }  ][line width=0.75]    (10.93,-3.29) .. controls (6.95,-1.4) and (3.31,-0.3) .. (0,0) .. controls (3.31,0.3) and (6.95,1.4) .. (10.93,3.29)   ;
\draw    (464.15,223.15) -- (444.31,207.92) ;
\draw [shift={(442.72,206.71)}, rotate = 37.49] [color={rgb, 255:red, 0; green, 0; blue, 0 }  ][line width=0.75]    (10.93,-3.29) .. controls (6.95,-1.4) and (3.31,-0.3) .. (0,0) .. controls (3.31,0.3) and (6.95,1.4) .. (10.93,3.29)   ;
\draw [color={rgb, 255:red, 74; green, 144; blue, 226 }  ,draw opacity=1 ]   (343.45,228.9) -- (376.75,206.98) ;
\draw [shift={(378.42,205.88)}, rotate = 146.65] [color={rgb, 255:red, 74; green, 144; blue, 226 }  ,draw opacity=1 ][line width=0.75]    (10.93,-3.29) .. controls (6.95,-1.4) and (3.31,-0.3) .. (0,0) .. controls (3.31,0.3) and (6.95,1.4) .. (10.93,3.29)   ;
\draw [color={rgb, 255:red, 74; green, 144; blue, 226 }  ,draw opacity=1 ]   (184.13,325.89) -- (223.06,300.68) ;
\draw [shift={(224.74,299.59)}, rotate = 147.07] [color={rgb, 255:red, 74; green, 144; blue, 226 }  ,draw opacity=1 ][line width=0.75]    (10.93,-3.29) .. controls (6.95,-1.4) and (3.31,-0.3) .. (0,0) .. controls (3.31,0.3) and (6.95,1.4) .. (10.93,3.29)   ;
\draw    (509.28,328.36) -- (548.21,303.14) ;
\draw [shift={(549.89,302.06)}, rotate = 147.07] [color={rgb, 255:red, 0; green, 0; blue, 0 }  ][line width=0.75]    (10.93,-3.29) .. controls (6.95,-1.4) and (3.31,-0.3) .. (0,0) .. controls (3.31,0.3) and (6.95,1.4) .. (10.93,3.29)   ;
\draw [color={rgb, 255:red, 74; green, 144; blue, 226 }  ,draw opacity=1 ]   (104.04,376.86) -- (142.97,351.64) ;
\draw [shift={(144.65,350.55)}, rotate = 147.07] [color={rgb, 255:red, 74; green, 144; blue, 226 }  ,draw opacity=1 ][line width=0.75]    (10.93,-3.29) .. controls (6.95,-1.4) and (3.31,-0.3) .. (0,0) .. controls (3.31,0.3) and (6.95,1.4) .. (10.93,3.29)   ;
\draw [color={rgb, 255:red, 144; green, 19; blue, 254 }  ,draw opacity=0.6 ]   (585.99,376.04) -- (624.92,350.82) ;
\draw [shift={(626.6,349.73)}, rotate = 147.07] [color={rgb, 255:red, 144; green, 19; blue, 254 }  ,draw opacity=0.6 ][line width=0.75]    (10.93,-3.29) .. controls (6.95,-1.4) and (3.31,-0.3) .. (0,0) .. controls (3.31,0.3) and (6.95,1.4) .. (10.93,3.29)   ;
\draw [color={rgb, 255:red, 144; green, 19; blue, 254 }  ,draw opacity=0.6 ]   (428.05,376.86) -- (466.99,351.64) ;
\draw [shift={(468.66,350.55)}, rotate = 147.07] [color={rgb, 255:red, 144; green, 19; blue, 254 }  ,draw opacity=0.6 ][line width=0.75]    (10.93,-3.29) .. controls (6.95,-1.4) and (3.31,-0.3) .. (0,0) .. controls (3.31,0.3) and (6.95,1.4) .. (10.93,3.29)   ;
\draw [color={rgb, 255:red, 144; green, 19; blue, 254 }  ,draw opacity=0.6 ]   (265.61,376.04) -- (304.54,350.82) ;
\draw [shift={(306.22,349.73)}, rotate = 147.07] [color={rgb, 255:red, 144; green, 19; blue, 254 }  ,draw opacity=0.6 ][line width=0.75]    (10.93,-3.29) .. controls (6.95,-1.4) and (3.31,-0.3) .. (0,0) .. controls (3.31,0.3) and (6.95,1.4) .. (10.93,3.29)   ;
\draw [color={rgb, 255:red, 74; green, 144; blue, 226 }  ,draw opacity=1 ]   (264.35,278.22) -- (303.29,253) ;
\draw [shift={(304.96,251.91)}, rotate = 147.07] [color={rgb, 255:red, 74; green, 144; blue, 226 }  ,draw opacity=1 ][line width=0.75]    (10.93,-3.29) .. controls (6.95,-1.4) and (3.31,-0.3) .. (0,0) .. controls (3.31,0.3) and (6.95,1.4) .. (10.93,3.29)   ;
\draw [color={rgb, 255:red, 144; green, 19; blue, 254 }  ,draw opacity=0.6 ]   (552.14,378.5) -- (513.21,353.29) ;
\draw [shift={(511.53,352.2)}, rotate = 32.93] [color={rgb, 255:red, 144; green, 19; blue, 254 }  ,draw opacity=0.6 ][line width=0.75]    (10.93,-3.29) .. controls (6.95,-1.4) and (3.31,-0.3) .. (0,0) .. controls (3.31,0.3) and (6.95,1.4) .. (10.93,3.29)   ;
\draw [color={rgb, 255:red, 144; green, 19; blue, 254 }  ,draw opacity=0.6 ]   (382.93,378.5) -- (344,353.29) ;
\draw [shift={(342.32,352.2)}, rotate = 32.93] [color={rgb, 255:red, 144; green, 19; blue, 254 }  ,draw opacity=0.6 ][line width=0.75]    (10.93,-3.29) .. controls (6.95,-1.4) and (3.31,-0.3) .. (0,0) .. controls (3.31,0.3) and (6.95,1.4) .. (10.93,3.29)   ;
\draw [color={rgb, 255:red, 144; green, 19; blue, 254 }  ,draw opacity=0.6 ]   (225,378.5) -- (186.07,353.29) ;
\draw [shift={(184.39,352.2)}, rotate = 32.93] [color={rgb, 255:red, 144; green, 19; blue, 254 }  ,draw opacity=0.6 ][line width=0.75]    (10.93,-3.29) .. controls (6.95,-1.4) and (3.31,-0.3) .. (0,0) .. controls (3.31,0.3) and (6.95,1.4) .. (10.93,3.29)   ;
\draw    (625.47,328.36) -- (586.54,303.14) ;
\draw [shift={(584.86,302.06)}, rotate = 32.93] [color={rgb, 255:red, 0; green, 0; blue, 0 }  ][line width=0.75]    (10.93,-3.29) .. controls (6.95,-1.4) and (3.31,-0.3) .. (0,0) .. controls (3.31,0.3) and (6.95,1.4) .. (10.93,3.29)   ;
\draw    (625.47,232.19) -- (586.54,206.97) ;
\draw [shift={(584.86,205.88)}, rotate = 32.93] [color={rgb, 255:red, 0; green, 0; blue, 0 }  ][line width=0.75]    (10.93,-3.29) .. controls (6.95,-1.4) and (3.31,-0.3) .. (0,0) .. controls (3.31,0.3) and (6.95,1.4) .. (10.93,3.29)   ;
\draw [color={rgb, 255:red, 144; green, 19; blue, 254 }  ,draw opacity=0.6 ]   (64.08,377.68) -- (33.08,357.4) ;
\draw [shift={(31.4,356.31)}, rotate = 33.19] [color={rgb, 255:red, 144; green, 19; blue, 254 }  ,draw opacity=0.6 ][line width=0.75]    (10.93,-3.29) .. controls (6.95,-1.4) and (3.31,-0.3) .. (0,0) .. controls (3.31,0.3) and (6.95,1.4) .. (10.93,3.29)   ;
\draw    (26.45,328.36) -- (56.38,308.11) ;
\draw [shift={(58.04,306.99)}, rotate = 145.92] [color={rgb, 255:red, 0; green, 0; blue, 0 }  ][line width=0.75]    (10.93,-3.29) .. controls (6.95,-1.4) and (3.31,-0.3) .. (0,0) .. controls (3.31,0.3) and (6.95,1.4) .. (10.93,3.29)   ;
\draw    (26.45,229.72) -- (56.38,209.47) ;
\draw [shift={(58.04,208.35)}, rotate = 145.92] [color={rgb, 255:red, 0; green, 0; blue, 0 }  ][line width=0.75]    (10.93,-3.29) .. controls (6.95,-1.4) and (3.31,-0.3) .. (0,0) .. controls (3.31,0.3) and (6.95,1.4) .. (10.93,3.29)   ;
\draw    (26.45,131.08) -- (56.38,110.83) ;
\draw [shift={(58.04,109.71)}, rotate = 145.92] [color={rgb, 255:red, 0; green, 0; blue, 0 }  ][line width=0.75]    (10.93,-3.29) .. controls (6.95,-1.4) and (3.31,-0.3) .. (0,0) .. controls (3.31,0.3) and (6.95,1.4) .. (10.93,3.29)   ;
\draw    (599.52,173.83) -- (621.57,158.53) ;
\draw [shift={(623.21,157.39)}, rotate = 145.24] [color={rgb, 255:red, 0; green, 0; blue, 0 }  ][line width=0.75]    (10.93,-3.29) .. controls (6.95,-1.4) and (3.31,-0.3) .. (0,0) .. controls (3.31,0.3) and (6.95,1.4) .. (10.93,3.29)   ;
\draw    (53.53,173) -- (30.4,158.45) ;
\draw [shift={(28.71,157.39)}, rotate = 32.18] [color={rgb, 255:red, 0; green, 0; blue, 0 }  ][line width=0.75]    (10.93,-3.29) .. controls (6.95,-1.4) and (3.31,-0.3) .. (0,0) .. controls (3.31,0.3) and (6.95,1.4) .. (10.93,3.29)   ;
\draw    (53.53,74.36) -- (30.4,59.81) ;
\draw [shift={(28.71,58.75)}, rotate = 32.18] [color={rgb, 255:red, 0; green, 0; blue, 0 }  ][line width=0.75]    (10.93,-3.29) .. controls (6.95,-1.4) and (3.31,-0.3) .. (0,0) .. controls (3.31,0.3) and (6.95,1.4) .. (10.93,3.29)   ;
\draw    (53.53,271.64) -- (30.4,257.09) ;
\draw [shift={(28.71,256.02)}, rotate = 32.18] [color={rgb, 255:red, 0; green, 0; blue, 0 }  ][line width=0.75]    (10.93,-3.29) .. controls (6.95,-1.4) and (3.31,-0.3) .. (0,0) .. controls (3.31,0.3) and (6.95,1.4) .. (10.93,3.29)   ;
\draw [color={rgb, 255:red, 74; green, 144; blue, 226 }  ,draw opacity=1 ]   (606.29,75.19) -- (629.39,58.19) ;
\draw [shift={(631,57)}, rotate = 143.65] [color={rgb, 255:red, 74; green, 144; blue, 226 }  ,draw opacity=1 ][line width=0.75]    (10.93,-3.29) .. controls (6.95,-1.4) and (3.31,-0.3) .. (0,0) .. controls (3.31,0.3) and (6.95,1.4) .. (10.93,3.29)   ;
\draw    (624.34,129.44) -- (597.8,111.65) ;
\draw [shift={(596.14,110.53)}, rotate = 33.84] [color={rgb, 255:red, 0; green, 0; blue, 0 }  ][line width=0.75]    (10.93,-3.29) .. controls (6.95,-1.4) and (3.31,-0.3) .. (0,0) .. controls (3.31,0.3) and (6.95,1.4) .. (10.93,3.29)   ;
\draw    (597.56,270.15) -- (619.6,254.85) ;
\draw [shift={(621.25,253.71)}, rotate = 145.24] [color={rgb, 255:red, 0; green, 0; blue, 0 }  ][line width=0.75]    (10.93,-3.29) .. controls (6.95,-1.4) and (3.31,-0.3) .. (0,0) .. controls (3.31,0.3) and (6.95,1.4) .. (10.93,3.29)   ;
\draw    (544.15,273.15) -- (524.31,257.92) ;
\draw [shift={(522.72,256.71)}, rotate = 37.49] [color={rgb, 255:red, 0; green, 0; blue, 0 }  ][line width=0.75]    (10.93,-3.29) .. controls (6.95,-1.4) and (3.31,-0.3) .. (0,0) .. controls (3.31,0.3) and (6.95,1.4) .. (10.93,3.29)   ;

\draw (78.18,375.12) node [anchor=north west][inner sep=0.75pt]  [color={rgb, 255:red, 74; green, 144; blue, 226 }  ,opacity=1 ]  {$\mathbb{X}_{0}^{0}$};
\draw (157.01,326.62) node [anchor=north west][inner sep=0.75pt]  [color={rgb, 255:red, 74; green, 144; blue, 226 }  ,opacity=1 ]  {$\mathbb{X}_{0}^{1}$};
\draw (235.24,376.12) node [anchor=north west][inner sep=0.75pt]  [color={rgb, 255:red, 144; green, 19; blue, 254 }  ,opacity=0.55 ]  {$\mathbb{X}_{1}^{1}$};
\draw (395.31,374.3) node [anchor=north west][inner sep=0.75pt]  [color={rgb, 255:red, 144; green, 19; blue, 254 }  ,opacity=0.55 ]  {$\mathbb{X}_{2}^{2}$};
\draw (318.4,326.62) node [anchor=north west][inner sep=0.75pt]  [color={rgb, 255:red, 144; green, 19; blue, 254 }  ,opacity=0.55 ]  {$\mathbb{X}_{1}^{2}$};
\draw (481.79,327.62) node [anchor=north west][inner sep=0.75pt]  [color={rgb, 255:red, 144; green, 19; blue, 254 }  ,opacity=0.55 ]  {$\mathbb{X}_{2}^{3}$};
\draw (558.69,374.48) node [anchor=north west][inner sep=0.75pt]  [color={rgb, 255:red, 144; green, 19; blue, 254 }  ,opacity=0.55 ]  {$\mathbb{X}_{3}^{3}$};
\draw (40.97,277.22) node [anchor=north west][inner sep=0.75pt]    {$\left(\mathbb{X}_{0}^{1} ,\mathbb{X}_{0}^{0}\right)$};
\draw (236.3,277.3) node [anchor=north west][inner sep=0.75pt]  [color={rgb, 255:red, 74; green, 144; blue, 226 }  ,opacity=1 ]  {$\mathbb{X}_{0}^{2}$};
\draw (395.44,278.3) node [anchor=north west][inner sep=0.75pt]    {$\mathbb{X}_{1}^{3}$};
\draw (318.4,227.99) node [anchor=north west][inner sep=0.75pt]  [color={rgb, 255:red, 74; green, 144; blue, 226 }  ,opacity=1 ]  {$\mathbb{X}_{0}^{3}$};
\draw (40.97,177.58) node [anchor=north west][inner sep=0.75pt]    {$\left(\mathbb{X}_{0}^{2} ,\mathbb{X}_{0}^{1}\right)$};
\draw (40.97,78.94) node [anchor=north west][inner sep=0.75pt]    {$\left(\mathbb{X}_{0}^{3} ,\mathbb{X}_{0}^{2}\right)$};
\draw (130.47,129.26) node [anchor=north west][inner sep=0.75pt]    {$\left(\mathbb{X}_{0}^{3} ,\mathbb{X}_{0}^{1}\right)$};
\draw (211.16,78.11) node [anchor=north west][inner sep=0.75pt]    {$\left(\mathbb{X}_{0}^{3} ,\mathbb{X}_{1}^{3}\right)$};
\draw (129.47,226.07) node [anchor=north west][inner sep=0.75pt]    {$\left(\mathbb{X}_{0}^{2} ,\mathbb{X}_{0}^{0}\right)$};
\draw (211.16,177.58) node [anchor=north west][inner sep=0.75pt]    {$\left(\mathbb{X}_{0}^{3} ,\mathbb{X}_{0}^{0}\right)$};
\draw (374.55,177.58) node [anchor=north west][inner sep=0.75pt]  [color={rgb, 255:red, 74; green, 144; blue, 226 }  ,opacity=1 ]  {$\left(\mathbb{X}_{0}^{3} ,\mathbb{X}_{3}^{3}\right)$};
\draw (537.94,176.58) node [anchor=north west][inner sep=0.75pt]    {$\left(\mathbb{X}_{1}^{3} ,\mathbb{X}_{2}^{3}\right)$};
\draw (544.94,78.11) node [anchor=north west][inner sep=0.75pt]  [color={rgb, 255:red, 74; green, 144; blue, 226 }  ,opacity=1 ]  {$\left(\mathbb{X}_{0}^{3} ,\mathbb{X}_{1}^{3}\right)$};
\draw (129.47,29.62) node [anchor=north west][inner sep=0.75pt]    {$\left(\mathbb{X}_{0}^{3} ,_{2}^{3}\mathbb{X}\right)$};
\draw (298.47,28.79) node [anchor=north west][inner sep=0.75pt]    {$\left(\mathbb{X}_{0}^{3} ,_{1}^{2}\mathbb{X}\right)$};
\draw (458.91,28.79) node [anchor=north west][inner sep=0.75pt]    {$\left(\mathbb{X}_{0}^{3} ,_{0}^{1}\mathbb{X}\right)$};
\draw (376.88,78.94) node [anchor=north west][inner sep=0.75pt]    {$\left(\mathbb{X}_{0}^{3} ,_{0}^{2}\mathbb{X}\right)$};
\draw (460.91,128.26) node [anchor=north west][inner sep=0.75pt]  [color={rgb, 255:red, 74; green, 144; blue, 226 }  ,opacity=1 ]  {$\left(\mathbb{X}_{0}^{3} ,\mathbb{X}_{2}^{3}\right)$};
\draw (296.47,127.43) node [anchor=north west][inner sep=0.75pt]    {$\left(\mathbb{X}_{0}^{3} ,_{0}^{3}\mathbb{X}\right)$};
\draw (461.13,226.07) node [anchor=north west][inner sep=0.75pt]    {$\left(\mathbb{X}_{1}^{3} ,\mathbb{X}_{3}^{3}\right)$};
\draw (10.43,133.35) node [anchor=north west][inner sep=0.75pt]    {$\emptyset $};
\draw (10.43,34.71) node [anchor=north west][inner sep=0.75pt]    {$\emptyset $};
\draw (10.43,231.99) node [anchor=north west][inner sep=0.75pt]    {$\emptyset $};
\draw (11.43,330.62) node [anchor=north west][inner sep=0.75pt]  [color={rgb, 255:red, 144; green, 19; blue, 254 }  ,opacity=0.55 ]  {$\emptyset $};
\draw (633.01,330.62) node [anchor=north west][inner sep=0.75pt]  [color={rgb, 255:red, 144; green, 19; blue, 254 }  ,opacity=0.55 ]  {$\emptyset $};
\draw (632.01,231.99) node [anchor=north west][inner sep=0.75pt]    {$\emptyset $};
\draw (632.01,132.52) node [anchor=north west][inner sep=0.75pt]    {$\emptyset $};
\draw (631.01,34.24) node [anchor=north west][inner sep=0.75pt]  [color={rgb, 255:red, 74; green, 144; blue, 226 }  ,opacity=1 ]  {$\emptyset $};
\draw (531.97,274.22) node [anchor=north west][inner sep=0.75pt]    {$\left(\mathbb{X}_{2}^{3} ,\mathbb{X}_{3}^{3}\right)$};

\end{tikzpicture}

    \caption{Pyramid for the case $n=3$, with  ${}^{j}_{i}\X:=\X_0^i\cup\X_j^n$.}
    \label{fig:pyr_drawing}
\end{figure}
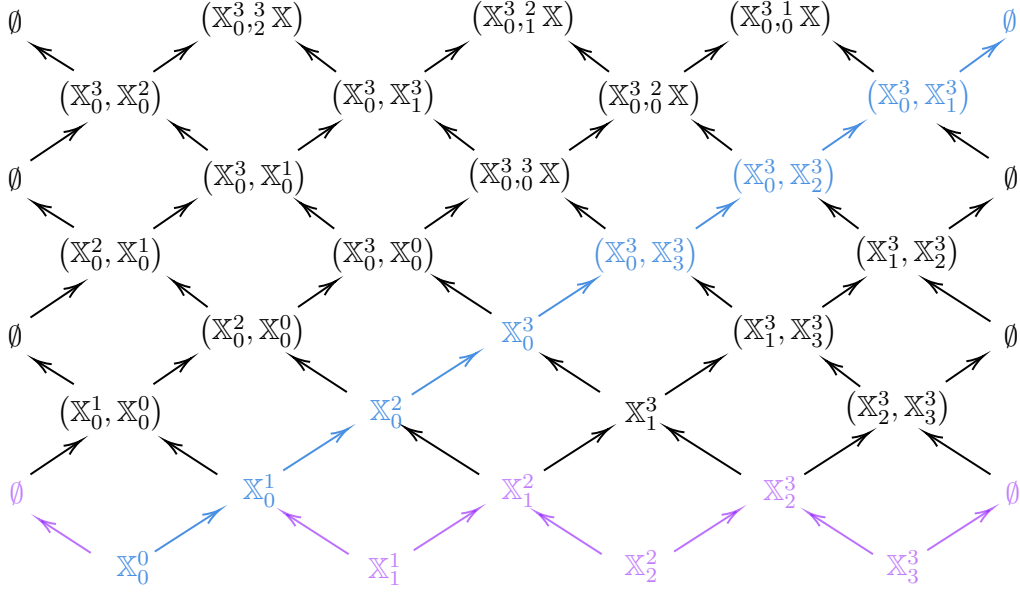

\begin{example}[Pyramid] Figure \ref{fig:pyr_drawing} reports the pyramid built for the case $n=3$. Note that the southern edge sequence corresponds to levelsets zigzag persistence (in cyan), while the left-to-right upward diagonal sequence corresponds to extended persistence (in blue).

\end{example}

\begin{remark}
This construction is called a ``pyramid'' as it can be thought of as a squared-basis pyramid by looking at it from above, where the space $\X_0^n$ is its summit.
\end{remark}

\noindent The Mayer-Vietoris diamonds enable us to effectuate so-called \emph{diamond moves}, that express a bijection between the persistence intervals of two general zigzag diagrams that differ by exactly one Mayer-Vietoris diamond. Thanks to the strong diamond principle, one can go, for example, from extended persistence to levelsets zigzag persistence and vice versa, via a sequence of bijections (diamond moves) between intermediary zigzag diagrams.

\subsection{The Barcode Bijection}

\begin{theorem}(\autocite[Pyramid Theorem]{10.1145/1542362.1542408})
\label{pyr_theo}
There is an explicit bijection between the extended persistence barcode and the levelsets zigzag persistence barcode of $(\X,f)$, that respects homological dimension except for possible shifts of degree $d\in\{-1,1\}$.
\end{theorem}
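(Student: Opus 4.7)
The plan is to exploit the Mayer--Vietoris pyramid of Definition~\ref{d:MVpyramid} as a \emph{deformation space} of zigzag diagrams, and to realize both sides of the asserted bijection as the homology of two particular monotone paths through it. Concretely, the southern edge of the pyramid is, by construction, the levelsets zigzag $ZZ(f)$ (with every space written as a pair $(\cdot,\emptyset)$), so its $p$-th homology computes $\B(ZZ_p(f))$. On the other hand, the path that climbs the western edge up to the summit $\X_0^n$ and then descends along the eastern edge consists exactly of the relative homology groups $H_p(\X_0^n, \X_j^n)$ and $H_p(\X_0^n, \X_0^i)$. After identifying $H_p(\X_0^n, \X_0^i)$ with $H_p(\X_0^n)/\mathrm{im}\, H_p(\X_0^i)$ via the long exact sequence of a pair, and using that $\X$ itself plays the role of $K$ in the simplicial treatment of Section~\ref{extended}, this up-and-down path is precisely the extended persistence module $EP_p(f)$. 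Thus the two barcodes live at the two extremal monotone paths of the pyramid.

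Next, I would observe that any two monotone paths through the pyramid that share endpoints differ by a finite sequence of elementary ``diamond flips'', each of which replaces a local subpath $(\A,\B)\leftarrow (\A\cap\C,\B\cap\D)\rightarrow (\C,\D)$ by $(\A,\B)\rightarrow (\A\cup\C,\B\cup\D)\leftarrow (\C,\D)$ (or vice versa). Applying $H_p(\cdot)$ to such a flip produces exactly the situation of the Strong Diamond Principle, because each square of the pyramid is a relative Mayer--Vietoris diamond and, as recalled in the proof of the Strong Diamond Principle, the induced square of homology groups is exact (for the pairs containing relative homology, one invokes the naturality of the long exact sequence of a pair together with the absolute Mayer--Vietoris sequence). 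Each individual flip therefore yields an explicit bijection on barcodes respecting the rules tabulated in the Strong Diamond Principle: intervals disjoint from the flipped index are preserved, intervals crossing the flipped index have one of their endpoints shifted by one position, and the unique ``birth=death'' intervals $[k,k]$ are matched across adjacent homological degrees, with a shift $\pm 1$.

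Having done this, I would compose the bijections: starting from the southern edge and ending at the western-then-eastern edge, fix once and for all a specific finite sequence of flips (for instance the lexicographic sweep of diamonds from bottom to top), and take the composition of the corresponding interval bijections. This gives the desired explicit correspondence between $\B(ZZ_p(f))$ and $\B(EP_p(f))$.

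The main obstacle is controlling the homological-degree shifts. At each flip only the \emph{local} $[k,k]$ intervals undergo a $\pm 1$ degree change, while all other intervals keep both endpoints (up to a one-step relabelling) and their homological dimension. The heart of the argument is therefore a bookkeeping lemma: any given interval of $\B(EP_p(f))$ can be traced back through the sequence of flips to a unique interval of $\B(ZZ_q(f))$, and the net degree shift $q-p$ suffered along the way is either $0$ or $\pm 1$, depending on whether the interval is of type Ord/Rel (no shift) or of type EP($\pm$) (shift by $1$, coming from the unique diamond in which the interval was created or destroyed as a $[k,k]$ class via the connecting homomorphism). Verifying that these are the only possibilities, and that the total shift cannot accumulate across multiple flips, is the one real computation; once this local-to-global analysis is in place, the theorem follows by composition.
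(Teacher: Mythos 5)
Your proposal takes essentially the same route as the paper: realize the levelsets zigzag barcode and the extended barcode as the homology of two monotone zigzag paths through the Mayer--Vietoris pyramid, interpolate between them by a finite sequence of diamond flips, invoke the Strong Diamond Principle at each flip, compose the resulting bijections, and track homological-degree shifts, which occur only for intervals of type $[k,k]$ and can happen at most once. One detail to fix in your setup: the extended persistence module is read off directly from the southwest-to-northeast diagonal of the pyramid, which passes through the \emph{absolute} groups $H_p(\X_0^i)$ on the way up (the pairs there are $(\X_0^i,\emptyset)$, not $(\X_0^n,\X_0^i)$) and the relative groups $H_p(\X_0^n,\X_j^n)$ on the way down; the detour you sketch via $H_p(\X_0^n,\X_0^i)\cong H_p(\X_0^n)/\mathrm{im}\,H_p(\X_0^i)$ is both unnecessary and not an isomorphism in general (the long exact sequence only gives an injection of the cokernel). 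With the path correctly identified the rest of your argument is exactly the paper's.
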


\begin{proof}
A zigzag diagram in the pyramid is said to be \emph{monotone} if it stretches from the western edge to the eastern edge without backtracking (\textit{i.e.} without making a right-to-left move). Now, for any two monotone zigzags $\mathcal{X}$ and $\mathcal{Y}$, there is a finite sequence of monotone zigzags $\{\mathcal{X}_i\}_{i=1}^{N}$ such that $\mathcal{X}_1=\mathcal{X}$, $\mathcal{X}_N=\mathcal{Y}$ and the zigzag modules $H_p(\mathcal{X}_i)$ and $H_p(\mathcal{X}_{i+1})$ differ by an exact square induced by a Mayer-Vietoris diamond, for any $i\in\{1,...,N-1\}$. Hence, the \emph{Strong Diamond Principle} applies and there is a sequence of bijections
$$\B(H_*(\mathcal{X}))\cong\cdots\cong\B(H_*(\mathcal{X}_i))\cong \B(H_*(\mathcal{X}_{i+1}))\cong\cdots\cong\B(H_*(\mathcal{Y})).$$ As the levelsets zigzag persistence barcode and the extended persistence barcode are both induced by monotone zigzag diagrams in the pyramid, we conclude that there is a bijection between the two of them. Now, the explicit form of the bijection results from tracking down birth and death points along the diamond moves. Finally, a shift of dimension occurs only if the birth and death coordinates find themselves both coinciding with the bottom of a Mayer-Vietoris diamond involved in a diamond move. However, this happens at most once, and thus the assertion about dimension shifts follows by the Strong Diamond Principle.
\end{proof}

\begin{example}[Pyramidal Transformation]
\label{traveling_example}

Let $(\X,f)$ be a pair of Morse type with $n=3$ critical values such that the interval $[\X_1^1,\X_2^2]$ appears in its levelsets zigzag barcode. Denote the levelsets zigzag sequence of $(\X,f)$ by $\mathrm{LZZ}(f)$, and define its up-down sequence to be

$$\mathrm{UD}(f):\X_0^0 \longrightarrow \X_0^1 \longrightarrow \X_0^2 \longrightarrow\X_0^3\longleftarrow\X_1^3\longleftarrow\X_2^3\longleftarrow\X_3^3.$$ 

\noindent Going from one sequence to the other via pyramidal transformation consists in making three consecutive diamond moves, as described below.

\begin{center}
\begin{tikzcd}
\X_0^0 \arrow[r] & \X_0^1           & \color{violet}\X_1^1 \arrow[l] \arrow[r] & \X_1^2 \arrow[d, "\text{Step (1)}", Rightarrow]                     & \X_2^2 \arrow[l] \arrow[r] & \X_2^3           & \X_3^3 \arrow[l]  \\
\X_0^0 \arrow[r] & \X_0^1 \arrow[r] & \color{violet}\X_0^2                     & \X_1^2 \arrow[l] \arrow[d, "\text{Step (2)}", Rightarrow]           & \color{cyan}\X_2^2 \arrow[l] \arrow[r] & \X_2^3           & \X_3^3 \arrow[l] &                 \\
\X_0^0 \arrow[r] & \X_0^1 \arrow[r] & \X_0^2                     & \color{magenta}\X_1^2 \arrow[l] \arrow[r] \arrow[d, "\text{Step (3)}", Rightarrow] & \color{cyan}\X_1^3                     & \X_2^3 \arrow[l] & \X_3^3 \arrow[l] &                 \\
\X_0^0 \arrow[r] & \X_0^1 \arrow[r] & \X_0^2 \arrow[r]           & \color{magenta}\X_0^3                                                              & \X_1^3 \arrow[l]           & \X_2^3 \arrow[l] & \X_3^3 \arrow[l] 
\end{tikzcd}
\end{center}

\noindent The steps indicated in the diagram above involve the following relative Mayer-Vietoris diamonds.
\begin{center}
\begin{tikzcd}
                  & \color{violet}\X_0^2                       &                   &                   & \color{cyan}\X_1^3                       &                   &                   & \color{magenta}\X_0^3                       &                   \\
\X_0^1 \arrow[ru] & (1)              & \X_1^2 \arrow[lu] & \X_1^2 \arrow[ru] & (2)              & \X_2^3 \arrow[lu] & \X_0^2 \arrow[ru] & (3)              & \X_1^3 \arrow[lu] \\
                  & \color{violet}\X_1^1 \arrow[lu] \arrow[ru] &                   &                   & \color{cyan}\X_2^2 \arrow[lu] \arrow[ru] &                   &                   & \color{magenta}\X_1^2 \arrow[lu] \arrow[ru] &                  
\end{tikzcd}
\end{center}

\noindent The interval $[\X_1^1,\X_2^2]$ is transformed according to the bijection expressed in the Pyramid theorem. One obtains the transformation below, whose step-wise birth and death travels are given in Figure \ref{fig:travels}. For example, during the first step, the birth coordinate travels from space $\X_1^1$ to space $\X_0^2$, while the death coordinate stays at space $\X_2^2$. 
\begin{center}
\begin{tikzcd}
{[\X_1^1,\X_2^2]} \arrow[r, "(1)", maps to] & {[\X_0^2,\X_2^2]} \arrow[r, "(2)", maps to] & {[\X_0^2,\X_1^3]} \arrow[r, "(3)", maps to] & {[\X_0^2,\X_1^3]}
\end{tikzcd}
\end{center}

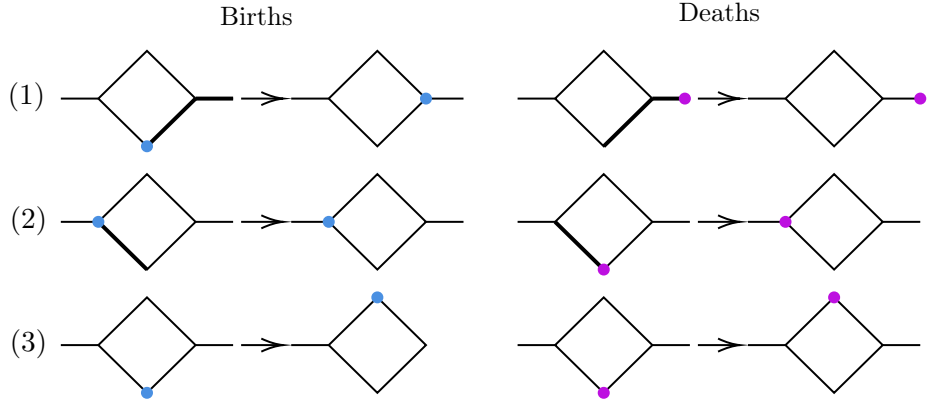
\begin{figure}
    \centering
    
\tikzset{every picture/.style={line width=0.75pt}} 

\begin{tikzpicture}[x=0.75pt,y=0.75pt,yscale=-1,xscale=1]

\draw   (172,131) -- (196.27,155) -- (172,179) -- (147.73,155) -- cycle ;
\draw    (147.73,155) -- (129,155) ;
\draw    (215,155) -- (196.27,155) ;
\draw   (287,131) -- (311.27,155) -- (287,179) -- (262.73,155) -- cycle ;
\draw    (262.73,155) -- (244,155) ;
\draw [line width=0.75]    (330,155) -- (311.27,155) ;
\draw    (219,155) -- (238,155) ;
\draw [shift={(240,155)}, rotate = 180] [color={rgb, 255:red, 0; green, 0; blue, 0 }  ][line width=0.75]    (10.93,-3.29) .. controls (6.95,-1.4) and (3.31,-0.3) .. (0,0) .. controls (3.31,0.3) and (6.95,1.4) .. (10.93,3.29)   ;
\draw   (172,193) -- (196.27,217) -- (172,241) -- (147.73,217) -- cycle ;
\draw    (147.73,217) -- (129,217) ;
\draw    (215,217) -- (196.27,217) ;
\draw   (287,193) -- (311.27,217) -- (287,241) -- (262.73,217) -- cycle ;
\draw    (262.73,217) -- (244,217) ;
\draw    (330,217) -- (311.27,217) ;
\draw    (219,217) -- (238,217) ;
\draw [shift={(240,217)}, rotate = 180] [color={rgb, 255:red, 0; green, 0; blue, 0 }  ][line width=0.75]    (10.93,-3.29) .. controls (6.95,-1.4) and (3.31,-0.3) .. (0,0) .. controls (3.31,0.3) and (6.95,1.4) .. (10.93,3.29)   ;
\draw   (172,255) -- (196.27,279) -- (172,303) -- (147.73,279) -- cycle ;
\draw    (147.73,279) -- (129,279) ;
\draw [line width=0.75]    (215,279) -- (196.27,279) ;
\draw   (287,255) -- (311.27,279) -- (287,303) -- (262.73,279) -- cycle ;
\draw    (262.73,279) -- (244,279) ;
\draw    (219,279) -- (238,279) ;
\draw [shift={(240,279)}, rotate = 180] [color={rgb, 255:red, 0; green, 0; blue, 0 }  ][line width=0.75]    (10.93,-3.29) .. controls (6.95,-1.4) and (3.31,-0.3) .. (0,0) .. controls (3.31,0.3) and (6.95,1.4) .. (10.93,3.29)   ;
\draw   (400,131) -- (424.27,155) -- (400,179) -- (375.73,155) -- cycle ;
\draw    (375.73,155) -- (357,155) ;
\draw [line width=1.5]    (443,155) -- (424.27,155) ;
\draw   (515,131) -- (539.27,155) -- (515,179) -- (490.73,155) -- cycle ;
\draw    (490.73,155) -- (472,155) ;
\draw [line width=0.75]    (558,155) -- (539.27,155) ;
\draw    (447,155) -- (466,155) ;
\draw [shift={(468,155)}, rotate = 180] [color={rgb, 255:red, 0; green, 0; blue, 0 }  ][line width=0.75]    (10.93,-3.29) .. controls (6.95,-1.4) and (3.31,-0.3) .. (0,0) .. controls (3.31,0.3) and (6.95,1.4) .. (10.93,3.29)   ;
\draw   (400,193) -- (424.27,217) -- (400,241) -- (375.73,217) -- cycle ;
\draw    (375.73,217) -- (357,217) ;
\draw    (443,217) -- (424.27,217) ;
\draw   (515,193) -- (539.27,217) -- (515,241) -- (490.73,217) -- cycle ;
\draw    (490.73,217) -- (472,217) ;
\draw    (558,217) -- (539.27,217) ;
\draw    (447,217) -- (466,217) ;
\draw [shift={(468,217)}, rotate = 180] [color={rgb, 255:red, 0; green, 0; blue, 0 }  ][line width=0.75]    (10.93,-3.29) .. controls (6.95,-1.4) and (3.31,-0.3) .. (0,0) .. controls (3.31,0.3) and (6.95,1.4) .. (10.93,3.29)   ;
\draw   (400,255) -- (424.27,279) -- (400,303) -- (375.73,279) -- cycle ;
\draw    (375.73,279) -- (357,279) ;
\draw    (443,279) -- (424.27,279) ;
\draw   (515,255) -- (539.27,279) -- (515,303) -- (490.73,279) -- cycle ;
\draw    (490.73,279) -- (472,279) ;
\draw    (558,279) -- (539.27,279) ;
\draw    (447,279) -- (466,279) ;
\draw [shift={(468,279)}, rotate = 180] [color={rgb, 255:red, 0; green, 0; blue, 0 }  ][line width=0.75]    (10.93,-3.29) .. controls (6.95,-1.4) and (3.31,-0.3) .. (0,0) .. controls (3.31,0.3) and (6.95,1.4) .. (10.93,3.29)   ;
\draw  [color={rgb, 255:red, 74; green, 144; blue, 226 }  ,draw opacity=1 ][fill={rgb, 255:red, 74; green, 144; blue, 226 }  ,fill opacity=1 ] (308.77,155) .. controls (308.77,153.62) and (309.89,152.5) .. (311.27,152.5) .. controls (312.65,152.5) and (313.77,153.62) .. (313.77,155) .. controls (313.77,156.38) and (312.65,157.5) .. (311.27,157.5) .. controls (309.89,157.5) and (308.77,156.38) .. (308.77,155) -- cycle ;
\draw  [color={rgb, 255:red, 189; green, 16; blue, 224 }  ,draw opacity=1 ][fill={rgb, 255:red, 189; green, 16; blue, 224 }  ,fill opacity=1 ] (438,155) .. controls (438,153.62) and (439.12,152.5) .. (440.5,152.5) .. controls (441.88,152.5) and (443,153.62) .. (443,155) .. controls (443,156.38) and (441.88,157.5) .. (440.5,157.5) .. controls (439.12,157.5) and (438,156.38) .. (438,155) -- cycle ;
\draw  [color={rgb, 255:red, 189; green, 16; blue, 224 }  ,draw opacity=1 ][fill={rgb, 255:red, 189; green, 16; blue, 224 }  ,fill opacity=1 ] (555.5,155) .. controls (555.5,153.62) and (556.62,152.5) .. (558,152.5) .. controls (559.38,152.5) and (560.5,153.62) .. (560.5,155) .. controls (560.5,156.38) and (559.38,157.5) .. (558,157.5) .. controls (556.62,157.5) and (555.5,156.38) .. (555.5,155) -- cycle ;
\draw  [color={rgb, 255:red, 189; green, 16; blue, 224 }  ,draw opacity=1 ][fill={rgb, 255:red, 189; green, 16; blue, 224 }  ,fill opacity=1 ] (512.5,255) .. controls (512.5,253.62) and (513.62,252.5) .. (515,252.5) .. controls (516.38,252.5) and (517.5,253.62) .. (517.5,255) .. controls (517.5,256.38) and (516.38,257.5) .. (515,257.5) .. controls (513.62,257.5) and (512.5,256.38) .. (512.5,255) -- cycle ;
\draw  [color={rgb, 255:red, 189; green, 16; blue, 224 }  ,draw opacity=1 ][fill={rgb, 255:red, 189; green, 16; blue, 224 }  ,fill opacity=1 ] (397.5,303) .. controls (397.5,301.62) and (398.62,300.5) .. (400,300.5) .. controls (401.38,300.5) and (402.5,301.62) .. (402.5,303) .. controls (402.5,304.38) and (401.38,305.5) .. (400,305.5) .. controls (398.62,305.5) and (397.5,304.38) .. (397.5,303) -- cycle ;
\draw  [color={rgb, 255:red, 189; green, 16; blue, 224 }  ,draw opacity=1 ][fill={rgb, 255:red, 189; green, 16; blue, 224 }  ,fill opacity=1 ] (488.23,217) .. controls (488.23,215.62) and (489.35,214.5) .. (490.73,214.5) .. controls (492.11,214.5) and (493.23,215.62) .. (493.23,217) .. controls (493.23,218.38) and (492.11,219.5) .. (490.73,219.5) .. controls (489.35,219.5) and (488.23,218.38) .. (488.23,217) -- cycle ;
\draw [line width=1.5]    (172,179) -- (196.27,155) ;
\draw  [color={rgb, 255:red, 74; green, 144; blue, 226 }  ,draw opacity=1 ][fill={rgb, 255:red, 74; green, 144; blue, 226 }  ,fill opacity=1 ] (169.5,179) .. controls (169.5,177.62) and (170.62,176.5) .. (172,176.5) .. controls (173.38,176.5) and (174.5,177.62) .. (174.5,179) .. controls (174.5,180.38) and (173.38,181.5) .. (172,181.5) .. controls (170.62,181.5) and (169.5,180.38) .. (169.5,179) -- cycle ;
\draw [line width=1.5]    (196.27,155) -- (215,155) ;
\draw [line width=1.5]    (147.73,217) -- (172,241) ;
\draw  [color={rgb, 255:red, 74; green, 144; blue, 226 }  ,draw opacity=1 ][fill={rgb, 255:red, 74; green, 144; blue, 226 }  ,fill opacity=1 ] (145.23,217) .. controls (145.23,215.62) and (146.35,214.5) .. (147.73,214.5) .. controls (149.11,214.5) and (150.23,215.62) .. (150.23,217) .. controls (150.23,218.38) and (149.11,219.5) .. (147.73,219.5) .. controls (146.35,219.5) and (145.23,218.38) .. (145.23,217) -- cycle ;
\draw  [color={rgb, 255:red, 74; green, 144; blue, 226 }  ,draw opacity=1 ][fill={rgb, 255:red, 74; green, 144; blue, 226 }  ,fill opacity=1 ] (260.23,217) .. controls (260.23,215.62) and (261.35,214.5) .. (262.73,214.5) .. controls (264.11,214.5) and (265.23,215.62) .. (265.23,217) .. controls (265.23,218.38) and (264.11,219.5) .. (262.73,219.5) .. controls (261.35,219.5) and (260.23,218.38) .. (260.23,217) -- cycle ;
\draw  [color={rgb, 255:red, 74; green, 144; blue, 226 }  ,draw opacity=1 ][fill={rgb, 255:red, 74; green, 144; blue, 226 }  ,fill opacity=1 ] (169.5,303) .. controls (169.5,301.62) and (170.62,300.5) .. (172,300.5) .. controls (173.38,300.5) and (174.5,301.62) .. (174.5,303) .. controls (174.5,304.38) and (173.38,305.5) .. (172,305.5) .. controls (170.62,305.5) and (169.5,304.38) .. (169.5,303) -- cycle ;
\draw  [color={rgb, 255:red, 74; green, 144; blue, 226 }  ,draw opacity=1 ][fill={rgb, 255:red, 74; green, 144; blue, 226 }  ,fill opacity=1 ] (284.5,255) .. controls (284.5,253.62) and (285.62,252.5) .. (287,252.5) .. controls (288.38,252.5) and (289.5,253.62) .. (289.5,255) .. controls (289.5,256.38) and (288.38,257.5) .. (287,257.5) .. controls (285.62,257.5) and (284.5,256.38) .. (284.5,255) -- cycle ;
\draw [line width=1.5]    (375.73,217) -- (400,241) ;
\draw  [color={rgb, 255:red, 189; green, 16; blue, 224 }  ,draw opacity=1 ][fill={rgb, 255:red, 189; green, 16; blue, 224 }  ,fill opacity=1 ] (397.5,241) .. controls (397.5,239.62) and (398.62,238.5) .. (400,238.5) .. controls (401.38,238.5) and (402.5,239.62) .. (402.5,241) .. controls (402.5,242.38) and (401.38,243.5) .. (400,243.5) .. controls (398.62,243.5) and (397.5,242.38) .. (397.5,241) -- cycle ;
\draw [line width=1.5]    (400,179) -- (424.27,155) ;

\draw (101,145.4) node [anchor=north west][inner sep=0.75pt]    {$( 1)$};
\draw (102,207.4) node [anchor=north west][inner sep=0.75pt]    {$( 2)$};
\draw (102,269.4) node [anchor=north west][inner sep=0.75pt]    {$( 3)$};
\draw (207,107.4) node [anchor=north west][inner sep=0.75pt]  [font=\small]  {$\text{Births}$};
\draw (436,105.4) node [anchor=north west][inner sep=0.75pt]  [font=\small]  {$\text{Deaths}$};

\end{tikzpicture}
\caption{Birth and death travels along three consecutive diamond moves.}
    \label{fig:travels}
\end{figure}
\end{example}

\noindent From the pyramid principle, one can deduce the following result, which describes an explicit bijection between levelsets zigzag persistence and extended persistence. The proof relies on case-by-case investigation, as in Example \ref{traveling_example}. Here, intervals appearing in the extended persistence barcode are classified into four types, as described in Section \ref{extended}. 

\begin{theorem}[Barcode Bijection]
\label{bijection_theorem}
One has the following correspondence between the intervals of the extended persistence barcode (left) and intervals of the levelsets zigzag persistence barcode (right).

\begin{center}
\begin{tabular}{|c c c|} 
 \hline
 Type & Extended & Levelsets zigzag\\ [0.5ex] 
 \hline
 I $(i<j)$ & $[\X_0^i,\X_0^{j-1}]$ & $[\X_{i-1}^i,\X_{j-1}^{j-1}]$ \\ 
\hline
 II $(i<j)$ & $[(\X_0^n,\X_{j-1}^n),(\X_0^n,\X_i^n)]^+$ & $[\X_i^i,\X_{j-1}^j]$ \\
 \hline
 III $(i\leq j)$ & $[\X_0^i,(\X_0^n,\X_j^n)]$ & $[\X_{i-1}^i,\X_{j-1}^j]$ \\
\hline
 IV $(i<j)$ & $[\X_0^j,(\X_0^n,\X_i^n)]^+$ & $[\X_{i}^i,\X_{j-1}^{j-1}]$ \\
 \hline
\end{tabular}
\end{center}

\end{theorem}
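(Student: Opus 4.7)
The plan is to construct the bijection explicitly by transforming the levelsets zigzag diagram of $(\X,f)$ into the extended persistence diagram $EP_p(f)$ via a finite sequence of Mayer-Vietoris diamond moves inside the pyramid of Definition \ref{d:MVpyramid}, and then to track how an arbitrary interval of each of the four types travels along these moves. Each intermediate diagram is a monotone zigzag in the pyramid, so at every step the Strong Diamond Principle provides a bijection on barcodes together with an explicit rule for the new endpoints. This is exactly the strategy already used in the existence proof of Theorem \ref{pyr_theo}, but now we need to record the bijection at the level of the endpoint labels.

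First I would fix a canonical route from the southern edge of the pyramid (the levelsets zigzag $ZZ(f)$) to the western-then-eastern path (the extended persistence sequence $EP_p(f)$), which is the path highlighted in Example 3.5. Starting from the southern edge, one successively trades each intersection corner $\X_i^i$ for the union corner immediately above it via the relative Mayer-Vietoris diamond given by rule (1) in Definition \ref{d:MVpyramid}, then continues swapping corners upward along the left face, and mirrors the procedure on the right face using the rules producing the relative pairs $(\X_0^n,\X_j^n)$. For $n=3$, this is exactly the three-step transformation computed in Example \ref{traveling_example}. Each intermediate zigzag is monotone, so the hypotheses of Theorem 3.2 are met at every step.

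Second, I would trace the endpoints of an arbitrary interval of each of the four types through this sequence of diamond moves. At each move, the Strong Diamond Principle tells us explicitly what happens: an endpoint strictly to the left of the pivot index is unchanged, an endpoint at position $k-1$ is sent to $k$ and vice versa, and analogously on the right; the only exceptional case is an interval of the form $[k,k]$ sitting at the bottom of a diamond, which gets shifted by $\pm 1$ in homological degree. Performing the bookkeeping case by case, in the exact style of Example \ref{traveling_example}, produces the four rows of the table: Type I intervals (living in the ordinary first half of $EP_p(f)$) correspond to intervals $[\X_{i-1}^i,\X_{j-1}^{j-1}]$ in the ascending-only part of $ZZ(f)$; Type II intervals (in the relative half) correspond to $[\X_i^i,\X_{j-1}^j]$ with a degree-shift accounting for the $+$ superscript; Type III and Type IV intervals, which straddle the middle of $EP_p(f)$ and encode essential classes, cross a critical value in $ZZ(f)$ and inherit the expected $\pm 1$ dimension shift from the exceptional row of Theorem 3.2.

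The main obstacle is the combinatorial bookkeeping rather than any conceptual subtlety: each of the four types requires tracing its endpoints through $O(n)$ diamond moves, and care is needed to distinguish the generic moves (which merely relabel an endpoint) from the exceptional ones (which create the homological shift). A practical way to tame this is to observe that, for a fixed interval, all but a constant number of diamond moves commute past it without touching its endpoints; this reduces each of the four cases to a computation essentially identical to the three-step transformation of Example \ref{traveling_example}, and assembling the four resulting endpoint correspondences yields the four rows of the table.
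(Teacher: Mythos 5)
Your strategy matches the paper's: the paper proves this theorem as an immediate corollary of Theorem \ref{pyr_theo}, citing case-by-case endpoint tracking through diamond moves "as in Example \ref{traveling_example}," which is exactly what you propose.

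There is, however, a concrete error in your predicted output. You assert that "Type III and Type IV intervals, which straddle the middle of $EP_p(f)$ and encode essential classes, cross a critical value in $ZZ(f)$ and inherit the expected $\pm 1$ dimension shift from the exceptional row" of the Strong Diamond Principle. But in the correspondence table of Theorem \ref{bijection_theorem} the $+$ superscript, which marks the homological degree shift, appears only on Types II and IV, not on Type III. The worked circle example in Section \ref{s:levelsetszigzag} confirms this: the Type III matching $[\X_0^1,\X_1^2]_0\leftrightarrow[\X_0^1,(\X_0^2,\X_2^2)]_0$ carries no shift, whereas the Type IV matching $[\X_1^1,\X_1^1]_0\leftrightarrow[\X_0^2,(\X_0^2,\X_1^2)]_1$ does. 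So the heuristic "straddles the middle of the extended sequence $\Rightarrow$ degree shift" is false. As the proof of Theorem \ref{pyr_theo} makes precise, the shift occurs exactly when the birth and death endpoints simultaneously hit the bottom vertex of a single Mayer--Vietoris diamond along the chosen route of monotone zigzags, and tracing the moves shows this happens for Types II and IV but not for Types I and III. Carrying out the bookkeeping you outline would expose this; as written, your shortcut reasoning would produce a wrong row for Type III.
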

\begin{proof}
This immediately follows from the proof of Theorem \ref{pyr_theo}.
\end{proof}

\section{Continuous Levelsets Persistence}

In this section, we introduce the continuous levelsets persistence associated to any real valued function, without the need for a Morse type hypothesis. When the context is clear, we shall only call this construction levelsets persistence, as opposed to levelsets zigzag persistence. This construction was initially studied in \cite{2018}.

\subsection{Levelsets persistence modules and interleavings}

For a topological space $\X$ and a function $f:\X\rightarrow\R$, ordinary persistence usually studies the sub-levelsets of $f$, that is, the evolution of the topology of $\S(f)(t) := f^{-1}((-\infty, t])$, as the real parameter $t$ varies. Nevertheless, the persistent homology of the sub-levelsets of $f$ may fail to detect significant topological features that go “upwards” (in the sense of $f$), see figure \ref{fig:cesub}. 

\begin{figure}
    \centering
    \includegraphics[width = 0.5 \textwidth]{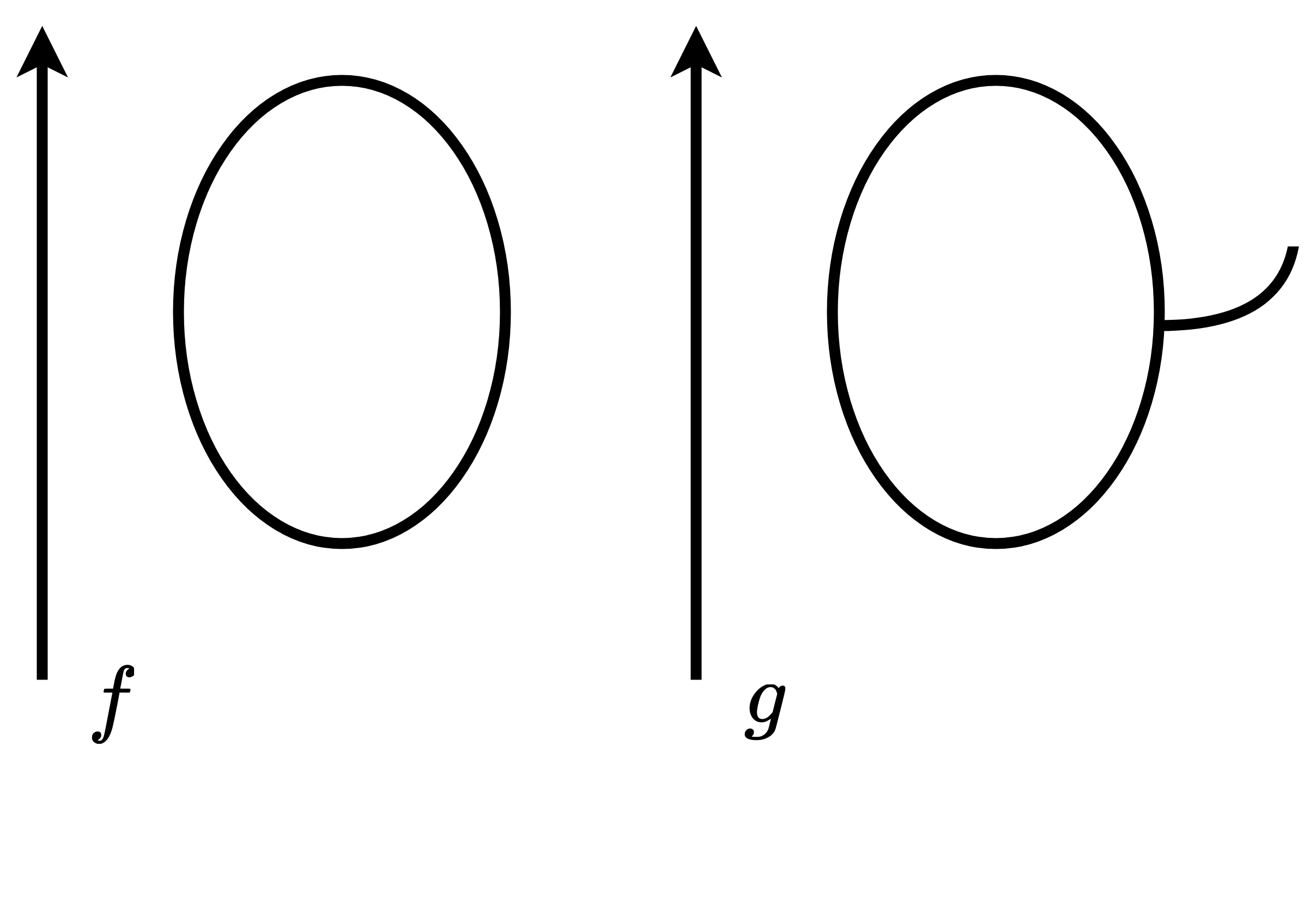}
    \caption{Two functions $f$ and $g$ having isomorphic $0$-th sub-levelsets persistence modules, but non-isomorphic $0$-th levelsets persistence modules.}
    \label{fig:cesub}
\end{figure}

To enrich the topological information extracted from $f$, one can instead study its  \textit{levelsets}  $f^{-1}((a,b))$, as $a < b$ vary. Considering the subposet $\U\subset\R^{\mathrm{op}}\times \R$ consisting of pairs $(a,b)$ with $a < b$, the \textit{level-set filtration of $(\X,f)$} is given by the functor 
\begin{equation*}
\mathcal{L}(f):\begin{aligned}
 \U & \rightarrow \mathbf{Top}\\
(a,b)&\mapsto f^{-1}((a,b))   
\end{aligned},
\end{equation*}

\noindent with $\U$ being a category as a poset, where the relation $(u_1,u_2)\preceq (v_1,v_2)$ is determined by $u_1\geq v_1$ and $u_2\leq v_2$. Indeed, if $v_1\leq u_1\leq u_2\leq v_2$ holds, then there is an inclusion map $\mathcal{L}(f)(u_1,u_2)\xhookrightarrow{} \mathcal{L}(f)(v_1,v_2)$.\\

\begin{definition}
For $p \in \Z_{\geq 0}$, the $p$-th levelsets persistence module of $f : X \to \R$ is the functor $\mathcal{L}_p(f) := H_p \circ \mathcal{L}(f) : \mathbb{U} \longrightarrow \text{Vect}_K$.
\end{definition}

As in the situation of classical persistence, it is possible to introduce a notion of interleaving distance between functors $\mathbb{U} \longrightarrow \text{Vect}_K$. Given $M$ such a functor, and $\varepsilon \geq 0$, we define the $\varepsilon$ shift of $M$ as the functor $M[\varepsilon] : \mathbb{U} \longrightarrow \text{Vect}_K $ defined, for $(x_1,x_2)\in \mathbb{U}$, by:

\[M[\varepsilon](x_1,x_2) := M(x_1 - \varepsilon, x_2 + \varepsilon). \]

There is a natural transformation $\tau_\varepsilon^M : M \longrightarrow M[\varepsilon]$ called the smoothing morphism of $M$.

\begin{definition}

Let $M,N: \mathbb{U} \longrightarrow \text{Vect}_K$ and $\varepsilon \geq 0$. An $\varepsilon$-interleaving between $M$ and $N$ is the data of two morphisms $f:M \to M[\varepsilon]$ and $g:N \to N[\varepsilon]$ fitting in a commutative diagram:$$ \xymatrix{
M  \ar[rd]\ar@/^0.7cm/[rr]^{\tau_{2\varepsilon}^M} \ar[r]^{f} & N[\varepsilon]  \ar[rd] \ar[r]^{g[\varepsilon]} & M[2\varepsilon] \\
N  \ar[ur]\ar@/_0.7cm/[rr]_{\tau_{2\varepsilon}^N} \ar[r]^{g} & M[\varepsilon]  \ar[ur] \ar[r]^{f[\varepsilon]} & N[2\varepsilon]
   } $$
   
In this situation, we will say that $M$ and $N$ are $\varepsilon$-interleaved and write {$M\sim_\varepsilon N$}.

\end{definition}

\begin{definition}
The interleaving distance between $M$ and $N: \mathbb{U} \longrightarrow \text{Vect}_K$ is the possibly infinite number:

\[d_I(M,N) := \inf\{\varepsilon\geq 0 \mid M \sim_\varepsilon N\}.\]
\end{definition}

\begin{proposition}
The interleaving distance satisfies the triangle inequality.
\end{proposition}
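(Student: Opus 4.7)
The plan is to prove the triangle inequality $d_I(M,P) \leq d_I(M,N) + d_I(N,P)$ for any three functors $M, N, P : \mathbb{U} \to \mathrm{Vect}_K$ by showing that if $M \sim_\varepsilon N$ and $N \sim_\delta P$, then $M \sim_{\varepsilon + \delta} P$. Passing to infima then yields the result (with the standard convention if one of the distances is infinite).

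First, I would fix an $\varepsilon$-interleaving between $M$ and $N$, given by $f_1 : M \to N[\varepsilon]$ and $g_1 : N \to M[\varepsilon]$, and a $\delta$-interleaving between $N$ and $P$, given by $f_2 : N \to P[\delta]$ and $g_2 : P \to N[\delta]$. The natural candidates for the $(\varepsilon+\delta)$-interleaving maps between $M$ and $P$ are the composites
$$F := f_2[\varepsilon] \circ f_1 : M \longrightarrow P[\varepsilon+\delta], \qquad G := g_1[\delta] \circ g_2 : P \longrightarrow M[\varepsilon+\delta],$$
where I use that shifting is functorial: applying $[\varepsilon]$ to $f_2 : N \to P[\delta]$ produces $f_2[\varepsilon] : N[\varepsilon] \to P[\delta+\varepsilon]$, and similarly for $g_1[\delta]$.

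Next, I would verify the two interleaving commutativity conditions for $(F,G)$. For the condition $G[\varepsilon+\delta] \circ F = \tau^M_{2(\varepsilon+\delta)}$, I would expand
$$G[\varepsilon+\delta] \circ F = g_1[\delta+\varepsilon+\delta] \circ g_2[\varepsilon+\delta] \circ f_2[\varepsilon] \circ f_1,$$
and insert in the middle the equality $g_2[\varepsilon+\delta] \circ f_2[\varepsilon] = \tau^N_{2\delta}[\varepsilon]$ coming from the $\delta$-interleaving of $N$ and $P$ (shifted by $\varepsilon$, using functoriality of $[\varepsilon]$). After this substitution, naturality of the smoothing morphism $\tau^N_{2\delta}$ with respect to $g_1$ allows one to rewrite $g_1[2\delta+\varepsilon] \circ \tau^N_{2\delta}[\varepsilon] \circ f_1$ as $\tau^{M[\varepsilon]}_{2\delta} \circ g_1[\varepsilon] \circ f_1$, and then the $\varepsilon$-interleaving between $M$ and $N$ gives $g_1[\varepsilon] \circ f_1 = \tau^M_{2\varepsilon}$. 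Composing the two smoothings yields $\tau^M_{2(\varepsilon+\delta)}$, as required. The symmetric identity $F[\varepsilon+\delta] \circ G = \tau^P_{2(\varepsilon+\delta)}$ is proved identically by swapping the roles.

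The main obstacle is purely bookkeeping: ensuring that each shift index lines up and applying naturality of $\tau$ (together with the fact that $[\varepsilon]$ and $[\delta]$ commute and that $(-)[\varepsilon][\delta] = (-)[\varepsilon+\delta]$) at the correct place. Once that is done, the triangle inequality follows by taking infima: for any $\varepsilon > d_I(M,N)$ and $\delta > d_I(N,P)$ we obtain $d_I(M,P) \leq \varepsilon + \delta$, so $d_I(M,P) \leq d_I(M,N) + d_I(N,P)$.
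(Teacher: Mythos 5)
Your argument is the standard composition-of-interleavings proof, and it is essentially correct; the paper itself states this proposition without proof, so there is no alternative argument to compare against. One small piece of bookkeeping to fix: after substituting $g_2[\varepsilon+\delta]\circ f_2[\varepsilon]=\tau^N_{2\delta}[\varepsilon]$ and applying naturality of the smoothing morphism (shifted by $[\varepsilon]$), what you obtain is $\tau^{M[\varepsilon]}_{2\delta}[\varepsilon]\circ g_1[\varepsilon]\circ f_1=\tau^{M[2\varepsilon]}_{2\delta}\circ g_1[\varepsilon]\circ f_1$, not $\tau^{M[\varepsilon]}_{2\delta}\circ g_1[\varepsilon]\circ f_1$ — the latter does not type-check since $g_1[\varepsilon]\circ f_1:M\to M[2\varepsilon]$ while $\tau^{M[\varepsilon]}_{2\delta}$ has source $M[\varepsilon]$. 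With that shift restored, $\tau^{M[2\varepsilon]}_{2\delta}\circ\tau^M_{2\varepsilon}=\tau^M_{2(\varepsilon+\delta)}$ and the rest, including passing to infima, is fine.
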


The interleaving distance allows expressing the stability property of the levelsets persistence construction.

\begin{theorem}
Let $f,g : \mathbb{X} \to \R$ be functions. Then for all $p\in \Z_{\geq 0}$:

\[d_I(\mathcal{L}_p(f), \mathcal{L}_p(g)) \leq \sup_{x \in \mathbb{X}} \|f(x) - g(x)\|.\]
\end{theorem}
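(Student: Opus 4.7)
The plan is to construct an explicit $\varepsilon$-interleaving between $\mathcal{L}_p(f)$ and $\mathcal{L}_p(g)$, where $\varepsilon := \sup_{x \in \mathbb{X}} \|f(x) - g(x)\|$. If $\varepsilon = +\infty$ the statement is vacuous, so we may assume $\varepsilon < \infty$. The interleaving morphisms will come directly from inclusions at the level of level-sets, which become $K$-linear maps after applying $H_p$.

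The key geometric observation is the following: for any $(a,b) \in \mathbb{U}$, if $x \in f^{-1}((a,b))$, then $a < f(x) < b$, and since $|f(x) - g(x)| \leq \varepsilon$ we obtain
\[
a - \varepsilon < f(x) - \varepsilon \leq g(x) \leq f(x) + \varepsilon < b + \varepsilon,
\]
so $x \in g^{-1}((a-\varepsilon, b+\varepsilon))$. Therefore one has inclusions of topological spaces
\[
\mathcal{L}(f)(a,b) \hookrightarrow \mathcal{L}(g)(a-\varepsilon, b+\varepsilon) = \mathcal{L}(g)[\varepsilon](a,b),
\]
and symmetrically $\mathcal{L}(g)(a,b) \hookrightarrow \mathcal{L}(f)[\varepsilon](a,b)$. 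These inclusions are compatible with the inclusions defining the functors $\mathcal{L}(f)$ and $\mathcal{L}(g)$: if $(a_1,a_2) \preceq (b_1,b_2)$ in $\mathbb{U}$, the four relevant subspaces of $\mathbb{X}$ sit inside a commutative square of inclusions. Applying the functor $H_p$ then yields natural transformations $\varphi : \mathcal{L}_p(f) \to \mathcal{L}_p(g)[\varepsilon]$ and $\psi : \mathcal{L}_p(g) \to \mathcal{L}_p(f)[\varepsilon]$.

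It remains to check the two triangular identities characterizing an $\varepsilon$-interleaving. By construction, both compositions
\[
\mathcal{L}(f)(a,b) \hookrightarrow \mathcal{L}(g)(a-\varepsilon, b+\varepsilon) \hookrightarrow \mathcal{L}(f)(a-2\varepsilon, b+2\varepsilon)
\]
and the direct inclusion $\mathcal{L}(f)(a,b) \hookrightarrow \mathcal{L}(f)(a-2\varepsilon, b+2\varepsilon)$ are equal, because both are simply the inclusion of subsets of $\mathbb{X}$. Applying $H_p$ and using its functoriality, this gives $\psi[\varepsilon] \circ \varphi = \tau_{2\varepsilon}^{\mathcal{L}_p(f)}$. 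The symmetric identity $\varphi[\varepsilon] \circ \psi = \tau_{2\varepsilon}^{\mathcal{L}_p(g)}$ is proven identically.

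Hence $\mathcal{L}_p(f) \sim_\varepsilon \mathcal{L}_p(g)$, so $d_I(\mathcal{L}_p(f), \mathcal{L}_p(g)) \leq \varepsilon$ as claimed. The only step requiring any care is the strictness of the inequalities when verifying $f^{-1}((a,b)) \subseteq g^{-1}((a-\varepsilon, b+\varepsilon))$; this works precisely because the level-sets are taken over \emph{open} intervals, so the (possibly attained) supremum $\varepsilon$ still yields honest inclusions. Everything else reduces to the functoriality of $H_p$ and the fact that all maps involved are ultimately inclusions of subspaces of $\mathbb{X}$.
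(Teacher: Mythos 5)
The paper states this stability theorem without proof, so there is nothing to compare against directly; your argument is complete and is the standard ``soft stability'' argument that one expects here. You define $\varepsilon := \sup_x |f(x)-g(x)|$ (the vacuous case $\varepsilon=\infty$ correctly dismissed), observe the set-theoretic inclusions $f^{-1}((a,b)) \subseteq g^{-1}((a-\varepsilon,b+\varepsilon))$ and vice versa, check that these assemble into natural transformations $\mathcal{L}(f)\to\mathcal{L}(g)[\varepsilon]$ and $\mathcal{L}(g)\to\mathcal{L}(f)[\varepsilon]$ in $\mathbf{Top}^{\mathbb{U}}$, and then apply $H_p$. The triangular identities hold because all the maps in sight are literal inclusions of subspaces of $\mathbb{X}$, so the two routes from $f^{-1}((a,b))$ to $f^{-1}((a-2\varepsilon,b+2\varepsilon))$ are the same map, and functoriality of $H_p$ transports this into the commuting hexagon defining $\sim_\varepsilon$. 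Your remark about open intervals is correct but mildly over-cautious: the inclusion $a-\varepsilon < g(x)$ follows directly from $a < f(x)$ (strict, by openness) and $g(x) \geq f(x)-\varepsilon$, so no extra subtlety about whether the supremum is attained is needed. The only minor cosmetic point is that naturality of $\varphi$ and $\psi$ is asserted rather than spelled out; it is, of course, immediate since every square is a square of inclusions between subsets of $\mathbb{X}$.
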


\subsection{Block decomposition}

Since levelsets persistence modules have two parameters, one cannot apply straightforwardly the ordinary persistence theory  to ensure that they have a barcode decomposition. Instead, we need to observe that even though they have two parameters, levelsets persistence module are algebraic constructions originating from one-parameter filtrations. This will be expressed by the property of middle-exactness.

\begin{definition}
A functor $M : \mathbb{U} \longrightarrow \text{Vect}_K$ is said to be middle-exact, if for all $y_1 < x_1 < x_2 < y_2$, the following diagram of vector spaces:

\[\xymatrix{ M(y_1,x_2) \ar[r] & M(y_1,y_2) \\ 
M(x_1,x_2) \ar[r] \ar[u] & M(x_1, y_2) \ar[u] }\]

is an exact square. 
\end{definition}

The classical Mayer-Vietoris exact sequence yields:

\begin{proposition}
Let $f : \mathbb{X} \longrightarrow \R$ be a continuous map. Then for all $p \in \Z_{\geq 0}$, the functor $\mathcal{L}_p(f)$ is middle-exact.
\end{proposition}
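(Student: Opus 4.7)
The plan is to derive the exact-square condition directly from the Mayer--Vietoris long exact sequence, exactly as the surrounding text suggests. The key geometric observation is that for $y_1 < x_1 < x_2 < y_2$, the open intervals $(y_1,x_2)$ and $(x_1,y_2)$ in $\R$ satisfy
\[
(y_1,x_2) \cup (x_1,y_2) = (y_1,y_2), \qquad (y_1,x_2) \cap (x_1,y_2) = (x_1,x_2).
\]
Applying $f^{-1}$ and using continuity of $f$, I get two open subsets $A := f^{-1}((y_1,x_2))$ and $B := f^{-1}((x_1,y_2))$ of $\mathbb{X}$ with $A \cup B = f^{-1}((y_1,y_2))$ and $A \cap B = f^{-1}((x_1,x_2))$. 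Since $A$ and $B$ are open in $A\cup B$, this is a valid Mayer--Vietoris cover.

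Next, I would write out the associated Mayer--Vietoris long exact sequence in degree $p$:
\[
\cdots \longrightarrow H_p(A\cap B) \xrightarrow{\;(i_*,\,j_*)\;} H_p(A) \oplus H_p(B) \xrightarrow{\;k_* - l_*\;} H_p(A \cup B) \longrightarrow \cdots
\]
where $i,j,k,l$ are the evident inclusions. Translating back through the definition of $\mathcal{L}_p(f)$, this sequence is precisely
\[
\mathcal{L}_p(f)(x_1,x_2) \longrightarrow \mathcal{L}_p(f)(y_1,x_2)\oplus\mathcal{L}_p(f)(x_1,y_2) \longrightarrow \mathcal{L}_p(f)(y_1,y_2),
\]
with the first arrow being the direct sum of the two inclusion-induced maps in the square, and the second arrow being the difference. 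Exactness in the middle term of the Mayer--Vietoris sequence is exactly the condition $\mathrm{Ker}(V_2\oplus V_3 \to V_4) = \mathrm{Im}(V_1 \to V_2\oplus V_3)$ required in the definition of an exact square.

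The only thing that requires a moment of verification is that the maps in the Mayer--Vietoris sequence really coincide with the maps of the square as functorially induced by $\mathcal{L}_p(f)$, i.e.\ that the inclusion $A\cap B \hookrightarrow A$ is precisely the morphism $\mathcal{L}_p(f)\bigl((x_1,x_2)\preceq(y_1,x_2)\bigr)$, and similarly for the other three edges. This is immediate from the definition of the partial order on $\mathbb{U}$ and functoriality of $H_p$, so there is no real obstacle. I do not foresee any serious difficulty; the proof is essentially a dictionary translation between the Mayer--Vietoris sequence and the definition of middle-exactness, with the open-cover hypothesis automatic because $f$ is continuous.
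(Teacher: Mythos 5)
Your proof is correct and follows exactly the approach the paper intends: the paper gives no written proof beyond the remark ``The classical Mayer--Vietoris exact sequence yields,'' and your argument simply fills in the details of that dictionary, identifying the open cover $A = f^{-1}((y_1,x_2))$, $B = f^{-1}((x_1,y_2))$ and reading off middle-exactness from the Mayer--Vietoris long exact sequence.
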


In the sequel, we will use the symbol $\overline{<}$ (resp. $\overline{>}$) to denote either $<$ or $\leq$ (resp. $>$ or $\geq$). There are simple examples of middle-exact functors given by so-called block modules. A block is a subset of $ \mathbb{U}$ of one of the following form.

\begin{enumerate}
    \item For $a<b\in [-\infty,+\infty]$, set $(a,b)_{\mathrm{BL}}=\{(x,y)\in\U\mid a\overline{<}x,y \overline{<}    b\}$
    \item For $a<b\in \R\cup\{+\infty\}$, set $[a,b)_{\mathrm{BL}}=\{(x,y)\in\U\mid a\overline{<} y \overline{<} b\}$
    \item For $a<b\in \R\cup\{-\infty\}$, set $(a,b]_{\mathrm{BL}}=\{(x,y)\in\U\mid a \overline{<} x \overline{<} b\}$
    \item For $a\leq b\in \R$, set $[a,b]_{\mathrm{BL}}=\{(x,y)\in\U\mid x \overline{<} b,y\overline{>} a\}$
    \item For $a<b\in\R$, set $[b,a]_{\mathrm{BL}}=\{(x,y)\in\U\mid x\overline{<} a \overline{<} b \overline{<} y\}$
\end{enumerate}

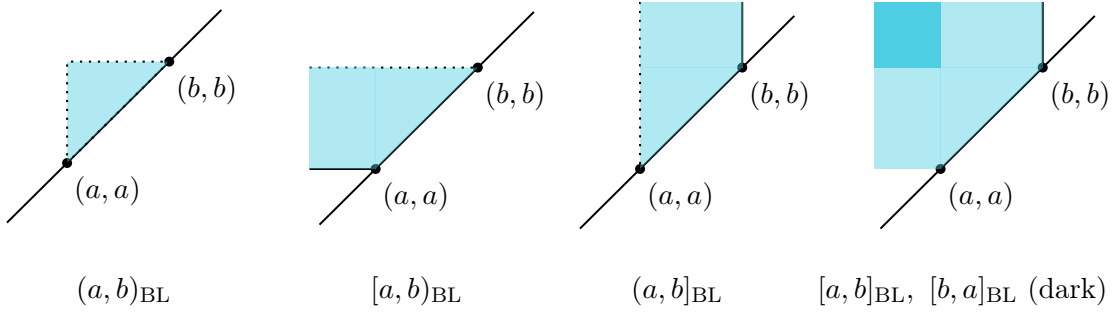
\begin{figure}
    \centering

\tikzset{every picture/.style={line width=0.75pt}} 

\begin{tikzpicture}[x=0.75pt,y=0.75pt,yscale=-1,xscale=1]

\draw    (71,152) -- (178,45) ;
\draw  [fill={rgb, 255:red, 0; green, 0; blue, 0 }  ,fill opacity=1 ] (99,122) .. controls (99,120.9) and (99.9,120) .. (101,120) .. controls (102.1,120) and (103,120.9) .. (103,122) .. controls (103,123.1) and (102.1,124) .. (101,124) .. controls (99.9,124) and (99,123.1) .. (99,122) -- cycle ;
\draw  [fill={rgb, 255:red, 0; green, 0; blue, 0 }  ,fill opacity=1 ] (150,71) .. controls (150,69.9) and (150.9,69) .. (152,69) .. controls (153.1,69) and (154,69.9) .. (154,71) .. controls (154,72.1) and (153.1,73) .. (152,73) .. controls (150.9,73) and (150,72.1) .. (150,71) -- cycle ;
\draw    (227,153) -- (334,46) ;
\draw  [fill={rgb, 255:red, 0; green, 0; blue, 0 }  ,fill opacity=1 ] (253,125) .. controls (253,123.9) and (253.9,123) .. (255,123) .. controls (256.1,123) and (257,123.9) .. (257,125) .. controls (257,126.1) and (256.1,127) .. (255,127) .. controls (253.9,127) and (253,126.1) .. (253,125) -- cycle ;
\draw  [fill={rgb, 255:red, 0; green, 0; blue, 0 }  ,fill opacity=1 ] (304,74) .. controls (304,72.9) and (304.9,72) .. (306,72) .. controls (307.1,72) and (308,72.9) .. (308,74) .. controls (308,75.1) and (307.1,76) .. (306,76) .. controls (304.9,76) and (304,75.1) .. (304,74) -- cycle ;
\draw  [fill={rgb, 255:red, 74; green, 205; blue, 226 }  ,fill opacity=0.43 ][dash pattern={on 0.84pt off 2.51pt}] (152,71) -- (101,122) -- (101,71) -- cycle ;
\draw  [draw opacity=0][fill={rgb, 255:red, 74; green, 205; blue, 226 }  ,fill opacity=0.43 ][dash pattern={on 0.84pt off 2.51pt}] (306,74) -- (255,125) -- (255,74) -- cycle ;
\draw    (222,125) -- (253,125) ;
\draw  [dash pattern={on 0.84pt off 2.51pt}]  (222,74) -- (304,74) ;
\draw    (357,155) -- (464,48) ;
\draw  [fill={rgb, 255:red, 0; green, 0; blue, 0 }  ,fill opacity=1 ] (385,125) .. controls (385,123.9) and (385.9,123) .. (387,123) .. controls (388.1,123) and (389,123.9) .. (389,125) .. controls (389,126.1) and (388.1,127) .. (387,127) .. controls (385.9,127) and (385,126.1) .. (385,125) -- cycle ;
\draw  [fill={rgb, 255:red, 0; green, 0; blue, 0 }  ,fill opacity=1 ] (436,74) .. controls (436,72.9) and (436.9,72) .. (438,72) .. controls (439.1,72) and (440,72.9) .. (440,74) .. controls (440,75.1) and (439.1,76) .. (438,76) .. controls (436.9,76) and (436,75.1) .. (436,74) -- cycle ;
\draw  [draw opacity=0][fill={rgb, 255:red, 74; green, 205; blue, 226 }  ,fill opacity=0.43 ][dash pattern={on 0.84pt off 2.51pt}] (438,74) -- (387,125) -- (387,74) -- cycle ;
\draw    (438,41) -- (438,72) ;
\draw  [dash pattern={on 0.84pt off 2.51pt}]  (387,41) -- (387,123) ;
\draw  [draw opacity=0][fill={rgb, 255:red, 74; green, 205; blue, 226 }  ,fill opacity=0.43 ] (222,74) -- (255,74) -- (255,125) -- (222,125) -- cycle ;
\draw  [draw opacity=0][fill={rgb, 255:red, 74; green, 205; blue, 226 }  ,fill opacity=0.43 ] (438,41) -- (438,74) -- (387,74) -- (387,41) -- cycle ;
\draw    (507,155) -- (614,48) ;
\draw  [fill={rgb, 255:red, 0; green, 0; blue, 0 }  ,fill opacity=1 ] (535,125) .. controls (535,123.9) and (535.9,123) .. (537,123) .. controls (538.1,123) and (539,123.9) .. (539,125) .. controls (539,126.1) and (538.1,127) .. (537,127) .. controls (535.9,127) and (535,126.1) .. (535,125) -- cycle ;
\draw  [fill={rgb, 255:red, 0; green, 0; blue, 0 }  ,fill opacity=1 ] (586,74) .. controls (586,72.9) and (586.9,72) .. (588,72) .. controls (589.1,72) and (590,72.9) .. (590,74) .. controls (590,75.1) and (589.1,76) .. (588,76) .. controls (586.9,76) and (586,75.1) .. (586,74) -- cycle ;
\draw  [draw opacity=0][fill={rgb, 255:red, 74; green, 205; blue, 226 }  ,fill opacity=0.43 ][dash pattern={on 0.84pt off 2.51pt}] (588,74) -- (537,125) -- (537,74) -- cycle ;
\draw    (588,41) -- (588,72) ;
\draw  [draw opacity=0][fill={rgb, 255:red, 74; green, 205; blue, 226 }  ,fill opacity=0.43 ] (588,41) -- (588,74) -- (537,74) -- (537,41) -- cycle ;
\draw  [draw opacity=0][fill={rgb, 255:red, 74; green, 205; blue, 226 }  ,fill opacity=0.43 ] (504,74) -- (537,74) -- (537,125) -- (504,125) -- cycle ;
\draw  [draw opacity=0][fill={rgb, 255:red, 74; green, 205; blue, 226 }  ,fill opacity=0.98 ][line width=0.75]  (504,41) -- (537,41) -- (537,74) -- (504,74) -- cycle ;

\draw (103,127.4) node [anchor=north west][inner sep=0.75pt]    {$( a,a)$};
\draw (154,76.4) node [anchor=north west][inner sep=0.75pt]    {$( b,b)$};
\draw (257,130.4) node [anchor=north west][inner sep=0.75pt]    {$( a,a)$};
\draw (308,79.4) node [anchor=north west][inner sep=0.75pt]    {$( b,b)$};
\draw (389,130.4) node [anchor=north west][inner sep=0.75pt]    {$( a,a)$};
\draw (440,79.4) node [anchor=north west][inner sep=0.75pt]    {$( b,b)$};
\draw (539,130.4) node [anchor=north west][inner sep=0.75pt]    {$( a,a)$};
\draw (590,79.4) node [anchor=north west][inner sep=0.75pt]    {$( b,b)$};
\draw (104,177.4) node [anchor=north west][inner sep=0.75pt]    {$( a,b)_{\mathrm{BL}}$};
\draw (251,177.4) node [anchor=north west][inner sep=0.75pt]    {$[ a,b)_{\mathrm{BL}}$};
\draw (381,177.4) node [anchor=north west][inner sep=0.75pt]    {$( a,b]_{\mathrm{BL}}$};
\draw (474,177.4) node [anchor=north west][inner sep=0.75pt]    {$[ a,b]_{\mathrm{BL}} ,\ [ b,a]_{\mathrm{BL}} \ \text{(dark)}$};

\end{tikzpicture}

    \caption{Five different types of blocks for the levelsets persistence.}
    \label{fig:level_blocks}
\end{figure}

Given a block $B$, we define the functor $K_B :  \mathbb{U} \longrightarrow \textnormal{Vect}_K$ by:

\[K_B(u) := \begin{cases}
K~\text{if}~u \in B \\ 0 ~\text{else}
\end{cases}~~~\textnormal{and}~~~K_B(u\leq v) := \begin{cases}
\text{id}_K ~\text{if}~u,v \in B \\ 0~\text{else}
\end{cases}.\]

\begin{definition}

A functor $M : \mathbb{U} \longrightarrow \text{Vect}_K$ is said to be a block-decomposable module if there exists a multi-set of blocks $\mathbb{B}(M)$ such that: \begin{itemize}
    \item $M \simeq \bigoplus_{B \in \mathbb{B}(M)} K_B$,
    \item for all compact subsets $S \subset \mathbb{U}$, the multi-set $\{B \in \mathbb{B}(M) \mid B \cap S \not = \emptyset\}$ is finite.
\end{itemize}

\end{definition}

By \cite[Theorem 1.1]{BotCraw18}, if $M$ is block decomposable, $\mathbb{B}(M)$ is unique up to reordering of the blocks.

\begin{proposition}
A block-decomposable module is middle-exact and pointwise finite dimensional.
\end{proposition}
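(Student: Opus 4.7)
The plan is to establish the two assertions separately.

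For pointwise finite dimensionality, I would fix $u \in \U$ and exploit that the singleton $\{u\}$ is compact in $\U$. The local finiteness axiom in the definition of block-decomposability then yields that the set $\{B \in \B(M) : u \in B\}$ is finite, so $M(u) = \bigoplus_{B \in \B(M)} K_B(u)$ is a finite direct sum of copies of $K$.

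For middle-exactness, I first rewrite the condition as the exactness of the three-term sequence
\[
M(x_1, x_2) \longrightarrow M(x_1, y_2) \oplus M(y_1, x_2) \longrightarrow M(y_1, y_2)
\]
of vector spaces, for each $y_1 < x_1 < x_2 < y_2$. Since this sequence is the direct sum, over $B \in \B(M)$, of the analogous three-term sequences for the block modules $K_B$, and since direct sums preserve exactness in $\text{Vect}_K$, it will suffice to verify middle-exactness for each individual block module $K_B$. For a fixed such $B$, the four corner values are each $0$ or $K$, with identity or zero transition maps, so the verification reduces to a finite case analysis.

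The observation that will make this case analysis manageable is that every block listed in the preceding definition has the ``rectangular'' form $B = (I \times J) \cap \U$ for a specific pair of intervals $I, J \subseteq \R$, whose endpoint shapes are paired and constrained by the block type. Consequently the membership status of the four corner points is governed by only four bits $\alpha = \chi_I(x_1), \beta = \chi_I(y_1), \gamma = \chi_J(x_2), \delta = \chi_J(y_2)$, with $(p_1, p_2, p_3, p_4) = (\alpha\gamma, \beta\gamma, \alpha\delta, \beta\delta)$. I would then enumerate the achievable $(\alpha, \beta, \gamma, \delta)$-patterns and check, for each resulting corner pattern, that the three-term complex is exact.

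The main obstacle is precisely this last case analysis: one must show that the paired shapes of $I$ and $J$ across the five block types rule out the two corner patterns that would otherwise violate exactness, namely those in which exactly one of the two ``off-diagonal'' corners $(y_1, x_2)$ or $(x_1, y_2)$ lies in $B$ and the other three corners do not. Once these are ruled out block-by-block, middle-exactness of $K_B$ follows immediately for every remaining pattern, and the conclusion for $M$ is obtained by reassembling via the direct sum as above.
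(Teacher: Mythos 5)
The paper states this proposition without proof, so I evaluate your argument directly. Your strategy is sound: pointwise finite-dimensionality follows by applying the local finiteness condition to the compact singleton $\{u\}$; the reduction of middle-exactness from $M$ to the block summands $K_B$ is legitimate, since the three-term complex for $M$ is the direct sum over $B$ of the corresponding complexes for $K_B$ and direct sums preserve exactness in $\text{Vect}_K$; every block is indeed of product form $B = (I \times J) \cap \U$ for intervals $I, J$, as is clear from the five defining formulas; and an explicit enumeration confirms that the two corner patterns you single out, $(p_1,p_2,p_3,p_4) \in \{(0,1,0,0),(0,0,1,0)\}$, are exactly the ones compatible with the product constraint $(p_1,p_2,p_3,p_4)=(\alpha\gamma,\beta\gamma,\alpha\delta,\beta\delta)$ for which the complex fails exactness.

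You do, however, leave the decisive check --- that these two patterns are incompatible with the five block types --- as an acknowledged ``obstacle,'' so strictly there is a gap, albeit a small one that closes with a single observation. Pattern $(0,1,0,0)$ arises from $(\alpha,\beta,\gamma,\delta)=(0,1,1,0)$: since $y_1<x_1$, having $y_1\in I$ but $x_1\notin I$ forces $I$ to be bounded above, and since $x_2<y_2$, having $x_2\in J$ but $y_2\notin J$ forces $J$ to be bounded above. Pattern $(0,0,1,0)$ arises from $(1,0,0,1)$, which forces both $I$ and $J$ bounded below. But for every one of the five block types the factors $I$ and $J$ are never both bounded on the same side: for types 2 and 3 one factor is all of $\R$, while for types 1, 4 and 5 the two factors are bounded only on opposite sides (e.g.\ for $(a,b)_{\mathrm{BL}}$, $I=(a,\infty)$ is bounded below and $J=(-\infty,b)$ is bounded above, and for $[a,b]_{\mathrm{BL}}$ or $[b,a]_{\mathrm{BL}}$ it is the reverse). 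This rules out both bad patterns and completes your argument.
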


The converse holds, and was proved by Cochoy and Oudot in \cite{Cochoy16}.

\begin{theorem}[\cite{Cochoy16}] 
Let $M : \mathbb{U} \longrightarrow \text{Vect}_K$ be a pointwise finite dimensional and middle-exact module. Then $M$ is block-decomposable.
\end{theorem}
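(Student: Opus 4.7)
The plan is to reduce the problem to a finite combinatorial setting where a Gabriel-type argument applies, and then to pass to a limit using Krull--Schmidt uniqueness.

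First, I would restrict $M$ to finite grids. Given finitely many real values $a_1 < \cdots < a_N$, let $\mathbb{U}_N \subset \mathbb{U}$ denote the induced finite subposet $\{(a_i, a_j) : i < j\}$. The restriction $M|_{\mathbb{U}_N}$ is still pfd, and every interior square is exact by middle-exactness. I would prove by induction on $N$ that $M|_{\mathbb{U}_N}$ decomposes as a finite direct sum of restrictions $K_B|_{\mathbb{U}_N}$ with each $B$ a block of one of the five types from Figure~\ref{fig:level_blocks}. The induction step would peel off a corner of $\mathbb{U}_N$ and apply the Strong Diamond Principle internally to the exact square furnished by middle-exactness at that corner: this matches summands of $M$ across the diamond and reduces the problem to a strictly smaller grid, the base case ($N=1$) being trivial.

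Next, I would pass to the limit. For nested grids $\mathbb{U}_N \subset \mathbb{U}_{N'}$, the Krull--Schmidt theorem (valid in the pfd setting) forces the two decompositions to be compatible: each summand over $\mathbb{U}_{N'}$ restricts either to a summand over $\mathbb{U}_N$ or to zero. I would index each tracked summand by its pair of birth and death coordinates within the grid and take the limit over a cofinal system of grids whose values form a countable dense subset of $\mathbb{R}$. This assigns to each tracked summand a well-defined birth and death in $\mathbb{R} \cup \{\pm\infty\}$, and the support sets so obtained, classified by whether those coordinates are attained, yield exactly the five block types. This produces a candidate multiset $\mathbb{B}(M)$; local finiteness over compact $S \subset \mathbb{U}$ follows from pfd, since for each fixed $u \in \mathbb{U}$ at most $\dim M(u)$ blocks can contain $u$.

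The main obstacle will be the limiting step: controlling the behavior of birth/death coordinates as grids refine, so that they converge (rather than oscillate) and the resulting support sets fall into one of the five allowed block types without escaping to infinity along the diagonal $\{a=b\}$. This is the technical heart of \cite{Cochoy16} and requires a Mittag--Leffler-type argument on the inverse system of finite-dimensional decompositions. Once it is settled, verifying the isomorphism $M \simeq \bigoplus_{B \in \mathbb{B}(M)} K_B$ is pointwise routine: both sides have the same dimension at every $u \in \mathbb{U}$ by the finite-grid decomposition, and the structure maps induced by $u \preceq v$ are pinned down by middle-exactness together with the fact that each $K_B$ has identity-or-zero transitions prescribed by its support.
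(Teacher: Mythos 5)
The paper does not itself prove this theorem; it simply cites \cite{Cochoy16}, so there is no in-paper proof to compare against. Your strategy --- discretize to finite grids, decompose there via a Gabriel-type argument, and pass to the limit using Krull--Schmidt --- is genuinely different from the Cochoy--Oudot approach, which works in the continuous setting by constructing explicit vector-space complements (``sections'') of each $M(a,b)$ out of images, preimages, limits along the horizontal and vertical rays through $(a,b)$, and the exactness condition; neither a quiver-theoretic decomposition nor a grid-refinement limiting step appears in their argument, even in its discrete preliminaries.

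Two steps in your sketch do not go through as written. First, in the finite-grid induction you invoke the Strong Diamond Principle to ``peel a corner,'' but that principle (both versions, as stated in Section~3 of the paper) compares the barcodes of two \emph{zigzag} modules --- representations of type-$A$ quivers --- that differ by a single exact square. It matches intervals of two one-dimensional diagrams; it does not produce an internal direct-sum decomposition of a two-dimensional grid module, nor does it reduce such a module to one over a strictly smaller grid. Decomposing a pfd middle-exact representation of the finite poset $\mathbb{U}_N$ into blocks is already the genuine content of the discrete version of the theorem and needs its own argument, not Gabriel's theorem for $A_n$ plus diamond moves. Second, granting the finite case, Krull--Schmidt alone does not resolve the limit. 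It guarantees the \emph{multi-set} of indecomposable summands of $M|_{\mathbb{U}_N}$ is well defined and compatible under restriction, but it supplies no canonical bijection between the individual summands of a decomposition over $\mathbb{U}_{N'}$ and those over a coarser $\mathbb{U}_N$; your ``tracked summand'' is therefore undefined. Indexing by birth/death coordinates does not remove the ambiguity, since distinct blocks may share endpoints on a coarse grid, and a summand's endpoints on the finer grid need not coincide with any summand's endpoints on the coarser one. Making the Mittag--Leffler idea precise means producing a compatible system of projectors or complements across grids --- which is exactly the role of the section construction in \cite{Cochoy16} and is the content your sketch leaves unsupplied.
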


\subsection{Bottleneck distance and isometry theorem}

In order to compute the interleaving distance between block decomposable modules, we will introduce, as per classical persistence, a matching distance between multi-sets of blocks. To do so, we follow \cite{2018} and introduce the following partition of block barcodes. 

\begin{definition}
A block $B \subset \mathbb{U}$ is said to be:

\begin{itemize}
    \item of type \textbf{o}, if there exists $a<b$ in $\R$ such that $B=(a,b)_{\mathrm{BL}}$ ;
    
    \item of type \textbf{co}, if there exists $a<b$ in $\R$ such that $B=[a,b)_{\mathrm{BL}}$ or $(-\infty,b)_{\mathrm{BL}}$ ;
    
    \item of type \textbf{oc}, if there exists $a<b$ in $\R$ such that $B=(a,b]_{\mathrm{BL}}$ or $(a,\infty)_{\mathrm{BL}}$ ;
    
    \item of type \textbf{c}, if there exists $a,b\in \R$ in $\R$ such that $B=[a,b]_{\mathrm{BL}}$ or $[a,\infty)_{\mathrm{BL}}$ or $(-\infty,b]_{\mathrm{BL}}$. In the case where $a < b$, we say that $B$ is of subtype $\textbf{c}_1$. Otherwise, we say that $B$ is of subtype $\textbf{c}_2$.
\end{itemize}
\end{definition}

In the following, we will use the notation $\langle a,b \rangle_{\mathrm{BL}}$ when we do not want to specify the orientation of the brackets of the interval.
\begin{lemma}[\cite{2018}]
Let $a\leq b$ and $a'\leq b'$ in  $ \R\cup \{\pm \infty\}$ and $\varepsilon \geq 0$. Then:

\begin{enumerate}
    \item one has $K_{\langle a,b \rangle_{\mathrm{BL}}} \sim_{2 \varepsilon} 0$ if and only if one of the following is true:
    \begin{itemize}
        \item $\langle a,b \rangle_{\mathrm{BL}}$ is of type \textbf{co} or \textbf{oc} and $b-a \leq 2 \varepsilon$;
        
        \item $\langle a,b \rangle_{\mathrm{BL}}$ is of type \textbf{o} and $b-a \leq 4 \varepsilon$ ;
    \end{itemize}
    
    \item one has $K_{\langle a,b \rangle_{\mathrm{BL}}} \sim_{\varepsilon} K_{\langle a',b' \rangle_{\mathrm{BL}}}$ if and only if one of the following is true:
    
    \begin{itemize}
        \item the blocks $\langle a,b \rangle_{\mathrm{BL}}$ and $\langle a',b' \rangle_{\mathrm{BL}} $ are of the same type and $\max(|a-a'|,|b-b'|) \leq \varepsilon$ ; 
        \item $K_{\langle a,b \rangle_{\mathrm{BL}}} \sim_{2 \varepsilon} 0$ and $K_{\langle a',b' \rangle_{\mathrm{BL}}} \sim_{2 \varepsilon} 0$.
    \end{itemize}
\end{enumerate}

\end{lemma}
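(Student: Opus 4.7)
The plan is to reduce both assertions to computations about the smoothing morphism $\tau_t^{K_B}$. First I would note that $K_B \sim_\delta 0$ holds if and only if the smoothing $\tau_{2\delta}^{K_B}$ is the zero natural transformation: the interleaving diagram with $N = 0$ forces the composite $K_B \to 0 \to K_B[2\delta]$ to equal the smoothing, and the only such composite is zero; conversely, if the smoothing vanishes, then $f = g = 0$ realise the interleaving. Since $K_B(x_1, x_2) \in \{K, 0\}$ with identity or zero structure maps, the map $\tau_t^{K_B}(x_1, x_2)$ is the identity of $K$ precisely when both $(x_1, x_2)$ and its shift $(x_1 - t, x_2 + t)$ lie in $B$, and is zero otherwise. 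Thus $\tau_t^{K_B} = 0$ means no point of $B$ stays inside $B$ after shifting.

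For part (1), this becomes a short case analysis. For a type $\mathbf{o}$ block $(a, b)_{\mathrm{BL}}$, the shift of $(x, y)$ lies in $B$ iff $a + t < x < y < b - t$, so the smoothing vanishes iff $b - a \leq 2t$. For types $\mathbf{co}$ and $\mathbf{oc}$, only one end of the strip is finite, giving the weaker condition $b - a \leq t$. For type $\mathbf{c}$, the block is stable under the shift $(x,y) \mapsto (x-t, y+t)$, so the smoothing never vanishes, which explains its absence from the statement. Inserting the shift amount matching the interleaving parameter $2\varepsilon$ yields the stated numerical thresholds.

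For the easy direction of part (2), if both $K_B$ and $K_{B'}$ are $\sim_{2\varepsilon}$-interleaved with $0$, then the zero morphisms $f = g = 0$ already form an $\varepsilon$-interleaving, since both smoothings appearing in the diagram vanish by part (1). If instead the blocks share a type and satisfy $\max(|a - a'|, |b - b'|) \leq \varepsilon$, I define $f_{(x_1, x_2)} : K_B(x_1, x_2) \to K_{B'}(x_1 - \varepsilon, x_2 + \varepsilon)$ as the identity of $K$ whenever both source and target are non-zero and as $0$ otherwise; the same-type plus $\varepsilon$-proximity hypothesis guarantees that this assignment is natural, and an analogous $g$ makes the interleaving triangles commute pointwise.

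The main obstacle is the converse of part (2). Assume $K_B \sim_\varepsilon K_{B'}$ via $(f, g)$ and that neither module is $\sim_{2\varepsilon}$-interleaved with $0$. By the reduction above, there is some $(x_1, x_2)$ where the relevant smoothing $\tau^{K_B}$ is the identity of $K$; the interleaving commutation then forces $f_{(x_1, x_2)}$ and the corresponding component of $g[\varepsilon]$ to also be the identity, so the shifted point $(x_1 - \varepsilon, x_2 + \varepsilon)$ must lie in $B'$. Running this argument over the entire ``non-trivial region'' of $B$ produces an inclusion of that region into an $\varepsilon$-shift of $B'$, and the symmetric argument using $g$ yields the reverse inclusion. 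The sensitive step is translating these mutual inclusions into the conclusion that $B$ and $B'$ are of the same type with endpoints at most $\varepsilon$ apart; this requires checking each pair of block types separately, using that the distinction between open and closed ends (e.g., $\mathbf{o}$ versus $\mathbf{co}$) is detected by the presence or absence of $K_B$ or $K_{B'}$ at specific boundary points, which the mutual inclusions pin down.
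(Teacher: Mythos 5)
The paper itself states this lemma without proof, citing \cite{2018}, so there is no internal argument to compare against; I assess your proposal on its own merits. Your overall strategy—characterizing $\sim_\delta 0$ by vanishing of a smoothing morphism, then a case analysis over block types, and for the converse of part (2) tracking the non-trivial region of each block through the interleaving maps—is the natural one.

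However, the numerics do not close as written, and the gap is concrete. You correctly observe that with this paper's interleaving definition, $K_B \sim_\delta 0$ if and only if $\tau_{2\delta}^{K_B} = 0$, and you correctly compute $\tau_t^{K_B} = 0 \iff b-a \leq 2t$ for type $\mathbf{o}$ and $\tau_t^{K_B} = 0 \iff b-a \leq t$ for types $\mathbf{co}$, $\mathbf{oc}$. But then $K_B \sim_{2\varepsilon} 0$ corresponds to $t = 4\varepsilon$, yielding $b-a \leq 8\varepsilon$ (type $\mathbf{o}$) and $b-a \leq 4\varepsilon$ (types $\mathbf{co}$/$\mathbf{oc}$)—not the lemma's $4\varepsilon$ and $2\varepsilon$. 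Your remark that ``inserting the shift amount matching the interleaving parameter $2\varepsilon$ yields the stated numerical thresholds'' elides this factor-of-two discrepancy. Either the lemma is implicitly using a half-shift convention ($M[\varepsilon](x,y) := M(x-\varepsilon/2, y+\varepsilon/2)$, as in the cited source), in which case your reduction must read $K_B \sim_\delta 0 \iff \tau_{\delta}^{K_B}=0$ and the computations should be redone accordingly, or the numbers in the lemma cannot be recovered from this paper's definitions. The same issue undermines your ``easy direction'' of part (2): taking $f=g=0$ under the hypothesis that $K_B, K_{B'} \sim_{2\varepsilon} 0$ requires $\tau_{2\varepsilon}^{K_B}=\tau_{2\varepsilon}^{K_{B'}}=0$, but your reduction from $\sim_{2\varepsilon} 0$ only gives $\tau_{4\varepsilon}^{K_B}=0$. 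You should first pin down the shift convention consistently with the stated thresholds and then re-run the arithmetic; the converse of part (2), which you rightly flag as the delicate step, is otherwise sketched at a level that is plausible but not yet a proof.
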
 

\begin{definition} Let $\mathbb{B}_1,\mathbb{B}_2$ be two multisets of blocks, and $\varepsilon\geq 0$. An $\varepsilon$-matching between $\mathbb{B}_1$ and $\mathbb{B}_2$ is the data of two sub-multi-sets $\mathcal{X}_1 \subset \mathbb{B}_1$ and  $\mathcal{X}_2 \subset \mathbb{B}_2$ and a bijection $\sigma : \mathcal{X}_1 \longrightarrow \mathcal{X}_2$ satisfying:

\begin{itemize}
    \item for all $B \in \mathcal{X}_1$, $K_B \sim_\varepsilon K_{\sigma(B)}$;
    
    \item for all $B \in \mathbb{B}_1 \backslash \mathcal{X}_1 \cup  \mathbb{B}_2 \backslash \mathcal{X}_2$, $K_B \sim_{\varepsilon} 0$.
\end{itemize}

\end{definition}

\begin{definition}Let $\mathbb{B}_1,\mathbb{B}_2$ be two multisets of blocks. Their bottleneck distance is the possibly infinite number:
\[d_B(\mathbb{B}_1,\mathbb{B}_2) := \inf \{\varepsilon \geq 0 \mid ~\textnormal{$\mathbb{B}_1$ and $\mathbb{B}_2$ are $\varepsilon$-matched}\}.\]

\end{definition}

\begin{theorem}[\cite{Bjer16}]
Let $M$ and $N$ be two pointwise finite dimensional block-decomposable modules. Then:

\[d_I(M,N) = d_B(\mathbb{B}(M), \mathbb{B}(M)).\]
\end{theorem}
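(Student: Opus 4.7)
The plan is to establish both inequalities separately: the \emph{soundness} inequality $d_I(M,N) \leq d_B(\mathbb{B}(M),\mathbb{B}(N))$, and the \emph{completeness} inequality $d_B(\mathbb{B}(M),\mathbb{B}(N)) \leq d_I(M,N)$.

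For soundness, I would start from any $\varepsilon$-matching $\sigma : \mathcal{X}_1 \to \mathcal{X}_2$ between $\mathbb{B}(M)$ and $\mathbb{B}(N)$, and construct an $\varepsilon$-interleaving between $M$ and $N$ blockwise. The previous lemma yields, for each matched pair $(B,\sigma(B))$, explicit morphisms $f_B, g_B$ realizing an $\varepsilon$-interleaving of $K_B$ with $K_{\sigma(B)}$, and for each unmatched block $B$, morphisms witnessing $K_B \sim_\varepsilon 0$. Assembling these via the block decompositions $M \simeq \bigoplus_B K_B$ and $N \simeq \bigoplus_{B'} K_{B'}$ (using that the shift functor commutes with direct sums) produces morphisms $f : M \to N[\varepsilon]$ and $g : N \to M[\varepsilon]$ whose defining commutative diagrams decompose as direct sums of the blockwise diagrams, and therefore commute. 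Taking the infimum over $\varepsilon$ yields the soundness inequality.

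For completeness, the strategy is to extract an $\varepsilon$-matching from a given $\varepsilon$-interleaving $(f,g)$. The plan is to represent $f$ and $g$ as (possibly infinite) matrices with entries $f_{B,B'} : K_B \to K_{B'}[\varepsilon]$ and $g_{B',B} : K_{B'} \to K_B[\varepsilon]$, using the block decompositions of $M$ and $N$. One then studies which entries can be nonzero: a component $f_{B,B'}$ vanishes automatically unless $B$ and $B'$ are in a suitable position relative to the shift, and in that case the corresponding scalar can only be nonzero when $K_B$ and $K_{B'}$ are $\varepsilon$-close in the sense of the lemma. The goal is then to apply a Hall-type marriage argument to the bipartite graph whose edges are exactly these compatible pairs. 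Concretely, after restricting to blocks not already $\varepsilon$-interleaved with zero and localizing over compact subsets of $\mathbb{U}$ (where finitely many blocks intervene, by the local finiteness assumption on block-decomposable modules), one verifies that for any finite subset $\mathcal{F} \subset \mathbb{B}(M)$ the set of blocks in $\mathbb{B}(N)$ that are $\varepsilon$-close to some member of $\mathcal{F}$ has at least $|\mathcal{F}|$ elements. This Hall condition is established by choosing a test point $(u_B, v_B) \in \mathbb{U}$ deep inside each $B \in \mathcal{F}$ and extracting rank inequalities from the commutative square of the $\varepsilon$-interleaving evaluated at these points.

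The main obstacle lies precisely in verifying this Hall condition, and the subtlety is that the shift functor $[\varepsilon]$ acts differently on the five block shapes: an open block $(a,b)_{\mathrm{BL}}$ shrinks under $[-\varepsilon]$ whereas a diagonal block $[b,a]_{\mathrm{BL}}$ grows, so one cannot directly compare blocks of different types. The strategy is to treat each type \textbf{o}, \textbf{co}, \textbf{oc}, $\textbf{c}_1$, $\textbf{c}_2$ separately, showing that the matrix entries $f_{B,B'}$ connecting blocks of mismatched types can only be nonzero when both blocks are already $\varepsilon$-interleaved with zero, so those pairs never need to be matched. Within each type, the Hall argument reduces to a one-parameter rank count analogous to the classical isometry theorem. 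Concatenating the partial matchings obtained type by type yields the desired global $\varepsilon$-matching, which establishes completeness and concludes the proof.
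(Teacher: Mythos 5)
The paper states this theorem without proof, citing Bjerkevik [Bjer16], so there is no in-paper argument to compare against. Your soundness direction (building an interleaving from a matching) is correct in outline: blockwise interleavings furnished by the preceding lemma assemble into a global one via direct sums, since shift and smoothing commute with $\oplus$.

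The completeness direction contains a genuine gap at the decisive step. You propose to verify Hall's condition by choosing a test point ``deep inside each block'' and reading off rank inequalities from the interleaving square. Over a two-parameter poset such as $\mathbb{U}$ this naive pointwise count does not suffice -- it is precisely the sort of argument that breaks down for general $2$-parameter interval-decomposable modules, which is why the isometry theorem here is nontrivial. Bjerkevik's proof introduces a carefully designed pre-order on the multiset of intervals, shows that the nonzero matrix entries of the interleaving morphisms respect this pre-order, and then constructs the matching by descending induction along it; the rank inequality certifying Hall's condition is only available once that combinatorial structure is in place, and your sketch omits it entirely. Moreover, two auxiliary claims you rely on are false as stated: a nonzero component $f_{B,B'}$ does not force $K_B \sim_\varepsilon K_{B'}$ (already in one parameter there is a nonzero morphism $K_{[0,10)} \to K_{[-5,1)}$, yet the two intervals are far from $\varepsilon$-interleaved for small $\varepsilon$), and there is no reason cross-type entries of $f$ must vanish unless both blocks are $\varepsilon$-trivial -- this assertion would itself require proof and is not how Bjerkevik's argument proceeds. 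The type-by-type reduction you propose is therefore unavailable, and the sketch does not close.
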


Therefore, when $f$ is continuous and is such that $\mathcal{L}_p(u)$ is finite dimensional for all $u\in \mathbb{U}$, the functor $\mathcal{L}_p$ is block decomposable. In this situation, we call $\mathbb{B}(\mathcal{L}_p(f))$ the $p$-th levelsets barcode of $f$.

Wrapping together the results of this section, we obtain the following.

\begin{corollary}

Let $f,g : \mathbb{X} \longrightarrow \R$ be  continuous functions such that for all $p\in \Z_{\geq 0}$, $\mathcal{L}_p(f)$ and $\mathcal{L}_p(g)$ are pointwise finite dimensional, then:

\[d_I(\mathcal{L}_p(f), \mathcal{L}_p(g)) = d_B(\mathbb{B}(\mathcal{L}_p(f)), \mathbb{B}(\mathcal{L}_p(g)) \leq \sup_{x\in \mathbb{X}}\|f(x) - g(x)\|.\]

\end{corollary}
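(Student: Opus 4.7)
The plan is to chain together the four main results proved earlier in this section. First I would observe that since $f$ and $g$ are continuous, the previous proposition (based on the Mayer--Vietoris exact sequence) tells us that $\mathcal{L}_p(f)$ and $\mathcal{L}_p(g)$ are middle-exact functors $\mathbb{U} \to \text{Vect}_K$. Combined with the pointwise finite dimensionality hypothesis, the Cochoy--Oudot theorem then applies and yields that both modules are block-decomposable, so that the multi-sets $\mathbb{B}(\mathcal{L}_p(f))$ and $\mathbb{B}(\mathcal{L}_p(g))$ are well-defined.

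Next, I would invoke Bjerkevik's isometry theorem, applicable precisely in this setting of pointwise finite dimensional block-decomposable modules, to get the equality
\[ d_I(\mathcal{L}_p(f), \mathcal{L}_p(g)) = d_B(\mathbb{B}(\mathcal{L}_p(f)), \mathbb{B}(\mathcal{L}_p(g))). \]
Finally, the inequality on the right-hand side is exactly the content of the stability theorem previously stated for levelsets persistence, which bounds the interleaving distance by the sup-norm $\sup_{x \in \mathbb{X}} \|f(x)-g(x)\|$. Concatenating these two facts gives the claimed chain of (in)equalities.

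There is essentially no obstacle here, since every ingredient has already been established in the preceding subsections; the corollary is really a packaging statement bringing them together. The only point requiring any care is to explicitly verify the hypotheses of each theorem as it is invoked (continuity for middle-exactness, pointwise finite dimensionality plus middle-exactness for block-decomposability, and block-decomposability plus pointwise finite dimensionality for the isometry theorem), which is why the statement is formulated with the pointwise finite dimensionality assumption on both $\mathcal{L}_p(f)$ and $\mathcal{L}_p(g)$.
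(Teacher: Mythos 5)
Your argument is correct and is essentially the proof the paper intends (the paper states the corollary with only the remark ``Wrapping together the results of this section''): continuity gives middle-exactness, which together with pointwise finite dimensionality yields block-decomposability by Cochoy--Oudot, so Bjerkevik's isometry theorem gives the equality, and the stability theorem supplies the inequality. Nothing is missing.
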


\section{Relative Interlevelsets Cohomology}
\label{pyr2}

As we already saw, an algebraic stability theory for levelsets persistence was successfully introduced in \cite{2018}, by following almost the same strategy than for one-parameter ordinary persistence. Nevertheless, the counterpart construction regarding extended persistence is more complicated, as extended persistence is in some sense more intricate than levelsets persistence. In \cite{bauer2021structure}, the authors define the Relative Interlevelsets persistence, as a continuous and functorial analogue to the Mayer-Vietoris pyramid (definition \ref{d:MVpyramid}) associated to a real-valued function. It provides a stable, continuous and functorial way of deducing the levelsets persistence barcode of a pair $(\X,f)$ from its extended persistence barcode, and vice-versa.

\bigskip

\noindent \textbf{Notation.} Consider the inverted plane $P:=\R\times\R^{\mathrm{op}}$, where $\R=(\R,\leq)$ and $\R^{\mathrm{op}}=(\R,\geq)$ are posets. One obtains a poset relation on $P$ by $(a,b)\preceq (c,d)$ if and only if $a\leq c$ and $b\geq d$. Moreover, given a point $m\in\M$ where $\M$ is the subset of $P$ defined below, one defines the sets $$\uparrow(m):=\{u\in\M\mid m\preceq u\}\text{ and }\downarrow(m):=\{u\in\M\mid u\preceq m\}.$$ For a subset $S\subset\M$, one defines similarly the sets $$\uparrow(S):=\{u\in\M\mid m\preceq u, \forall m\in S\}\text{ and }\downarrow(S):=\{u\in\M\mid u\preceq m, \forall m\in S\}.$$ 

\subsection{The RISC Functor}

\begin{definition}[Big Strip]
The subset $\M\subset P$ is defined as the convex hull formed by the lines $l_1:=\{(x,y)\in P\mid y=1-x\}$ and $l_2:=\{(x,y)\in P\mid y=-1-x\}$. We call $\M$ the \emph{big strip}, and consider its poset structure inherited from $P$.
\end{definition}

\noindent Write the extended real line as $\Bar{R}:=\R\cup\{\pm\infty\}$. Let $\blacktriangle:\Bar{R}\rightarrow \M$ be an embedding such that the injected copy of $\Bar{R}$ is orthogonal to $l_1$ and goes through the origin of $P$. Furthermore, we write $\bigstar=\mathrm{Im}_\blacktriangle(\Bar{R})$ for the injected copy of $\Bar{R}$ (\textit{cf.} Figure \ref{fig:big_strip}). \\

\begin{figure}
    \centering

\tikzset{every picture/.style={line width=0.75pt}} 

\begin{tikzpicture}[x=0.75pt,y=0.75pt,yscale=-1,xscale=1]

\draw  [color={rgb, 255:red, 155; green, 155; blue, 155 }  ,draw opacity=1 ][fill={rgb, 255:red, 155; green, 155; blue, 155 }  ,fill opacity=0.13 ] (194.5,73.5) -- (451,330) -- (310.5,330.5) -- (54,74) -- cycle ;
\draw    (85,182.01) -- (408,183) ;
\draw [shift={(83,182)}, rotate = 0.18] [color={rgb, 255:red, 0; green, 0; blue, 0 }  ][line width=0.75]    (10.93,-3.29) .. controls (6.95,-1.4) and (3.31,-0.3) .. (0,0) .. controls (3.31,0.3) and (6.95,1.4) .. (10.93,3.29)   ;
\draw    (230,75.5) -- (230,303.5) ;
\draw [shift={(230,73.5)}, rotate = 90] [color={rgb, 255:red, 0; green, 0; blue, 0 }  ][line width=0.75]    (10.93,-3.29) .. controls (6.95,-1.4) and (3.31,-0.3) .. (0,0) .. controls (3.31,0.3) and (6.95,1.4) .. (10.93,3.29)   ;
\draw [color={rgb, 255:red, 57; green, 107; blue, 170 }  ,draw opacity=1 ]   (195.5,217) -- (267.5,147) ;
\draw [shift={(267.5,147)}, rotate = 315.81] [color={rgb, 255:red, 57; green, 107; blue, 170 }  ,draw opacity=1 ][fill={rgb, 255:red, 57; green, 107; blue, 170 }  ,fill opacity=1 ][line width=0.75]      (0, 0) circle [x radius= 3.35, y radius= 3.35]   ;
\draw [shift={(195.5,217)}, rotate = 315.81] [color={rgb, 255:red, 57; green, 107; blue, 170 }  ,draw opacity=1 ][fill={rgb, 255:red, 57; green, 107; blue, 170 }  ,fill opacity=1 ][line width=0.75]      (0, 0) circle [x radius= 3.35, y radius= 3.35]   ;
\draw  [color={rgb, 255:red, 255; green, 255; blue, 255 }  ,draw opacity=1 ][fill={rgb, 255:red, 255; green, 255; blue, 255 }  ,fill opacity=1 ] (53,58) -- (83,58) -- (83,205) -- (53,205) -- cycle ;
\draw  [color={rgb, 255:red, 255; green, 255; blue, 255 }  ,draw opacity=1 ][fill={rgb, 255:red, 255; green, 255; blue, 255 }  ,fill opacity=1 ] (211.5,68.5) -- (211.5,74.5) -- (64.5,74.5) -- (64.5,68.5) -- cycle ;
\draw  [color={rgb, 255:red, 255; green, 255; blue, 255 }  ,draw opacity=1 ][fill={rgb, 255:red, 255; green, 255; blue, 255 }  ,fill opacity=1 ] (451,330) -- (451,335.5) -- (304,335.5) -- (304,330) -- cycle ;
\draw  [color={rgb, 255:red, 255; green, 255; blue, 255 }  ,draw opacity=1 ][fill={rgb, 255:red, 255; green, 255; blue, 255 }  ,fill opacity=1 ] (427,209) -- (451,209) -- (451,356) -- (427,356) -- cycle ;

\draw (262,295.4) node [anchor=north west][inner sep=0.75pt]    {$l_{1}$};
\draw (388,245.4) node [anchor=north west][inner sep=0.75pt]    {$l_{2}$};
\draw (173,222.4) node [anchor=north west][inner sep=0.75pt]  [font=\footnotesize]  {$\bigstar $};
\draw (237,74.4) node [anchor=north west][inner sep=0.75pt]    {$\mathbb{R}$};
\draw (88,195.4) node [anchor=north west][inner sep=0.75pt]    {$\mathbb{R}^{\text{op}}$};
\draw (136,107.4) node [anchor=north west][inner sep=0.75pt]    {$\mathbb{M}$};

\end{tikzpicture}

    \caption{The big strip $\M$ containing $\bigstar=\mathrm{Im}\blacktriangle$ as a subset.}
    \label{fig:big_strip}
\end{figure}

\noindent Let $T\in\mathrm{End}(\M)$ be the invertible endomorphism of posets defined as follows. For $m\in \M$, let $h_1$ (resp. $v_1$) be the horizontal (resp. vertical) line passing through $m$. Let $a$ be the intersection of $l_2$ and $v_1$. Let $h_2$ be the horizontal line passing through $a$, and let $b$ be the intersection of $l_1$ and $h_1$. Finally, let $v_2$ be the vertical line passing through $b$. For $m\in\M$, $T(m)$ is defined as the intersection point of $v_2$ and $h_2$ (\textit{cf.} Figure \ref{fig:endo_T}).

\begin{figure}
    \centering

\tikzset{every picture/.style={line width=0.75pt}} 

\begin{tikzpicture}[x=0.75pt,y=0.75pt,yscale=-1,xscale=1]

\draw  [fill={rgb, 255:red, 74; green, 144; blue, 226 }  ,fill opacity=0 ] (232.75,134.75) -- (307.5,209.5) -- (232.75,284.25) -- (158,209.5) -- cycle ;
\draw    (120.01,172) -- (158,209.5) ;
\draw    (225,127) -- (340,242) ;
\draw    (232.75,284.25) -- (249.75,301.25) ;
\draw  [dash pattern={on 4.5pt off 4.5pt}]  (249.75,301.25) -- (271.25,323.25) ;
\draw  [dash pattern={on 4.5pt off 4.5pt}]  (340,242) -- (378.25,280.25) ;
\draw  [dash pattern={on 4.5pt off 4.5pt}]  (80.01,132) -- (120.01,172) ;
\draw  [dash pattern={on 4.5pt off 4.5pt}]  (187.5,89.5) -- (225,127) ;
\draw    (232.75,134.75) -- (232.75,284.25) ;
\draw    (158,209.5) -- (307.5,209.5) ;
\draw [color={rgb, 255:red, 57; green, 107; blue, 170 }  ,draw opacity=1 ][line width=2.25]    (158,209.5) -- (232.75,134.75) ;
\draw  [draw opacity=0][fill={rgb, 255:red, 226; green, 130; blue, 130 }  ,fill opacity=1 ] (246.5,252.25) .. controls (246.5,250.87) and (247.62,249.75) .. (249,249.75) .. controls (250.38,249.75) and (251.5,250.87) .. (251.5,252.25) .. controls (251.5,253.63) and (250.38,254.75) .. (249,254.75) .. controls (247.62,254.75) and (246.5,253.63) .. (246.5,252.25) -- cycle ;
\draw  [draw opacity=0][fill={rgb, 255:red, 226; green, 130; blue, 130 }  ,fill opacity=1 ] (198,152.25) .. controls (198,150.87) and (199.12,149.75) .. (200.5,149.75) .. controls (201.88,149.75) and (203,150.87) .. (203,152.25) .. controls (203,153.63) and (201.88,154.75) .. (200.5,154.75) .. controls (199.12,154.75) and (198,153.63) .. (198,152.25) -- cycle ;
\draw  [color={rgb, 255:red, 0; green, 0; blue, 0 }  ,draw opacity=0.55 ][dash pattern={on 0.84pt off 2.51pt}] (200.5,152.25) -- (249.5,152.25) -- (249.5,252.25) -- (200.5,252.25) -- cycle ;

\draw (139,209.4) node [anchor=north west][inner sep=0.75pt]  [font=\footnotesize]  {$\bigstar $};
\draw (251,234.57) node [anchor=north west][inner sep=0.75pt]    {$m$};
\draw (162.17,134.4) node [anchor=north west][inner sep=0.75pt]    {$T( m)$};

\end{tikzpicture}

    \caption{Schematic picture of the map $T$.}
    \label{fig:endo_T}
\end{figure}
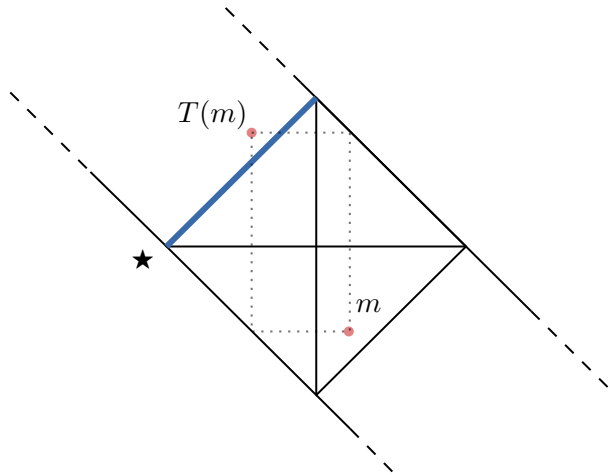

\begin{remark}
The map $T$ acts on $\M$ by composition when considering the action $\Z\cong \langle T\rangle\circlearrowright\M$ given by $k\cdot m = T^k(m)$ for $m\in\M$. This action induces a tessellation of the big strip $\M$ by seeing it as the orbit of $D$, introduced in Definition \ref{D} below. 
\end{remark}

\begin{definition}[Fundamental Domain]
\label{D}
The \emph{fundamental domain} of $\M$ is defined as $$D:=\downarrow(\bigstar)\setminus T^{-1}(\downarrow(\bigstar)).$$
\end{definition}

\begin{figure}
    \centering
\tikzset{every picture/.style={line width=0.75pt}} 

\tikzset{every picture/.style={line width=0.75pt}} 

\begin{tikzpicture}[x=0.75pt,y=0.75pt,yscale=-1,xscale=1]

\draw  [fill={rgb, 255:red, 74; green, 144; blue, 226 }  ,fill opacity=0.45 ] (210.75,114.75) -- (285.5,189.5) -- (210.75,264.25) -- (136,189.5) -- cycle ;
\draw   (285.5,189.5) -- (360.25,264.25) -- (285.5,339) -- (210.75,264.25) -- cycle ;
\draw    (98.01,152) -- (136,189.5) ;
\draw    (203,107) -- (388,292) ;
\draw    (285.5,339) -- (302.5,356) ;
\draw  [dash pattern={on 4.5pt off 4.5pt}]  (302.5,356) -- (324,377.99) ;
\draw  [dash pattern={on 4.5pt off 4.5pt}]  (388,292) -- (426.25,330.25) ;
\draw  [dash pattern={on 4.5pt off 4.5pt}]  (58.01,112) -- (98.01,152) ;
\draw  [dash pattern={on 4.5pt off 4.5pt}]  (165.5,69.5) -- (203,107) ;
\draw    (424,278) -- (183.41,37.41) (346.93,206.58) -- (352.58,200.93)(272.68,132.34) -- (278.34,126.68)(198.43,58.09) -- (204.09,52.43) ;
\draw [shift={(182,36)}, rotate = 45] [color={rgb, 255:red, 0; green, 0; blue, 0 }  ][line width=0.75]    (10.93,-3.29) .. controls (6.95,-1.4) and (3.31,-0.3) .. (0,0) .. controls (3.31,0.3) and (6.95,1.4) .. (10.93,3.29)   ;
\draw [color={rgb, 255:red, 57; green, 107; blue, 170 }  ,draw opacity=1 ][line width=2.25]    (136,189.5) -- (210.75,114.75) ;

\draw (125,117.4) node [anchor=north west][inner sep=0.75pt]    {$T( D)$};
\draw (261,254.4) node [anchor=north west][inner sep=0.75pt]    {$T^{-1}( D)$};
\draw (337,322.4) node [anchor=north west][inner sep=0.75pt]    {$T^{-2}( D)$};
\draw (205,183.4) node [anchor=north west][inner sep=0.75pt]    {$D$};
\draw (163,15.4) node [anchor=north west][inner sep=0.75pt]    {$\mathbb{Z}$};
\draw (353,184.4) node [anchor=north west][inner sep=0.75pt]    {$-1$};
\draw (283,108.4) node [anchor=north west][inner sep=0.75pt]    {$0$};
\draw (207,35.4) node [anchor=north west][inner sep=0.75pt]    {$1$};
\draw (116,191.4) node [anchor=north west][inner sep=0.75pt]  [font=\footnotesize]  {$\bigstar $};

\end{tikzpicture}

\caption{Tessellation on $\M$ induced by the action $\Z\cong \langle T\rangle\circlearrowright\M$.}
    \label{fig:tessellation}
\end{figure}

\noindent The goal is to define a functor that \textit{reads} $\M$ as a gluing of all the \textit{homology pyramids} (those are diagrams of vector spaces obtained by applying a homology functor to the pyramid introduced in Section \ref{pyr1}). To this end, one begins by defining a morphism of posets $\rho$ that associates to each point $m\in\M$ a pair of open intervals $(\rho_1(m),\rho_2(m))$.

\begin{definition}
\label{rho_def}
One defines the map 
\begin{equation*}\rho:
\begin{cases}
             \M\rightarrow \mathcal{P}:=\mathrm{Op}(\R)\times\mathrm{Op}(\R)\\
             m\mapsto \left(\blacktriangle^{-1}(\mathrm{int}(\downarrow T(m))),\blacktriangle^{-1}(\M\setminus\uparrow m)\right),
       \end{cases}
\end{equation*}
where $\mathrm{Op}(\R)$ denotes the set of open sets of $\R$ (for the standard topology).
\end{definition}

\noindent One can endow $\mathcal{P}$ with a poset relation $\preceq_{\mathcal{P}}$ defined as follows. For two pairs $(I_1,J_1), (I_2,J_2)\in\mathcal{P}$, one has $(I_1,J_1)\preceq_{\mathcal{P}} (I_2,J_2)$ if and only if $I_1\subseteq I_2$ and $J_1\subseteq J_2$. This way, the map $\rho:\M\rightarrow\mathcal{P}$ becomes a morphism of posets (\textit{i.e.} is monotone).

\begin{figure}
    \centering

\tikzset{every picture/.style={line width=0.75pt}} 

\begin{tikzpicture}[x=0.75pt,y=0.75pt,yscale=-1,xscale=1]

\draw  [fill={rgb, 255:red, 74; green, 144; blue, 226 }  ,fill opacity=0.45 ] (210.75,114.75) -- (285.5,189.5) -- (210.75,264.25) -- (136,189.5) -- cycle ;
\draw    (98.01,152) -- (136,189.5) ;
\draw    (203,107) -- (318,222) ;
\draw    (210.75,264.25) -- (227.75,281.25) ;
\draw  [dash pattern={on 4.5pt off 4.5pt}]  (227.75,281.25) -- (249.25,303.25) ;
\draw  [dash pattern={on 4.5pt off 4.5pt}]  (318,222) -- (356.25,260.25) ;
\draw  [dash pattern={on 4.5pt off 4.5pt}]  (58.01,112) -- (98.01,152) ;
\draw  [dash pattern={on 4.5pt off 4.5pt}]  (165.5,69.5) -- (203,107) ;
\draw    (210.75,114.75) -- (210.75,264.25) ;
\draw    (136,189.5) -- (285.5,189.5) ;
\draw [color={rgb, 255:red, 57; green, 107; blue, 170 }  ,draw opacity=1 ][line width=2.25]    (136,189.5) -- (210.75,114.75) ;
\draw  [dash pattern={on 0.84pt off 2.51pt}] (234.88,154) -- (234.88,225) -- (186.63,225) -- (186.63,154) -- cycle ;
\draw  [draw opacity=0][fill={rgb, 255:red, 226; green, 130; blue, 130 }  ,fill opacity=1 ] (232.38,225) .. controls (232.38,223.62) and (233.49,222.5) .. (234.88,222.5) .. controls (236.26,222.5) and (237.38,223.62) .. (237.38,225) .. controls (237.38,226.38) and (236.26,227.5) .. (234.88,227.5) .. controls (233.49,227.5) and (232.38,226.38) .. (232.38,225) -- cycle ;
\draw  [draw opacity=0][fill={rgb, 255:red, 226; green, 130; blue, 130 }  ,fill opacity=1 ] (184.13,225) .. controls (184.13,223.62) and (185.25,222.5) .. (186.63,222.5) .. controls (188.01,222.5) and (189.13,223.62) .. (189.13,225) .. controls (189.13,226.38) and (188.01,227.5) .. (186.63,227.5) .. controls (185.25,227.5) and (184.13,226.38) .. (184.13,225) -- cycle ;
\draw  [draw opacity=0][fill={rgb, 255:red, 226; green, 130; blue, 130 }  ,fill opacity=1 ] (184.13,154) .. controls (184.13,152.62) and (185.25,151.5) .. (186.63,151.5) .. controls (188.01,151.5) and (189.13,152.62) .. (189.13,154) .. controls (189.13,155.38) and (188.01,156.5) .. (186.63,156.5) .. controls (185.25,156.5) and (184.13,155.38) .. (184.13,154) -- cycle ;
\draw  [draw opacity=0][fill={rgb, 255:red, 226; green, 130; blue, 130 }  ,fill opacity=1 ] (232.38,154) .. controls (232.38,152.62) and (233.49,151.5) .. (234.88,151.5) .. controls (236.26,151.5) and (237.38,152.62) .. (237.38,154) .. controls (237.38,155.38) and (236.26,156.5) .. (234.88,156.5) .. controls (233.49,156.5) and (232.38,155.38) .. (232.38,154) -- cycle ;
\draw    (239.38,149) -- (272.57,116.4) ;
\draw [shift={(274,115)}, rotate = 135.52] [color={rgb, 255:red, 0; green, 0; blue, 0 }  ][line width=0.75]    (10.93,-3.29) .. controls (6.95,-1.4) and (3.31,-0.3) .. (0,0) .. controls (3.31,0.3) and (6.95,1.4) .. (10.93,3.29)   ;
\draw [shift={(239.38,149)}, rotate = 135.52] [color={rgb, 255:red, 0; green, 0; blue, 0 }  ][line width=0.75]    (0,5.59) -- (0,-5.59)   ;
\draw    (181.25,148.5) -- (150.41,117.42) ;
\draw [shift={(149,116)}, rotate = 45.22] [color={rgb, 255:red, 0; green, 0; blue, 0 }  ][line width=0.75]    (10.93,-3.29) .. controls (6.95,-1.4) and (3.31,-0.3) .. (0,0) .. controls (3.31,0.3) and (6.95,1.4) .. (10.93,3.29)   ;
\draw [shift={(181.25,148.5)}, rotate = 45.22] [color={rgb, 255:red, 0; green, 0; blue, 0 }  ][line width=0.75]    (0,5.59) -- (0,-5.59)   ;
\draw    (182.13,230) -- (148.42,263.59) ;
\draw [shift={(147,265)}, rotate = 315.1] [color={rgb, 255:red, 0; green, 0; blue, 0 }  ][line width=0.75]    (10.93,-3.29) .. controls (6.95,-1.4) and (3.31,-0.3) .. (0,0) .. controls (3.31,0.3) and (6.95,1.4) .. (10.93,3.29)   ;
\draw [shift={(182.13,230)}, rotate = 315.1] [color={rgb, 255:red, 0; green, 0; blue, 0 }  ][line width=0.75]    (0,5.59) -- (0,-5.59)   ;
\draw    (240.11,230.05) -- (281.55,269.62) ;
\draw [shift={(283,271)}, rotate = 223.68] [color={rgb, 255:red, 0; green, 0; blue, 0 }  ][line width=0.75]    (10.93,-3.29) .. controls (6.95,-1.4) and (3.31,-0.3) .. (0,0) .. controls (3.31,0.3) and (6.95,1.4) .. (10.93,3.29)   ;
\draw [shift={(240.11,230.05)}, rotate = 223.68] [color={rgb, 255:red, 0; green, 0; blue, 0 }  ][line width=0.75]    (0,5.59) -- (0,-5.59)   ;
\draw  [dash pattern={on 0.84pt off 2.51pt}]  (234.88,227.5) -- (235.11,232.05) -- (235.5,239.5) ;
\draw  [dash pattern={on 0.84pt off 2.51pt}]  (237.38,225) -- (248.99,225.05) -- (242.5,225) ;
\draw  [fill={rgb, 255:red, 0; green, 0; blue, 0 }  ,fill opacity=1 ] (234.5,238.5) .. controls (234.5,237.95) and (234.95,237.5) .. (235.5,237.5) .. controls (236.05,237.5) and (236.5,237.95) .. (236.5,238.5) .. controls (236.5,239.05) and (236.05,239.5) .. (235.5,239.5) .. controls (234.95,239.5) and (234.5,239.05) .. (234.5,238.5) -- cycle ;
\draw  [fill={rgb, 255:red, 0; green, 0; blue, 0 }  ,fill opacity=1 ] (248.99,225.05) .. controls (248.99,224.49) and (249.44,224.05) .. (249.99,224.05) .. controls (250.54,224.05) and (250.99,224.49) .. (250.99,225.05) .. controls (250.99,225.6) and (250.54,226.05) .. (249.99,226.05) .. controls (249.44,226.05) and (248.99,225.6) .. (248.99,225.05) -- cycle ;
\draw  [fill={rgb, 255:red, 0; green, 0; blue, 0 }  ,fill opacity=1 ] (233,246) -- (237,254) -- (229,254) -- cycle ;
\draw  [fill={rgb, 255:red, 0; green, 0; blue, 0 }  ,fill opacity=1 ] (257,228) -- (261,236) -- (253,236) -- cycle ;

\draw (214,170.4) node [anchor=north west][inner sep=0.75pt]    {$D$};
\draw (117,189.4) node [anchor=north west][inner sep=0.75pt]  [font=\footnotesize]  {$\bigstar $};
\draw (261.99,224.45) node [anchor=north west][inner sep=0.75pt]  [font=\small]  {$b$};
\draw (238.5,241.9) node [anchor=north west][inner sep=0.75pt]  [font=\small]  {$a$};
\draw (277,95.4) node [anchor=north west][inner sep=0.75pt]    {$([ a,\infty ) ,[ b,\infty ))$};
\draw (285,274.4) node [anchor=north west][inner sep=0.75pt]    {$([ a,b] ,\emptyset )$};
\draw (20,269.4) node [anchor=north west][inner sep=0.75pt]    {$(( -\infty ,b] ,( -\infty ,a])$};
\draw (64,91.4) node [anchor=north west][inner sep=0.75pt]    {$(\mathbb{R} ,\mathbb{R} \setminus ( a,b))$};

\end{tikzpicture}

    \caption{Schematic functioning of the map $\rho$.}
    \label{fig:rho_map}
\end{figure}
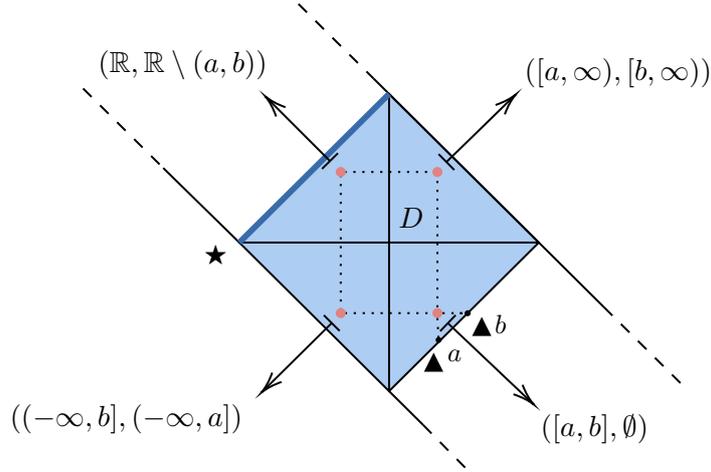

\begin{definition}[Operator $(\cdot)^\Z$]
For a category $\mathcal{C}$, one can define $\mathcal{C}^\Z$ as the category having for objects maps $M^\bullet:\Z\rightarrow\mathcal{C}:n\mapsto M^n$. Morphisms of $\mathcal{C}^\Z$ are  defined pointwise.
\end{definition}

\noindent \textbf{Notation.} For a category $\mathcal{C}$, let $\mathcal{C}^{\mathrm{op}}$ denote the opposite category, with the same objects and arrows reversed. Let $\mathrm{vect}_K$ denote the category of finite dimensional $K$-vector spaces with linear maps as morphisms, and let $\mathrm{Vect}_K^\Z$ denote the category of $\Z$-graded vector spaces over $K$ with pointwise linear maps as morphisms.

\begin{definition}[Evaluation]
Let $\mathcal{C}$ be a category. Define the \emph{evaluation functor} \begin{equation*}
    \mathrm{ev}^0: 
    \begin{cases}
    \mathcal{C}^\Z\rightarrow \mathcal{C}\\
    M^\bullet\mapsto M^0
    \end{cases}.
\end{equation*}
\end{definition}

\noindent Now, one can define the notion of \textit{extended persistence diagram} associated to a continuous function $f:\X\rightarrow\R$ in the sense of  \cite{bauer2021structure}. First, one needs to define the \textit{RISC functor} associated to $f$, as well as the persistence diagram of a contravariant functor $F:\M^{\mathrm{op}}\rightarrow \mathrm{Vect}_K$. Consider the strip $\M$ as a category whose objects are the points on $\M$ and whose arrows $\rightarrow$ are given by the relations $\preceq$. Let $\mathrm{int}(\M)$ denote the interior of the strip $\M$ and $\partial\M$ its boundary. Finally, consider a general cohomology theory $\mathcal{H}^\bullet$ that takes values in the category $\mathrm{Vect}_K^\Z$, sending weak equivalences to isomorphisms.\\

\noindent Consider $D$ as a category whose objects are the points of $D$ and whose morphisms are the relations $\preceq$. Define the map  $$F'_f:=F':D\rightarrow \left (\mathrm{Vect}_K^\Z\right )^{\mathrm{op}}, m\mapsto \mathcal{H}^\bullet(f^{-1}(\rho_1(m)),f^{-1}(\rho_2(m)))$$ assigning to each point of $\downarrow(\bigstar)$ a homology group, where $p\in\N$ is the only integer such that $T^p(m)\in D$ and $\rho=(\rho_1,\rho_2)$.\\

\noindent \textbf{Fact.} \textit{$F'$ is a contravariant functor.} Indeed, for two elements $u\preceq v\in\M$, we have $\rho_i(u)\subseteq \rho_i(v)$, $i=1,2$ and thus we obtain a linear map $F'_f(v)\longrightarrow F'_f(u)$. This way, $F'_f$ inherits the properties of a contravariant functor from the cohomology theory $\mathcal{H}^\bullet$. \\

\noindent Let $\Sigma:(\mathrm{Vect}_K^\Z)^{\mathrm{op}}\rightarrow (\mathrm{Vect}_K^\Z)^{\mathrm{op}}$ be the degree-shift endofunctor acting as $\Sigma(M^\bullet)=M^{\bullet-1}$. One can extend the functor $F'$ to a functor $F:\M \longrightarrow \left (\mathrm{Vect}_K^\Z\right )^{\mathrm{op}}$ such that the square 

\begin{center}
\begin{tikzcd}
\M \arrow[d, "T"] \arrow[r, "F"] & (\mathrm{Vect}_K^\Z)^{\mathrm{op}} \arrow[d, "\Sigma"] \\
\M \arrow[r, "F"]                & (\mathrm{Vect}_K^\Z)^{\mathrm{op}}                    
\end{tikzcd}
\end{center}
commutes, \textit{i.e.} $\Sigma\circ F = F\circ T$. \\

\noindent Note that the transformation $T$ corresponds to degree-shifts, and thus there is unnecessary information within the extended functor $F$. First, consider the opposite functor $F^{\mathrm{op}}:\M^{\mathrm{op}}\rightarrow\mathrm{Vect}_K^\Z$. Second, compose it with an evaluation map to obtain the desired functor $$h(f):=\mathrm{ev}^0\circ F^{\mathrm{op}}:\M^{\mathrm{op}}\rightarrow\mathrm{Vect}_K,$$ where there is no redundancy in the information it contains.

\begin{definition}[RISC]
Let $f:\X\rightarrow\R$ be a continuous function. The \emph{relative interlevelsets cohomology (RISC) functor} of $(\X,f)$ is defined as the functor $$h(f):=\mathrm{ev}^0\circ F^{\mathrm{op}}:\M^{\mathrm{op}}\rightarrow\mathrm{Vect}_K.$$
\end{definition}

\begin{definition}[Persistence diagram]
Let $G:\M^{\mathrm{op}}\rightarrow \mathrm{Vect}_K$ be a contravariant pointwise finite dimensional functor  that vanishes on $\partial\M$. The $\emph{extended persistence diagram}$ of $G$ is defined as the map 
\begin{equation*}
    \mathrm{Dgm}(G):
    \begin{cases}
    \mathrm{int}(\M)\rightarrow \N_0\\
    m\mapsto \mathrm{dim}_K(G(m))-\mathrm{dim}_K\left(\sum_{u\succ m} \mathrm{Im} G(m\preceq u)\right)
    \end{cases}
\end{equation*}
\end{definition}

\begin{definition}[Extended persistence diagram]
Let $f:\X\rightarrow\R$ be a continuous function with RISC $h(f)$ that is pointwise finite dimensional. The \emph{extended persistence diagram} of $(\X,f)$ is defined as the map $\mathrm{Dgm}(f):=\mathrm{Dgm}(h(f))$.
\end{definition}

\begin{figure}
    \centering

\tikzset{every picture/.style={line width=0.75pt}} 

\begin{tikzpicture}[x=0.75pt,y=0.75pt,yscale=-1,xscale=1]

\draw  [draw opacity=0][fill={rgb, 255:red, 209; green, 180; blue, 233 }  ,fill opacity=1 ] (213,227) -- (320.99,227) -- (320.99,309.62) -- (213,309.62) -- cycle ;
\draw  [draw opacity=0][fill={rgb, 255:red, 74; green, 144; blue, 226 }  ,fill opacity=0.62 ] (239,238) -- (332,238) -- (332,334) -- (239,334) -- cycle ;
\draw [color={rgb, 255:red, 86; green, 189; blue, 216 }  ,draw opacity=1 ][line width=1.5]    (315.01,40.6) -- (341,98) ;
\draw [color={rgb, 255:red, 86; green, 189; blue, 216 }  ,draw opacity=1 ][line width=1.5]    (315.01,40.6) -- (316.25,84.42) ;
\draw [color={rgb, 255:red, 24; green, 97; blue, 150 }  ,draw opacity=1 ][line width=1.5]    (285.27,21) -- (286,162) ;
\draw [color={rgb, 255:red, 124; green, 102; blue, 202 }  ,draw opacity=1 ][line width=1.5]    (259.26,65.97) -- (232,98.26) ;
\draw [color={rgb, 255:red, 124; green, 102; blue, 202 }  ,draw opacity=1 ][line width=1.5]    (232,98.26) -- (261,146) ;
\draw [color={rgb, 255:red, 124; green, 102; blue, 202 }  ,draw opacity=1 ][line width=1.5]    (261,146) -- (272.88,100.57) ;
\draw [color={rgb, 255:red, 124; green, 102; blue, 202 }  ,draw opacity=1 ][line width=1.5]    (272.88,100.57) -- (259.26,65.97) ;
\draw [color={rgb, 255:red, 86; green, 189; blue, 216 }  ,draw opacity=1 ][line width=1.5]    (345.01,40.6) -- (346.25,84.42) ;
\draw  [fill={rgb, 255:red, 74; green, 144; blue, 226 }  ,fill opacity=0.45 ] (247.15,151.6) -- (340.7,247.16) -- (247.15,342.71) -- (153.61,247.16) -- cycle ;
\draw   (340.7,247.16) -- (434.24,342.71) -- (340.7,438.27) -- (247.15,342.71) -- cycle ;
\draw    (106,199.22) -- (153.56,247.16) ;
\draw    (237.43,141.69) -- (469,378.19) ;
\draw    (340.7,438.27) -- (361.98,460) ;
\draw [color={rgb, 255:red, 57; green, 107; blue, 170 }  ,draw opacity=1 ][line width=0.75]    (153.56,247.16) -- (247.13,151.6) ;
\draw    (395,3) -- (395,169) ;
\draw [shift={(395,1)}, rotate = 90] [color={rgb, 255:red, 0; green, 0; blue, 0 }  ][line width=0.75]    (10.93,-3.29) .. controls (6.95,-1.4) and (3.31,-0.3) .. (0,0) .. controls (3.31,0.3) and (6.95,1.4) .. (10.93,3.29)   ;
\draw  [dash pattern={on 0.84pt off 2.51pt}]  (286,162) -- (396,162) ;
\draw  [dash pattern={on 0.84pt off 2.51pt}]  (285.27,21) -- (395.27,21) ;
\draw  [dash pattern={on 0.84pt off 2.51pt}]  (315.01,40.6) -- (394,41) ;
\draw  [dash pattern={on 0.84pt off 2.51pt}]  (341,98) -- (394,98) ;
\draw [color={rgb, 255:red, 86; green, 189; blue, 216 }  ,draw opacity=1 ][line width=1.5]    (345.01,40.6) -- (371,98) ;
\draw  [dash pattern={on 0.84pt off 2.51pt}]  (242.67,146.86) -- (396,147) ;
\draw  [dash pattern={on 0.84pt off 2.51pt}]  (259.26,65.97) -- (395,66) ;
\draw  [fill={rgb, 255:red, 0; green, 0; blue, 0 }  ,fill opacity=1 ] (161.06,238) .. controls (161.06,237.17) and (161.73,236.5) .. (162.56,236.5) .. controls (163.39,236.5) and (164.06,237.17) .. (164.06,238) .. controls (164.06,238.83) and (163.39,239.5) .. (162.56,239.5) .. controls (161.73,239.5) and (161.06,238.83) .. (161.06,238) -- cycle ;
\draw  [dash pattern={on 0.84pt off 2.51pt}]  (101.02,176.43) -- (162.56,236.5) ;
\draw  [dash pattern={on 0.84pt off 2.51pt}]  (176.99,98.66) -- (238.53,158.73) ;
\draw  [dash pattern={on 0.84pt off 2.51pt}]  (184.57,126.38) -- (228.53,169.73) ;
\draw  [dash pattern={on 0.84pt off 2.51pt}]  (165.7,174.44) -- (195.35,203.38) ;
\draw  [dash pattern={on 0.84pt off 2.51pt}]  (85.86,143.7) -- (171.56,227.5) ;
\draw  [dash pattern={on 0.84pt off 2.51pt}]  (136.1,111.71) -- (212.02,185.84) ;
\draw  [fill={rgb, 255:red, 0; green, 0; blue, 0 }  ,fill opacity=1 ] (171.56,227.5) .. controls (171.56,226.67) and (172.23,226) .. (173.06,226) .. controls (173.89,226) and (174.56,226.67) .. (174.56,227.5) .. controls (174.56,228.33) and (173.89,229) .. (173.06,229) .. controls (172.23,229) and (171.56,228.33) .. (171.56,227.5) -- cycle ;
\draw  [fill={rgb, 255:red, 0; green, 0; blue, 0 }  ,fill opacity=1 ] (193.85,204.88) .. controls (193.85,204.05) and (194.52,203.38) .. (195.35,203.38) .. controls (196.17,203.38) and (196.85,204.05) .. (196.85,204.88) .. controls (196.85,205.71) and (196.17,206.38) .. (195.35,206.38) .. controls (194.52,206.38) and (193.85,205.71) .. (193.85,204.88) -- cycle ;
\draw  [fill={rgb, 255:red, 0; green, 0; blue, 0 }  ,fill opacity=1 ] (212.02,185.84) .. controls (212.02,185.01) and (212.69,184.34) .. (213.52,184.34) .. controls (214.35,184.34) and (215.02,185.01) .. (215.02,185.84) .. controls (215.02,186.67) and (214.35,187.34) .. (213.52,187.34) .. controls (212.69,187.34) and (212.02,186.67) .. (212.02,185.84) -- cycle ;
\draw  [fill={rgb, 255:red, 0; green, 0; blue, 0 }  ,fill opacity=1 ] (227.03,171.23) .. controls (227.03,170.4) and (227.7,169.73) .. (228.53,169.73) .. controls (229.36,169.73) and (230.03,170.4) .. (230.03,171.23) .. controls (230.03,172.05) and (229.36,172.73) .. (228.53,172.73) .. controls (227.7,172.73) and (227.03,172.05) .. (227.03,171.23) -- cycle ;
\draw  [fill={rgb, 255:red, 0; green, 0; blue, 0 }  ,fill opacity=1 ] (237.03,160.23) .. controls (237.03,159.4) and (237.7,158.73) .. (238.53,158.73) .. controls (239.36,158.73) and (240.03,159.4) .. (240.03,160.23) .. controls (240.03,161.05) and (239.36,161.73) .. (238.53,161.73) .. controls (237.7,161.73) and (237.03,161.05) .. (237.03,160.23) -- cycle ;
\draw  [dash pattern={on 4.5pt off 4.5pt}]  (164.06,238) -- (332,238) ;
\draw  [dash pattern={on 4.5pt off 4.5pt}]  (238.53,161.73) -- (239,334) ;
\draw  [fill={rgb, 255:red, 0; green, 0; blue, 0 }  ,fill opacity=1 ] (76.5,162) -- (82,172) -- (71,172) -- cycle ;
\draw  [fill={rgb, 255:red, 0; green, 0; blue, 0 }  ,fill opacity=1 ] (166.5,83) -- (172,93) -- (161,93) -- cycle ;
\draw  [dash pattern={on 4.5pt off 4.5pt}]  (213.52,187.34) -- (213.99,309.62) ;
\draw  [dash pattern={on 4.5pt off 4.5pt}]  (174.56,227.5) -- (320,227) ;
\draw [color={rgb, 255:red, 255; green, 255; blue, 255 }  ,draw opacity=1 ][line width=2.25]    (187.13,284.71) -- (251,350) ;
\draw [color={rgb, 255:red, 24; green, 97; blue, 150 }  ,draw opacity=1 ][line width=1.5]    (239,238) -- (332,238) ;
\draw [color={rgb, 255:red, 24; green, 97; blue, 150 }  ,draw opacity=1 ][line width=1.5]    (239,238) -- (239,334) ;
\draw [color={rgb, 255:red, 24; green, 97; blue, 150 }  ,draw opacity=1 ][line width=1.5]  [dash pattern={on 5.63pt off 4.5pt}]  (239,334) -- (332,334) ;
\draw [color={rgb, 255:red, 24; green, 97; blue, 150 }  ,draw opacity=1 ][line width=1.5]  [dash pattern={on 5.63pt off 4.5pt}]  (332,238) -- (332,334) ;
\draw [color={rgb, 255:red, 124; green, 102; blue, 202 }  ,draw opacity=1 ][line width=1.5]    (213,227) -- (320,227) ;
\draw [color={rgb, 255:red, 124; green, 102; blue, 202 }  ,draw opacity=1 ][line width=1.5]    (213,227) -- (213.99,309.62) ;
\draw [color={rgb, 255:red, 124; green, 102; blue, 202 }  ,draw opacity=1 ][line width=1.5]  [dash pattern={on 5.63pt off 4.5pt}]  (320,227) -- (320.99,309.62) ;
\draw [color={rgb, 255:red, 124; green, 102; blue, 202 }  ,draw opacity=1 ][line width=1.5]  [dash pattern={on 5.63pt off 4.5pt}]  (213.99,309.62) -- (320.99,309.62) ;

\draw (131.17,251.15) node [anchor=north west][inner sep=0.75pt]  [font=\footnotesize]  {$\bigstar $};
\draw (378,1.4) node [anchor=north west][inner sep=0.75pt]    {$f$};
\draw (237,30.4) node [anchor=north west][inner sep=0.75pt]    {$\mathbb{X}$};
\draw (396,150.4) node [anchor=north west][inner sep=0.75pt]    {$a_{1}$};
\draw (396,135.4) node [anchor=north west][inner sep=0.75pt]    {$a_{2}$};
\draw (397,88.4) node [anchor=north west][inner sep=0.75pt]    {$a_{3}$};
\draw (396,55.4) node [anchor=north west][inner sep=0.75pt]    {$a_{4}$};
\draw (396,31.4) node [anchor=north west][inner sep=0.75pt]    {$a_{5}$};
\draw (396,11.4) node [anchor=north west][inner sep=0.75pt]    {$a_{6}$};
\draw (84,156.4) node [anchor=north west][inner sep=0.75pt]    {$a_{1}$};
\draw (174,78.4) node [anchor=north west][inner sep=0.75pt]    {$a_{6}$};
\draw (401,255.4) node [anchor=north west][inner sep=0.75pt]    {$\mathbb{M}$};

\end{tikzpicture}

    \caption{The contravariant block decomposition (bottom) of the RISC functor $h(f)$ associated to the simplicial complex $\X$ with height function $f$ (top of the figure).}
    \label{fig:blocks_example}
\end{figure}
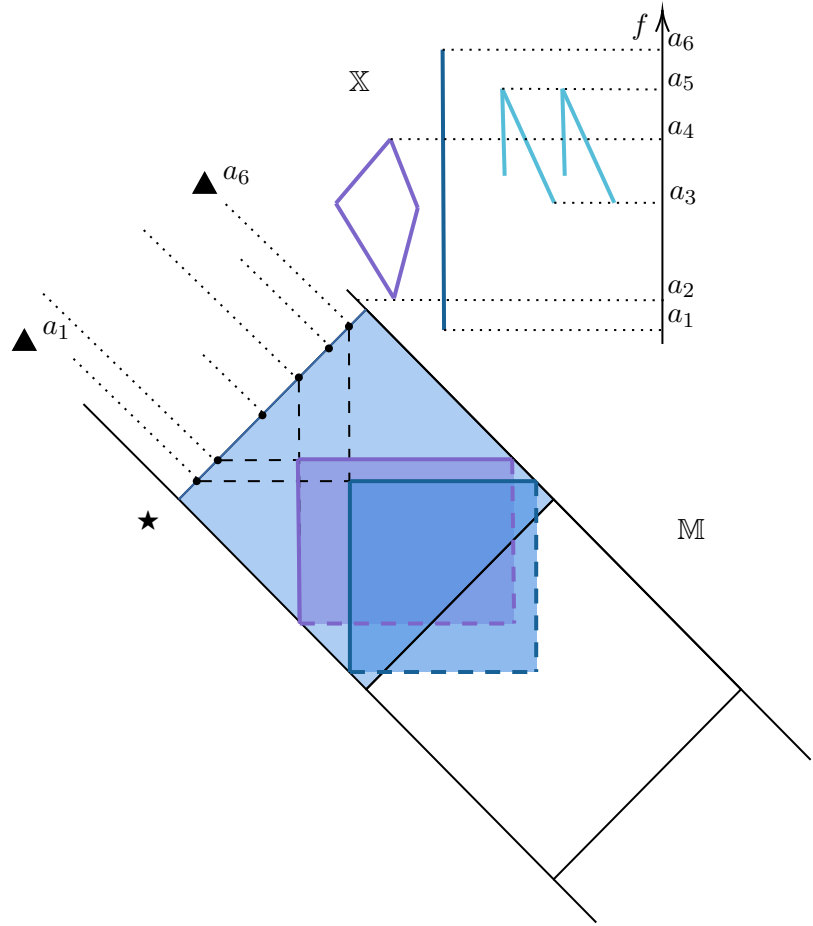

\subsection{Block decomposition}
\label{decomposition}

Let $f:\X\rightarrow\R$ be a continuous function with finite dimensional levelsets co-homology (\textit{i.e.} $\mathrm{dim}_K H^p(f^{-1}((a,b)))<\infty$ for $p\in\N$). As it is the case for the levelsets persistence modules $\mathcal{L}_p(f)$, the functor $h(f)$ is defined over a two-parameters poset, even though originating from a one-parameter filtration. Therefore, one shall identify an algebraic condition that expresses this fact, as it was the case for middle-exactness in the previous section. This will be achieved with the notions of cohomological functor and sequential continuity. 

\begin{definition}(\autocite[Definition~C.1]{bauer2021structure})
A functor $G : \M^{\mathrm{op}} \longrightarrow \mathrm{Vect}_K$ vanishing on $\partial \M$ is cohomological, if for all $u \preceq v \preceq w$, such that $v-u \in (1,0) \cdot \R$ and $w - v \in (0,1) \cdot \R$ (see figure \ref{fig:uvw}), the following sequence is exact:

\[ ... \longrightarrow G(T^{-1}(u)) \longrightarrow G(w) \longrightarrow G(v) \longrightarrow G(u) \longrightarrow G(T(u)) \longrightarrow ... \]

\end{definition}

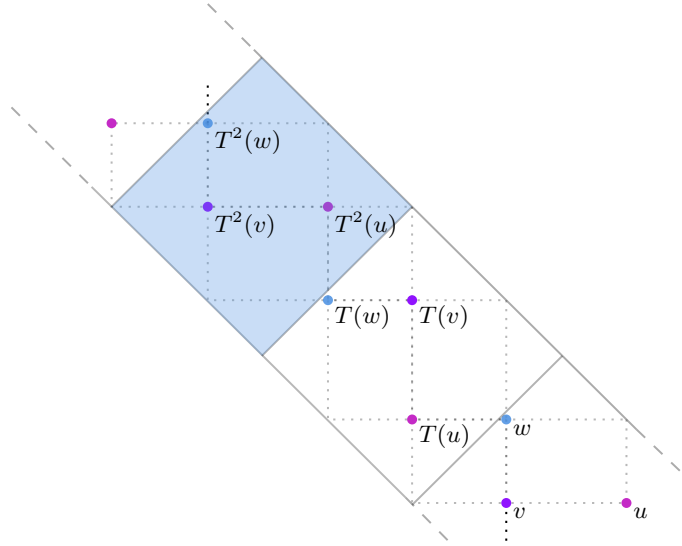
\begin{figure}

\tikzset{every picture/.style={line width=0.75pt}} 

\begin{center}

\tikzset{every picture/.style={line width=0.75pt}} 

\begin{tikzpicture}[x=0.75pt,y=0.75pt,yscale=-1,xscale=1]

\draw [color={rgb, 255:red, 0; green, 0; blue, 0 }  ,draw opacity=0.28 ]   (93,145) -- (158,209.5) ;
\draw [color={rgb, 255:red, 0; green, 0; blue, 0 }  ,draw opacity=0.33 ]   (174,76) -- (365,267) ;
\draw [color={rgb, 255:red, 0; green, 0; blue, 0 }  ,draw opacity=0.28 ]   (232.75,284.25) -- (249.75,301.25) ;
\draw [color={rgb, 255:red, 0; green, 0; blue, 0 }  ,draw opacity=0.33 ] [dash pattern={on 4.5pt off 4.5pt}]  (249.75,301.25) -- (271.25,323.25) ;
\draw [color={rgb, 255:red, 0; green, 0; blue, 0 }  ,draw opacity=0.33 ] [dash pattern={on 4.5pt off 4.5pt}]  (365,267) -- (391,292) ;
\draw [color={rgb, 255:red, 0; green, 0; blue, 0 }  ,draw opacity=0.33 ] [dash pattern={on 4.5pt off 4.5pt}]  (53,105) -- (93,145) ;
\draw [color={rgb, 255:red, 0; green, 0; blue, 0 }  ,draw opacity=0.33 ] [dash pattern={on 4.5pt off 4.5pt}]  (151,53) -- (174,76) ;
\draw  [color={rgb, 255:red, 0; green, 0; blue, 0 }  ,draw opacity=0.28 ][dash pattern={on 0.84pt off 2.51pt}] (300,202) -- (300,262) -- (253,262) -- (253,202) -- cycle ;
\draw  [color={rgb, 255:red, 0; green, 0; blue, 0 }  ,draw opacity=0.28 ][dash pattern={on 0.84pt off 2.51pt}] (360,262) -- (360,304) -- (300,304) -- (300,262) -- cycle ;
\draw [color={rgb, 255:red, 0; green, 0; blue, 0 }  ,draw opacity=0.28 ]   (158,209.5) -- (232.75,284.25) ;
\draw  [color={rgb, 255:red, 0; green, 0; blue, 0 }  ,draw opacity=0.28 ][dash pattern={on 0.84pt off 2.51pt}] (300,262) -- (300,304) -- (253,304) -- (253,262) -- cycle ;
\draw  [color={rgb, 255:red, 0; green, 0; blue, 0 }  ,draw opacity=0.28 ][dash pattern={on 0.84pt off 2.51pt}] (253,202) -- (253,262) -- (211,262) -- (211,202) -- cycle ;
\draw  [color={rgb, 255:red, 0; green, 0; blue, 0 }  ,draw opacity=0.28 ][dash pattern={on 0.84pt off 2.51pt}] (253,155) -- (253,202) -- (211,202) -- (211,155) -- cycle ;
\draw  [color={rgb, 255:red, 0; green, 0; blue, 0 }  ,draw opacity=0.28 ][dash pattern={on 0.84pt off 2.51pt}] (211,155) -- (211,202) -- (151,202) -- (151,155) -- cycle ;
\draw  [color={rgb, 255:red, 0; green, 0; blue, 0 }  ,draw opacity=0.28 ][dash pattern={on 0.84pt off 2.51pt}] (211,113) -- (211,155) -- (151,155) -- (151,113) -- cycle ;
\draw  [color={rgb, 255:red, 0; green, 0; blue, 0 }  ,draw opacity=0.28 ][dash pattern={on 0.84pt off 2.51pt}] (151,113) -- (151,155) -- (103,155) -- (103,113) -- cycle ;
\draw  [dash pattern={on 0.84pt off 2.51pt}]  (151,113) -- (151,94) ;
\draw  [dash pattern={on 0.84pt off 2.51pt}]  (300,323) -- (300,304) ;
\draw  [draw opacity=0][fill={rgb, 255:red, 196; green, 42; blue, 198 }  ,fill opacity=1 ] (357.5,304) .. controls (357.5,302.62) and (358.62,301.5) .. (360,301.5) .. controls (361.38,301.5) and (362.5,302.62) .. (362.5,304) .. controls (362.5,305.38) and (361.38,306.5) .. (360,306.5) .. controls (358.62,306.5) and (357.5,305.38) .. (357.5,304) -- cycle ;
\draw  [draw opacity=0][fill={rgb, 255:red, 74; green, 144; blue, 226 }  ,fill opacity=0.86 ] (148.5,113) .. controls (148.5,111.62) and (149.62,110.5) .. (151,110.5) .. controls (152.38,110.5) and (153.5,111.62) .. (153.5,113) .. controls (153.5,114.38) and (152.38,115.5) .. (151,115.5) .. controls (149.62,115.5) and (148.5,114.38) .. (148.5,113) -- cycle ;
\draw  [draw opacity=0][fill={rgb, 255:red, 74; green, 144; blue, 226 }  ,fill opacity=0.86 ] (208.5,202) .. controls (208.5,200.62) and (209.62,199.5) .. (211,199.5) .. controls (212.38,199.5) and (213.5,200.62) .. (213.5,202) .. controls (213.5,203.38) and (212.38,204.5) .. (211,204.5) .. controls (209.62,204.5) and (208.5,203.38) .. (208.5,202) -- cycle ;
\draw  [draw opacity=0][fill={rgb, 255:red, 74; green, 144; blue, 226 }  ,fill opacity=0.86 ] (297.5,262) .. controls (297.5,260.62) and (298.62,259.5) .. (300,259.5) .. controls (301.38,259.5) and (302.5,260.62) .. (302.5,262) .. controls (302.5,263.38) and (301.38,264.5) .. (300,264.5) .. controls (298.62,264.5) and (297.5,263.38) .. (297.5,262) -- cycle ;
\draw  [draw opacity=0][fill={rgb, 255:red, 144; green, 19; blue, 254 }  ,fill opacity=1 ] (297.5,304) .. controls (297.5,302.62) and (298.62,301.5) .. (300,301.5) .. controls (301.38,301.5) and (302.5,302.62) .. (302.5,304) .. controls (302.5,305.38) and (301.38,306.5) .. (300,306.5) .. controls (298.62,306.5) and (297.5,305.38) .. (297.5,304) -- cycle ;
\draw  [draw opacity=0][fill={rgb, 255:red, 144; green, 19; blue, 254 }  ,fill opacity=1 ] (148.5,155) .. controls (148.5,153.62) and (149.62,152.5) .. (151,152.5) .. controls (152.38,152.5) and (153.5,153.62) .. (153.5,155) .. controls (153.5,156.38) and (152.38,157.5) .. (151,157.5) .. controls (149.62,157.5) and (148.5,156.38) .. (148.5,155) -- cycle ;
\draw  [draw opacity=0][fill={rgb, 255:red, 144; green, 19; blue, 254 }  ,fill opacity=1 ] (250.5,202) .. controls (250.5,200.62) and (251.62,199.5) .. (253,199.5) .. controls (254.38,199.5) and (255.5,200.62) .. (255.5,202) .. controls (255.5,203.38) and (254.38,204.5) .. (253,204.5) .. controls (251.62,204.5) and (250.5,203.38) .. (250.5,202) -- cycle ;
\draw  [draw opacity=0][fill={rgb, 255:red, 196; green, 42; blue, 198 }  ,fill opacity=1 ] (100.5,113) .. controls (100.5,111.62) and (101.62,110.5) .. (103,110.5) .. controls (104.38,110.5) and (105.5,111.62) .. (105.5,113) .. controls (105.5,114.38) and (104.38,115.5) .. (103,115.5) .. controls (101.62,115.5) and (100.5,114.38) .. (100.5,113) -- cycle ;
\draw  [draw opacity=0][fill={rgb, 255:red, 196; green, 42; blue, 198 }  ,fill opacity=1 ] (208.5,155) .. controls (208.5,153.62) and (209.62,152.5) .. (211,152.5) .. controls (212.38,152.5) and (213.5,153.62) .. (213.5,155) .. controls (213.5,156.38) and (212.38,157.5) .. (211,157.5) .. controls (209.62,157.5) and (208.5,156.38) .. (208.5,155) -- cycle ;
\draw  [draw opacity=0][fill={rgb, 255:red, 196; green, 42; blue, 198 }  ,fill opacity=1 ] (250.5,262) .. controls (250.5,260.62) and (251.62,259.5) .. (253,259.5) .. controls (254.38,259.5) and (255.5,260.62) .. (255.5,262) .. controls (255.5,263.38) and (254.38,264.5) .. (253,264.5) .. controls (251.62,264.5) and (250.5,263.38) .. (250.5,262) -- cycle ;
\draw [color={rgb, 255:red, 0; green, 0; blue, 0 }  ,draw opacity=0.3 ]   (178,230) -- (253,155) ;
\draw [color={rgb, 255:red, 0; green, 0; blue, 0 }  ,draw opacity=0.3 ]   (253,304.99) -- (328,230) ;
\draw [color={rgb, 255:red, 0; green, 0; blue, 0 }  ,draw opacity=0.3 ]   (103.01,155) -- (178,80) ;
\draw  [draw opacity=0][fill={rgb, 255:red, 74; green, 144; blue, 226 }  ,fill opacity=0.3 ] (103.02,155.01) -- (178,80) -- (253.01,154.98) -- (178.03,229.98) -- cycle ;

\draw (362,304.9) node [anchor=north west][inner sep=0.75pt]  [font=\footnotesize]  {$u$};
\draw (302,304.9) node [anchor=north west][inner sep=0.75pt]  [font=\footnotesize]  {$v$};
\draw (302,262.9) node [anchor=north west][inner sep=0.75pt]  [font=\footnotesize]  {$w$};
\draw (255,262.9) node [anchor=north west][inner sep=0.75pt]  [font=\footnotesize]  {$T( u)$};
\draw (255,202.9) node [anchor=north west][inner sep=0.75pt]  [font=\footnotesize]  {$T( v)$};
\draw (213,202.9) node [anchor=north west][inner sep=0.75pt]  [font=\footnotesize]  {$T( w)$};
\draw (153,113.9) node [anchor=north west][inner sep=0.75pt]  [font=\footnotesize]  {$T^{2}( w)$};
\draw (153,155.9) node [anchor=north west][inner sep=0.75pt]  [font=\footnotesize]  {$T^{2}( v)$};
\draw (213,155.9) node [anchor=north west][inner sep=0.75pt]  [font=\footnotesize]  {$T^{2}( u)$};

\end{tikzpicture}

\end{center}

    \caption{The subposet generated by the orbits of $u,v,w$. The blue area is a copy of the fundamental domain.}
    \label{fig:uvw}
\end{figure}

\begin{definition}(\autocite[Definition~2.4]{bauer2021structure}) A contravariant functor $G:\M^{\mathrm{op}}\rightarrow\mathrm{Vect}_K$ is \emph{sequentially continuous}, if for any increasing sequence $(m_k)_{k\in\N}\subset\M$ converging to $m\in\M$, the natural map $$G(m)\rightarrow\lim_{\xleftarrow[k]{}} G(m_k)$$ is an isomorphism.

\end{definition}

\begin{proposition}(\autocite[Proposition~2.5]{bauer2021structure})
\label{prop_seq}
Let $f : \X \longrightarrow \R$ be a continuous function. The contravariant functor $$h(f):\M^{\mathrm{op}}\rightarrow\mathrm{Vect}_K$$ is cohomological. Moreover, if it is pointwise finite dimensional, then it is sequentially continuous.
\end{proposition}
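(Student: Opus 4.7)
My plan is to reduce both statements to standard facts about singular cohomology, exploiting the fact that $h(f)$ is by construction evaluated on \emph{pairs} of open subsets of $\X$ obtained by pulling back the pair $(\rho_1(m),\rho_2(m))$ along $f$.

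For cohomologicality, I would first decode the triple $u \preceq v \preceq w$ (with $v-u \in (1,0)\cdot\R$ and $w-v \in (0,1)\cdot\R$) into pairs of open subsets of $\R$ via Definition \ref{rho_def}. A direct computation from the geometric description of $\rho$ shows that there exist open subsets $B \subset A \subset \X$ such that the values of the extended functor $F$ at $u, v, w$ identify — up to the $\Sigma$-shifts encoded by $T$ — with $\mathcal{H}^\bullet(A,B)$, $\mathcal{H}^\bullet(A)$, and $\mathcal{H}^\bullet(B)$ at appropriate degrees. The sequence to be shown exact is then nothing other than the long exact sequence of the pair $(A,B)$ in the cohomology theory $\mathcal{H}^\bullet$:
\[\cdots \to \mathcal{H}^{p-1}(B) \to \mathcal{H}^p(A,B) \to \mathcal{H}^p(A) \to \mathcal{H}^p(B) \to \mathcal{H}^{p+1}(A,B) \to \cdots.\]
The connecting homomorphism raises cohomological degree by one, which matches precisely the effect of the shift $\Sigma$ on $(\mathrm{Vect}_K^{\Z})^{\mathrm{op}}$ and hence, via the identity $\Sigma \circ F = F \circ T$, corresponds to applying $T$ on $\M$. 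Evaluating at degree zero then yields the claimed exactness of the $u,v,w$-sequence for $h(f)$.

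For sequential continuity, let $(m_k)_{k\in\N}$ be an increasing sequence converging to $m$. Since $\rho_i(\cdot)$ is defined via the monotone operators $\downarrow$, $\uparrow$, and $\mathrm{int}$, one has $\rho_i(m_k) \nearrow \rho_i(m)$ as an increasing union of open subsets of $\R$, and hence $f^{-1}(\rho_i(m_k))$ is an expanding open cover of $f^{-1}(\rho_i(m))$. Milnor's $\varprojlim^1$ short exact sequence for an increasing union of open sets then gives, in each cohomological degree,
\[0 \to {\varprojlim_k}^1 \mathcal{H}^{\bullet-1}\bigl(f^{-1}(\rho_i(m_k))\bigr) \to \mathcal{H}^\bullet\bigl(f^{-1}(\rho_i(m))\bigr) \to \varprojlim_k \mathcal{H}^\bullet\bigl(f^{-1}(\rho_i(m_k))\bigr) \to 0,\]
and the corresponding statement in the relative setting follows via the five lemma applied to the LES of pairs. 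Under the pointwise finite dimensionality hypothesis, each $h(f)(m_k)$ is finite-dimensional, so the descending chain of images $\mathrm{Im}\bigl(h(f)(m_{k'}) \to h(f)(m_k)\bigr)$ stabilizes for every fixed $k$. This Mittag-Leffler condition forces $\varprojlim^1$ to vanish, yielding the isomorphism $h(f)(m) \xrightarrow{\sim} \varprojlim_k h(f)(m_k)$.

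The main obstacle in both parts is the bookkeeping required to identify exactly which pair $(A,B)$ appears at each of the points $u, v, w$ and how the formal $T$-shifts align with connecting homomorphisms. Once that identification is made, cohomologicality becomes a direct invocation of the LES of a pair, and sequential continuity a standard application of Mittag-Leffler to Milnor's sequence; the only additional subtlety is adapting Milnor's sequence from absolute to relative cohomology, which is routine via the five lemma.
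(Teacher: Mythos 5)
The paper does not actually prove this statement; it cites \autocite[Proposition~2.5]{bauer2021structure} as is, so there is no in-paper proof to compare against. Evaluating your sketch on its own terms, the overall strategy (read off the cohomological axiom as a long exact sequence in $\mathcal{H}^\bullet$ along the $T$-staircase, and deduce sequential continuity from a Mittag--Leffler argument using pointwise finite dimensionality) is in the right spirit, but two steps do not hold up as stated.

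For cohomologicality, your identification of $F(u),F(v),F(w)$ with $\mathcal{H}^\bullet(A,B)$, $\mathcal{H}^\bullet(A)$, $\mathcal{H}^\bullet(B)$ is only correct when the three points land in the face of $D$ where $\rho_2 = \emptyset$. For a generic triple $u\preceq v\preceq w$ satisfying the displacement conditions (say in the face where $\rho(m) = ([a,\infty),[b,\infty))$), all three values of $F'$ are relative groups $\mathcal{H}^\bullet(f^{-1}\rho_1(\cdot), f^{-1}\rho_2(\cdot))$ with nonempty second entry, and moving horizontally then vertically changes $\rho_2$ and $\rho_1$ in two different nested families. The local piece of the sequence at $u,v,w$ is therefore the long exact sequence of a \emph{triple} of pairs (equivalently, you must splice together two different pair LES's), and matching the connecting map with the $T$-shift across $w\to T^{-1}(u)$ requires an excision identification between the pair that appears on one face of the pyramid and the pair that appears on the next. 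Your phrase ``a direct computation from the geometric description of $\rho$ shows'' is precisely where this nontrivial bookkeeping lives, and the special-case pair identification you write down does not cover it.

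For sequential continuity, two caveats. First, Milnor's $\varprojlim^1$ sequence is a property of specific theories (singular cohomology on CW filtrations, or Čech-type theories), whereas the paper only assumes ``a general cohomology theory $\mathcal{H}^\bullet$ sending weak equivalences to isomorphisms''; you would need to add an axiom (additivity / sequential colimit) or specialize $\mathcal{H}^\bullet$ for the $\varprojlim^1$ sequence to apply. Second, ``the corresponding statement in the relative setting follows via the five lemma'' is not routine: the five lemma would be applied to a map from the long exact sequence of the limiting pair into the levelwise inverse limit of long exact sequences, and inverse limits are not exact, so the target is not a priori exact. One either needs Mittag--Leffler at every spot of the LES before invoking the five lemma, or one should run the $\varprojlim^1$ argument directly on the relative groups. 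The Mittag--Leffler observation itself (finite dimensionality of $h(f)$ on the whole orbit gives stabilizing images in every degree) is correct and is the right engine for the vanishing of $\varprojlim^1$.
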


\begin{definition}(\autocite[Definition~3.2]{bauer2021structure}) For any $v\in\mathrm{int}(\M)$, the \emph{contravariant block} $B_v$ is defined as
\begin{equation*}
    B_v:\M^{\mathrm{op}}\rightarrow\mathrm{Vect}_K,m\mapsto\begin{cases}
    K \text{ if } m\in (\downarrow m)\cap \mathrm{int}(\uparrow T^{-1}(m)),\\
    \{0\} \text{ otherwise}.
    \end{cases}
\end{equation*}
with identity maps connecting any two non-zero vector spaces.
\end{definition}

\begin{theorem}(\autocite[Corollary~3.5]{bauer2021structure})
\label{decomposition_thm}
 Let $G:\M^{\mathrm{op}}\rightarrow\mathrm{Vect}_K$ be a sequentially continuous, pointwise finite dimensional, cohomological functor. There is a contravariant block decomposition $$G\cong \bigoplus_{v\in\mathrm{int}(\M)} (B_v)^{\oplus\mathrm{deg}(v)},$$
where $\nu=\mathrm{Dgm}(G)$.
\end{theorem}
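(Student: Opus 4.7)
The plan is to reduce the two-parameter decomposition problem on $\M^{\mathrm{op}}$ to the known one-parameter block decomposition for middle-exact modules (the Cochoy--Oudot theorem cited earlier), then to propagate this decomposition across the $T$-orbit using the long exact sequence structure encoded in the cohomological condition. The general shape of the argument is: extract a middle-exact slice, decompose it, and lift.

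First, I would restrict $G$ to the subposet $\downarrow(\bigstar) \subset \M$, which is naturally identified with the poset $\mathbb{U}$ of Section~4 via the first coordinate of $\rho$. Applied to triples $u \preceq v \preceq w$ lying in $\downarrow(\bigstar)$ with $v - u \in (1,0)\cdot\R$ and $w - v \in (0,1)\cdot\R$, the six-term exact sequence of the cohomological condition degenerates, since the terms $G(T^{-1}(u))$ and $G(T(u))$ either vanish or are controlled by the boundary behavior, leaving an exact square on the four vertices $(u,v,w,T(u))$. This is exactly middle-exactness on $\mathbb{U}$. Since $G$ is pointwise finite dimensional, the Cochoy--Oudot theorem produces a decomposition of $G|_{\downarrow(\bigstar)}$ into the five standard block types.

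Second, I would extend each summand to a global contravariant block $B_v$ on $\M$ using the long exact sequence to pass between adjacent fundamental domains. The cohomological condition forces a rigid relationship between $G$ on $T^k(D)$ and $G$ on $T^{k+1}(D)$: the connecting map is a coboundary, and composing these across the orbit reconstructs the rhombic support of $B_v$ from any cross-section of it. Sequential continuity then guarantees the limiting behavior along boundaries of $\M$ and along accumulation points of the $T$-orbit is consistent with the direct sum structure. The five block types on $\downarrow(\bigstar)$ (open, closed--open, open--closed, closed with $a<b$, closed with $a=b$) correspond to the five placements of $v \in \mathrm{int}(\M)$ relative to $\bigstar$, and the matching between them is precisely the barcode bijection of Theorem~\ref{bijection_theorem}, transported to the continuous setting.

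The main obstacle I expect is the lifting step: once the Cochoy--Oudot decomposition is chosen on $\downarrow(\bigstar)$, one must show that the cohomological constraints uniquely determine a compatible global decomposition, and that the resulting global functors are isomorphic to the blocks $B_v$ rather than merely sharing their support. This requires tracking the connecting homomorphisms carefully and checking that the rhombic generators fit together along $T$-translates without loss of rank; the sequentially continuous hypothesis is essential here to prevent pathologies along limits. The multiplicity $\nu(v) = \mathrm{Dgm}(G)(v)$ then emerges tautologically, as the dimension drop in the definition of $\mathrm{Dgm}(G)$ counts precisely the number of generators born at $v$, i.e.\ the number of summands $B_v$ in the decomposition.
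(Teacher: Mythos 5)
The step you call ``First'' contains a genuine gap. You assert that for triples $u \preceq v \preceq w$ with $v-u \in (1,0)\cdot\R$ and $w-v \in (0,1)\cdot\R$, the six-term window of the cohomological long exact sequence degenerates because ``$G(T^{-1}(u))$ and $G(T(u))$ either vanish or are controlled by the boundary behavior,'' leaving an exact square on the vertices $(u,v,w,T(u))$. Neither half of this is right. The points $T^{\pm 1}(u)$ are ordinary interior points of $\M$, sitting in the adjacent $T$-translates of the fundamental domain; $G$ vanishes only on $\partial\M$, so there is no reason for $G(T^{\pm 1}(u))$ to vanish. And $(u,v,w,T(u))$ do not form a commuting square: writing $v = u + (s,0)$ and $w = v + (0,-t)$, the geometric fourth corner of the rectangle spanned by $u,v,w$ is $z = u + (0,-t)$, whereas $T(u) = (1-u_2,\,-1-u_1)$ lies in the next copy of $D$, and the two coincide only when $u\in l_1$ (where $G(u)=0$ anyway) and $t=2$. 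What the long exact sequence actually gives at the node $G(v)$ is $\mathrm{Im}\bigl(G(w)\to G(v)\bigr) = \mathrm{Ker}\bigl(G(v)\to G(u)\bigr)$, a three-term linear condition that is not the middle-exactness $\mathrm{Ker}\bigl(G(v)\oplus G(z)\to G(u)\bigr) = \mathrm{Im}\bigl(G(w)\to G(v)\oplus G(z)\bigr)$ required by the Cochoy--Oudot theorem: the latter involves the fourth corner $z$, which the cohomological sequence never references. The reduction of the cohomological condition to middle-exactness on $\downarrow(\bigstar)$ therefore does not go through as you describe.

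Your second step, extending each one-parameter summand to a global block $B_v$ by chasing connecting maps across $T$-translates, is also left unproved; you acknowledge it is the main obstacle, and it is exactly the place where the periodicity of the long exact sequence under $T$ must carry the argument rather than be bolted on after a separate one-parameter decomposition. Without establishing either the reduction to a decomposable one-parameter slice or the $T$-equivariant lift, the proposal does not constitute a proof of the theorem.
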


\begin{corollary}
Let $f:\mathbb{X}\longrightarrow \R$ be a continuous function with pointwise finite dimensional levelsets cohomology. There is a contravariant bock decomposition $$h(f)\cong \bigoplus_{v\in\mathrm{int}(\M)} (B_v)^{\oplus\mu(v)},$$
where $\mu=\mathrm{Dgm}(f)$.
\end{corollary}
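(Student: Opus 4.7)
The plan is to derive this statement as a direct consequence of Theorem \ref{decomposition_thm} applied to the RISC functor $G = h(f)$. The task therefore reduces to verifying that $h(f)$ satisfies the three hypotheses of that theorem: pointwise finite dimensionality, sequential continuity, and the cohomological property.

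First, pointwise finite dimensionality of $h(f)$ follows from the assumption that $f$ has pointwise finite dimensional levelsets cohomology. Indeed, for any $m \in \M$, the value $h(f)(m)$ is obtained by the construction $\mathrm{ev}^0 \circ F^{\mathrm{op}}$, where $F'(m) = \mathcal{H}^\bullet(f^{-1}(\rho_1(m)), f^{-1}(\rho_2(m)))$, and the long exact sequence of the pair together with the finiteness hypothesis on $H^p(f^{-1}((a,b)))$ for all $a < b$ yields finiteness of each relative cohomology group in question; since the image under $\mathrm{ev}^0$ selects a single graded component, the output is finite dimensional.

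Next, both the cohomological property and sequential continuity of $h(f)$ are provided directly by Proposition \ref{prop_seq}: the proposition asserts that $h(f)$ is always cohomological, and that it is sequentially continuous whenever it is pointwise finite dimensional, which we have just established. With all three hypotheses in hand, Theorem \ref{decomposition_thm} yields a contravariant block decomposition
\[h(f) \cong \bigoplus_{v \in \mathrm{int}(\M)} (B_v)^{\oplus \mathrm{Dgm}(h(f))(v)}.\]
Finally, by the very definition of the extended persistence diagram of $(\X, f)$, one has $\mathrm{Dgm}(f) = \mathrm{Dgm}(h(f))$, so substituting $\mu = \mathrm{Dgm}(f)$ gives the claimed decomposition.

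I do not anticipate any real obstacle, since every ingredient has been assembled by the preceding propositions and theorem; the corollary is essentially a packaging statement that specializes the abstract decomposition result to the concrete functor arising from a continuous function. The only step requiring a small amount of care is the verification of pointwise finite dimensionality from the stated hypothesis on levelsets cohomology, but this is a routine consequence of the long exact sequence of a pair applied to $(f^{-1}(\rho_1(m)), f^{-1}(\rho_2(m)))$.
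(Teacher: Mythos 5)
Your proof is correct and follows the same route as the paper, which simply states that the corollary follows from Theorem~\ref{decomposition_thm} and Proposition~\ref{prop_seq}. The paper is terser, leaving implicit both the verification that the standing hypothesis of pointwise finite dimensional levelsets cohomology yields pointwise finite dimensionality of $h(f)$ and the identification $\mathrm{Dgm}(f)=\mathrm{Dgm}(h(f))$; your elaboration of these points is the right idea, though note that the finiteness argument via the long exact sequence is slightly less routine than you suggest, since $\rho_1(m)$ and $\rho_2(m)$ need not be bounded open intervals (they can be $\R$, rays, or complements of intervals), so one must combine the hypothesis with excision and exhaustion arguments rather than apply it directly.
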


\begin{proof}
This follows from Theorem \ref{decomposition_thm} and Proposition \ref{prop_seq}.
\end{proof}

\subsection{Extracting a barcode}
\label{extract}

This section focuses on showing how to extract the \textit{extended persistence barcode} from the information contained in an extended persistence diagram $\mu:\mathrm{int}(\M)\rightarrow\N_0$. The first thing to note is that all the information needed lies in how the injected real line $\bigstar$ and its copies along $\M$ intersect with the fundamental blocks of $\mu$. Now, to complete the reasoning, it remains to naturally extract a barcode from those intersections, which can form four types of intervals.\\

\noindent \textbf{Notation.} Let $2^\R$ be the set of all subsets of $\R$, and for any $m\in\M$, let $\mathrm{deg}(m)$ denote the only integer $p\in\N$ such that $T^p(m)\in D$. Consider the map $\rho=(\rho_1,\rho_2)$ defined in Definition \ref{rho_def} and let $\mathrm{int}(\R)$ denote the set of intervals in $\R$. Define the map 
$$\sigma:\{(X,Y)\in 2^\R\times2^\R\mid Y\subset X\}\longrightarrow 2^\R\text{ }: \text{ }(X,Y)\mapsto X\setminus Y.$$

\begin{definition}
We define the map $I(\cdot):\M\rightarrow 2^\R:m\mapsto (\sigma\circ \rho\circ T^{\mathrm{deg}(m)})(m)$. 
\end{definition}

\begin{remark}
The map $I(\cdot)$ can be splitted into three steps. Consider a point $m\in M$. 
\begin{enumerate}
    \item[$\bullet$] First, it sends $m\in\M$ in the fundamental domain $D$ through repetitively applying $T:\M\rightarrow\M$ (one loses track of the degree $\mathrm{deg}(m)$ of $m$).
    \item[$\bullet$] Second, it attributes a pair of intervals through the map $\rho$. Each of the four regions of the pyramid drawn by $D$ gives a specific kind of pairs of intervals.
    \item[$\bullet$] Third, the map converts an element of $2^\R\times 2^\R$ to an element of $2^\R$ without losing any information. This dimensionality reduction is the key part to extracting a barcode of the form $\Z\times\mathrm{Int}(\R)\rightarrow\N_0$, which is what is looked at next.
\end{enumerate} 
\end{remark}

\begin{proposition}
The map $\Psi:\M\rightarrow \N_0\times\mathrm{Int}(\R):m\mapsto (\mathrm{deg}(m),I(m))$ is a bijection.
\end{proposition}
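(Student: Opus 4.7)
The plan is to factor $\Psi$ through the fundamental domain $D$, reducing the problem to two independent bijections: the orbit-indexing $\M \simeq \N_0 \times D$ afforded by the $T$-tessellation, and a bijection $\sigma \circ \rho|_D : D \to \mathrm{Int}(\R)$ describing the four regions of the diamond $D$.

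First I would verify that the family $\{T^{-p}(D)\}$ partitions $\M$. The definition $D = \downarrow(\bigstar) \setminus T^{-1}(\downarrow(\bigstar))$ is the standard recipe for a fundamental domain of a free $\Z$-action, and combined with the fact that $\langle T \rangle \circlearrowright \M$ is free (as visible in Figure~\ref{fig:tessellation}) this yields a disjoint union $\M = \bigsqcup_{p} T^{-p}(D)$. In particular $\mathrm{deg}(m)$ is well-defined and the map $m \mapsto (\mathrm{deg}(m),\ T^{\mathrm{deg}(m)}(m))$ is a bijection $\M \xrightarrow{\sim} \N_0 \times D$.

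The main work lies in the second step: showing that $\sigma \circ \rho|_D : D \to \mathrm{Int}(\R)$ is a bijection. I would partition $D$ along its two diagonals (the segment $\bigstar \cap D$ and its perpendicular inside $D$) into four open triangular regions, and in each region compute $\rho$ directly from Definition~\ref{rho_def} as pictured in Figure~\ref{fig:rho_map}. The four cases yield, for parameters $a < b$ that parametrize the region bijectively via the pair of coordinates of $m$ along the two diagonals:
\begin{itemize}
    \item \emph{north:} $\rho(m) = (\R,\, \R \setminus (a,b))$, hence $\sigma \circ \rho(m) = (a,b)$;
    \item \emph{east:} $\rho(m) = ([a,\infty),\, [b,\infty))$, hence $\sigma \circ \rho(m) = [a,b)$;
    \item \emph{south:} $\rho(m) = ([a,b],\, \emptyset)$, hence $\sigma \circ \rho(m) = [a,b]$;
    \item \emph{west:} $\rho(m) = ((-\infty,b],\, (-\infty,a])$, hence $\sigma \circ \rho(m) = (a,b]$.
\end{itemize}
In each region the parametrization is a homeomorphism onto the corresponding cell of $\mathrm{Int}(\R)$ (bounded intervals of the appropriate type). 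Boundary points of $D$ — namely points lying on $\bigstar \cap D$, on the perpendicular diagonal, or on $D \cap (l_1 \cup l_2)$ — are handled by taking one or both of the endpoints to $a = b$ or to $\pm \infty$, thereby producing the degenerate intervals (singletons, half-lines, the full line, and the empty interval). Checking that exactly one $m \in \overline{D}$ realizes each type completes the bijection $D \to \mathrm{Int}(\R)$.

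Finally, combining steps one and two gives
\[ \Psi \;=\; \bigl(\mathrm{id}_{\N_0} \times (\sigma \circ \rho|_D)\bigr)\ \circ\ \bigl(m \mapsto (\mathrm{deg}(m),\, T^{\mathrm{deg}(m)}(m))\bigr), \]
which is a composition of bijections, hence bijective. The main obstacle is not conceptual but geometric: the case analysis in step two must be executed carefully so that the boundary of $D$ is matched with the unbounded and degenerate intervals without over- or under-counting, and so that the parametrization $m \leftrightarrow (a,b)$ is shown to be a homeomorphism on the interior of each triangular region.
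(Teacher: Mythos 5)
Your proposal is correct and follows essentially the same approach as the paper's (very terse) proof: factor $\Psi$ through the fundamental domain via the degree map, then show $\sigma\circ\rho|_D$ is a bijection onto $\mathrm{Int}(\R)$ by a case analysis over the four triangular faces of $D$, with the boundary of $D$ accounting for the degenerate interval types. One small caution: the half-openness of $D=\downarrow(\bigstar)\setminus T^{-1}(\downarrow(\bigstar))$ is exactly what prevents over-counting on shared boundaries of the tessellation, so the boundary case analysis should be carried out over $D$ itself rather than its closure $\overline{D}$ as you write at one point.
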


\begin{proof}
This essentially follows from the fact that, apart from the degree $\mathrm{deg}(\cdot)$, through any of the steps of $I(\cdot)=(\sigma\circ \rho\circ T^{\mathrm{deg}(\cdot)})(\cdot)$, the object considered is entirely determined by two real numbers $a,b\in\R$. Now, if one keeps track of the degree by integrating $\mathrm{deg}(\cdot):\M\rightarrow\Z$ to those steps, a bijection is obtained.
\end{proof}

\noindent Now, this tells us that a point $m\in\M$ corresponds to an interval $I\in\mathrm{Int}(\R)$ together with a dimension $n$. Hence, the information contained in an extended persistence diagram $\mathrm{Dgm}(f):\mathrm{int}(\M)\rightarrow\N_0$ is contained in the corresponding barcode given by 
\begin{equation*}
    \mathbb{B}(f):
    \begin{cases}
    \N_0\times\mathrm{Int}(\R)\rightarrow\N_0\\
    (n,I)\mapsto \mathrm{Dgm}(f)(\Psi^{-1}((n,I))),
    \end{cases}
\end{equation*}
that attributes a multiplicity to an interval $I$ of dimension $n$ through $\mathrm{Dgm}(f)$. 

\begin{figure}
    \centering
    
    \tikzset{every picture/.style={line width=0.75pt}} 

\begin{tikzpicture}[x=0.75pt,y=0.75pt,yscale=-1,xscale=1]

\draw  [color={rgb, 255:red, 74; green, 144; blue, 226 }  ,draw opacity=1 ] (76.02,8.03) -- (134,66.02) -- (76.02,66.02) -- cycle ;
\draw   (134,66.02) -- (191.98,124) -- (134,181.98) -- (76.02,124) -- cycle ;
\draw  [color={rgb, 255:red, 74; green, 144; blue, 226 }  ,draw opacity=1 ][fill={rgb, 255:red, 0; green, 0; blue, 0 }  ,fill opacity=0.2 ] (76.02,8.03) -- (134,66.02) -- (76.02,124) -- (18.03,66.02) -- cycle ;
\draw    (2,50) -- (140,188) ;
\draw    (70,2) -- (210,142) ;
\draw  [color={rgb, 255:red, 74; green, 144; blue, 226 }  ,draw opacity=1 ] (76.02,124) -- (18.03,66.02) -- (76.02,66.02) -- cycle ;
\draw    (187,66) -- (259,66) ;
\draw [shift={(261,66)}, rotate = 180] [color={rgb, 255:red, 0; green, 0; blue, 0 }  ][line width=0.75]    (10.93,-3.29) .. controls (6.95,-1.4) and (3.31,-0.3) .. (0,0) .. controls (3.31,0.3) and (6.95,1.4) .. (10.93,3.29)   ;
\draw [shift={(187,66)}, rotate = 180] [color={rgb, 255:red, 0; green, 0; blue, 0 }  ][line width=0.75]    (0,5.59) -- (0,-5.59)   ;
\draw  [dash pattern={on 0.84pt off 2.51pt}]  (88.02,94.02) -- (88,112) ;
\draw  [dash pattern={on 0.84pt off 2.51pt}]  (88.02,94.02) -- (106,94) ;
\draw  [color={rgb, 255:red, 0; green, 0; blue, 0 }  ,draw opacity=1 ][fill={rgb, 255:red, 0; green, 0; blue, 0 }  ,fill opacity=1 ] (86.52,92.52) .. controls (86.52,91.69) and (87.19,91.02) .. (88.02,91.02) .. controls (88.85,91.02) and (89.52,91.69) .. (89.52,92.52) .. controls (89.52,93.35) and (88.85,94.02) .. (88.02,94.02) .. controls (87.19,94.02) and (86.52,93.35) .. (86.52,92.52) -- cycle ;
\draw  [fill={rgb, 255:red, 0; green, 0; blue, 0 }  ,fill opacity=1 ] (86.5,112) .. controls (86.5,111.17) and (87.17,110.5) .. (88,110.5) .. controls (88.83,110.5) and (89.5,111.17) .. (89.5,112) .. controls (89.5,112.83) and (88.83,113.5) .. (88,113.5) .. controls (87.17,113.5) and (86.5,112.83) .. (86.5,112) -- cycle ;
\draw  [fill={rgb, 255:red, 0; green, 0; blue, 0 }  ,fill opacity=1 ] (104.5,94) .. controls (104.5,93.17) and (105.17,92.5) .. (106,92.5) .. controls (106.83,92.5) and (107.5,93.17) .. (107.5,94) .. controls (107.5,94.83) and (106.83,95.5) .. (106,95.5) .. controls (105.17,95.5) and (104.5,94.83) .. (104.5,94) -- cycle ;
\draw  [color={rgb, 255:red, 74; green, 144; blue, 226 }  ,draw opacity=1 ] (76.02,8.03) -- (134,66.02) -- (76.02,66.02) -- cycle ;
\draw    (351,66) -- (423,66) ;
\draw [shift={(425,66)}, rotate = 180] [color={rgb, 255:red, 0; green, 0; blue, 0 }  ][line width=0.75]    (10.93,-3.29) .. controls (6.95,-1.4) and (3.31,-0.3) .. (0,0) .. controls (3.31,0.3) and (6.95,1.4) .. (10.93,3.29)   ;
\draw [shift={(351,66)}, rotate = 180] [color={rgb, 255:red, 0; green, 0; blue, 0 }  ][line width=0.75]    (0,5.59) -- (0,-5.59)   ;
\draw    (188,45) .. controls (257.65,0.22) and (345.12,0.99) .. (422.83,44.34) ;
\draw [shift={(424,45)}, rotate = 209.43] [color={rgb, 255:red, 0; green, 0; blue, 0 }  ][line width=0.75]    (10.93,-3.29) .. controls (6.95,-1.4) and (3.31,-0.3) .. (0,0) .. controls (3.31,0.3) and (6.95,1.4) .. (10.93,3.29)   ;
\draw [shift={(188,45)}, rotate = 507.26] [color={rgb, 255:red, 0; green, 0; blue, 0 }  ][line width=0.75]    (0,5.59) -- (0,-5.59)   ;

\draw (52,73.4) node [anchor=north west][inner sep=0.75pt]    {$D$};
\draw (142,191.4) node [anchor=north west][inner sep=0.75pt]    {$l_{1}$};
\draw (212,145.4) node [anchor=north west][inner sep=0.75pt]    {$l_{2}$};
\draw (79,74.4) node [anchor=north west][inner sep=0.75pt]    {$m$};
\draw (216,41.4) node [anchor=north west][inner sep=0.75pt]    {$\rho $};
\draw (90,113.9) node [anchor=north west][inner sep=0.75pt]    {$\blacktriangle a$};
\draw (108,95.9) node [anchor=north west][inner sep=0.75pt]    {$\blacktriangle b$};
\draw (270,57.4) node [anchor=north west][inner sep=0.75pt]    {$([ a,b] ,\emptyset )$};
\draw (378,40.4) node [anchor=north west][inner sep=0.75pt]    {$\sigma $};
\draw (431,57.4) node [anchor=north west][inner sep=0.75pt]    {$[ a,b]$};
\draw (288,15.4) node [anchor=north west][inner sep=0.75pt]    {$I( \cdot )$};
\draw (110,132.4) node [anchor=north west][inner sep=0.75pt]    {$T^{-1}( D)$};

\end{tikzpicture}

    \caption{Illustrative example of the map $I(\cdot)$.}
    \label{fig:my_label}
\end{figure}
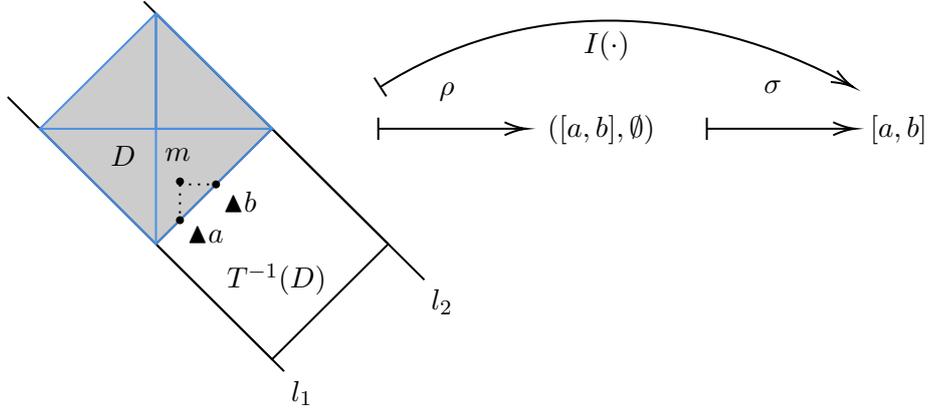

\subsection{Bottleneck distance and universality}

In \cite{bauer2020universality}, the authors propose a way to define a Bottleneck distance between RISC extended persistence diagrams, and prove that in this context, their bottleneck distance satisfy a universality property. \\

\noindent Consider the strip $\M$ built as if $l_1$ intersects the $x$-axis at $(-\pi,0)$ and $l_2$ intersects it at $(\pi,0)$. Let $d_0:\R\times\R\rightarrow\R^+\cup\{\infty\}$ be the unique extended metric satisfying 
\begin{equation*}
    d_0(s,t)=
    \begin{cases}
    \tan t - \tan s \text{ if } [s,t]\cap (\frac{\pi}{2}+\pi\Z)=\emptyset,\\
    \infty \text{ otherwise.}
    \end{cases}
\end{equation*} 

\noindent Now, one can define a metric on $\R\times\R$ based on $d_0$ as follows. 

\begin{equation*}
    d_0^{\R\times\R}:
    \begin{cases}
    (\R\times\R)\times (\R\times\R)\rightarrow \R^+\cup\{\infty\}\\
    ((s_1,s_2),(t_1,t_2))\mapsto \max_{i=1,2} d_0(s_i,t_i)
    \end{cases}
\end{equation*}

\begin{definition}[Metric $d$ on the strip $\M$]
Let $d$ be the following map defined  on $\M\times \M$:
\begin{equation*}
d:\begin{cases}
\M\times\M\rightarrow\R^+\cup\{\infty\}\\
(s,t)\mapsto d_0^{\R\times\R}|_{\M\times\M}(s,t)
\end{cases}
\end{equation*}
\end{definition}

\begin{proposition}[\cite{bauer2020universality}]
The map $d$ is an extended pseudo metric on $\M$.
\end{proposition}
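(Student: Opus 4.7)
The plan is to unpack the definition layer by layer: first show that $d_0$ is an extended pseudo metric on $\R$, then that $d_0^{\R\times\R}$ is an extended pseudo metric on $\R\times\R$, and finally observe that the restriction to $\M\times\M$ of an extended pseudo metric is automatically an extended pseudo metric. Since $\M\subset \R\times \R$ as a subset (with induced poset/topological structure), the last step is immediate, and the whole proposition reduces to verifying the two standard constructions.

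For the first step, I would use that $\tan$ restricts to a homeomorphism from each open interval $J_k:=(-\tfrac{\pi}{2}+k\pi,\tfrac{\pi}{2}+k\pi)$ onto $\R$, so that on $J_k\times J_k$ the function $(s,t)\mapsto |\tan t - \tan s|$ is the pullback along $\tan$ of the Euclidean metric on $\R$ and is therefore an honest metric. Positivity and symmetry are automatic (one may write the defining formula as $|\tan t - \tan s|$ to make symmetry manifest). For the triangle inequality, given $s,u,t\in\R$, if either $d_0(s,u)$ or $d_0(u,t)$ equals $\infty$ there is nothing to prove; otherwise both closed segments $[s,u]$ and $[u,t]$ avoid the singular set $\tfrac{\pi}{2}+\pi\Z$, hence $s,u,t$ lie in a common $J_k$, and the usual triangle inequality for Euclidean distance on $\tan(s),\tan(u),\tan(t)$ yields the claim. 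Reflexivity $d_0(s,s)=0$ can be imposed on the singular set by convention, which is compatible with the "unique extended metric" formulation.

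For the second step, I would invoke the elementary fact that if $\delta_1,\delta_2$ are extended pseudo metrics on sets $X_1,X_2$, then $((x_1,x_2),(y_1,y_2))\mapsto\max(\delta_1(x_1,y_1),\delta_2(x_2,y_2))$ is an extended pseudo metric on $X_1\times X_2$. Vanishing on the diagonal and symmetry are obvious; the triangle inequality follows from the componentwise triangle inequalities together with the monotonicity $\max(a+b,c+d)\leq \max(a,c)+\max(b,d)$. Applying this with $X_1=X_2=\R$ and $\delta_1=\delta_2=d_0$ gives that $d_0^{\R\times\R}$ is an extended pseudo metric on $\R\times\R$.

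Finally, since $d$ is by definition the restriction of $d_0^{\R\times\R}$ to $\M\times\M\subset(\R\times\R)\times(\R\times\R)$, and the restriction of an extended pseudo metric to any subset is again an extended pseudo metric, the proposition follows. No step is genuinely hard; the only point requiring a moment of care is the triangle inequality for $d_0$ in presence of the value $\infty$, where one must verify that finiteness of $d_0(s,u)$ and $d_0(u,t)$ forces $s,u,t$ to lie in a common branch of $\tan$, so that the Euclidean triangle inequality can be transported back.
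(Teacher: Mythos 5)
Your proof is correct, and since the paper states this proposition with only a citation to \cite{bauer2020universality} rather than its own argument, the natural layered verification you give (pullback metric on each branch of $\tan$, max of two extended pseudo metrics, restriction to a subset) is exactly the right thing to supply. Two small points worth spelling out, both of which you gesture at but could make fully explicit: first, in the finite case of the triangle inequality for $d_0$ you must also observe that $s,u,t$ lying in a common branch $J_k$ forces $[s,t]$ (and not just $[s,u]$ and $[u,t]$) to avoid $\tfrac{\pi}{2}+\pi\Z$, so that $d_0(s,t)$ is actually given by $|\tan t-\tan s|$ and the Euclidean inequality applies to the correct quantity; second, the reflexivity issue on $\tfrac{\pi}{2}+\pi\Z$ is a genuine clash between the displayed formula (which would give $d_0(s,s)=\infty$ there) and the requirement that an extended (pseudo) metric vanish on the diagonal, and your resolution via the convention implicit in ``the unique extended metric satisfying\dots'' is the intended reading. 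With those clarifications the argument is complete.
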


\noindent We present three lemmas that help getting a better understanding of how the metric $d:\M\times\M\rightarrow\R^+\cup\{\infty\}$ acts. Before formulating those results, we introduce a notation that associates to a point of the interior of the strip a specific region of $\M$.\\

\noindent \textbf{Notation.} Let $m\in\M$. We denote by $R_m$ the triangular region in which $m$ is situated. For example, the fundamental domain $D$ consists of four closed triangular regions (the north, east, south and west faces of the pyramid), whose interiors don't intersect. Once again, let $\mathrm{deg}(m)$ denote the only integer $p\in\N$ such that $T^p(m)\in D$.

\begin{lemma}
For all $s,t\in\mathrm{int}(\M)$ such that $\mathrm{deg}(s)=\mathrm{deg}(t)$, we have $$d(s,t)=\infty\iff \mathrm{int}(R_s)\cap \mathrm{int}(R_t)=\emptyset.$$
\end{lemma}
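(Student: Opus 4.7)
The plan is to unpack the definitions of $d$, $d_0$, and the triangular-region structure, and reduce the claim to the fact that the poles $\tfrac{\pi}{2}+j\pi$ of $\tan$ align precisely with the internal boundaries of the triangular regions within each translate $T^{-k}(D)$.

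First, I would make the tessellation explicit. The fundamental domain $D$ is a diamond in $\M$ that is split into four closed triangular sub-regions by its two diagonals: one horizontal line $y=y_0^{*}$ and one vertical line $x=x_0^{*}$ passing through the center of $D$. Applying $T^{-k}$ produces a translated diamond $T^{-k}(D)$, again split by a horizontal line $y=y_k^{*}$ and a vertical line $x=x_k^{*}$ into its four triangular regions. The key geometric fact to verify is that the choice of $l_1, l_2$ passing through $(-\pi,0)$ and $(\pi,0)$ was made precisely so that, for every integer $k$, one has $x_k^{*},\, y_k^{*}\in \tfrac{\pi}{2}+\pi\Z$, i.e.\ these splitting lines coincide with the poles of $\tan$. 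This is a direct consequence of the tessellation being generated by $T$ acting as a translation of magnitude $\pi$ along the diagonal direction perpendicular to $l_1, l_2$.

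Second, I would recall that, by construction, $d(s,t)=\max\bigl(d_0(s_1,t_1),\,d_0(s_2,t_2)\bigr)$ and that $d_0(a,b)<\infty$ iff the closed interval $[a,b]$ avoids $\tfrac{\pi}{2}+\pi\Z$, i.e.\ iff $a$ and $b$ lie in the same open interval between two consecutive poles. Combining this with the previous step gives the following coordinate criterion: for $s,t\in\mathrm{int}(\M)$ with $\deg(s)=\deg(t)=k$, one has $d(s,t)<\infty$ if and only if $s_1,t_1$ lie in the same sub-interval determined by the vertical splitting lines of $T^{-k}(D)$ and $s_2,t_2$ lie in the same sub-interval determined by the horizontal splitting lines of $T^{-k}(D)$.

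Third, I would conclude by translating this coordinate criterion into the language of triangular regions. Since $s$ and $t$ both lie in the single diamond $T^{-k}(D)$, and since the four triangular regions of this diamond are cut out precisely by the horizontal and vertical splitting lines identified in Step~1, the criterion of Step~2 is equivalent to $s$ and $t$ lying in the same (open) triangular region. That is, $R_s=R_t$, which is in turn equivalent to $\mathrm{int}(R_s)\cap \mathrm{int}(R_t)\neq \emptyset$ (the triangular regions being closed with pairwise disjoint interiors). Taking the contrapositive yields the stated equivalence.

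The main obstacle is the geometric verification in Step~1 that the splitting coordinates $x_k^{*},y_k^{*}$ of the tiles $T^{-k}(D)$ are exactly the poles of $\tan$. Once this alignment of the tessellation with the singular set of $d_0$ is established, the rest of the argument is essentially bookkeeping using the product structure of $d$ and the sub-interval characterization of finiteness of $d_0$.
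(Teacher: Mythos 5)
Your proof is essentially the same as the paper's: both reduce to the observation that $d(s,t)=\infty$ iff $[s_1,t_1]$ or $[s_2,t_2]$ meets $\frac{\pi}{2}+\pi\Z$, and both identify these pole lines with the horizontal and vertical diagonals that cut each tile $T^{-k}(D)$ into its four triangular regions; you are simply a bit more explicit about why those diagonals sit at pole coordinates. One small inaccuracy in Step~1: $T$ is not a translation but a glide reflection (one can check $T(m_1,m_2)=(-\pi-m_2,\pi-m_1)$, so only $T^2$ is a translation, namely by $(-2\pi,2\pi)$); moreover the centers shift by $(-\pi,\pi)$, a vector parallel to $l_1,l_2$ rather than perpendicular, and of Euclidean length $\pi\sqrt{2}$. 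The fact you actually need — that $T$ moves each coordinate of the diamond center by $\pi$, so $x_k^{*},y_k^{*}\in\frac{\pi}{2}+\pi\Z$ for all $k$ — is nonetheless correct, so the argument goes through once the description of $T$ is fixed.
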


\begin{proof}
This follows from the fact that for $s,t\in\mathrm{int}(\M)$, one has $d(s,t)=\infty$ if and only if either $[s_1,t_1]\cap (\frac{\pi}{2}+\pi\Z)\neq \emptyset$ or $[s_2,t_2]\cap (\frac{\pi}{2}+\pi\Z)\neq \emptyset$. Indeed, the first condition above means that amongst $s$ and $t$, one is in the upper part of its domain and the other one is in the bottom part. Similarly, the second condition means that one of them is in the left part of its domain and the other one is in the right part.
\end{proof}

\begin{lemma}
For all $s,t\in\mathrm{int}(\M)$ such that $|\mathrm{deg}(s)-\mathrm{deg}(t)|\geq 2$, one has $d(s,t)=\infty$.
\end{lemma}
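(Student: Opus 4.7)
The strategy is to compare the first coordinates of $s$ and $t$ directly: I will show that the closed interval between $s_1$ and $t_1$ must contain a point of $\frac{\pi}{2} + \pi\Z$, which forces $d_0(s_1, t_1) = \infty$ and hence $d(s,t) = \infty$.

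First I would write $T$ down explicitly. With $l_1$ and $l_2$ the two lines of slope $-1$ through $(-\pi, 0)$ and $(\pi, 0)$ respectively, unpacking the construction of $T$ yields $T(m_1, m_2) = (-m_2 - \pi,\, -m_1 + \pi)$; in particular $T^2(m_1, m_2) = (m_1 - 2\pi,\, m_2 + 2\pi)$ is a pure translation. Since the linear part of $T$ is an involutive reflection that preserves axis-aligned diamonds, each iterate $T^{-k}(D)$ is again an axis-aligned diamond. Reading off the four vertices of $D$ as the intersection points of $\{\bigstar, T(\bigstar)\}$ with $\{l_1, l_2\}$, one finds that $D$ is the square centred at $(-\pi/2, \pi/2)$ whose vertices lie at horizontal and vertical distance $\pi$ from that centre. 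Applying $T^{-k}$ then shows that the first-coordinate projection of $T^{-k}(D)$ is the interval
\[ I_k := \big[\tfrac{\pi}{2} + (k-2)\pi,\, \tfrac{\pi}{2} + k\pi\big], \]
whose two endpoints both lie in $\frac{\pi}{2} + \pi\Z$.

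Second, set $p = \mathrm{deg}(s)$ and $q = \mathrm{deg}(t)$, and assume without loss of generality that $p \geq q + 2$. By definition of the degree, $s \in T^{-p}(D)$ and $t \in T^{-q}(D)$, so $s_1 \in I_p$ and $t_1 \in I_q$. Since $p - 2 \geq q$ we have
\[ t_1 \;\leq\; \tfrac{\pi}{2} + q\pi \;\leq\; \tfrac{\pi}{2} + (p-2)\pi \;\leq\; s_1, \]
and therefore $\tfrac{\pi}{2} + q\pi \in [\min(s_1, t_1),\, \max(s_1, t_1)] \cap (\tfrac{\pi}{2} + \pi\Z)$. By definition of $d_0$ this gives $d_0(s_1, t_1) = \infty$, and because $d(s,t) = \max(d_0(s_1, t_1), d_0(s_2, t_2))$, we conclude $d(s,t) = \infty$.

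The main obstacle is fixing the geometric conventions (the excerpt is somewhat informal about the labelling of $l_1$ versus $l_2$ and the orientation of the fundamental domain $D$) and certifying the explicit tessellation formula $I_k$ from the definitions of $T$ and $D$; once the projection intervals are in hand, the conclusion is a one-line comparison.
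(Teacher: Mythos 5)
The paper states this lemma without proof, so there is nothing internal to compare against. On its merits, your strategy---computing $T$ explicitly, identifying the first-coordinate projection of $T^{-k}(D)$ as a closed interval $I_k$ of length $2\pi$ with both endpoints in $\frac{\pi}{2}+\pi\Z$ and shifting by $\pi$ per unit $k$, and then observing that $p\geq q+2$ forces $[\min(s_1,t_1),\max(s_1,t_1)]$ to contain a point of $\frac{\pi}{2}+\pi\Z$---is the right one, and the closing comparison via the definition of $d_0$ is correct.

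One thing you should nail down, beyond the bookkeeping you already flag: plugging your formula $T(m_1,m_2)=(-m_2-\pi,-m_1+\pi)$ into $D=\downarrow(\bigstar)\setminus T^{-1}(\downarrow(\bigstar))$ gives $T^{-1}(\downarrow(\bigstar))=\{u\in\M : u_1-u_2\leq 2\pi\}\supset\downarrow(\bigstar)=\{u\in\M: u_1\leq u_2\}$, so $D=\emptyset$. The paper's stated assignment of $(-\pi,0)$ to $l_1$ and $(\pi,0)$ to $l_2$ is in fact incompatible with its own definition of $D$; the convention that makes $D$ nonempty is the opposite one, $T(m_1,m_2)=(\pi-m_2,-m_1-\pi)$, under which $D$ is bounded by $\bigstar$ and $T^{-1}(\bigstar)$ and is precisely the diamond you describe. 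You silently compensate for this sign by reading $D$ off $\{\bigstar,T(\bigstar)\}$ rather than $\{\bigstar,T^{-1}(\bigstar)\}$, so your picture of $D$ and the projection formula $I_k$ come out correct (up to the harmless reindexing $k\mapsto -k$), and the concluding comparison is unaffected since you reduce to $p\geq q+2$ without loss of generality. But a reader who traces the definitions with your explicit $T$ will hit the $D=\emptyset$ obstruction, so the orientation should be fixed before the tessellation formula can actually be \emph{derived} rather than read off the figure.
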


\begin{lemma}
\label{cool_case}
For all $s,t\in\mathrm{int}(\M)$ such that $|\mathrm{deg}(s)- \mathrm{deg}(t)|=1$, one has $d(s,t)\neq\infty$ if and only if $|R_s\cap R_t|>1$, \textit{i.e.} if and only if the closed triangular regions $R_s$ and $R_t$ share a copy of the injected real line $\bigstar=\mathrm{Im}_\blacktriangle(\Bar{R})$.
\end{lemma}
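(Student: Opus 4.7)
The plan is to convert the analytic condition $d(s,t)\neq\infty$ into a purely combinatorial statement about the position of $s$ and $t$ with respect to the singularity grid of $\tan$, and then to identify this grid with the tessellation structure of $\M$ induced by $T$.

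First, I will unpack the definition of $d$. Directly from the formula for $d_0$, one has $d(s,t) < \infty$ if and only if both intervals $[s_1, t_1]$ and $[s_2, t_2]$ are disjoint from $\tfrac{\pi}{2} + \pi\Z$. Equivalently, $s$ and $t$ lie in the same open cell of the grid $\mathcal{G}$ on $\R \times \R$ cut out by the vertical lines $\{x = \tfrac{\pi}{2}+k\pi\}$ and the horizontal lines $\{y = \tfrac{\pi}{2} + k\pi\}$, for $k\in\Z$. The two previous lemmas of this subsection have already encoded the behaviour of $d$ when $\mathrm{deg}(s) = \mathrm{deg}(t)$ or $|\mathrm{deg}(s) - \mathrm{deg}(t)| \geq 2$, so only the intermediate case $|\mathrm{deg}(s) - \mathrm{deg}(t)| = 1$ has to be treated here.

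Next, I will verify that the normalization fixed at the beginning of the subsection (namely, $l_1$ meeting the $x$-axis at $(-\pi,0)$ and $l_2$ at $(\pi,0)$) is chosen precisely so that every copy $T^k\bigstar$ inside $\M$ lies on a single line of $\mathcal{G}$, and conversely every line of $\mathcal{G}$ meeting $\mathrm{int}(\M)$ is one of the $T^k\bigstar$. Granting this alignment, every open triangular region $R$ of the tessellation is contained in a unique open cell of $\mathcal{G}$, and two distinct closed triangles lie in the closure of the same open cell if and only if they share an edge, which is then necessarily a segment of some $T^k\bigstar$.

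With this dictionary in place, the proof reduces to a short case enumeration. When $|\mathrm{deg}(s) - \mathrm{deg}(t)| = 1$, the two fundamental-domain copies containing $R_s$ and $R_t$ are adjacent in the tessellation, and since each fundamental domain contains only four triangles (the north, south, east and west faces of the pyramid seen from above) there are only finitely many relative positions of such a neighbouring pair to check. In each configuration one sees directly that either $R_s$ and $R_t$ share an edge lying on some $T^k\bigstar$, in which case they belong to the same open cell of $\mathcal{G}$ and $d(s,t) < \infty$, or they meet at most in an isolated vertex, which sits at the intersection of two grid lines of $\mathcal{G}$, in which case they lie in different cells and $d(s,t) = \infty$. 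The main delicate step is the alignment of the tessellation with $\mathcal{G}$ in the chosen normalization; once this is pinned down, the equivalence asserted in the lemma follows by direct inspection of the finite list of configurations.
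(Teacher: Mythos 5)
The paper actually omits a proof of this lemma entirely (only the first of the three lemmas in this subsection is proved), so I will assess your argument on its own terms.

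Your overall strategy is the right one — reduce $d(s,t)<\infty$ to the statement ``$s$ and $t$ lie in the same open cell of the grid $\mathcal{G}$'' and then match $\mathcal{G}$ with the combinatorics of the tessellation — but the key intermediate claim that you single out as ``the main delicate step'' is false, and the error is not cosmetic: it inverts the role of $\bigstar$ relative to $\mathcal{G}$.

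You assert that the normalization is chosen so that every $T^k\bigstar$ lies on a line of $\mathcal{G}$, and conversely. This cannot be the case: $\bigstar$ is, by definition, orthogonal to $l_1$, and $l_1,l_2$ have slope $-1$, so $\bigstar$ and all the $T^k\bigstar$ are segments of slope $+1$, whereas every line of $\mathcal{G}$ is horizontal or vertical. Concretely, in the $\pi$-normalization, $\bigstar$ is the segment on $y=x$ from $(-\pi/2,-\pi/2)$ to $(\pi/2,\pi/2)$, which is the \emph{diagonal} of the cell $(-\pi/2,\pi/2)\times(-\pi/2,\pi/2)$, not a grid line. What is true — and what your argument actually needs — is that the horizontal and vertical segments subdividing each diamond $T^k(D)$ into its four triangular faces lie exactly on the lines of $\mathcal{G}$: the center of $D$ sits at $(-\pi/2,\pi/2)$, its diagonals are $x=-\pi/2$ and $y=\pi/2$, and $T$ carries these to the diagonals of the adjacent diamonds, so the grid lines $\{x=\pi/2+k\pi\}$ and $\{y=\pi/2+k\pi\}$ are precisely those internal subdivision lines. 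The $T^k\bigstar$, in contrast, are the shared edges between adjacent diamonds and run along the diagonals of the cells of $\mathcal{G}$.

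This matters because your next step — ``if $R_s$ and $R_t$ share an edge lying on some $T^k\bigstar$, then they belong to the same open cell of $\mathcal{G}$'' — would be \emph{contradicted} by your stated alignment. If $T^k\bigstar$ lay on a grid line, two triangles sharing that edge would be separated by a line of $\mathcal{G}$ and hence in different cells, giving $d(s,t)=\infty$, the opposite of what the lemma asserts. So as written, your proof contains an internally inconsistent chain of claims, even though the final statements (each open triangle sits in a unique open cell; two closed triangles fully contained in the same closed cell share a $\bigstar$-copy edge) are in fact correct. With the corrected identification — grid lines $=$ internal horizontal/vertical subdivision lines of the diamonds, and $T^k\bigstar$ $=$ the diagonals of the cells — the rest of your argument goes through: since $|\mathrm{deg}(s)-\mathrm{deg}(t)|=1$ forces $R_s$ and $R_t$ into adjacent diamonds, they lie in a common cell of $\mathcal{G}$ precisely when they are the two triangles glued along the $T^k\bigstar$ separating those diamonds, and otherwise $[s_1,t_1]$ or $[s_2,t_2]$ must cross a grid line, giving $d(s,t)=\infty$.
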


\noindent Based on the introduced metric, one may now express the notion of \textit{Bottleneck distance} between extended persistence diagrams. A $\delta$-matching between two extended persistence diagrams is a partial matching of their vertices such that any two matched vertices $s,t\in\M$ satisfy $d(s,t)\leq\delta$ and unmatched vertices are at distance at most $\delta$ of the boundary $\partial\M$. For $\delta>0$, one writes $\mathcal{M}(\delta)$ for the set of pairs $(\mu_1,\mu_2)$ of extended persistence diagrams for which there exists a $\delta$-matching.

\begin{definition}[Bottleneck distance]
The \emph{Bottleneck distance} between two extended persistence diagrams $\mu_1:\mathrm{int}(\M)\rightarrow \N_0$ and $\mu_2:\mathrm{int}(\M)\rightarrow \N_0$ is defined as $$d_B(\mu_1,\mu_2)=\inf\{\delta>0\mid (\mu_1,\mu_2)\in\mathcal{M}(\delta)\}.$$
\end{definition}

The following is implicitly contained in \cite{bauer2020universality}. 

\begin{proposition}

Let $f,g : \mathbb{X} \longrightarrow \R$ be two continuous functions such that $h(f)$ and $h(g)$ are pointwise finite dimensional. Then:

\[d_B(\mathrm{Dgm}(f),\mathrm{Dgm(g)}) \leq \|f-g\|_\infty.\]
We say that the bottleneck distance is stable.
\end{proposition}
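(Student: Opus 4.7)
The plan is to establish the bound by constructing a natural interleaving between the RISC functors $h(f)$ and $h(g)$ and then invoking an algebraic stability principle adapted to the contravariant block setting of Theorem \ref{decomposition_thm}. Set $\varepsilon := \|f-g\|_\infty$. The first step is to observe that the hypothesis $\|f-g\|_\infty \leq \varepsilon$ gives inclusions of preimages of shifted intervals: for any open $(a,b) \subset \R$ one has $g^{-1}((a,b)) \subset f^{-1}((a-\varepsilon,b+\varepsilon))$ and vice-versa, and analogously at the level of pairs. Via the map $\rho$ of Definition \ref{rho_def}, such a $\varepsilon$-shift of open intervals is induced by a well-defined translation $\Sigma_\varepsilon : \M \to \M$ that moves points along the directions governed by the metric $d$ (the two diagonal directions corresponding to the two coordinates $(\rho_1,\rho_2)$). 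Functoriality of cohomology then yields natural transformations $h(f) \Rightarrow h(g) \circ \Sigma_\varepsilon$ and $h(g) \Rightarrow h(f) \circ \Sigma_\varepsilon$ whose composites factor through the structure maps $h(f)(m) \to h(f)(\Sigma_{2\varepsilon}(m))$. This constitutes an $\varepsilon$-interleaving of $h(f)$ and $h(g)$ in the sense compatible with the metric $d$.

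Next, I would invoke the block decomposition: by Proposition \ref{prop_seq} and Theorem \ref{decomposition_thm}, both $h(f)$ and $h(g)$ split as direct sums $\bigoplus_{v} B_v^{\oplus \mu(v)}$ of contravariant blocks indexed by their persistence diagrams $\mathrm{Dgm}(f)$ and $\mathrm{Dgm}(g)$. The interleaving constructed above then descends to the barcodes via an induced matching theorem: any $\varepsilon$-interleaving of sequentially continuous, pointwise finite dimensional cohomological functors on $\M^{\mathrm{op}}$ gives rise to an $\varepsilon$-matching (with respect to $d$) between their block diagrams. This is the algebraic stability statement that parallels the classical induced matching theorem of Bauer--Lesnick and is the substantive content extracted from \cite{bauer2020universality}. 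Unmatched blocks have size at most $\varepsilon$ in the $d$-metric, i.e.\ their generating points lie within distance $\varepsilon$ of $\partial \M$, exactly as required by the definition of $\delta$-matching.

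Finally, the $\varepsilon$-matching provided by the previous step witnesses $(\mathrm{Dgm}(f),\mathrm{Dgm}(g)) \in \mathcal{M}(\varepsilon)$, so taking the infimum in the definition of $d_B$ gives $d_B(\mathrm{Dgm}(f),\mathrm{Dgm}(g)) \leq \varepsilon = \|f-g\|_\infty$.

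The main obstacle lies in the second step: verifying that the two-parameter $\varepsilon$-shift on $\M$ is compatible with the tessellation by $T$-orbits and that the induced matching theorem holds in the contravariant, $\Z$-graded, sequentially continuous setting. In particular, one must ensure that blocks $B_v$ destroyed or created at interleaving parameter $\varepsilon$ correspond precisely to points $v$ within $d$-distance $\varepsilon$ of $\partial \M$, and that the matching respects the shift $\Sigma$ encoding degree shifts $p \mapsto p \pm 1$ across the tessellation. Once this induced matching statement is available, the bound is immediate by the general scheme outlined above.
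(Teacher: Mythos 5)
The paper does not actually prove this proposition; it is stated with the remark that it is ``implicitly contained in'' \cite{bauer2020universality}, so there is no in-paper proof to compare against. Your sketch takes a plausible interleaving-to-matching route, and you are right that the substantive missing ingredient is an induced-matching (algebraic stability) theorem adapted to contravariant, block-decomposable, cohomological functors on $\M^{\mathrm{op}}$. But I want to sharpen the nature of that gap and flag two further technical points you glide past.

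First, the paper never defines an interleaving distance for functors $\M^{\mathrm{op}} \to \mathrm{Vect}_K$; the bottleneck distance is introduced directly as a combinatorial matching distance. So before you can ``construct an $\varepsilon$-interleaving'' you must supply the definition, and in doing so you hit the second issue: the strip $\M$ is parametrized so that the metric $d$ involves $\tan$, i.e.\ the boundary $\partial\M$ lies at finite Euclidean distance but corresponds to $\pm\infty$ in interval-endpoint coordinates. A shift by $\varepsilon$ of the underlying intervals (which is what the inclusion $g^{-1}((a,b)) \subset f^{-1}((a-\varepsilon,b+\varepsilon))$ gives you) is therefore \emph{not} a translation on $\M$; it is a position-dependent endomorphism of $\M$ that is monotone for the poset order but highly nonlinear in the ambient coordinates. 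Writing it as ``a well-defined translation $\Sigma_\varepsilon$'' hides exactly the bookkeeping that makes the $\tan$-reparametrization nontrivial, and it also collides notationally with the degree-shift endofunctor $\Sigma$ already in use. Third, and most importantly, the claim that ``any $\varepsilon$-interleaving $\ldots$ gives rise to an $\varepsilon$-matching'' in this setting is not a corollary of the Bauer--Lesnick induced matching theorem; that theorem is for one-parameter modules. What you need is the analogue of the Bjerkevik/Cochoy--Oudot algebraic stability result for block-decomposable two-parameter modules, transported through the correspondence between $\M^{\mathrm{op}}$-blocks and $\U$-blocks, together with a careful accounting of the degree shift across the $T$-tessellation (which is why Lemma \ref{cool_case} is in the paper). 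The reference \cite{bauer2020universality} establishes stability by precisely this kind of reduction to the levelsets/interlevelsets block-decomposition framework, not by proving an abstract interleaving-implies-matching statement for RISC functors from scratch. Your outline is in the right spirit, but as written it replaces one unproved assertion (the proposition) with another of comparable difficulty (the induced matching theorem on $\M^{\mathrm{op}}$), without reducing either to the results actually available in the paper.
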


\noindent The main result of \cite{bauer2020universality} states that the Bottleneck distance is universal, meaning that it is the largest possible stable
distance on realizable persistence diagrams.

\begin{definition}(Realizable persistence diagram) A map $\mu:\mathrm{int}(\M)\rightarrow\N_0$ 
is a \emph{realizable persistence diagram} if $\mathrm{Dgm}(f)=\mu$ for some PL function $f:\X\rightarrow\R$. 
\end{definition}

\begin{theorem}(\autocite[Theorem~1.1]{bauer2020universality})
For realizable persistence diagrams $\mu$ and $\nu$ with $d_B(\mu, \nu)<\infty$, there
exists a finite simplicial complex $\X$ and piecewise linear functions $f,g : \X\rightarrow\R$ with
$$\mathrm{Dgm}(f)=\mu, \text{ } \mathrm{Dgm}(g)=\nu, \text{ and } \lVert f-g \rVert_\infty = d_B(\mu, \nu).$$
\end{theorem}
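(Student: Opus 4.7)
The plan is the following. By the stability proposition just above, any triple $(\X,f,g)$ with $\mathrm{Dgm}(f)=\mu$ and $\mathrm{Dgm}(g)=\nu$ satisfies $d_B(\mu,\nu) \leq \|f-g\|_\infty$, so it suffices to exhibit one with $\|f-g\|_\infty \leq \delta := d_B(\mu,\nu)$; stability then forces equality. I would fix an optimal $\delta$-matching $\sigma$ between the two diagrams; by definition of $\mathcal{M}(\delta)$, every unmatched point on either side lies within $d$-distance $\delta$ of $\partial\M$.

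My approach is to build $\X$ as a disjoint union of local building blocks, one per matched pair and one per unmatched point, and to define $f$ and $g$ block-by-block. Since extended persistence diagrams are additive under disjoint union of pairs, this reduces the problem to producing, for each prescribed point $s \in \mathrm{int}(\M)$ and each interval type I--IV of Section \ref{extended}, a local finite simplicial complex $\X_s$ with a PL function $h_s$ whose extended persistence diagram is the single point $s$. Such local models can be built explicitly using the bijection of Theorem \ref{bijection_theorem}: each interval type corresponds to a birth/death pattern realizable by a filtered cone or suspension whose only surviving barcode interval is the prescribed one. For a matched pair $(s,t=\sigma(s))$ lying in the same triangular region of $\M$, the same simplicial model serves for both endpoints, and the PL functions $f_{s,t}, g_{s,t}$ differ only by a translation of one or two critical values. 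Because the metric $d$ is built from $\tan$-coordinates along $\bigstar$ and the critical values of a PL function correspond to finitely many points of $\bigstar$, this translation can be arranged so that $\|f_{s,t}-g_{s,t}\|_\infty = d(s,t)$ exactly. When $s$ and $t$ lie in adjacent triangular regions, Lemma \ref{cool_case} guarantees that they share a copy of $\bigstar$, and the PL homotopy between $f_{s,t}$ and $g_{s,t}$ can be routed through a single critical event where the interval type -- and possibly the homological dimension -- changes, consistently with the shift-of-dimension clause in Theorem \ref{pyr_theo}.

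For each unmatched point $s$ of $\mu$ (respectively $\nu$), I would include the local block $(\X_s, h_s)$ in $\X$ and use $h_s$ in $f$ (respectively $g$), while declaring the other function to be a suitable constant on that block, within sup-distance $\delta$ of $h_s$. This is feasible precisely because $s$ is $d$-close to $\partial\M$: a height perturbation of size at most $\delta$ collapses the corresponding feature to the empty diagram. Taking $\X$ to be the finite disjoint union of all blocks and $f, g$ to be the corresponding piecewise definitions would yield $\mathrm{Dgm}(f) = \mu$, $\mathrm{Dgm}(g) = \nu$, and $\|f-g\|_\infty \leq \delta$, which together with stability gives equality.

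The hardest part is the precise calibration between the metric $d$ on $\M$ and the sup-norm on PL functions: for each interval type and each endpoint location, one must produce a local PL model whose critical values realize the $\tan$-based coordinates exactly, and show that two such models for a matched pair can be interpolated by a PL homotopy with sup-norm deviation equal to $d(s,t)$, not merely bounded by it. The most delicate sub-case is when a matched pair crosses between adjacent triangular regions of the fundamental domain, where the interpolating homotopy must pass through a critical fiber at which the type transition of Theorem \ref{pyr_theo} occurs; controlling the sup-norm tightly across this transition while simultaneously realizing the correct homological shift is where the technical core of the proof lies.
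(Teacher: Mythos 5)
The paper itself gives no proof of this theorem; it is stated purely as a citation of \autocite[Theorem~1.1]{bauer2020universality}, so there is no argument in the text to compare against. Evaluating your outline on its own terms: the overall strategy---reduce to the inequality $\|f-g\|_\infty \leq d_B(\mu,\nu)$ via stability, then realize each point of a near-optimal matching by a local PL building block and take a (disjoint) union---is the standard one for universality results, and your identification of the metric calibration between $d$ and $\|\cdot\|_\infty$ as the delicate point is apt (the $\tan$-reparametrization of $\M$ is in fact designed precisely so that $d$-distances along $\bigstar$ correspond to differences of function values, so this step, while needing verification, is not where the argument breaks).

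There is, however, a genuine gap in the disjoint-union step. Each local block $\X_s$ is a nonempty finite simplicial complex, and any PL function on it produces at least one essential $0$-dimensional class, hence at least one additional diagram point (of type III) that you did not account for. These extra points lie in $\mathrm{int}(\M)$, not on $\partial\M$, so they genuinely enter $\mathrm{Dgm}(f)$. Taking a disjoint union of one block per point of $\mu$ therefore produces a diagram strictly larger than $\mu$ whenever $\mu$ has more than one type-I, II, or IV point per essential class: you would need exactly as many connected components as $\mu$ has degree-$0$ essential classes, with all other features distributed among them, which your per-point decomposition ignores. A related problem afflicts the treatment of unmatched points: replacing $h_s$ by a near-constant function does not yield the empty diagram on $\X_s$ but rather a short type-III interval, which again sits in $\mathrm{int}(\M)$. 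A correct construction must either glue the blocks so that all essential classes merge into the ones $\mu$ actually prescribes, or arrange for each block's essential contribution to coincide exactly with a type-III point of $\mu$ (and of $\nu$ for the matched partner). Until that accounting is done, the claimed identity $\mathrm{Dgm}(f)=\mu$ does not follow from your construction. The remaining steps---explicit local models for each interval type, attainment of the infimum defining $d_B$, and the dimension-shift bookkeeping across adjacent triangular regions---are all plausibly fillable but are asserted rather than carried out.
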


\begin{corollary}
Let $d$ be a stable distance on the set of realizable extended persistence diagrams. Then for all realizable extended persistence diagrams $\mu, \nu$ : $d(\mu, \nu) \leq d_B(\mu, \nu)$.
\end{corollary}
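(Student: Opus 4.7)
The plan is to derive this corollary directly from the preceding realization theorem, combined with the very definition of a stable distance. The stability hypothesis on $d$ should be read as: for any finite simplicial complex $\X$ and any piecewise linear functions $f,g : \X \to \R$ (whose RISC functors are pointwise finite dimensional, so that $\mathrm{Dgm}(f)$ and $\mathrm{Dgm}(g)$ are well defined), one has $d(\mathrm{Dgm}(f), \mathrm{Dgm}(g)) \leq \|f-g\|_\infty$.

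First, I would dispatch the trivial case $d_B(\mu,\nu) = \infty$, for which the inequality $d(\mu,\nu) \leq d_B(\mu,\nu)$ holds vacuously. So assume $d_B(\mu,\nu) < \infty$. By hypothesis, $\mu$ and $\nu$ are realizable, and we are in the regime where the previous theorem (\autocite[Theorem~1.1]{bauer2020universality}) applies: it provides a finite simplicial complex $\X$ and PL functions $f,g : \X \to \R$ such that
\[
\mathrm{Dgm}(f) = \mu, \qquad \mathrm{Dgm}(g) = \nu, \qquad \|f-g\|_\infty = d_B(\mu,\nu).
\]
This is the essential input: the realization theorem not only produces functions with the correct diagrams but matches the distances exactly, so that the sup-norm on functions equals the bottleneck distance on the realized diagrams.

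With $f$ and $g$ in hand, applying the stability hypothesis on $d$ directly yields
\[
d(\mu,\nu) = d(\mathrm{Dgm}(f), \mathrm{Dgm}(g)) \leq \|f-g\|_\infty = d_B(\mu,\nu),
\]
which is the desired inequality.

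There is essentially no obstacle here: the entire content of the corollary has been absorbed into the preceding realization theorem, and the proof is a one-line chain of equalities and inequalities. The only subtlety worth flagging is the implicit assumption that $d$ is defined on (and stable with respect to) PL functions whose RISC is pointwise finite dimensional, which is automatic for functions on a finite simplicial complex and hence causes no trouble.
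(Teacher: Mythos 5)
Your proof is correct and is the intended argument: the paper states this corollary without proof immediately after the universality theorem, and your derivation (handle $d_B(\mu,\nu)=\infty$ trivially, then realize $\mu,\nu$ by PL functions at sup-distance exactly $d_B(\mu,\nu)$ and apply stability of $d$) is precisely the one-line chain the corollary is meant to follow from.
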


\section{Connection with Levelsets Zigzag (co-)Homology}
\label{connection}

\subsection{Connection with the pyramid}
\label{pyr_connection}
For a pair $(\X,f)$, the strip $\M$ introduced in \cite{bauer2021structure} is closely related to the pyramid associated to $(\X,f)$ presented in Section \ref{pyr1}, which we denote by $\square_f$. If we denote by $H^p(\square_f)$ the diagram obtained by applying the functor $H^p(\cdot)$ to the pyramid $\square$, then $\M$ can be thought as an infinite continuous gluing of all diagrams $H^p(\square_f)$ for $p\in\N$. 

\begin{proposition}
\label{connection_pyramids_prop}
Let $(\X,f)$ be a pair of Morse type. There is an inclusion of posets from the pyramid $\square_f$ to a sub-lattice embedded in the fundamental domain $D$ of $\M$. 
\end{proposition}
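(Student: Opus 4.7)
The plan is to construct an explicit order-preserving injection $\iota : \square_f \hookrightarrow D$ by using the map $\rho$ of Definition \ref{rho_def}. Every pair $(\A, \B)$ appearing in $\square_f$ is, by construction, of the form $(f^{-1}(I), f^{-1}(J))$ for a pair $J \subseteq I \subseteq \R$ of (possibly relatively open or closed) intervals whose endpoints all lie in the finite set $\{s_0, \ldots, s_n\} \cup \{\pm \infty\}$. Since $f$ is of Morse type and the $s_i$ are regular values, each such $f^{-1}(I)$ deformation retracts onto the preimage of the corresponding open interval with the same endpoints, so up to this identification we may always replace $(I, J)$ by a pair of opens. The assignment $\iota$ will send $(\A, \B)$ to the unique point $m \in D$ such that $\rho(m) = (I, J)$.

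First I would make a case-by-case reading of Figure \ref{fig:rho_map} and Definition \ref{d:MVpyramid} to identify which region of $D$ parametrises which type of pair: points in the south triangle of $D$ correspond to pairs of the form $(f^{-1}((a,b)), \emptyset)$, and hence recover the southern edge $(\X_i^j, \emptyset)$; points in the west (resp. east) triangle correspond to pairs of the form $(f^{-1}(\R), f^{-1}((-\infty, b]))$ (resp. $(f^{-1}(\R), f^{-1}([a, \infty)))$), covering the western and eastern edges built via relative Mayer--Vietoris diamonds; and points in the north triangle correspond to pairs of the form $(f^{-1}([a,\infty)), f^{-1}([b,\infty)))$, covering the top of the pyramid. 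Since the pyramid is built inductively by Mayer--Vietoris diamond completion from the southern edge, the pairs arising at each inductive step will be supplied with coordinates in $\{s_0, \ldots, s_n\} \cup \{\pm\infty\}$, and hence with a well-defined image in $D$.

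Second, I would verify the two required structural properties. For order preservation, an inclusion $(\A, \B) \to (\C, \D)$ in $\square_f$ means $\A \subseteq \C$ and $\B \subseteq \D$, which translates on the level of intervals to $I_{(\A,\B)} \subseteq I_{(\C,\D)}$ and $J_{(\A,\B)} \subseteq J_{(\C,\D)}$; by the very definition of $\rho$ and the induced order $\preceq_{\mathcal{P}}$ on pairs, this is exactly the statement $\iota(\A,\B) \preceq \iota(\C,\D)$ in $\M$. Injectivity follows because distinct pairs in the pyramid yield distinct interval pairs — a consequence of the uniqueness of the endpoints $s_i$ as regular values. Finally, the image forms the finite sub-lattice of $D$ generated by the grid of coordinates $\{s_0, \ldots, s_n\}$.

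The main obstacle will be the bookkeeping between open and closed intervals: the pyramid uses the closed preimages $\X_i^j = f^{-1}([s_i, s_j])$, while $\rho$ by definition outputs pairs of opens via $\blacktriangle^{-1} \circ \mathrm{int}$. The Morse type hypothesis is what lets us pass between the two freely (since $s_i$ avoids critical values), but each of the nine diamond-completion rules in Definition \ref{d:MVpyramid} produces a slightly different bracket pattern (pairs $(\Y, \Y)$ identified with $\emptyset$, half-open relative pairs on the western/eastern edges, etc.), and these must all be matched coherently with the geometry of the triangular regions of $D$. Organising this as a single uniform table matching pyramid entries to coordinates in $D$ is the essential technical content; everything else is a direct reading of Definitions \ref{rho_def} and \ref{d:MVpyramid}.
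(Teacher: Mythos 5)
Your overall strategy matches the paper's: construct an order-preserving injection $\iota : \square_f \hookrightarrow D$ by reading each pyramid entry as a pair $(\A,\B) = (f^{-1}(I), f^{-1}(J))$ and matching it with the unique point of $D$ whose image under $\rho$ is $(I,J)$. You also go further than the paper's terse argument by spelling out order-preservation and the open/closed bookkeeping, which is useful.

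However, the case analysis of which region of $D$ carries which kind of pair — the technical heart of the argument — is wrong, and this is a genuine gap. Reading Figure \ref{fig:rho_map} against Definition \ref{rho_def}, the four triangular regions of $D$ map under $\rho$ to four two-parameter families: absolute-space pairs $([a,b],\emptyset)$ below $\bigstar$, pairs $(\R,\R\setminus(a,b))$ on the face opposite $\bigstar$, and the sub-levelset pairs $((-\infty,b],(-\infty,a])$ and super-levelset pairs $([a,\infty),[b,\infty))$ on the two lateral faces. In your version the lateral triangles are assigned pairs $(\R,(-\infty,b])$ and $(\R,[a,\infty))$: each of these is a one-parameter family, so it can only trace a boundary edge of $D$ (precisely the degeneration of the $(\R,-)$ face as $a\to -\infty$ or $b\to\infty$), never fill a triangle. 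Worse, the sub-levelset pairs $((-\infty,b],(-\infty,a])$ — which constitute an entire face of $D$ and are precisely what the pyramid's $(\X_0^i,\X_0^j)$ entries must land on — are missing altogether from your list, and the super-levelset pairs $([a,\infty),[b,\infty))$ have been misplaced onto the north triangle, where $\rho$ instead produces pairs with ambient $\R$. With this misidentification your $\iota$ has no well-defined target for the pyramid's sub-levelset face, and it would put the north face $(\X_0^n,\X_0^i\cup\X_j^n)$ in the wrong region of $D$. The correction is exactly the table the paper's proof writes down: south face of $\square_f \mapsto [s_i,s_j]$, north face $\mapsto (\R,\R\setminus[s_i,s_j])$, and the two lateral faces $\mapsto ((-\infty,s_i),(-\infty,s_j))$ and $((s_i,\infty),(s_j,\infty))$ respectively; the rest of your verification (order-preservation via $\preceq_{\mathcal P}$, injectivity from distinctness of the $s_i$) then goes through as you describe.
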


\begin{proof}
Let $a_1<...<a_n$ be critical values of $f:\X\rightarrow\R$, with in-between regular values $-\infty<s_0<a_1<s_1<\cdots<s_{n-1}<a_n<s_n<\infty$. Note that the pyramid $\square_f$ can be divided into four triangular regions, which are the faces of the pyramid. Now, each region contains a specific kind of relative or absolute space. Indeed, the south face contains spaces of the form $\X_i^j$ for $i\leq j$, which correspond to intervals of the form $[s_i,s_j]$. The north face contains pairs of the form $(\X_0^n,\X_0^i\cup\X_j^n)$, which correspond to pairs of the form $(\R,\R\setminus [s_i,s_j])$. The east face is made of pairs $(\X_0^i,\X_0^j)$ corresponding to pairs $((-\infty,s_i),(-\infty,s_j))$, and the west face is made of pairs $(\X_i^n,\X_j^n)$ corresponding to pairs $((s_i,\infty),(s_j,\infty))$. We conclude the proof by looking at the schematic functioning of the map $\rho:\M\rightarrow \mathrm{Op}(\mathbb{R})\times\mathrm{Op}(\mathbb{R})$ shown in Figure \ref{fig:rho_map}.
\end{proof}

\subsection{Extracting levelsets zigzag persistence}

Binding the observations of Section \ref{extract} and Section \ref{pyr_connection}, one can formulate the main result, which describes how on can retrieve the levelsets zigzag barcode of a pair $(\X,f)$ from the extended persistence diagram $\mathrm{Dgm}(f)$ defined on the strip $\M$.

\begin{proposition}[Extracting levelsets zigzag persistence]
The barcode map $$\mathbb{B}(f):\N_0\times\mathrm{Int}(\R)\rightarrow\N_0$$ defined in Section \ref{extract} is the levelsets zigzag persistence barcode of $(\X,f)$.
\end{proposition}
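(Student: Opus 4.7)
The plan is to combine three results established earlier in the paper: the contravariant block decomposition of the RISC functor (Theorem \ref{decomposition_thm} applied to $h(f)$ via Proposition \ref{prop_seq}), the embedding of the pyramid $\square_f$ into the fundamental domain $D \subset \M$ (Proposition \ref{connection_pyramids_prop}), and the explicit Barcode Bijection of Theorem \ref{bijection_theorem}.

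First I would invoke the block decomposition to write
$$h(f) \cong \bigoplus_{v \in \mathrm{int}(\M)} (B_v)^{\oplus \mu(v)}, \qquad \mu = \mathrm{Dgm}(f).$$
The bijection $\Psi : \M \to \N_0 \times \mathrm{Int}(\R)$ re-labels each block $B_v$ by the pair $(\mathrm{deg}(v), I(v))$, so by construction $\mathbb{B}(f)(n, I)$ counts exactly the blocks $B_v$ with $\Psi(v) = (n, I)$. The $\mathrm{deg}(v)$ coordinate is aligned with homological dimension thanks to the intertwining $\Sigma \circ F = F \circ T$ built into the definition of $h(f)$.

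Next I would perform the case analysis on the four triangular faces of $D$. By Proposition \ref{connection_pyramids_prop}, each face is identified with one face of the pyramid $\square_f$: the south face with absolute spaces $\X_i^j$, the east and west faces with sublevelset pairs $(\X_0^i, \X_0^j)$ and superlevelset pairs $(\X_i^n, \X_j^n)$, and the north face with pairs of the form $(\X_0^n, \X_0^i \cup \X_j^n)$. Reading off the definition of $\rho$ in Definition \ref{rho_def} (see also Figure \ref{fig:rho_map}), a point in each face maps to the expected pair of open subsets of $\R$; applying $\sigma$ then extracts the very interval that appears on the levelsets-zigzag side of the table in Theorem \ref{bijection_theorem}, while the integer $\mathrm{deg}(v)$ records the homological shift predicted by the Pyramid Theorem \ref{pyr_theo} for the type II intervals.

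The principal obstacle is the endpoint bookkeeping — open versus closed, and finite versus infinite — produced by the interior and complement operations in Definition \ref{rho_def}, combined with the verification that these four face-by-face cases exhaust exactly the four types of extended persistence intervals classified in Section \ref{extended}. Once the combinatorial matching is verified, taking multiplicities $\mu(v)$ into account and summing over $v \in \mathrm{int}(\M)$ yields the identity of $\mathbb{B}(f)$ with the levelsets zigzag barcode of $(\X,f)$, as desired.
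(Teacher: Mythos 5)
Your proof takes a genuinely different route from the paper's, detouring through the discrete Barcode Bijection (Theorem \ref{bijection_theorem}) rather than arguing directly, and this detour hides the essential missing link. The paper's argument is shorter and sharper: it observes that, via Proposition \ref{connection_pyramids_prop}, the south edge of the pyramid $\square_f$ is mapped into the copy $\bigstar_n$ of the injected real line sitting at the bottom of $T^{-n}(D)$, so that the restriction of the RISC functor $h(f)$ to $\bigstar_n$ \emph{is} the dimension-$n$ levelsets zigzag persistence module of $(\X,f)$. Once one has this, the block decomposition of $h(f)$ restricts to a barcode decomposition of that zigzag module, and the map $I(\cdot)$ is designed precisely to record, for each block $B_v$, the interval of $\bigstar_n$ on which it is supported. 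The equality $\mathbb{B}(f)(n,I)=\mu(\Psi^{-1}(n,I))$ together with this restriction argument gives the result immediately.

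In your write-up this restriction step never appears. You verify that $\sigma\circ\rho\circ T^{\deg(\cdot)}$, applied face-by-face on $D$, produces intervals that formally match the levelsets-zigzag column of the table in Theorem \ref{bijection_theorem}, and then conclude. But that table records a correspondence between two barcodes of the \emph{discrete, Morse-type} construction, and quoting it does not by itself connect the abstract blocks $B_v$ of the (continuous, cohomological) functor $h(f)$ to either barcode. Without the identification of $h(f)|_{\bigstar_n}$ with the levelsets zigzag module, the ``combinatorial matching'' verifies only that $I(\cdot)$ is a sensible re-indexing of points of $\M$, not that $\mathbb{B}(f)$ counts actual levelsets zigzag bars. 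To make your argument go through you would need either to state the restriction to $\bigstar_n$ explicitly (the paper's approach), or to establish first that restricting $h(f)$ to the copy of the extended-persistence diagonal recovers the discrete extended persistence module, and only then invoke Theorem \ref{bijection_theorem}; as written, neither intermediate fact is proven, so the final step is a leap.
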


\begin{proof}[Idea.]
By construction, the south edge of the pyramid $\square_f$ determines the levelsets zigzag persistence of $(\X,f)$. Moreover, one knows from Proposition \ref{connection_pyramids_prop} that this edge corresponds to one of the copies of the injected line $\bigstar=\mathrm{Im}_\blacktriangle(\Bar{R})$ (depending on the dimension). Let $\bigstar_n$ denote the copy that corresponds to the south edge of the domain $T^{-n}(D)$. Then the information about the dimension-$n$ intervals of the levelsets zigzag barcode of $(\X,f)$ is entirely contained in $\bigstar_n$. Now, what the map $I(\cdot)$ does is mapping a point $m\in T^{-n}(D)$ to an interval in to $\bigstar_n$ that corresponds to an indecomposable block generated by $m$.
\end{proof}

\section{Computational aspects}
\label{algorithm}

In this section, we draw the computational consequences of the several links we have presented between the different flavors of persistence. We also review recent algorithms that achieve state-of-the-art complexity to compute extended persistence.

\subsection{Computing levelsets Zigzag Persistence}
\label{s:levelsetszigzag}

This section is devoted to explicit the bijections between the barcodes obtained with the different flavors of persistence we have presented. Let $a_1<...<a_n$ be the critical values of the Morse type function $f:\X\rightarrow\R$, and in-between regular values $-\infty<s_0<a_1<s_1<\cdots<s_{n-1}<a_n<s_n<\infty$. Recall that for $i \leq j$, we set $\mathbb{X}_i^j := f^{-1}([s_i,s_j])$.

 Instead of dealing with intervals of spaces, one can look at intervals of critical values and reformulate the diagram bijection theorem (\textit{cf} Theorem \ref{bijection_alternative}).\\

\noindent \textbf{Levelsets zigzag persistence.} Regarding the levelsets zigzag persistence, one can define the following correspondence. Every interval is either closed, open or half-open.
\begin{enumerate}
    \item $[\X_{i-1}^i,\X_{j-1}^j]\leftrightarrow [a_i,a_j]$ for $i,j\in \{1,...,n\}$
    \item $[\X_{i-1}^i,\X_{j-1}^{j-1}]\leftrightarrow [a_i,a_j)$ for $i<j\in \{1,...,n+1\}$
    \item $[\X_i^i,\X_{j-1}^j]\leftrightarrow (a_i,a_j]$ for $i<j\in \{0,...,n\}$
    \item $[\X_i^i,\X_{j-1}^{j-1}]\leftrightarrow (a_i,a_j)$ for $i<j\in \{0,...,n+1\}$
\end{enumerate}

\noindent \textbf{Extended persistence.} As for the extended persistence, one can define the notation change as follows. Intervals have four different forms (for \textit{Types I} to \textit{IV}).
\begin{enumerate}
    \item $[\X_0^i,\X_0^{j-1}]\leftrightarrow [a_i,a_j)$ for $i,j\in \{1,...,n\}$
    \item $[(\X_0^n,\X_{j-1}^n),(\X_0^n,\X_i^n)]^+\leftrightarrow [\Bar{a}_j,\Bar{a}_i)^+$ for $i<j\in \{1,...,n+1\}$
    \item $[\X_0^i,(\X_0^n,\X_j^n)]\leftrightarrow [a_i,\Bar{a}_j)$ for $i<j\in \{0,...,n\}$
    \item $[\X_0^j,(\X_0^n,\X_i^n)]^+\leftrightarrow [a_i,\Bar{a}_j)^+$ for $i<j\in \{0,...,n+1\}$
\end{enumerate}

\begin{theorem}[Diagram Bijection, new version]
\label{bijection_alternative}
One has the following correspondence between the intervals of extended persistence (left) and levelsets zigzag persistence (right).

\begin{center}
\begin{tabular}{|c c c|} 
 \hline
 Type & Extended & Levelsets zigzag\\ [0.5ex] 
 \hline
 I $(i<j)$ & $[a_i,a_j)$ & $[a_i,a_j)$ \\ 
\hline
 II $(i< j)$ & $[\Bar{a}_j,\Bar{a}_i)^+$ & $(a_i,a_j]$ \\
 \hline
 III  $(i\leq j)$ & $[a_i,\Bar{a}_j)$ & $[a_i,a_j]$\\
\hline
 IV $(i<j)$ & $[a_j,\Bar{a}_i)^+$ & $(a_i,a_j)$ \\
 \hline
\end{tabular}
\end{center}

\end{theorem}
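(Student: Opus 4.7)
The plan is to derive Theorem \ref{bijection_alternative} as a direct reformulation of the already established Theorem \ref{bijection_theorem}, using the two translation dictionaries introduced at the start of Section \ref{s:levelsetszigzag}. Nothing beyond those dictionaries and Theorem \ref{bijection_theorem} is needed; the task is to verify that, row by row, the space-indexed table of Theorem \ref{bijection_theorem} matches the critical-value-indexed table under the stated correspondences.

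Concretely, I would proceed type by type. For Type~I with $i<j$, Theorem \ref{bijection_theorem} pairs $[\X_0^i,\X_0^{j-1}]$ with $[\X_{i-1}^i,\X_{j-1}^{j-1}]$. Rule~1 of the extended dictionary rewrites $[\X_0^i,\X_0^{j-1}]$ as $[a_i,a_j)$, while Rule~2 of the levelsets zigzag dictionary rewrites $[\X_{i-1}^i,\X_{j-1}^{j-1}]$ as $[a_i,a_j)$, giving exactly the Type~I row of the new table. For Type~II, Rule~2 (extended) rewrites $[(\X_0^n,\X_{j-1}^n),(\X_0^n,\X_i^n)]^+$ as $[\bar a_j,\bar a_i)^+$ and Rule~3 (levelsets zigzag) rewrites $[\X_i^i,\X_{j-1}^j]$ as $(a_i,a_j]$. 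For Type~III, Rule~3 (extended) rewrites $[\X_0^i,(\X_0^n,\X_j^n)]$ as $[a_i,\bar a_j)$ and Rule~1 (levelsets zigzag) rewrites $[\X_{i-1}^i,\X_{j-1}^j]$ as $[a_i,a_j]$. Finally, for Type~IV, Rule~4 of each dictionary yields $[a_j,\bar a_i)^+$ on the extended side and $(a_i,a_j)$ on the levelsets zigzag side. Each line therefore matches the corresponding row of the new table.

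The only thing to be careful about is the bookkeeping of the indices and of the open/closed brackets. The guiding principle in the levelsets zigzag dictionary is that an endpoint of the form $\X_{k-1}^k$ (respectively $\X_{k-1}^{k-1}$ or $\X_k^k$) corresponds to a closed endpoint at $a_k$ (respectively a half-open endpoint strictly before or strictly after $a_k$); similarly, on the extended side, $\X_0^i$ corresponds to a closed birth at $a_i$, $\X_0^{j-1}$ to a death strictly after $a_{j-1}$ and strictly before $a_j$ (hence a right-open endpoint at $a_j$), while the pairs $(\X_0^n,\X_k^n)$ correspond to $\bar a_k$ in the extended filtration. Checking that the index ranges stated in Rules~1--4 of both dictionaries are compatible with the ranges $i<j$ or $i\le j$ appearing in Theorem \ref{bijection_theorem} finishes the verification.

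The hard part is essentially presentational rather than mathematical: one must display the dictionaries and the source table in a way that makes the four simultaneous substitutions transparent, and argue (or simply observe) that the dictionaries are bijective on each interval type, so that the correspondence of Theorem \ref{bijection_theorem} transports verbatim to the critical-value notation. Since Theorem \ref{bijection_theorem} already guarantees the existence of the bijection, the new statement is obtained at no additional mathematical cost.
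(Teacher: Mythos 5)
Your proposal is correct and takes essentially the same approach as the paper: the paper's proof of Theorem~\ref{bijection_alternative} is the one-line statement that it is ``another formulation of Theorem~\ref{bijection_theorem},'' and your row-by-row application of the two translation dictionaries is exactly what that one-liner leaves implicit. One small discrepancy worth flagging: for Type~IV you correctly deduce $[a_j,\bar a_i)^+$ on the extended side (birth at $\X_0^j\mapsto a_j$, death at $(\X_0^n,\X_i^n)\mapsto \bar a_i$), but the paper's Rule~4 as written actually reads $[\X_0^j,(\X_0^n,\X_i^n)]^+\leftrightarrow [a_i,\bar a_j)^+$, with $i$ and $j$ apparently transposed; your reading is the consistent one and the theorem's Type~IV row agrees with it, so this looks like a typo in the paper's dictionary rather than an error on your part.
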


\begin{proof}
This is another formulation of Theorem \ref{bijection_theorem}.
\end{proof}

\noindent Finally, the algorithm to compute the levelsets zigzag persistence of Morse type function from its extended persistence simply corresponds to  Theorem \ref{bijection_alternative}, as shown in Figure \ref{pipeline}'s pipeline.\\

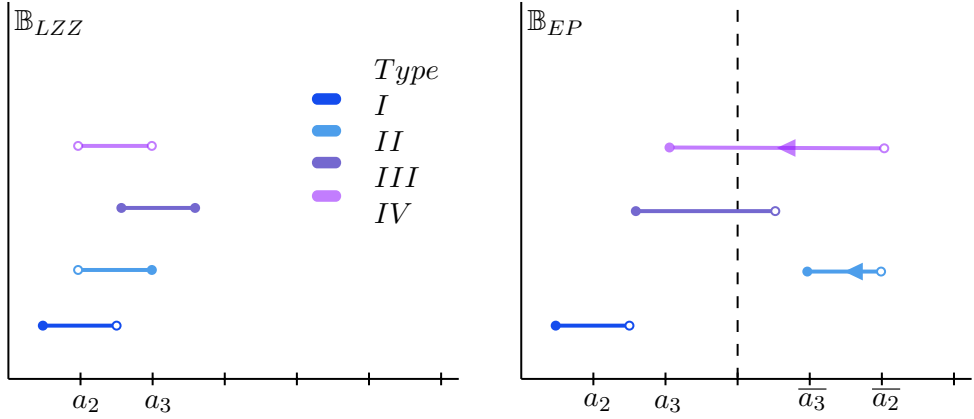
\begin{figure}
    \centering
    
    \tikzset{every picture/.style={line width=0.75pt}} 

\tikzset{every picture/.style={line width=0.75pt}} 

\begin{tikzpicture}[x=0.60pt,y=0.60pt,yscale=-1,xscale=1]

\draw    (44,245) -- (325,245) (89,241) -- (89,249)(134,241) -- (134,249)(179,241) -- (179,249)(224,241) -- (224,249)(269,241) -- (269,249)(314,241) -- (314,249) ;
\draw    (44,245) -- (44,8) ;
\draw    (364,245) -- (645,245) (409,241) -- (409,249)(454,241) -- (454,249)(499,241) -- (499,249)(544,241) -- (544,249)(589,241) -- (589,249)(634,241) -- (634,249) ;
\draw    (364,245) -- (364,8) ;
\draw  [dash pattern={on 4.5pt off 4.5pt}]  (499,13) -- (499,244) ;
\draw [color={rgb, 255:red, 77; green, 158; blue, 235 }  ,draw opacity=1 ][line width=1.5]    (90,176.5) -- (131,176.5) ;
\draw  [color={rgb, 255:red, 77; green, 158; blue, 235 }  ,draw opacity=1 ][fill={rgb, 255:red, 77; green, 158; blue, 235 }  ,fill opacity=1 ] (131,176.5) .. controls (131,175.12) and (132.12,174) .. (133.5,174) .. controls (134.88,174) and (136,175.12) .. (136,176.5) .. controls (136,177.88) and (134.88,179) .. (133.5,179) .. controls (132.12,179) and (131,177.88) .. (131,176.5) -- cycle ;
\draw  [color={rgb, 255:red, 77; green, 158; blue, 235 }  ,draw opacity=1 ] (85,176.5) .. controls (85,175.12) and (86.12,174) .. (87.5,174) .. controls (88.88,174) and (90,175.12) .. (90,176.5) .. controls (90,177.88) and (88.88,179) .. (87.5,179) .. controls (86.12,179) and (85,177.88) .. (85,176.5) -- cycle ;
\draw [color={rgb, 255:red, 116; green, 104; blue, 207 }  ,draw opacity=1 ][fill={rgb, 255:red, 116; green, 104; blue, 207 }  ,fill opacity=1 ][line width=1.5]    (117,137.5) -- (158,137.5) ;
\draw  [color={rgb, 255:red, 116; green, 104; blue, 207 }  ,draw opacity=1 ][fill={rgb, 255:red, 116; green, 104; blue, 207 }  ,fill opacity=1 ] (112,137.5) .. controls (112,136.12) and (113.12,135) .. (114.5,135) .. controls (115.88,135) and (117,136.12) .. (117,137.5) .. controls (117,138.88) and (115.88,140) .. (114.5,140) .. controls (113.12,140) and (112,138.88) .. (112,137.5) -- cycle ;
\draw  [color={rgb, 255:red, 116; green, 104; blue, 207 }  ,draw opacity=1 ][fill={rgb, 255:red, 116; green, 104; blue, 207 }  ,fill opacity=1 ] (158,137.5) .. controls (158,136.12) and (159.12,135) .. (160.5,135) .. controls (161.88,135) and (163,136.12) .. (163,137.5) .. controls (163,138.88) and (161.88,140) .. (160.5,140) .. controls (159.12,140) and (158,138.88) .. (158,137.5) -- cycle ;
\draw [color={rgb, 255:red, 20; green, 76; blue, 237 }  ,draw opacity=1 ][line width=1.5]    (68,211.5) -- (109,211.5) ;
\draw  [color={rgb, 255:red, 20; green, 76; blue, 237 }  ,draw opacity=1 ][fill={rgb, 255:red, 20; green, 76; blue, 237 }  ,fill opacity=1 ] (63,211.5) .. controls (63,210.12) and (64.12,209) .. (65.5,209) .. controls (66.88,209) and (68,210.12) .. (68,211.5) .. controls (68,212.88) and (66.88,214) .. (65.5,214) .. controls (64.12,214) and (63,212.88) .. (63,211.5) -- cycle ;
\draw  [color={rgb, 255:red, 20; green, 76; blue, 237 }  ,draw opacity=1 ] (109,211.5) .. controls (109,210.12) and (110.12,209) .. (111.5,209) .. controls (112.88,209) and (114,210.12) .. (114,211.5) .. controls (114,212.88) and (112.88,214) .. (111.5,214) .. controls (110.12,214) and (109,212.88) .. (109,211.5) -- cycle ;
\draw [color={rgb, 255:red, 20; green, 76; blue, 237 }  ,draw opacity=1 ][line width=1.5]    (388,211.5) -- (429,211.5) ;
\draw  [color={rgb, 255:red, 20; green, 76; blue, 237 }  ,draw opacity=1 ][fill={rgb, 255:red, 20; green, 76; blue, 237 }  ,fill opacity=1 ] (383,211.5) .. controls (383,210.12) and (384.12,209) .. (385.5,209) .. controls (386.88,209) and (388,210.12) .. (388,211.5) .. controls (388,212.88) and (386.88,214) .. (385.5,214) .. controls (384.12,214) and (383,212.88) .. (383,211.5) -- cycle ;
\draw  [color={rgb, 255:red, 20; green, 76; blue, 237 }  ,draw opacity=1 ] (429,211.5) .. controls (429,210.12) and (430.12,209) .. (431.5,209) .. controls (432.88,209) and (434,210.12) .. (434,211.5) .. controls (434,212.88) and (432.88,214) .. (431.5,214) .. controls (430.12,214) and (429,212.88) .. (429,211.5) -- cycle ;
\draw [color={rgb, 255:red, 77; green, 158; blue, 235 }  ,draw opacity=1 ][line width=1.5]    (545,177.5) -- (586,177.5) ;
\draw [shift={(565.5,177.5)}, rotate = 0] [fill={rgb, 255:red, 77; green, 158; blue, 235 }  ,fill opacity=1 ][line width=0.08]  [draw opacity=0] (11.61,-5.58) -- (0,0) -- (11.61,5.58) -- cycle    ;
\draw  [color={rgb, 255:red, 77; green, 158; blue, 235 }  ,draw opacity=1 ][fill={rgb, 255:red, 77; green, 158; blue, 235 }  ,fill opacity=1 ] (540,177.5) .. controls (540,176.12) and (541.12,175) .. (542.5,175) .. controls (543.88,175) and (545,176.12) .. (545,177.5) .. controls (545,178.88) and (543.88,180) .. (542.5,180) .. controls (541.12,180) and (540,178.88) .. (540,177.5) -- cycle ;
\draw [color={rgb, 255:red, 144; green, 19; blue, 254 }  ,draw opacity=0.55 ][line width=1.5]    (90,98.5) -- (131,98.5) ;
\draw  [color={rgb, 255:red, 144; green, 19; blue, 254 }  ,draw opacity=0.55 ] (85,98.5) .. controls (85,97.12) and (86.12,96) .. (87.5,96) .. controls (88.88,96) and (90,97.12) .. (90,98.5) .. controls (90,99.88) and (88.88,101) .. (87.5,101) .. controls (86.12,101) and (85,99.88) .. (85,98.5) -- cycle ;
\draw  [color={rgb, 255:red, 144; green, 19; blue, 254 }  ,draw opacity=0.55 ] (131,98.5) .. controls (131,97.12) and (132.12,96) .. (133.5,96) .. controls (134.88,96) and (136,97.12) .. (136,98.5) .. controls (136,99.88) and (134.88,101) .. (133.5,101) .. controls (132.12,101) and (131,99.88) .. (131,98.5) -- cycle ;
\draw [color={rgb, 255:red, 144; green, 19; blue, 254 }  ,draw opacity=0.55 ][line width=1.5]    (459,99.5) -- (588,100) ;
\draw [shift={(523.5,99.75)}, rotate = 0.22] [fill={rgb, 255:red, 144; green, 19; blue, 254 }  ,fill opacity=0.55 ][line width=0.08]  [draw opacity=0] (11.61,-5.58) -- (0,0) -- (11.61,5.58) -- cycle    ;
\draw  [color={rgb, 255:red, 144; green, 19; blue, 254 }  ,draw opacity=0.55 ][fill={rgb, 255:red, 144; green, 19; blue, 254 }  ,fill opacity=0.55 ] (454,99.5) .. controls (454,98.12) and (455.12,97) .. (456.5,97) .. controls (457.88,97) and (459,98.12) .. (459,99.5) .. controls (459,100.88) and (457.88,102) .. (456.5,102) .. controls (455.12,102) and (454,100.88) .. (454,99.5) -- cycle ;
\draw  [color={rgb, 255:red, 144; green, 19; blue, 254 }  ,draw opacity=0.55 ] (588,100) .. controls (588,98.62) and (589.12,97.5) .. (590.5,97.5) .. controls (591.88,97.5) and (593,98.62) .. (593,100) .. controls (593,101.38) and (591.88,102.5) .. (590.5,102.5) .. controls (589.12,102.5) and (588,101.38) .. (588,100) -- cycle ;
\draw [color={rgb, 255:red, 116; green, 104; blue, 207 }  ,draw opacity=1 ][fill={rgb, 255:red, 116; green, 104; blue, 207 }  ,fill opacity=1 ][line width=1.5]    (438,139.5) -- (520,139.5) ;
\draw  [color={rgb, 255:red, 116; green, 104; blue, 207 }  ,draw opacity=1 ][fill={rgb, 255:red, 116; green, 104; blue, 207 }  ,fill opacity=1 ] (433,139.5) .. controls (433,138.12) and (434.12,137) .. (435.5,137) .. controls (436.88,137) and (438,138.12) .. (438,139.5) .. controls (438,140.88) and (436.88,142) .. (435.5,142) .. controls (434.12,142) and (433,140.88) .. (433,139.5) -- cycle ;
\draw  [color={rgb, 255:red, 20; green, 76; blue, 237 }  ,draw opacity=1 ][fill={rgb, 255:red, 20; green, 76; blue, 237 }  ,fill opacity=1 ] (236.72,66) -- (248.28,66) .. controls (249.78,66) and (251,67.34) .. (251,69) .. controls (251,70.66) and (249.78,72) .. (248.28,72) -- (236.72,72) .. controls (235.22,72) and (234,70.66) .. (234,69) .. controls (234,67.34) and (235.22,66) .. (236.72,66) -- cycle ;
\draw  [color={rgb, 255:red, 77; green, 158; blue, 235 }  ,draw opacity=1 ][fill={rgb, 255:red, 77; green, 158; blue, 235 }  ,fill opacity=1 ] (236.72,86) -- (248.28,86) .. controls (249.78,86) and (251,87.34) .. (251,89) .. controls (251,90.66) and (249.78,92) .. (248.28,92) -- (236.72,92) .. controls (235.22,92) and (234,90.66) .. (234,89) .. controls (234,87.34) and (235.22,86) .. (236.72,86) -- cycle ;
\draw  [color={rgb, 255:red, 116; green, 104; blue, 207 }  ,draw opacity=1 ][fill={rgb, 255:red, 116; green, 104; blue, 207 }  ,fill opacity=1 ] (236.72,106) -- (248.28,106) .. controls (249.78,106) and (251,107.34) .. (251,109) .. controls (251,110.66) and (249.78,112) .. (248.28,112) -- (236.72,112) .. controls (235.22,112) and (234,110.66) .. (234,109) .. controls (234,107.34) and (235.22,106) .. (236.72,106) -- cycle ;
\draw  [color={rgb, 255:red, 144; green, 19; blue, 254 }  ,draw opacity=0.55 ][fill={rgb, 255:red, 144; green, 19; blue, 254 }  ,fill opacity=0.55 ] (236.72,127) -- (248.28,127) .. controls (249.78,127) and (251,128.34) .. (251,130) .. controls (251,131.66) and (249.78,133) .. (248.28,133) -- (236.72,133) .. controls (235.22,133) and (234,131.66) .. (234,130) .. controls (234,128.34) and (235.22,127) .. (236.72,127) -- cycle ;
\draw  [color={rgb, 255:red, 116; green, 104; blue, 207 }  ,draw opacity=1 ] (520,139.5) .. controls (520,138.12) and (521.12,137) .. (522.5,137) .. controls (523.88,137) and (525,138.12) .. (525,139.5) .. controls (525,140.88) and (523.88,142) .. (522.5,142) .. controls (521.12,142) and (520,140.88) .. (520,139.5) -- cycle ;
\draw  [color={rgb, 255:red, 77; green, 158; blue, 235 }  ,draw opacity=1 ] (586,177.5) .. controls (586,176.12) and (587.12,175) .. (588.5,175) .. controls (589.88,175) and (591,176.12) .. (591,177.5) .. controls (591,178.88) and (589.88,180) .. (588.5,180) .. controls (587.12,180) and (586,178.88) .. (586,177.5) -- cycle ;

\draw (46,11.4) node [anchor=north west][inner sep=0.75pt]    {$\mathbb{B}_{LZZ}$};
\draw (366,11.4) node [anchor=north west][inner sep=0.75pt]    {$\mathbb{B}_{EP}$};
\draw (82,253.4) node [anchor=north west][inner sep=0.75pt]    {$a_{2}$};
\draw (535,248.4) node [anchor=north west][inner sep=0.75pt]    {$\overline{a_{3}}$};
\draw (401,252.4) node [anchor=north west][inner sep=0.75pt]    {$a_{2}$};
\draw (262,38.4) node [anchor=north west][inner sep=0.75pt]    {$ \begin{array}{l}
Type\\
I\\
II\\
III\\
IV
\end{array}$};
\draw (127,253.4) node [anchor=north west][inner sep=0.75pt]    {$a_{3}$};
\draw (581,248.4) node [anchor=north west][inner sep=0.75pt]    {$\overline{a_{2}}$};
\draw (445,253.4) node [anchor=north west][inner sep=0.75pt]    {$a_{3}$};

\end{tikzpicture}

\caption{Illustrative example of the diagram bijection theorem. $\mathbb{B}_{LZZ}$ and $\mathbb{B}_{EP}$ denote the levelsets zigzag persistence barcode and the extended persistence barcode respectively. In $\mathbb{B}_{EP}$, the dotted line separates $\R$ from $\R^{\mathrm{op}}$. Intervals marked with an arrow correspond to intervals marked with a "$+$" exponent in Theorem \ref{bijection_alternative}.}
    \label{fig:my_label}
\end{figure}

\noindent We consider the simple, but very illustrative, case of the circle $S^1$ embedded into the real plane $\R^2$, paired with a sub-levelsets filtration induced by the projection onto the horizontal axis (see Figure \ref{circle}).

\begin{example}[The circle $S^1$]
Let $\X=S^1 = \{(x,y) \in \R^2 \mid x^2 + y^2 = 1\}$, and consider the first coordinate projection $f:\X\rightarrow\R$ as defined in Figure \ref{circle}, where $a_1 = -1,a_2 =1$ are the critical values of the function $f$ and $s_0,s_1,s_2$ are regular values of $f$ such that $-\infty<s_0<a_1<s_1<a_2<s_2<\infty$.\\

\noindent \textbf{Levelsets zigzag persistence.} For $p\in\N$, one has levelsets zigzag modules
$$H_p(f^{-1}(s_0))\rightarrow H_p(f^{-1}([s_0,s_1]))\leftarrow   H_p(f^{-1}(s_1))\rightarrow H_p(f^{-1}([s_1,s_2]))\leftarrow H_p(f^{-1}(s_2)).$$

\noindent The case $p=0$ leads to the zigzag module 

$$0\rightarrow K\leftarrow K\oplus K\rightarrow K\leftarrow 0,$$ that decomposes into indecomposable summands as 
$$\left(0\rightarrow K\leftarrow K\rightarrow K\leftarrow 0\right)\oplus (0\rightarrow 0\leftarrow K\rightarrow 0\leftarrow 0),$$

\noindent leading to the levelsets zigzag barcode given by the intervals $[\X_0^1,\X_1^2]_0$ and $[\X_1^1,\X_1^1]_0$, where the interval index is used to keep track of the dimensionality of the features.\\

\noindent \textbf{Extended persistence.} For $p\in\N$, one has the extended persistence modules

\[\xymatrix{
     H_p(f^{-1}(s_0)) \ar[r] & H_p(f^{-1}([s_0,s_1])) \ar[r] & H_p(f^{-1}([s_0,s_2])) \ar[dll]\\
              H_p(f^{-1}(S^1,\{s_2\})) \ar[r] & H_p(f^{-1}(S^1,[s_1,s_2])) \ar[r]& H_p(f^{-1}(S^1,S^1)).
    }\]

\noindent The case $p=0$ gives the indecomposable module 

$$0\rightarrow K\rightarrow K\rightarrow K\rightarrow 0\rightarrow 0,$$ and the case $p=1$ gives the indecomposable module
$$0\rightarrow 0\rightarrow K\rightarrow K\rightarrow K\rightarrow 0,$$
\noindent leading to the extended barcode given by $[\X_0^1,(\X_0^2,\X_2^2)]_0$ and $[\X_0^2,(\X_0^2,\X_1^2)]_1$.\\

\noindent \textbf{Correspondence.} One may refer to the table in Theorem \ref{bijection_alternative} to obtain the matching between the levelsets zigzag and extended barcodes of $f$:

\begin{equation*}
    \begin{cases}
    [\X_1^1,\X_1^1]_0\leftrightarrow [\X_0^2,(\X_0^2,\X_1^2)]_1,\\
    [\X_0^1,\X_1^2]_0\leftrightarrow [\X_0^1,(\X_0^2,\X_2^2)]_0.
    \end{cases}
\end{equation*}

\noindent \textbf{Interpretation.} The levelsets zigzag persistence has a way of detecting cycles that is a bit less natural than ordinary persistence and extended persistence. In fact, the cycle formed by $\X=S^1$ is not represented with dimension-$1$ intervals. Instead, it creates a particular signature entirely encoded in the dimension-$0$ intervals.

\begin{figure}
    \centering
    
    \label{circle}
    
\tikzset{every picture/.style={line width=0.75pt}} 

\begin{tikzpicture}[x=0.75pt,y=0.75pt,yscale=-1,xscale=1]

\draw   (254,105.5) .. controls (254,68.77) and (283.77,39) .. (320.5,39) .. controls (357.23,39) and (387,68.77) .. (387,105.5) .. controls (387,142.23) and (357.23,172) .. (320.5,172) .. controls (283.77,172) and (254,142.23) .. (254,105.5) -- cycle ;
\draw    (200,202) -- (438,202) ;
\draw [shift={(440,202)}, rotate = 180] [color={rgb, 255:red, 0; green, 0; blue, 0 }  ][line width=0.75]    (10.93,-3.29) .. controls (6.95,-1.4) and (3.31,-0.3) .. (0,0) .. controls (3.31,0.3) and (6.95,1.4) .. (10.93,3.29)   ;
\draw  [dash pattern={on 0.84pt off 2.51pt}]  (254,105.5) -- (255,202) ;
\draw  [dash pattern={on 0.84pt off 2.51pt}]  (387,105.5) -- (388,202) ;
\draw    (217,197) -- (217,206) ;
\draw    (325,197) -- (325,206) ;
\draw    (417,197) -- (417,206) ;

\draw (449,192.4) node [anchor=north west][inner sep=0.75pt]    {$f$};
\draw (385,28.4) node [anchor=north west][inner sep=0.75pt]    {$S^{1}$};
\draw (233,204.4) node [anchor=north west][inner sep=0.75pt]    {$a_{1}=-1$};
\draw (360,204.4) node [anchor=north west][inner sep=0.75pt]    {$a_{2}=1$};
\draw (410,204.4) node [anchor=north west][inner sep=0.75pt]    {$s_{2}$};
\draw (303,204.4) node [anchor=north west][inner sep=0.75pt]    {$s_{1}=0$};
\draw (208,204.4) node [anchor=north west][inner sep=0.75pt]    {$s_{0}$};

\end{tikzpicture}
\caption{Height function on the circle $S^1$.}
\end{figure}
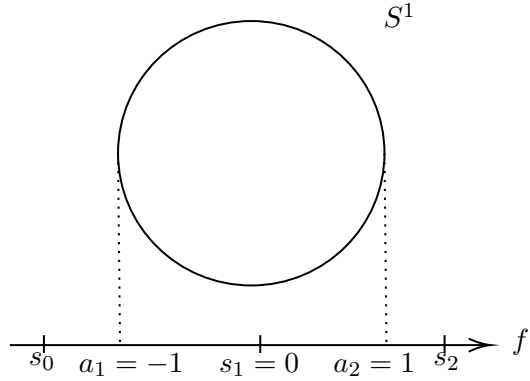

\end{example}

\subsection{Graphs}

Although ordinary persistence computations involve nearly-linear complexity for the case of graphs, this was not the case for zigzag persistence computations on graphs until the recent advances made in \cite{dey2021computing_}, where the authors provide two algorithms for computing $0$-dimensional and $1$-dimensional zigzag persistence barcodes, and extend the first algorithm to an arbitrary dimension $p-1$ for $\R^p$-embedded complexes.

\subsection{Manifold-like complexes}

Shortly after, the same authors look at the more general setting of manifold-like complexes in \cite{dey2021computing}, presenting a polynomial-time
algorithm for each type of levelsets persistence interval, that computes an optimal sequence of so-called \textit{levelsets persistent $p$-cycles} for weak $(p+1)$-pseudomanifolds. This is a consequent step towards efficient computing of zigzag persistence, as general optimal cycle challenges for homology usually involve NP-hard complexity. The idea behind the algorithms is to make use of the one-to-one correspondence between optimal cycles in the complex and minimum-weight cuts in a specific dual graph.

\subsection{Fast computation of zigzag persistence}

In \cite{Dey22}, the authors develop a fast algorithm to convert any zigzag filtration into an up-down filtration, making possible to apply any ordinary persistence algorithm for zigzag persistence. Thanks to this method, any algorithmic progress in ordinary persistence will automatically induce an improvement for zigzag computations.

\section{Applications}

\subsection{Computational sheaf theory and projected barcodes}

In this section, we will freely use the standard terminology of sheaf theory, and refer the reader to \cite{KS90} for an introduction. Let $D^b(K_\R)$ denote the bounded derived category of sheaves of $K$-vector spaces on $\R$ equipped with the standard topology, and let $D^b_{\R c}(K_\R)$ denote its full subcategory whose objects have constructible cohomology. In \cite{KS18}, Kashiwara and Schapira have introduced an interleaving like distance on  $D^b(K_\R)$, called the convolution distance and denoted $d_C$. In the same work, they also proved that  all sheaves $F \in D^b_{\R c}(K_\R)$ decompose as a direct sum of constant sheaves on intervals, not necessarily concentrated in the same degree. This graded collection of intervals, denoted $\mathbb{B}(F)$, has been  later called the graded barcode of $F$ in \cite[Definition 2.13]{BG18}, in which the authors introduce the graded-bottleneck distance $d_B$ between graded barcodes and prove the following isometry theorem:

\begin{theorem}[\cite{BG18}]
Let $F,G\in D^b_{\R c}(K_\R)$, then:

\[d_C(F,G) = d_B(\mathbb{B}(F),\mathbb{B}(G)).\]
\end{theorem}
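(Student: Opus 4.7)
The plan is to prove the two inequalities $d_B(\mathbb{B}(F),\mathbb{B}(G)) \leq d_C(F,G)$ (stability) and $d_C(F,G) \leq d_B(\mathbb{B}(F),\mathbb{B}(G))$ (converse stability) separately. The structural input that makes both directions tractable is the decomposition theorem for $D^b_{\R c}(K_\R)$: every $F$ splits as a finite direct sum $F \simeq \bigoplus_{i} K_{I_i}[n_i]$ indexed by $\mathbb{B}(F)$, where $K_{I}[n]$ denotes the constant sheaf on an interval $I \subset \R$ placed in degree $-n$. Thus both $d_C$ and $d_B$ will ultimately be controlled by the combinatorics of these indecomposable summands and the morphisms between them.

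For the converse inequality $d_C \leq d_B$, the idea is to build an $\varepsilon$-interleaving summand by summand from a given $\varepsilon$-matching. This reduces to a local computation analogous to the block-module lemma earlier in the paper: one must identify, for each single indecomposable $K_I[n]$, the precise conditions under which $K_I[n] \sim_\varepsilon 0$ in the convolution distance, and similarly when $K_I[n] \sim_\varepsilon K_J[m]$. These conditions, worked out in \cite{BG18}, depend on the type of the endpoints of $I$ (open vs.\ closed) and force the degrees $n,m$ to agree when both bars are persistent. Once the local interleavings are in place, one assembles the global $\varepsilon$-interleaving by taking direct sums, together with trivial interleavings with $0$ for the unmatched bars.

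For the stability direction $d_B \leq d_C$, the strategy is the standard "read a matching off an interleaving" argument, but in the derived setting. Given an $\varepsilon$-interleaving $(f,g)$, write $f$ and $g$ as matrices of morphisms between shifted constant sheaves on intervals, with respect to the decompositions of $F$ and $G$. Because $\mathrm{Hom}$ in $D^b_{\R c}(K_\R)$ between two such indecomposables is either $0$ or $K$, these matrices have rigid structure. Using changes of basis within each isotypic block and an induction on the total number of bars, one simplifies the interleaving to a "normal form" in which a matching between bars is directly exhibited, with each matched pair automatically $\varepsilon$-interleaved and each unmatched bar automatically $\varepsilon$-interleaved with $0$. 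The main obstacle here is the bookkeeping: one must ensure that the simplification steps preserve the commutativity of the interleaving squares and do not introduce spurious morphisms across degree shifts, which is exactly where constructibility and the derived $\mathrm{Hom}$ computation are needed.

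Combining the two inequalities yields the equality $d_C(F,G) = d_B(\mathbb{B}(F),\mathbb{B}(G))$.
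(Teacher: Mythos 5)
The paper does not give a proof of this theorem: it is stated purely as a cited result from \cite{BG18}, where graded barcodes and the graded bottleneck distance are defined and the isometry is established. So there is no in-text argument to compare against, and your sketch can only be judged on its own terms and against the proof in \cite{BG18}.

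Your high-level plan is the right one and matches the structure of the source: decompose $F$ and $G$ into shifted constant sheaves on intervals via Kashiwara--Schapira's constructible decomposition theorem, prove $d_C \le d_B$ by assembling a global interleaving from local ones, and prove $d_B \le d_C$ by extracting a matching from an interleaving. The first direction is essentially as easy as you describe, and the ``local interleaving/vanishing conditions for a single indecomposable'' are indeed the analogues of the block-module lemma from the levelsets section.

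The genuine gap is in the converse direction $d_B \le d_C$, and it sits exactly where you wave it off as ``bookkeeping.'' Two things conspire to make the matrix-reduction argument far from routine. First, $\mathrm{Hom}_{D^b(K_\R)}\bigl(K_I[n], K_J[m]\bigr)$ is not a graded-degree-preserving structure: besides the degree-$0$ morphisms you rely on, there are nonzero $\mathrm{Ext}^1$ classes, so the interleaving matrices are upper-triangular in cohomological degree rather than block-diagonal, and a naive column/row reduction can move contributions between degrees. Second, the convolution $K_{(a,b)} \star K_\varepsilon$ itself shifts degree once $b-a \le 2\varepsilon$ (it becomes a closed-interval constant sheaf in degree $-1$), so ``short'' bars migrate across degrees under smoothing; the commutativity constraints of the interleaving square mix the $\mathrm{Ext}^0$ and $\mathrm{Ext}^1$ blocks through precisely these degree-shifted terms. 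Saying that ``constructibility and the derived $\mathrm{Hom}$ computation are needed'' names the obstruction but does not resolve it: one has to show that the $\mathrm{Ext}^1$ components of the interleaving can be discarded without breaking the $2\varepsilon$-triangle, and then run a matching argument on the surviving degree-$0$ part. This is where \cite{BG18} spend most of their effort; they do not carry out a matrix normal form in the derived category but instead prove a sequence of structure lemmas on morphisms between indecomposables and then a combinatorial matching argument in the spirit of induced matchings. Without an explicit reduction lemma controlling the $\mathrm{Ext}^1$ terms, your induction on the number of bars has no base to stand on, so as written the converse-stability half is not a proof.
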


Given $f : \mathbb{X} \longrightarrow \R$ a continuous function, we denote the derived direct image of the constant sheaf on $X$ by $f$ by $R f_\ast K_X$. It is the sheaf analogue construction to the collection of levelsets persistence modules associated to $f$. This analogy was made precise in \cite{BGO19} where the authors prove that these barcodes determine each others.

First, the authors show that the collection of levelsets persistence modules of a real valued function is redundant, in the following sense. Let $f : \mathbb{X} \to \R$ be such that $\mathcal{L}_p(f)$ is pointwise finite dimensional for all $p \in \Z_{\geq 0}$. We denote by $\mathbb{B}^\textbf{o}(\mathcal{L}_p(f))$ (resp. $\mathbb{B}^{\textbf{c}_2}(\mathcal{L}_p(f))$) the sub-multi-set of $\mathbb{B}(\mathcal{L}_p(f))$ constituted of intervals of type \textbf{o} (resp. $\textbf{c}_2$).

\begin{proposition}[\cite{BGO19}]
There is a bijection  $\phi_f^p : \mathbb{B}^\textbf{o}(\mathcal{L}_p(f)) \longrightarrow \mathbb{B}^{\textbf{c}_2}(\mathcal{L}_{p+1}(f))$, given by $[a,b]_{\mathrm{BL}} \mapsto [b,a]_{\mathrm{BL}}$.
\end{proposition}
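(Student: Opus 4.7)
The plan is to exploit the Mayer--Vietoris long exact sequence to relate $\mathcal{L}_p(f)$ and $\mathcal{L}_{p+1}(f)$, and to extract the bijection from a block-by-block analysis. Concretely, for any four reals $y_1 < x_1 < x_2 < y_2$, I set $U := f^{-1}((y_1, x_2))$ and $V := f^{-1}((x_1, y_2))$ so that $U \cup V = f^{-1}((y_1, y_2))$ and $U \cap V = f^{-1}((x_1, x_2))$. Mayer--Vietoris then provides an exact sequence
\[
\mathcal{L}_{p+1}(f)(y_1, x_2) \oplus \mathcal{L}_{p+1}(f)(x_1, y_2) \xrightarrow{\alpha} \mathcal{L}_{p+1}(f)(y_1, y_2) \xrightarrow{\partial} \mathcal{L}_p(f)(x_1, x_2) \xrightarrow{\beta} \mathcal{L}_p(f)(y_1, x_2) \oplus \mathcal{L}_p(f)(x_1, y_2),
\]
from which one reads off the numerical identity $\dim \operatorname{coker} \alpha = \dim \ker \beta$.

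The technical core is a case-by-case analysis on block types. Since each $K_B$ has identity structure maps inside $B$ and zero maps outside, I would show that $\dim \ker \beta$ equals the number of \textbf{o} blocks $(a, b)_{\mathrm{BL}}$ in $\mathbb{B}(\mathcal{L}_p(f))$ with $y_1 \leq a < x_1 < x_2 < b \leq y_2$: a class at $(x_1, x_2)$ in such a block dies under \emph{both} extensions, while classes coming from the remaining types (\textbf{co}, \textbf{oc}, $\textbf{c}_1$ and $\textbf{c}_2$) always survive in at least one of $\mathcal{L}_p(f)(y_1, x_2)$ or $\mathcal{L}_p(f)(x_1, y_2)$. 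Symmetrically, $\dim \operatorname{coker} \alpha$ equals the number of $\textbf{c}_2$ blocks $[b, a]_{\mathrm{BL}}$ in $\mathbb{B}(\mathcal{L}_{p+1}(f))$ with the same parameter range, since these are precisely the blocks whose class at $(y_1, y_2)$ fails to lie in the image of $\alpha$: the restriction to $(y_1, x_2)$ would require $x_2 \geq b$ (which fails), and the restriction to $(x_1, y_2)$ would require $x_1 \leq a$ (which also fails).

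Combining the two counts with the Mayer--Vietoris identity and letting $(y_1, x_1, x_2, y_2)$ vary over all admissible configurations, a standard inclusion--exclusion argument on nested boxes in the parameter space $\{(a, b) : a < b\}$ upgrades this equality-of-counts-on-boxes to a pointwise equality of multiplicities: for every $a < b$ in $\R$, the multiplicity of $(a, b)_{\mathrm{BL}}$ in $\mathbb{B}(\mathcal{L}_p(f))$ equals the multiplicity of $[b, a]_{\mathrm{BL}}$ in $\mathbb{B}(\mathcal{L}_{p+1}(f))$. This canonical matching of multiplicities is precisely the bijection $\phi_f^p$.

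The main obstacle will be the exhaustive case analysis underlying the two dimension formulas, which requires going through every block subtype, including the variants with infinite endpoints and all bracket conventions, and verifying that none of them contributes to the relevant kernel or cokernel other than \textbf{o} and $\textbf{c}_2$ respectively. An additional subtle point is handling the unbounded blocks uniformly: when $a = -\infty$ or $b = +\infty$, the Mayer--Vietoris analysis above needs to be replaced by an approximation argument, letting $y_1 \to -\infty$ or $y_2 \to +\infty$ and invoking pointwise finite-dimensionality to commute limits with dimensions.
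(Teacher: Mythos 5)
Your proposal is correct, and it works. The Mayer--Vietoris long exact sequence gives the dimension identity $\dim\operatorname{coker}\alpha = \dim\ker\beta$, your block-by-block analysis correctly identifies types $\textbf{o}$ and $\textbf{c}_2$ as the only contributors to $\ker\beta$ and $\operatorname{coker}\alpha$ respectively (the types $\textbf{co}$, $\textbf{oc}$ and $\textbf{c}_1$ always survive in at least one of the two targets, as you can check from the shapes of the corresponding regions in $\U$), and the local finiteness built into block-decomposability lets you extract individual multiplicities by shrinking the box around a fixed $(a,b)$. One remark: the approximation step you flag for blocks with $a=-\infty$ or $b=+\infty$ is in fact unnecessary, since by definition $\textbf{o}$ blocks $(a,b)_{\mathrm{BL}}$ and $\textbf{c}_2$ blocks $[b,a]_{\mathrm{BL}}$ both have $a<b$ in $\R$; all blocks with infinite endpoints are of types $\textbf{co}$, $\textbf{oc}$ or $\textbf{c}$ with one infinite end, and your case analysis already excludes them from both $\ker\beta$ and $\operatorname{coker}\alpha$.

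Note that the paper does not reproduce a proof --- the proposition is cited from \cite{BGO19}. There, the statement is obtained inside the sheaf-theoretic dictionary: one decomposes the derived direct image $Rf_\ast K_{\mathbb{X}}$ into constant sheaves on intervals via the Kashiwara--Schapira decomposition theorem, and observes that a single summand of the form $K_{(a,b)}[-p]$ on an open interval contributes an $\textbf{o}$ block $(a,b)_{\mathrm{BL}}$ to $\mathcal{L}_p(f)$ (from degree-$p$ cohomology) and a $\textbf{c}_2$ block $[b,a]_{\mathrm{BL}}$ to $\mathcal{L}_{p+1}(f)$ (from degree-$(p+1)$ cohomology), while the remaining interval types only produce $\textbf{co}$, $\textbf{oc}$ and $\textbf{c}_1$ blocks. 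That route is more structural --- it exhibits a single sheaf-theoretic object behind each matched pair --- but it rests on the derived category machinery. Your route is genuinely more elementary: it uses only the topological Mayer--Vietoris sequence and the Cochoy--Oudot block decomposition theorem already stated in the paper, with no sheaf theory. The sheaf approach buys you the full correspondence between $\mathbb{B}(Rf_\ast K_{\mathbb{X}})$ and $\mathbb{B}(\mathcal{L}_\ast(f))$ in one sweep; yours isolates this one redundancy cleanly and self-containedly.
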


We denote by $\mathbb{B}(\mathcal{L}_\ast(f))$ the multi-set $\cup_p \mathbb{B}(\mathcal{L}_p(f)) \times \{p\}$ quotiented by the equivalence relation identifying pairs of intervals of the form $((I,p), (\phi_f^p(I), p+1))$.  

\begin{theorem}[\cite{BGO19}]
Let $X$ be a locally contractible topological space, and $f : \mathbb{X} \longrightarrow \R$ be a continuous map such that $R f_\ast K_X \in D^b_{\mathbb{R} c}(K_X)$. There is a bijection between $\mathbb{B}(R f_\ast K_X)$ and $\mathbb{B}(\mathcal{L}_\ast(f))$ given by: \[\langle a,b \rangle^p \mapsto (\langle a,b \rangle_{\mathrm{BL}},p),\]

where the notation $\langle a,b\rangle^p$, means that the interval $\langle a,b\rangle$ appears in degree $p$ in the graded-barcode of  $R f_\ast K_X$.

\end{theorem}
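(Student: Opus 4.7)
The plan is to exploit the decomposition of $\mathbb{R}$-constructible sheaves on $\mathbb{R}$ and trace each indecomposable summand of $Rf_\ast K_X$ through the levelsets persistence construction, matching sheaf-interval types with block-module types.

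First I would invoke the Kashiwara--Schapira decomposition
\[ Rf_\ast K_X \;\cong\; \bigoplus_{i \in \mathcal{I}} K_{I_i}[-p_i] \]
with each $I_i$ an interval and $p_i \in \mathbb{Z}_{\geq 0}$, so that by definition $\mathbb{B}(Rf_\ast K_X)$ is the multi-set $\{(I_i, p_i)\}$. Local contractibility of $X$ yields a natural isomorphism $R\Gamma((c,d); Rf_\ast K_X) \simeq R\Gamma(f^{-1}(c,d); K_X)$ identifying the derived sections with singular cohomology of $f^{-1}(c,d)$; universal coefficients over the field $K$ then recovers $\mathcal{L}_q(f)(c,d)$ as a $K$-dual, compatibly with the functor structure on $\mathbb{U}$.

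The core technical step, and what I expect to be the main obstacle, is a case-by-case computation of the contribution of a single summand $K_I[-p]$ to the block decomposition of $\mathcal{L}_\ast(f)$. Using the short exact sequences $0 \to K_{(a,b)} \to K_\mathbb{R} \to K_{\mathbb{R}\setminus(a,b)} \to 0$ (and their half-open and closed variants) together with the induced long exact sequences on probes $(c,d) \in \mathbb{U}$, I would establish:
\begin{itemize}
\item $I = [a,b]$ (and semi-infinite closed variants) contributes the single $\textbf{c}_1$-block $([a,b]_{\mathrm{BL}}, p)$;
\item $I = [a,b)$ or $(a,b]$ contributes the corresponding $\textbf{co}$- or $\textbf{oc}$-block at degree $p$;
\item $I = (a,b)$ contributes \emph{two} blocks, the $\textbf{o}$-block $((a,b)_{\mathrm{BL}}, p)$ \emph{and} the $\textbf{c}_2$-block $([b,a]_{\mathrm{BL}}, p+1)$.
\end{itemize}
The open case is the subtle one: a probe $(c,d) \supseteq [a,b]$ sees the class $K_{(a,b)}$ only in cohomological degree $1$, because the constant section on $(a,b)$ fails to extend by zero across $a,b$ once they lie in $(c,d)$. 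This forced degree shift is exactly the pairing $\phi_f^p$ of the preceding proposition, so in the quotient $\mathbb{B}(\mathcal{L}_\ast(f))$ each open-interval summand of $Rf_\ast K_X$ contributes one equivalence class and each non-open summand contributes one unpaired block.

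To conclude, uniqueness of the Kashiwara--Schapira decomposition combined with uniqueness of block decompositions of middle-exact modules (Cochoy--Oudot, Botnan--Crawley-Boevey) makes the assignment $(I_i, p_i) \mapsto [(I_{i,\mathrm{BL}}, p_i)]$ a well-defined bijection respecting multiplicities, which matches the formula in the statement.
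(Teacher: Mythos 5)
The paper states this result by citing \cite{BGO19} and does not reproduce a proof, so there is no in-text argument to compare against; I will therefore assess the proposal on its own merits. Your strategy --- decompose $Rf_\ast K_X$ into shifted interval sheaves via Kashiwara--Schapira, use local contractibility of $X$ (hence of every open $f^{-1}((c,d))$) to identify $R\Gamma((c,d);Rf_\ast K_X)$ with singular cochains on $f^{-1}((c,d))$, dualize over the field to recover the covariant functor $\mathcal{L}_q(f)$, and compute the block contributed by each summand $K_I[-p]$ via the short exact sequence against $K_\R$ --- is sound and is the natural route. The three bullet points are correct for \emph{bounded} intervals, and your explanation of the degree shift in the open case (the constant section on $(a,b)$ admits no global extension over a probe $(c,d)\supset[a,b]$, so the class surfaces as a coboundary in degree one) is exactly right; together with the paper's quotient by $\phi_f^p$ this gives the stated bijection, and uniqueness of both decompositions (Kashiwara--Schapira on the sheaf side, Cochoy--Oudot/Botnan--Crawley-Boevey on the module side) makes it well-defined on multi-sets.

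One gap worth closing: the unbounded \emph{open} intervals $(-\infty,b)$ and $(a,\infty)$ do not appear in your case list (only the unbounded closed variants are mentioned), and they behave differently from the bounded open case. The same SES computation shows that each of them contributes a \emph{single} block in degree $p$ and no degree-$(p+1)$ partner; this is consistent with the paper's taxonomy, which files $(-\infty,b)_{\mathrm{BL}}$ and $(a,\infty)_{\mathrm{BL}}$ under types $\textbf{co}$ and $\textbf{oc}$ respectively rather than $\textbf{o}$, so $\phi_f^p$ does not act on them. But the case must be recorded explicitly, since otherwise the count of equivalence classes in $\mathbb{B}(\mathcal{L}_\ast(f))$ does not visibly match the count of summands of $Rf_\ast K_X$. (The degenerate summand $K_\R[-p]$, if present, likewise contributes one unpaired block.) With that case added, the argument is complete.
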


Combined with the bijection of section \ref{s:levelsetszigzag}, we obtain the following.

\begin{corollary}
Let $a_1<...<a_n$ be the critical values of the Morse type function $f:\X\rightarrow\R$, and in-between regular values $-\infty<s_0<a_1<s_1<\cdots<s_{n-1}<a_n<s_n<\infty$. The collection in all degree of the extended persistence barcode of $f$ is equivalent to the graded-barcode of $R f_\ast K_X$. This equivalence can be computed in linear time with respect to the number of bars in the extended persistence barcode.
\end{corollary}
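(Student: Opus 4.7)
The plan is to chain three bar-by-bar bijections already made available in the paper: (i) the explicit extended-to-levelsets-zigzag bijection of Theorem \ref{bijection_alternative}; (ii) the identification, in the Morse type setting, of the discrete levelsets zigzag barcode with the block barcode of the continuous levelsets persistence module $\mathcal{L}_\ast(f)$; (iii) the sheaf-theoretic bijection of \cite{BGO19} between $\B(\mathcal{L}_\ast(f))$ (quotiented by $\phi_f^p$) and the graded barcode $\B(R f_\ast K_X)$.

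Step (i) is immediate from Theorem \ref{bijection_alternative} and amounts to a pointwise rewriting of each bar. For step (ii), I would first observe that the Morse type hypothesis guarantees pointwise finite dimensionality of $\mathcal{L}_p(f)$, and that the classical Mayer--Vietoris exact sequence makes it middle-exact, so the Cochoy--Oudot theorem yields a block decomposition. A direct case analysis on the four endpoint patterns $[a_i,a_j]$, $[a_i,a_j)$, $(a_i,a_j]$, $(a_i,a_j)$ appearing in the levelsets zigzag barcode, matched against the nondegenerate block types $\textbf{c}_1$, $\textbf{co}$, $\textbf{oc}$, $\textbf{o}$ of $\mathcal{L}_p(f)$, would then produce an explicit bijection between $\B(ZZ_p(f))$ and $\B(\mathcal{L}_p(f))$. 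Step (iii) is the cited theorem applied term-by-term.

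Composing these three pointwise transformations delivers the claimed equivalence. Since each step is an $O(1)$ rewriting of a single bar (endpoint type and homological degree), the overall cost is linear in the size of the extended persistence barcode. The hard part will be verifying in step (ii) that the $\pm 1$ degree shift arising in the Pyramid Theorem is precisely absorbed by the equivalence relation $\phi_f^p$ of \cite{BGO19}, which identifies type $\textbf{o}$ bars in degree $p$ with type $\textbf{c}_2$ bars in degree $p+1$: the exceptional diamond bars $[\X_k^k,\X_k^k]$ produced at the bottom of a Mayer--Vietoris diamond are exactly the type $\textbf{c}_2$ blocks, and tracking their compatibility across the three bijections is the delicate bookkeeping that makes the composition well defined.
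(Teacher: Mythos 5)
Your overall plan --- chain Theorem \ref{bijection_alternative} (extended to levelsets zigzag), an identification of the zigzag barcode with the block barcode of $\mathcal{L}_\ast(f)$, and the sheaf-theoretic bijection of \cite{BGO19} --- is exactly what the paper's one-line proof intends, and the linear-time claim follows as you say.

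There is, however, a gap in step (ii): the map you set up cannot be a bijection between $\B(ZZ_p(f))$ and $\B(\mathcal{L}_p(f))$ at a fixed degree $p$, because these two multi-sets do not have the same cardinality in general. The levelsets zigzag path visits only the points $(s_i,s_j)\in\U$ with $j-i\le 1$, so it never enters a $\textbf{c}_2$ block $[b,a]_{\mathrm{BL}}$, whose points $(x,y)$ satisfy $x\le a<b\le y$ for two distinct critical values $a<b$; any such window spans at least two critical values, which the zigzag staircase never does. Thus $\B(ZZ_p(f))$ detects only the non-$\textbf{c}_2$ part of $\B(\mathcal{L}_p(f))$, and your four-type case analysis accounts for exactly that sub-multi-set. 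The true (and needed) statement is the graded one: $\cup_p\B(ZZ_p(f))\times\{p\}$ bijects onto the \emph{quotient} $\mathbb{B}(\mathcal{L}_\ast(f))$, because each $\textbf{c}_2$ block in degree $p+1$ is identified via $\phi_f^p$ with a type $\textbf{o}$ block in degree $p$ that the zigzag does see. Your remark about the exceptional diamond bars is also backwards: $[\X_k^k,\X_k^k]$ corresponds, by the dictionary of Section \ref{s:levelsetszigzag}, to the open interval $(a_k,a_{k+1})$, i.e.\ a type $\textbf{o}$ block, not a $\textbf{c}_2$ one; the $\textbf{c}_2$ blocks are its $\phi_f^p$-partners one degree up and never appear in the zigzag barcode. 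Finally, both Types II and IV carry the ``$+$'' degree shift in Theorem \ref{bijection_alternative}, yet $\phi_f^p$ relates only type $\textbf{o}$ (Type IV) to $\textbf{c}_2$; the Type II shift, which lands on $\textbf{oc}$ blocks, is an independent feature of the dictionary, not something ``absorbed'' by $\phi_f^p$.
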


\begin{remark}
It shall be noted that in this setting, we obtain a mean to compute the derived direct image of a sheaf without the need of computing any injective resolution, as it was done in \cite{Brown21}, which is way more time costly.
\end{remark}

In \cite{BP22}, the authors introduce the projected barcodes of a Piecewise Linear (PL) multi-parameter filtration of a simplicial complex, and prove several important properties this invariant using sheaf theory. More precisely, let $X$ be a finite simplicial complex, $\mathbb{X}$ its geometric realization, and $f : X \to \mathbb{R}^n$ be a PL map, with geometric realization $|f| : \mathbb{X} \to \R^n$. Let $\mathfrak{F}$ be a set of continuous functions from $\R^n$ to $\R$.

\begin{definition}[\cite{BP22}]
The $\mathfrak{F}$-projected barcode of $R |f|_\ast K_\mathbb{X} $ is the collection of graded-barcodes: \[\mathbb{B}^\mathfrak{F}(R |f|_\ast K_\mathbb{X}) := (\mathbb{B}(R |u\circ f|_\ast K_\mathbb{X})_{u \in \mathfrak{F}}.\]
\end{definition}

\begin{corollary}
The $\mathfrak{F}$-projected barcode of $R |f|_\ast K_\mathbb{X} $ is equivalent to the collection in all degree of the extended persistence barcodes of the PL maps $u\circ f$, for $u \in \mathfrak{F}$. This equivalence can be computed in linear time in the number of simplices in $X$.
\end{corollary}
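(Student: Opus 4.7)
The plan is to apply the previous Corollary pointwise in $u \in \mathfrak{F}$. By definition,
\[
\mathbb{B}^\mathfrak{F}(R |f|_\ast K_\mathbb{X}) \;=\; \bigl(\mathbb{B}(R |u\circ f|_\ast K_\mathbb{X})\bigr)_{u \in \mathfrak{F}},
\]
so it suffices to produce, for each fixed $u \in \mathfrak{F}$, an explicit linear-time bijection between the graded-barcode $\mathbb{B}(R|u\circ f|_\ast K_\mathbb{X})$ and the collection in all degrees of the extended persistence barcode of the function $u \circ f : \mathbb{X} \to \R$, and then assemble the result into the family indexed by $u$.

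First I would verify that, for each $u \in \mathfrak{F}$ considered, the composite $u \circ f : \mathbb{X} \to \R$ falls under the hypotheses of the previous Corollary. In the PL setting considered in \cite{BP22}, one takes $\mathfrak{F}$ to consist of functions $u : \R^n \to \R$ whose pullback $u \circ f$ is PL (e.g.\ linear functionals, or more generally PL maps), so that $u\circ f$ is of Morse type, $\mathbb{X}$ is locally contractible as the geometric realization of a finite simplicial complex, and $R|u\circ f|_\ast K_\mathbb{X}$ is $\R$-constructible. This legitimizes the direct invocation of the previous Corollary.

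Next, for each such $u$, the previous Corollary provides the pointwise bijection
\[
\langle a,b \rangle^p \;\longmapsto\; (\langle a,b \rangle_{\mathrm{BL}},\, p),
\]
sending intervals in the graded-barcode of $R|u\circ f|_\ast K_\mathbb{X}$ to (degree-tagged) intervals in $\mathbb{B}(\mathcal{L}_\ast(u\circ f))$, and this identifies with the extended persistence barcode in all degrees via the explicit type-by-type translation in Section~\ref{s:levelsetszigzag}. The Corollary also guarantees that the translation is performed in linear time in the total number of bars. Since the extended persistence barcode of a PL map on a finite simplicial complex $X$ has a total number of intervals bounded by the number of simplices of $X$, this yields the claimed linear complexity for each fixed $u$.

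I do not foresee any technical obstacle beyond the verification that the Morse type / constructibility hypothesis is met for the chosen family $\mathfrak{F}$; the content of the statement is essentially the assembly, over $u \in \mathfrak{F}$, of the previous Corollary, and the complexity bound is inherited unchanged. The only point where one must be slightly careful is to make sure the linear-time bound is in the number of simplices of $X$ (not of a subdivision depending on $u$), which is immediate because the barcode of $u\circ f$ is indexed by critical values, hence by vertices of $X$ after a single preprocessing subdivision common to all $u$.
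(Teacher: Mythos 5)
Your proposal is correct and takes essentially the same approach that the paper implies (the paper itself gives no explicit proof of this corollary): applying the preceding corollary pointwise for each $u \in \mathfrak{F}$ and then noting that the number of bars in the extended persistence barcode of a PL map on $X$ is bounded by the number of simplices of $X$, which converts the "linear in the number of bars" bound into "linear in the number of simplices". Your extra remarks about verifying the Morse-type/constructibility hypotheses for $u \circ f$ and about a common subdivision are reasonable precautions, though largely moot in the intended setting where $u \circ f$ is already PL on the original complex $X$.
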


\subsection{Generalized rank invariant of 2-parameters persistence modules}

Given a $2$-parameters persistence module $M : \R^2 \longrightarrow \mathrm{Vect}_K$, its generalized rank invariant is the map associating to each interval $I$ of the poset $\R^2$, the rank of the map
$\textnormal{lim}~ M_{|I} \longrightarrow \textnormal{colim}~ M_{|I}$. It is an incomplete invariant of mutli-parameters persistence modules, and computing it efficiently is an important question for practical applications of multi-parameters persistence. In \cite{Kim21}, the authors show that computing the generalized rank invariant of $M$ over $I$, amounts to computing the generalized invariant of $M$ restricted to a zigzag path tracing the boundary of $I$. This method has a substantial decrease in complexity than previous existing ones, and also permits to define an efficient test for a $2$-parameters persistence module to be interval decomposable.

\subsection{Extended persistent homology transform}

In \cite{Tur22}, the authors introduce the Extended Persistent Homology Transform, as the extended version of the well-known Persistent Homology Transform \cite{Tur13}. They provide evidence of its usefulness for image classification.




\section{Conclusion}
\label{ccl}

Extended, zigzag and levelsets persistence have been introduced more than ten years ago now, and offer substantial generalization of ordinary persistence, while still having nice computational properties. In addition, they are at the heart of recent advances for multi-parameter persistence. Given the recent advances regarding this literature, we felt the need for a self-contained and unified introduction presenting its main results and constructions, and showing their implications for active research topics, such as computational sheaf theory and multi-parameter persistence.

\subsection*{Acknowledgements.}
The authors are grateful to Kathryn Hess for her support in the writing of this article. This work was supported by the Swiss Innovation Agency (Innosuisse project 41665.1
IP-ICT).

\printbibliography

\end{document}